\newcommand{\pp}{\mathfrak{p}}
\newcommand{\kk}{\mathfrak{k}}
\newcommand{\g}{\mathfrak{g}}
\newcommand{\z}{\mathfrak{z}}
\newcommand{\kt}{\mathfrak{t}}
\newcommand{\ks}{\mathfrak{s}}
\newcommand{\kb}{\mathfrak{b}}
\newcommand{\ku}{\mathfrak{u}}
\newcommand{\kn}{\mathfrak{n}}
\newcommand{\ke}{\mathfrak{e}}
\newcommand{\uzs}{\underline{\mathfrak{z}_\sigma}}
\newcommand{\cE}{\mathcal{E}}
\newcommand{\Z}{\mathbb{Z}}
\newcommand{\bbC}{\mathbb{C}}
\newcommand{\R}{\mathbb{R}}
\newcommand{\bN}{\mathbb{N}}
\newcommand{\Ad}{\mathrm{Ad}}
\newcommand{\ad}{\mathrm{ad}}
\begin{document}
\selectlanguage{english}

\title{Hypoelliptic Laplacian and twisted trace formula}
\author{Bingxiao LIU}
\address{Universität zu Köln, Weyertal 86-90, D-50931 Köln }

\begin{abstract}
	We give an explicit geometric formula for the twisted orbital 
	integrals using the method of the hypoelliptic Laplacian 
	developed by Bismut. Combining with the twisted trace formula, we 
	can evaluate the equivariant 
	trace of the heat operators of the Laplacians on a compact locally 
	symmetric space. In particular, we revisit the equivariant local 
	index theorems and twisted $L_{2}$-torsions for locally symmetric 
	spaces.
\end{abstract}

\keywords{Twisted orbital integral; Casimir operator; hypoelliptic Laplacian; symmetric
  	space.}

\theoremstyle{plain}
\newtheorem{theorem}{Theorem}[subsection]
\newtheorem{lemma}[theorem]{Lemma}
\newtheorem{proposition}[theorem]{Proposition}
\newtheorem{corollary}[theorem]{Corollary}

\theoremstyle{remark}
\newtheorem{definition}[theorem]{Definition}
\newtheorem{remark}[theorem]{Remark}
\newtheorem{example}[theorem]{Example}


\numberwithin{equation}{subsection}
 \renewcommand\thesection{\arabic{section}}
 \renewcommand\thesubsection{\arabic{section}.\arabic{subsection}}

\maketitle
\thispagestyle{empty}
\tableofcontents

\section{Introduction}
The purpose of this paper is to give an explicit geometric formula 
for the semisimple twisted orbital integrals associated with the 
Casimir operator on symmetric spaces, which extends an important result of Bismut for 
semisimple orbital integrals \cite[Chapter 
6]{bismut2011hypoelliptic}. The method that we use is the theory of hypoelliptic 
Laplacian developed by Bismut \cite{bismut2011hypoelliptic}. Here, we 
start with establishing a 
geometric formulation for the twisted orbital integral. Then we 
explain how to adapt Bismut's method to get 
our explicit formula. In the context of cyclic base change theory, we also 
exploit our formula by typical examples.

To explore the power of our formula, we use it to revisit the local equivariant 
index theorems for compact locally symmetric space, and especially, we 
exhibit the computations on the twisted orbital integrals using 
representation theory of compact Lie groups. In 
the last subsection, we also discuss briefly the equivariant real 
analytic torsion. For further study on this topic using our explicit formula, 
we refer to the author's paper \cite{LIU2021109117}.

Let us now give more details on the content of this paper.
\subsection{Real reductive group and symmetric space}
\label{s0.2}
Let $G$ be a connected real reductive Lie group (\cite[\S 
7.2]{knapp2002liegroupe})
with Lie algebra $\g$, and let $\theta\in \mathrm{Aut}(G)$ be a 
Cartan involution. Let $K$ be the fixed point set of $\theta$ in $G$. 
Then $K$ is a maximal compact subgroup of $G$ with Lie algebra $\kk$. 
The Cartan decomposition of $\g$ associated with $\theta$ is given by
\begin{equation}
	\mathfrak{g}=\mathfrak{p}\oplus \mathfrak{k}.
	\label{eq:0.2.1ugc}
\end{equation}
Put $m=\dim \pp$, $n=\dim \kk$.

Let $B$ be a $G$ and $\theta$-invariant nondegenerate bilinear 
symmetric form on $\g$, which is positive on $\pp$ and negative on $\kk$. Let $U\g$ be the enveloping algebra of $\g$, and let $C^\g\in U\g$ be the Casimir operator associated with $B$.

Let $X=G/K$ be the associated symmetric space. Then the projection $p: 
G\rightarrow X$ is a $K$-principal bundle. The bilinear form $B$ induces a 
Riemannian metric $g^{TX}$ on $X$ with nonpositive sectional curvature.
Let $d(\cdot,\cdot)$ denote the Riemannian 
distance on $X$.

If $(E,\rho^E)$ is a unitary representation of $K$, then $F=G\times_K 
E$ is a Hermitian vector bundle on $X$. Moreover, $C^\g$ descends to 
an elliptic operator $C^{\g,X}$ acting on $C^{\infty}(X,F)$. Our main 
object is to study the operator 
$\mathcal{L}^X$ acting on $C^\infty(X,F)$, which is defined as the 
sum of $\frac{1}{2}C^{\g,X}$ with an explicit real constant 
(Definition \ref{def:3.2.1sss}). For $t>0$, let 
$\exp(-t\mathcal{L}^X)$ be the associated heat operator.

\subsection{Twisted orbital integrals}\label{s0.8}
We introduce the geometric characterization for semisimple elements. Let 
$\mathrm{Isom}(X)$ be the Lie group of isometries of $X$. If 
$\phi\in \mathrm{Isom}(X)$, set $d_\phi(x)=d(x,\phi(x))$, $x\in X$. As in \cite{eberlein1996geometry}, $\phi$ is called semisimple if $d_\phi$ reaches its infimum value $m_\phi$ in $X$, and $\phi$ is called elliptic if $\phi$ has fixed points in $X$. If $\phi$ is semisimple, let $X(\phi)\subset X$ be the minimizing set of $d_\phi$, which is a convex submanifold of $X$.

In \cite[Chapters 3]{bismut2011hypoelliptic}, given a semisimple 
element $\gamma\in G$ (viewed as an isometry of $X$), 
$X(\gamma)$ is a symmetric space associated with $Z^{0}(\gamma)$, the 
identity component of the centralizer of $\gamma$. Then Bismut 
gave a geometric interpretation for the 
associated orbital integrals $\mathrm{Tr}^{[\gamma]}[\exp({-t\mathcal{L}^X })]$, so that they can be written as 
integrals along the fibres of the normal bundle $N_{X(\gamma)/X}$. This 
geometric formulation plays a central role in 
Bismut's approach to his explicit geometric formula of 
$\mathrm{Tr}^{[\gamma]}[\exp({-t\mathcal{L}^X })]$. Using Bismut's formula, Shen \cite{MR3473562,Shen_2016} 
 gave a full proof of the Fried conjecture for compact locally 
 symmetric spaces, completing the work of Moscovici and Stanton \cite{MS1991}.

In this paper, we extend Bismut's result to the case of 
twisted orbital integrals.
Let $\Sigma$ be the compact Lie group of the automorphisms of $\left(G,B,\theta\right)$.
If $\sigma\in \Sigma$, let 
$\Sigma^\sigma$ be the closure of the subgroup of $\Sigma$ generated by 
$\sigma$. Put $	G^{\sigma}=G\rtimes\Sigma^{\sigma},\;K^\sigma=K\rtimes 
\Sigma^\sigma$. We do not assume $\sigma$ to have finite order.

If $\sigma\in \Sigma$, we define the $\sigma$-twisted conjugation 
$C^{\sigma}$ so that if $h,\gamma\in G$, 
\begin{equation}
	C^{\sigma}(h)\gamma=h\gamma\sigma(h^{-1}).
	\label{eq:csigmaeng}
\end{equation}
Let 
$Z_{\sigma}(\gamma)\subset G$ be the 
$\sigma$-twisted centralizer of 
$\gamma\in G$. Then $\sigma$-twisted conjugacy class of $\gamma$ in $G$ can be identified with 
$Z_{\sigma}(\gamma)\backslash G$. The twisted orbital integral, 
defined as a certain integral on $Z_{\sigma}(\gamma)\backslash G$, has been vastly studied in cyclic base change theory (cf.
\cite{Langlands1988base}, \cite{Clozel1989}, 
\cite{CM_1992__81_3_261_0}, \cite{BeLip2017}, 
etc). 

Due to the possible nontrivial large center of $G$, the Lie group 
$G\rtimes \Sigma$, even 
$G^{\sigma}$, may fail to be reductive. A typical example is 
$\R^{m}\rtimes \mathrm{O}(m)$. In Subsection \ref{s1-4}, we explain 
the key point that the above groups do not displace very far from a 
reductive one. In particular, if $\gamma\in G$ is
such that $\gamma\sigma$ is semisimple as an isometry of $X$, we 
establish, via a geometric argument, a decomposition theorem 
 for $Z_{\sigma}(\gamma)$. Then we show that  
$X(\gamma\sigma)$ is a symmetric space associated 
with $Z^{0}_{\sigma}(\gamma)$, the identity 
component of $Z_{\sigma}(\gamma)$. This way, in Subsection 
\ref{section2-4}, we give a 
geometric interpretation for the twisted orbital integrals, as an 
extension of \cite[Definition 4.2.2]{bismut2011hypoelliptic}.

We now assume $(E,\rho^{E})$ to be a unitary representation of $K^\sigma$. Then the action of 
$G^\sigma$ on $X$ lifts to $F$. The operator $\mathcal{L}^{X}$ 
commutes with $G^{\sigma}$. For $t>0$, let $\mathrm{Tr}^{[\gamma\sigma]}[\exp(-t 
\mathcal{L}^X)]$ denote the $\sigma$-twisted orbital integral of the kernel of 
$\exp(-t \mathcal{L}^X)$ associated with $\gamma$.

\subsection{Statement of the main results}\label{s0.4}

If $\gamma\sigma$ is semisimple, after conjugation, we may and we 
will assume that $\gamma=e^a k^{-1}$ with $a\in\mathfrak{p}, k\in 
K$ and $\mathrm{Ad}(k)a=\sigma a$. Then $\theta$ acts on 
$Z_{\sigma}(\gamma)$. Let $\z_{\sigma}(\gamma)\subset \g$ be the Lie 
algebra of $Z_{\sigma}(\gamma)$, and let $\kk_{\sigma}(\gamma)$ be 
the Lie algebra of $K_{\sigma}(\gamma)=Z_{\sigma}(\gamma)\cap K$. As in \eqref{eq:0.2.1ugc}, we have 
the Cartan decomposition
\begin{equation}
	\z_{\sigma}(\gamma)=\pp_{\sigma}(\gamma)\oplus 
	\kk_{\sigma}(\gamma).
	\label{eq:bonn3333}
\end{equation}
Put $p=\dim \pp_{\sigma}(\gamma)$, $q=\dim \kk_{\sigma}(\gamma)$.

The analytic function $J_{\gamma\sigma}(Y^\kk_0)$ in $Y^\kk_0\in 
\kk_{\sigma}(\gamma)$ will be defined in Definition \ref{def:3.1.1sss} by an explicit formula. The main result of this paper is as follows.
\begin{theorem}\label{thm:0000}
	For	$t>0$, the following identity holds:
\begin{equation}
\begin{split}
		&\mathrm{Tr}^{[\gamma\sigma]}\big[\exp(-t 
		\mathcal{L}^X)\big]=\frac{\exp(-|a|^2/2t)}{(2\pi t)^{p/2}}\\
		&\cdot\int_{\mathfrak{k}_{\sigma}(\gamma)} 
		J_{\gamma\sigma}(Y_0^{\mathfrak{k}})\mathrm{Tr}^E\big[\rho^E(k^{-1}\sigma)\exp(-i \rho^E(Y_0^\mathfrak{k}))\big]
		e^{-|Y^\mathfrak{k}_0|^2/2t} \frac{dY^\mathfrak{k}_0}{(2\pi 
		t)^{q/2}}. 
\end{split}	
	\label{eq:55}
\end{equation}
\end{theorem}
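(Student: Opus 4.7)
The plan is to follow Bismut's strategy from \cite[Chapter 6]{bismut2011hypoelliptic}, adapted to the $\sigma$-twisted setting using the geometric interpretation of twisted orbital integrals established in Subsection \ref{section2-4}. The starting point is to rewrite $\mathrm{Tr}^{[\gamma\sigma]}[\exp(-t\mathcal{L}^X)]$ as an integral over the normal bundle $N_{X(\gamma\sigma)/X}$ of the minimizing submanifold of the displacement function $d_{\gamma\sigma}$. Since $X(\gamma\sigma)$ is the symmetric space for $Z^0_\sigma(\gamma)$ with $\g$-decomposition coming from \eqref{eq:bonn3333}, after the conjugation reducing to $\gamma=e^ak^{-1}$ with $\mathrm{Ad}(k)a=\sigma a$, one can unfold the twisted conjugacy class and reduce the integration to a fibre integral over $\pp_\sigma(\gamma)^{\perp}\oplus\kk_\sigma(\gamma)^{\perp}$ coupled with integration against the heat kernel of $\mathcal{L}^X$ restricted along a distinguished geodesic segment tangent to $a$.

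Next, I would introduce the family of hypoelliptic Laplacians $\mathcal{L}^X_b$ for $b>0$ on the total space $\widehat{\mathcal{X}}=TX\times_G E$, as in \cite{bismut2011hypoelliptic}. The key observation, to be established in the $\sigma$-equivariant setting, is that the twisted orbital integral $\mathrm{Tr}^{[\gamma\sigma]}[\exp(-t\mathcal{L}^X_b)]$ is independent of $b>0$ and coincides with $\mathrm{Tr}^{[\gamma\sigma]}[\exp(-t\mathcal{L}^X)]$ at $b=0$. This independence will follow by exhibiting $\mathcal{L}^X_b$ as a conjugate of a Witten-type deformation of the elliptic Casimir, combined with an inspection showing that $\gamma\sigma$-twisted orbital integrals commute with the deformation because $\gamma\sigma$ acts isometrically on $\widehat{\mathcal{X}}$ and preserves the hypoelliptic structure.

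The crux of the argument is then the asymptotic analysis of $\mathrm{Tr}^{[\gamma\sigma]}[\exp(-t\mathcal{L}^X_b)]$ as $b\to+\infty$. In this limit, the hypoelliptic heat kernel concentrates along the unique geodesic in $X(\gamma\sigma)$ with initial velocity $a$ and tangential action of $k^{-1}\sigma$ on the normal directions. After rescaling fibre coordinates of $TX$ by $b^{-1}$ and using Getzler-type rescaling adapted to the normal bundle of $X(\gamma\sigma)$, the leading contribution factorizes: the $\pp$-direction produces the Gaussian $\exp(-|a|^2/2t)/(2\pi t)^{p/2}$, the $\kk$-direction produces the Gaussian in $Y_0^{\kk}$ together with the action $\rho^E(k^{-1}\sigma)\exp(-i\rho^E(Y_0^{\kk}))$, and the transverse geometry assembles into the function $J_{\gamma\sigma}(Y_0^{\kk})$ defined in Subsection \ref{s4.1}. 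Matching these pieces yields \eqref{eq:55}.

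The main obstacle I anticipate is twofold. First, extending Bismut's geometric interpretation of orbital integrals to the twisted case requires careful bookkeeping of how $\sigma$ interacts with the Cartan decomposition of $\z_\sigma(\gamma)$, the action on the normal bundle of $X(\gamma\sigma)$, and the relevant Jacobian factor in the change of variables on $N_{X(\gamma\sigma)/X}$. Second, the local index-theoretic limit $b\to+\infty$ must be performed $\sigma$-equivariantly: the rescaled operator picks up cross-terms mixing $\mathrm{Ad}(k^{-1}\sigma)$ with the curvature of $X$ along the geodesic, and one must verify that, after Mathai-Quillen-type identification, these cross-terms assemble precisely into the function $J_{\gamma\sigma}$ rather than $J_\gamma$. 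The uniform estimates needed to justify the limit and to control the off-diagonal contributions will essentially repeat Bismut's analytic machinery in \cite[Chapters 9--12]{bismut2011hypoelliptic}, but carried out for the semidirect product $G^\sigma$ acting on $\widehat{\mathcal{X}}$.
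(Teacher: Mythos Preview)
Your overall strategy is correct and matches the paper's approach: deform via the hypoelliptic Laplacian, establish $b$-independence, and compute the limit $b\to+\infty$ by localization near the geodesic fixed set. However, two points in your outline are inaccurate and would derail the argument if taken literally.

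First, the hypoelliptic orbital integral must be a \emph{supertrace} $\mathrm{Tr_s}^{[\gamma\sigma]}[\exp(-t\mathcal{L}^X_b)]$, taken with respect to the $\Z_2$-grading of $\Lambda^\cdot(T^*X\oplus N^*)$, not an ordinary trace. The fundamental identity is $\mathrm{Tr}^{[\gamma\sigma]}[\exp(-t\mathcal{L}^X)] = \mathrm{Tr_s}^{[\gamma\sigma]}[\exp(-t\mathcal{L}^X_b)]$. Second, the $b$-independence does \emph{not} come from $\mathcal{L}^X_b$ being conjugate to a Witten deformation of the Casimir; the operators $\mathcal{L}^X_b$ are genuinely different for different $b$. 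The correct mechanism is algebraic: since $\mathcal{L}^X_b = -\tfrac{1}{2}\widehat{D}^{\g,X,2} + \tfrac{1}{2}\mathfrak{D}^{X,2}_b$, one has $\tfrac{\partial}{\partial b}\mathcal{L}^X_b = \tfrac{1}{2}[\mathfrak{D}^X_b, \tfrac{\partial}{\partial b}\mathfrak{D}^X_b]$, and together with $[\mathfrak{D}^X_b,\mathcal{L}^X_b]=0$ this makes $\tfrac{\partial}{\partial b}\mathrm{Tr_s}^{[\gamma\sigma]}[\exp(-t\mathcal{L}^X_b)]$ the orbital supertrace of a supercommutator, which vanishes by Proposition~\ref{prop_3.4.3}. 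This is exactly why the supertrace, not the trace, is needed.

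There is also a key technical step missing from your $b\to+\infty$ outline. After rescaling by $F_{-b}$ and localizing near $\widehat{\mathcal{F}}_{\gamma\sigma}$, the naive change of variables produces a divergent prefactor $b^{2r}$ with $r=\dim\z_\sigma(\gamma)$. The paper, following \cite[Chapter 9]{bismut2011hypoelliptic}, kills this by introducing the auxiliary element $\alpha=\sum_i c(e_i)\underline{e}^i\in c(\z_\sigma(\gamma))\widehat{\otimes}\Lambda^\cdot(\uzs(\gamma)^*)$, passing to $\mathfrak{L}^X_{A,b}=\underline{\mathcal{L}}^X_{A,b}+\alpha$, and replacing $\mathrm{Tr_s}$ by a modified functional $\widehat{\mathrm{Tr_s}}$. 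This converts the $b^{-2r}$ into a finite limit and is precisely what makes the transverse contribution assemble into $J_{\gamma\sigma}$ via the model operator $\mathcal{P}_{a,Y_0^\kk}$ of Subsection~\ref{proof5.2}. Without this device the Getzler-type rescaling you describe does not close.
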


If $\sigma=\mathrm{Id}_{G}$, it is just Bismut's formula given in \cite[Theorem 
6.1.1]{bismut2011hypoelliptic}. In \cite[Sections 8.1 and 10.6]{bismut2011hypoelliptic}, Bismut 
	explained that a formula like \eqref{eq:55} holds for the 
	cases such as $G=K$ non-connected, and $G=\R^{m}$, 
	$\sigma\in \mathrm{O}(m)$. Our theorem here confirms his observation in a more general setting. We will restate the above theorem in
Subsection \ref{s4.2}, and the proof will be given in Section 
\ref{section:proof}, which is partly derived from \cite[Chapter 
9]{bismut2011hypoelliptic}. In Subsection \ref{section:basechange}, 
we exploit the formula \eqref{eq:55} in the context of cyclic base 
change theory over $\R$, so that we only need elementary computation 
from linear algebra to establish some nontrivial identities.

Let $\pp^{\perp}_{\sigma}(\gamma)\subset \pp$ be the orthogonal space 
of $\pp_{\sigma}(\gamma)$ in $\pp$ with respect to $B$. Let $P^\perp_{\sigma}(\gamma)\subset X$ be the image of $\pp^{\perp}_{\sigma}(\gamma)$ by the map $f\rightarrow pe^{f}$. Put
\begin{equation}
	\Delta^{\gamma\sigma}_{X}=\{(x,\gamma\sigma(x))\,:\, x\in 
	P^\perp_{\sigma}(\gamma)\}\subset X\times X.
	\label{eq:sousvariete0}
\end{equation}

Let 
$(a,\kk_{\sigma}(\gamma))$ denote the affine subspace of 
$\z_{\sigma}(\gamma)=\pp_{\sigma}(\gamma)\oplus 
\kk_{\sigma}(\gamma)$. Set
\begin{equation}
	H^{\gamma}_{\sigma}= \{0\}\times 
	\big(a,\kk_{\sigma}(\gamma)\big)\subset 
	\z_{\sigma}(\gamma)\times\z_{\sigma}(\gamma).
	\label{eq:Haffine0}
\end{equation}
Let $\Delta^{\z_{\sigma}(\gamma)}$ denote the standard Laplacian on 
$\z_{\sigma}(\gamma)$.

In Subsection \ref{section:5.3}, using 
Theorem \ref{thm:0000}, we get an extension of 
\cite[Theorem 6.3.2]{bismut2011hypoelliptic} for the twisted orbital 
integrals for wave operators.
\begin{theorem}\label{thm:waveoperator00}
	We have the identity of even distributions on $\mathbb{R}$ 
	(defined in Subsection \ref{section:5.3})
	supported on $\{s\in\mathbb{R}\,:\, |s|\geq \sqrt{2}|a|\}$ with 
	singular support included in $\pm \sqrt{2}|a|$,
	\begin{equation}
		\begin{split}
					&\int_{\Delta^{\gamma\sigma}_{X}}\mathrm{Tr}^{F}\big[\gamma\sigma 
		\cos\big(s\sqrt{\mathcal{L}^{X}}\big)\big]\\
		&=\int_{H^{\gamma}_{\sigma}} 
		\mathrm{Tr}^{E}\Big[\cos\Big(s\sqrt{-\Delta^{\z_{\sigma}(\gamma)}/2}\Big)J_{\gamma\sigma}(Y^{\kk}_{0})\rho^{E}(k^{-1}\sigma)\exp\left(-i\rho^{E}(Y^{\kk}_{0})\right)\Big].
		\end{split}
		\label{eq:waveop00}
	\end{equation}
\end{theorem}

\subsection{Hypoelliptic Laplacian on symmetric spaces}
\label{s0.2sud}
Let us briefly recall the theory of 
hypoelliptic Laplacian developed by Bismut in 
\cite{bismut2011hypoelliptic}. We also refer to \cite{Ma2017bourbaki} 
for an introduction to this theory.

Put $N=G\times_{K}\kk$. Then $TX\oplus N$ is canonically trivial on $X$.
Let $\widehat{\pi}:\widehat{\mathcal{X}}\rightarrow X$ be the total 
space of $TX\oplus N$, so that $\widehat{\mathcal{X}} = X\times\g$.  
The hypoelliptic Laplacian is defined as a family 
of hypoelliptic differential operators $\{\mathcal{L}^X_b\}_{b>0}$ acting on 
$C^{\infty}(\widehat{\mathcal{X}},\widehat{\pi}^{*}(\Lambda^{\bullet}(T^{*}X\oplus N^{*})\otimes F))$.

Let $\Delta^{TX\oplus N}$ be the standard Laplace along the fibre 
$TX\oplus N$. Then $\mathcal{L}^X_b$ is given as follows \cite[Section 2.13]{bismut2011hypoelliptic},
\begin{equation}\label{hypoopX00}
\begin{split}
\mathcal{L}^X_b = \frac{1}{2}\big|[Y^N,Y^{TX}]\big|^2+\frac{1}{2b^2} 
\big(-\Delta^{TX\oplus N}+|Y|^2-m-n\big)
+\frac{N^{\Lambda^\bullet(T^*X\oplus N^*)}}{b^2}&\\
 +\frac{1}{b}\Big( \nabla^{C^\infty(TX\oplus N, \widehat{\pi}^*(\Lambda^\bullet(T^*X\oplus N^*)\otimes F))}_{Y^{TX}}+ 
\widehat{c}\big(\ad(Y^{TX})\big)&\\
- c\big(\mathrm{ad}(Y^{TX})+i\theta \mathrm{ad}(Y^N)\big)
-i\rho^E(Y^N)\Big).&
\end{split}
\end{equation} 
The structure of $\mathcal{L}^X_b$ is close to the structure of 
the hypoelliptic Laplacian studied by Bismut 
\cite{bismut2005cotangent, MR2473254}, and by Bismut-Lebeau \cite{BL2008RaySinger}.

In 
\cite{bismut2011hypoelliptic}, the proper functional analytic 
machinery was developed in order to obtain the analytic properties of 
the resolvent and of the heat kernel of $\mathcal{L}^X_b$. Let 
$\exp({-t\mathcal{L}^X_b})$ be the heat operator associated with 
$\mathcal{L}^X_b$. In \cite[Chapters 11, 14]{bismut2011hypoelliptic}, Bismut proved that there is a smooth 
kernel $q^X_{b,t}$ associated with $\exp({-t\mathcal{L}^X_b})$, and that as $b\rightarrow 0$, $q^X_{b,t}$ converges in the 
proper sense to the kernel of $\exp(-t\mathcal{L}^X)$.

In \eqref{hypoopX00}, the term $\nabla^{C^\infty(TX\oplus 
N, \widehat{\pi}^*(\Lambda^\bullet(T^*X\oplus N^*)\otimes 
F))}_{Y^{TX}}$ represents the left action of the generator of the
geodesic flow. If we forget the first quartic term in the right-hand 
side of \eqref{hypoopX00}, then after rescaling, as $b\rightarrow +\infty$, 
$\mathcal{L}^X_b$ converges in a na\"{i}ve sense to the generator of the 
geodesic flow. More precisely, the diffusion associated with the 
scalar part of $\mathcal{L}^X_b$ tends to propagate along the 
geodesic flow. In \cite[Chapters 12, 15]{bismut2011hypoelliptic}, Bismut 
established the uniform estimates on $q^{X}_{b,t}$ for $b$ large, 
from which he gave a quantitative estimate on how much this diffusion differs from 
the geodesic flow.

In Subsection \ref{section-infinite}, we also define the 
$\sigma$-twisted orbital integral for the (hypoelliptic) heat kernel of $\mathcal{L}^X_b$. Then 
in Theorem \ref{thm_trequaltrs}, we establish an identity which says that, for $b>0$, $t>0$,
\begin{equation}
	\mathrm{Tr}^{[\gamma\sigma]}[\exp(-t\mathcal{L}^X)]=\mathrm{Tr_{s}}^{[\gamma\sigma]}[\exp(-t\mathcal{L}^X_b)].
	\label{eq:3300}
\end{equation}
Theorem \ref{thm:0000} is obtained by evaluating the right-hand side 
of \eqref{eq:3300} as 
$b\rightarrow +\infty$. As we will explain in Subsection 
\ref{proof5.3}, this evaluation can be 
localized near $X(\gamma\sigma)$, more precisely, the 
$\gamma\sigma$-periodic points of the geodesic flow on 
$\widehat{\mathcal{X}}$. Then, using methods of local index theory, we 
can explicitly work out its limit as $b\rightarrow +\infty$ and 
obtain \eqref{eq:55}.

\subsection{Local equivariant index theorems}

Let $\Gamma$ be a cocompact torsion-free discrete subgroup of $G$ such that 
$\sigma(\Gamma) = \Gamma$. Then $Z=\Gamma\backslash X$ is a compact smooth manifold 
equipped with an isometric action of $\Sigma^\sigma$. The vector bundle $F$ 
descends to one on $Z$, so that the action of 
$\Sigma^\sigma$ on $Z$ lifts to $F\rightarrow Z$.
Moreover, $\mathcal{L}^{X}$ descends 
to a Bochner-like Laplacian $\mathcal{L}^{Z}$ on $Z$, whose heat 
operators are trace class. 

The twisted trace formula shows,
\begin{equation}
\mathrm{Tr}\big[\sigma^Z \exp(-t\mathcal{L}^{Z})\big]=\sum_{\underline{[\gamma]}_{\sigma}\in 
[\Gamma]_{\sigma}} \mathrm{Vol}\big(\Gamma\cap 
Z_{\sigma}(\gamma)\backslash 
X(\gamma\sigma)\big)\mathrm{Tr}^{[\gamma\sigma]}\big[\exp(-t\mathcal{L}^{X})\big],
\label{eq:0.5.1sss}
\end{equation}
where $[\Gamma]_{\sigma}$ is the set of $\sigma$-twisted conjugacy 
classes in $\Gamma$.

Under the geometric setting in Section \ref{section5bonn}, the 
operator $\mathcal{L}^{Z}$ can be replaced by the Laplacian for spinors, or the Hodge Laplacian for a 
Hermitian flat vector bundle ($F$ is defined by a 
$G^{\sigma}$-representation $(E,\rho^{E})$). Then, combining \eqref{eq:55} with 
\eqref{eq:0.5.1sss}, we get a formula for the $\sigma$-equivariant 
heat trace, from which we can evaluate the equivariant Dirac index or $\sigma$-equivariant real analytic torsion.

For the example of equivariant Euler characteristic number 
$\chi_{\sigma}(Z,F)$ with 
a flat vector bundle $F$ ($\mathcal{L}^{Z}$ 
is the Hodge Laplacian $\mathbf{D}^{Z,F,2}$ up to a parallel 
endomorphism of $F$), we will show that all the term 
$\mathrm{Tr}^{[\gamma\sigma]}[\exp(-t\mathbf{D}^{X,F,2})]$ in 
\eqref{eq:0.5.1sss} vanish except for the elliptic class 
$\underline{[\gamma]}_{\sigma}$ (i.e., $\gamma\sigma$ has fixed 
points in $X$). Let $\underline{E}_{\sigma}\subset [\Gamma]_{\sigma}$ 
denote the finite set of elliptic classes, and let 
${}^{\sigma}Z\subset Z$ denote the fixed point set of isometry 
$\sigma$. Then, in Subsection \ref{s1.9}, we get
\begin{equation}\label{eq:0.0.10bb}
    {}^\sigma Z=\cup^{\mathrm{disjoint}}_{\underline{[\gamma]}_{\sigma} \in \underline{E}_\sigma} \Gamma\cap Z_{\sigma}(\gamma)\backslash X(\gamma\sigma),
\end{equation}
where $X(\gamma\sigma)$ is just the fixed point set of $\gamma\sigma$ 
in $X$.

In Subsections \ref{ss:5.2elliptic} and \ref{s7.7}, we exhibit how to 
proceed a 
further evaluation on the integral in the right-hand side of 
\eqref{eq:55} by analyzing the representation $(E,\rho^{E})$. In 
particular, if $\underline{[\gamma]}_{\sigma} \in 
\underline{E}_\sigma$, 
\begin{equation}
	\mathrm{Tr}^{[\gamma\sigma]}\big[\exp(-t\mathbf{D}^{X,F,2})\big]=\Big[e\big(TX(\gamma\sigma), \nabla^{TX(\gamma\sigma)}\big)\Big]^{\mathrm{max}}\mathrm{Tr}^{E}\big[\rho^{E}(\gamma\sigma)\big],
	\label{eq:0.5.3stt}
\end{equation}
where $e\big(TX(\gamma\sigma), \nabla^{TX(\gamma\sigma)}\big)$ denotes the Euler 
form of $X(\gamma\sigma)$, hence identified locally with the Euler 
form of ${}^{\sigma}Z$. Finally, we assembly together the above 
computations, we get
\begin{equation}
	\chi_{\sigma}(Z,F)=\sum_{\underline{[\gamma]}_{\sigma} \in 
	\underline{E}_\sigma}\chi\Big(\Gamma\cap Z_{\sigma}(\gamma)\backslash 
	X(\gamma\sigma)\Big) 
	\mathrm{Tr}^{E}\big[\rho^{E}(\gamma\sigma)\big],
\end{equation}
where $\chi(\cdots)$ denotes the corresponding Euler characteristic 
number. This is clearly a specialization of the local equivariant 
index theorem (cf. \cite[Chapter 6]{berline2003heat}) for the locally 
symmetric space $Z$.

In the last subsection, we introduce the $\sigma$-twisted 
$L_{2}$-torsion $\mathcal{T}_{\sigma, L_{2}}(Z,F)$, as an 
extension of the definition in \cite{BeLip2017}. We explain briefly 
that $\mathcal{T}_{\sigma, L_{2}}(Z,F)$ plays a similar role as the 
ordinary $L_{2}$-torsions (\cite{MR1158345}, \cite{MATHAI1992369}), but 
associated with the fixed point set ${}^{\sigma}Z$.

\subsection{The organization of the paper}\label{s0.13}
This paper is organized as follows. In Section \ref{s1}, we 
introduce the real reductive Lie group $G$ and the twist $\sigma$, 
and we explain the associated 
geometric structure for $X(\gamma\sigma)$ when $\gamma\sigma$ is 
semisimple.

In Section \ref{section3}, we establish the geometric formulation for 
the $\sigma$-twisted orbital integrals 
associated with $\gamma$.

In Section \ref{s4}, we restate Theorem \ref{thm:0000} as Theorem 
\ref{thm_orbitalintegral}, and we give a vanishing theorem by classifying the 
representations of $K^{\sigma}$. We also explain our formula for the 
examples from cyclic base change theory.

In Section \ref{ch3}, we recall the construction of the hypoelliptic 
Laplacian $\mathcal{L}^{X}_{b}$ of Bismut and the properties of its 
heat kernel \cite{bismut2011hypoelliptic}.

In Section \ref{section:proof}, we prove Theorem \ref{thm_orbitalintegral}.

Finally, in Section \ref{section5bonn}, we show the compatibility of our 
formula \eqref{eq:55} with the local equivariant index 
theorems for compact locally symmetric spaces. In the last part, we 
discuss briefly the twisted $L_{2}$-torsion introduced in 
\cite{BeLip2017}.

This paper is mainly the first part of the author's thesis 
\cite{liu:tel-01841334}, and the main results were announced in \cite{LIU201974}.

In the sequel, if $V$ is a real vector space and if $E$ is a 
complex vector space, we will denote by $V\otimes E$ the complex 
vector space $V\otimes_{\R} E$. We use the same convention for the 
tensor product of vector bundles.

If $H$ is a Lie group, let $H^0$ denote the connected component of identity.

\subsection*{Acknowledgments}
This paper presents a part of the results obtained in my Ph.D. thesis 
at Universit\'{e} Paris-Sud 11 (Orsay). I am deeply grateful to my 
thesis supervisor, Prof. Jean-Michel Bismut, for encouraging me to work on this subject, and for many useful discussions. I also want to 
express my sincere gratitude to Laboratoire de Math\'{e}matiques 
d’Orsay (LMO) for providing so nice research and study
environment. Last but not least, I would like to thank the 
anonymous referees for their suggestions and comments that greatly improved the quality of the redaction.

\section{The symmetric space $X=G/K$ and semisimple isometries}\label{s1}

In this section, we consider a connected real reductive Lie group $G$, and let $X$ be the 
associated symmetric space. We introduce a compact subgroup $\Sigma$ of $\mathrm{Aut}(G)$ which acts on $X$ 
isometrically, then for each semisimple element $\gamma\sigma:= 
(\gamma,\sigma)\in G\rtimes \Sigma$, we construct a symmetric space 
$X(\gamma\sigma)\subset X$ associated with the $\sigma$-twisted 
centralizer of $\gamma$ in $G$. Our results here are direct extensions of the results 
obtained in \cite[Chapter 3]{bismut2011hypoelliptic}. They are 
necessary to establish the geometric formulation of the twisted 
orbital integrals in Section \ref{section3}.

\subsection{Symmetric spaces and homogeneous vector bundles}\label{section1-1}

Let $G$ be a connected real reductive Lie group \cite[\S 
7.2]{knapp2002liegroupe} with a Cartan involution $\theta$. Let 
$K\subset G$ be the fixed point set of $\theta$, which is a connected maximal 
compact subgroup. Let $\mathfrak{g}, \kk$ be the Lie algebras of $G$, 
$K$ respectively. Let $\pp\subset \g$ be the eigenspace of $\theta$ 
associated with the eigenvalue $-1$. Then the Cartan decomposition of 
$\mathfrak{g}$ is given by
\begin{equation}\label{cartandecom1}
	\mathfrak{g}=\mathfrak{p}\oplus\mathfrak{k}.
\end{equation}
Moreover,
\begin{equation}
[\kk,\pp]\subset \pp,\;\; [\kk,\kk]\subset \kk,\;\; [\pp,\pp]\subset \kk.
\label{eq:1.1.2ugc}
\end{equation}
Put $m=\dim\pp, n=\dim\kk$. Then $\dim \g=m+n$

Let $B$ be a nondegenerate bilinear symmetric form on $\g$ which is positive-definite on $\pp$ and negative-definite 
on $\kk$. We also assume that $B$ is invariant under the action of 
$\theta$ and the adjoint action of $G$. Let 
$\langle\cdot,\cdot\rangle$ be the scalar product on $\g$ defined by 
$-B(\cdot,\theta\cdot)$. Then the splitting \eqref{cartandecom1} is orthogonal with respect to $B$ and $\langle\cdot,\cdot\rangle$.

For $g,g'\in G$, put
\begin{equation}
C(g)g'=gg'g^{-1}\in G.
\label{eq:1.1.3pp}
\end{equation}
Let $\Ad(\cdot)$, $\ad(\cdot)$ 
denote respectively the adjoint actions of $G$, $\g$ on $\g$. We also use $\Ad(g)$ abusively to denote the conjugation $C(g)$ on $G$.

Let $\omega^\g=\omega^\pp+\omega^\kk$ be the canonical left-invariant $1$-form on $G$ 
with values in $\g=\pp\oplus\kk$. Then by \eqref{cartandecom1}, \eqref{eq:1.1.2ugc}, we get
\begin{equation}
d\omega^\pp=-[\omega^\kk,\omega^\pp],\;\; d\omega^\kk=-\frac{1}{2}[\omega^\kk,\omega^\kk]-\frac{1}{2}[\omega^\pp,\omega^\pp].
\label{eq:1.1.7ugc}
\end{equation}

Let $X=G/K$ be the associated symmetric space. The projection $p: 
G\rightarrow G/K$ defines a $K$-principal
bundle on $X$, then the 
splitting \eqref{cartandecom1} gives it a connection with the connection 
form $\omega^\kk$. 
Let $\Omega$ be the associated curvature, then by \eqref{eq:1.1.7ugc},
\begin{equation}\label{eq:1.1.5n}
    \Omega=-\frac{1}{2}[\omega^\pp,\omega^\pp]\in \Lambda^2 
    (\pp^*)\otimes \kk.
\end{equation}

If $(E,\rho^E, h^{E})$ is a finite dimensional orthogonal 
(resp. unitary) representation of $K$, then $(F= G\times_K E, h^{F})$ is a Euclidean 
(resp. Hermitian) vector bundle on $X$. The connection form 
$\omega^\kk$ induces a Euclidean (resp. Hermitian) connection 
$\nabla^F$ on $F$. The actions of $G$ and $\theta$ on $X$ lift to $F$.

Note that $K$ acts on $\pp$ via adjoint action. Then we have the 
identification 
\begin{equation}
TX=G\times_K \pp.
\label{eq:1.1.3ppaa}
\end{equation}
Moreover, $B|_\pp$ induces a Riemannian metric
$g^{TX}$ on $TX$. Then $G$ and $\theta$ act on $X$ isometrically. 
Let $d(\cdot,\cdot)$ denote the Riemannian distance on $X$. By \eqref{eq:1.1.7ugc}, \eqref{eq:1.1.3ppaa}, $\omega^\kk$ induces the 
Levi-Civita connection $\nabla^{TX}$ on $(TX, g^{TX})$. Let $R^{TX}$ 
denote its curvature. Then $X$ has nonpositive sectional curvature. 
If $x_{0}=p1\in 
X$, the exponential map $\exp_{x_{0}}: \pp\rightarrow X$ given by 
$Y^{\pp}\in\pp \rightarrow \exp_{x_{0}}(Y^{\pp})= \exp(Y^{\pp})\cdot 
x_{0}$ is a 
diffeomorphism between $\pp$ and $X$. 

Put
\begin{equation}
\label{eq:1.1.6n} 
N=G\times_K \kk.
\end{equation}
Let $\nabla^N$ be the connection on $N$ associated with $\omega^\kk$. By \eqref{eq:1.1.3ppaa}, \eqref{eq:1.1.6n},
\begin{equation}
    TX\oplus N=G\times_K \g.
    \label{eq:1.1.8pps}
\end{equation}
Let $\nabla^{TX\oplus N}$ be the connection on $TX\oplus N$ 
associated with $\omega^\kk$, equivalently, $\nabla^{TX\oplus 
N}=\nabla^{TX}\oplus \nabla^N$.

In the sequel, let $\pi: \mathcal{X}\rightarrow X$ be the total space of $TX$ to 
$X$, and let $\widehat{\pi}: \widehat{\mathcal{X}}\rightarrow X$ be the 
total space of $TX\oplus N$ to $X$. The map $(g,a)\in G 
\times_{K}\g\rightarrow (pg,\mathrm{Ad}(g)a)\in 
X\times\g$ identifies $TX\oplus N$ with the trivial vector bundle 
$\g$ over $X$. Then
\begin{equation}
\widehat{\mathcal{X}} \simeq X\times\g.
\label{eq:1.1.13sud}
\end{equation}

We now go back to the Hermitian vector bundle $F$ on $X$ associated 
with a unitary representation $(E,\rho^E)$ of $K$.
Let $C^\infty(G,E)$ denote the set of smooth functions on $G$ valued in $E$. 
The right multiplication of $K$ on $G$ induces a dot-action of $K$ on 
$C^\infty(G,E)$, such that for $k\in K$, $s\in C^\infty(G,E)$,
\begin{equation}
    \label{eq:actionKs}
	(k.s)(g)=\rho^E(k)s(gk).
\end{equation}
Let $C^\infty_K(G,E)$ be the subspace of $C^\infty(G,E)$ 
of the sections fixed by $K$-dot-action. Let $C^\infty(X,F)$ be the vector space of the smooth sections of $F$ over $X$. 
Then
\begin{equation}\label{eq:1.8.4bis}
C^\infty(X,F)=C^\infty_K(G,E).
\end{equation}

The left action of $G$ on itself induces an equivariant action of $G$ on 
$C^\infty(X,F)$ such that if $s\in C^\infty_K(G,E)$, if $g,h\in G$, then
\begin{equation}
(hs)(g)=s(h^{-1}g).
\label{eq:1.1.13didot}
\end{equation}
Moreover, $\nabla^F$ is $G$-invariant.

\subsection{Semisimple isometries of $X$}\label{section1.05}
Let $\mathrm{Isom}(X)$ be the
Lie group of isometries of $(X, g^{TX})$. Then we have an obvious group homomorphism $G\rightarrow \mathrm{Isom}(X)$.
\begin{definition}\label{def:1.1.1sud}
	 If $\phi\in\mathrm{Isom}(X)$, the displacement 
	 function $d_\phi$ associated with $\phi$  
	 is the function on $X$ defined as
	 \begin{equation}
	 d_\phi(x)= d(x,\phi x)\;,\; x\in X.
	 \label{eq:1.1.12pps}
	 \end{equation}
	 Put $m_\phi=\inf_{x\in X}d_\phi(x)\in \R_{\geq 0}$.
\end{definition}

Since $X$ has nonpositive sectional curvature, by \cite[Chapter 1, 
Example 1.6.6]{eberlein1996geometry}, $d^2_\phi$ is a smooth convex 
function on $X$.

\begin{definition}\label{def:1.1.2n}
	We say that $\phi\in \mathrm{Isom}(X)$ is semisimple 
	if $d_\phi(x)$ reaches its infimum $m_\phi$ in $X$. A semisimple isometry $\phi$ is called elliptic if it has fixed points in $X$, i.e. $m_{\phi}=0$. If $\phi$ is semisimple, put $X(\phi)=\{x\in X\;|\; d_\phi(x)=m_\phi\}$.
\end{definition}

\subsection{A compact subgroup of $\mathrm{Aut}(G)$}\label{s1-2}
Let $\mathrm{Aut}(G)$ be the Lie group of automorphism of $G$ \cite[Theorem 2]{Hochschild1952}.

\begin{definition}
	The semidirect product of $G$ 
	and $\mathrm{Aut}(G)$ is defined as
	\begin{equation}
		G\rtimes \mathrm{Aut}(G):
		=\{(g,\phi)\;:\; g\in G, \phi\in \mathrm{Aut}(G)\},
		\label{eq:1.2.2pps}
	\end{equation}
	with the group multiplication:
	\begin{equation}
		(g_1,\phi_1)\cdot(g_2,\phi_2)=\big(g_1\phi_1(g_2),\phi_1\phi_2\big).
		\label{eq:1.2.3pps}
	\end{equation}
	The unit element is $(1, \mathrm{Id}_G)$. 
	Also $(g,\phi)^{-1}=(\phi^{-1}(g^{-1}),\phi^{-1})$.
\end{definition}

We will often write $g\phi$ instead of $(g,\phi)$ for an element in $G\rtimes \mathrm{Aut}(G)$. 

\begin{definition}\label{def:1.2.2ss}
Put 
\begin{equation}\label{eq:Sigmadef}
\Sigma := \{\phi\in \mathrm{Aut}(G)\;:\;
\phi\theta=\theta\phi,\; \phi \mathrm{\;preserves\;the\;bilinear\;form\;} B\}.
\end{equation}
\end{definition}

Then $\Sigma$ is a compact Lie subgroup of $\mathrm{Aut}(G)$. Let $\ke$ be its Lie algebra.  
The action of $\Sigma$ on $\g$ 
preserves the splitting $\eqref{cartandecom1}$ 
and the scalar product of $\g$. 
Note that $\Sigma$ contains all 
the inner automorphisms
defined by elements in $K$.

Set
\begin{equation}
	\widetilde{G}=G\rtimes \Sigma,\; \widetilde{K}=K\rtimes \Sigma.
	\label{eq:1.3.4-2021}
\end{equation}
They are closed subgroups of $G\rtimes \mathrm{Aut}(G)$. Let 
$\widetilde{\g}$, $\widetilde{\kk}$ denote their Lie algebras 
respectively. As vector spaces, we have $\widetilde{\g}=\g\oplus 
\ke$, $\widetilde{\kk}=\kk\oplus \ke$. Then we have the Cartan 
splitting of $\widetilde{\g}$ (associated with the conjugation of 
$\theta$),
\begin{equation}\label{decomptilde}
 \widetilde{\g}=\pp\oplus \widetilde{\kk}.
\end{equation}
Moreover, we also have the global Cartan decomposition for 
$\pp\times \widetilde{K}\simeq\widetilde{G}$, where the 
diffeomorphism is given by $(f,\tilde{k})\mapsto \exp(f)\tilde{k}\in 
\widetilde{G}$.

\begin{remark}\label{rmk:1.2.3ugcd}
The group $\widetilde{G}$ is not necessarily 
reductive. An example is the case $\R^m$. In this case 
$\widetilde{G}=\R^m \rtimes \mathrm{O}(m)$ and the corresponding Lie 
algebra is $\widetilde{\g}=\R^m \oplus \mathfrak{so}(m)$ with a twisted Lie 
bracket.
\end{remark}

Given $\sigma\in\Sigma$, the map $g\in G\rightarrow \sigma(g)\in G$ 
descends to a diffeomorphism of $X$: $x\in X\rightarrow \sigma(x)\in X$. 
By \eqref{eq:1.1.3ppaa}, \eqref{eq:Sigmadef}, the derivative of $\sigma$ 
is given by $(g,f)\rightarrow(\sigma(g),\sigma(f))$ 
with $g\in G, f\in\pp$. Then $\widetilde{G}$ acts on $X$ isometrically. 
Then
	\begin{equation}
	X=\widetilde{G}/{\widetilde{K}}.
	\label{eq:1.2.16pps}
	\end{equation}

Fix $\sigma\in\Sigma$, let $\Sigma^\sigma$ be the closure of the 
subgroup of $\Sigma$ generated by $\sigma$. Set
	\begin{equation}
		G^\sigma=G\rtimes \Sigma^\sigma,\; K^\sigma=K\rtimes \Sigma^\sigma.
		\label{eq:1.2.13pps}
	\end{equation}
Similarly to \eqref{eq:1.2.16pps}, 
\begin{equation}\label{eq:1.2.16}
X=G^\sigma/ K^\sigma.
\end{equation}
Moreover, we have 
\begin{equation}\label{eq_TXidentity}
TX=\widetilde{G}\times_{\widetilde{K}}\pp = G^\sigma\times_{K^\sigma}\pp.
\end{equation} 
In the sequel, $p$ denotes both the projections $\widetilde{G}\rightarrow X$ and $G^\sigma\rightarrow X$.

If the representation $\rho^E: K\rightarrow \mathrm{Aut}(E)$ extends to 
a representation $\rho^E: K^{\sigma}\rightarrow \mathrm{Aut}(E)$, 
then we have the identification of vector bundles over $X$,
\begin{equation}\label{eq_Fid}
 F= G^{\sigma}\times_{K^{\sigma}} E.
\end{equation} 
The question on such extensions is studied in Subsection 
\ref{newsection}.
In this case, the equivariant action of $\sigma$ on $F$ is 
represented by 
$\sigma(g,f)\rightarrow (\sigma(g),\rho^E(\sigma)f)$. Moreover, as in \eqref{eq:1.8.4bis}, we have
\begin{equation}\label{XFsection}
C^\infty(X,F)=C^\infty_{K^{\sigma}}(G^{\sigma},E).
\end{equation}
Then $G^{\sigma}$ acts on $C^\infty(X,F)$. Also $\nabla^F$ is 
invariant under the action of
$G^{\sigma}$.

\subsection{The decomposition of semisimple elements in $\protect\widetilde{G}$}\label{s1-4}
\begin{definition}\label{def:1.2.8ss}
An element of $\widetilde{G}$ is semisimple (resp. elliptic) if its 
isometric action on $X$ is semisimple (resp. elliptic).
\end{definition}

If $\gamma\in \widetilde{G}$, let $\widetilde{Z}(\gamma)$ be the 
centralizer of $\gamma$ in $\widetilde{G}$. Then $d_{\gamma}$ 
is $\widetilde{Z}(\gamma)$-invariant. Recall that if $\gamma$ is 
semisimple, $X(\gamma)$ is the minimizing set of $d_{\gamma}$.

Using instead the identification \eqref{eq:1.2.16pps}, and by the geometric 
properties of $(X,g^{TX})$, the same arguments in the proof of
\cite[Theorem 3.1.2]{bismut2011hypoelliptic} give the 
following criterion on the set $X(\gamma)$.

\begin{theorem}\label{thm_keythm}
Assume that $\gamma\in\widetilde{G}$ is semisimple. If $g\in 
\widetilde{G}$, $x=pg\in X$, then $x\in X(\gamma)$ if and only if 
there exist $a\in\mathfrak{p},\, k\in \widetilde{K}$ such that 
$\mathrm{Ad}(k)a=a$ and $\gamma=C(g)(e^a k^{-1})$. If 
$g_{t}=ge^{ta}$, then $t\in[0,1]\rightarrow x_{t}=pg_{t}$ is the 
unique geodesic connecting $x$ and $\gamma x$. Moreover, 
$m_\gamma=|a|$, and $k\in \widetilde{K}$ is the parallel transport 
along the above geodesic.
\end{theorem}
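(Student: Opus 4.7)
The plan is to adapt Bismut's proof of the reductive case \cite[Theorem 3.1.2]{bismut2011hypoelliptic}, using that $X=\widetilde{G}/\widetilde{K}$ is still a Hadamard symmetric space and that $\widetilde{K}$ preserves both the Cartan decomposition $\g=\pp\oplus\kk$ and the bilinear form $B$ (the former because $K$ acts on $\g$ by $\mathrm{Ad}$ and $\Sigma$ commutes with $\theta$, the latter by the very definition of $\Sigma$). By the $\widetilde{G}$-equivariance $d_{C(g)\gamma}(gx)=d_\gamma(x)$ and of both sides of the asserted identity, I may assume $g=1$, so everything reduces to showing: $x_0:=p(1)\in X(\gamma)$ iff $\gamma=e^a k^{-1}$ for some $a\in\pp, k\in\widetilde{K}$ with $\mathrm{Ad}(k)a=a$. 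Since $X\cong\widetilde{G}/\widetilde{K}$ and $\pp\to X$, $Y\mapsto p(e^Y)$, is a diffeomorphism, each $\gamma\in\widetilde{G}$ already admits a unique decomposition $\gamma=e^a k^{-1}$ with $a\in\pp$, $k\in\widetilde{K}$; this yields $\gamma\cdot x_0=p(e^a)$ and $d_\gamma(x_0)=|a|$, so the theorem boils down to $x_0\in X(\gamma)\Longleftrightarrow \mathrm{Ad}(k)a=a$.

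For the ``if'' direction, assume $\mathrm{Ad}(k)a=a$. Then $k^{-1}$ commutes with $e^{ta}$ for every $t\in\R$, whence $\gamma\cdot p(e^{ta})=p(e^{(t+1)a})$; thus the geodesic line $t\in\R\mapsto p(e^{ta})$ is an axis of $\gamma$ on which $\gamma$ acts by translation of length $|a|$. By the standard theory of semisimple isometries of Hadamard manifolds (or directly by convexity of $d_\gamma^2$), the existence of an axis forces $m_\gamma$ to equal the translation length on that axis, so $x_0\in X(\gamma)$ and $m_\gamma=|a|$. Uniqueness of the connecting geodesic is immediate from the nonpositive curvature of $X$. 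For the parallel-transport assertion, identify $T_{x_0}X\cong\pp$ via the frame $[1,\cdot]$ in $TX=\widetilde{G}\times_{\widetilde{K}}\pp$; the horizontal lift of $t\mapsto p(e^{ta})$ to $\widetilde{G}$ is $t\mapsto e^{ta}$, so parallel transport carries $[1,v]$ to $[e^a,v]$, whereas $d\gamma_{x_0}([1,v])=[e^ak^{-1},v]=[e^a,\mathrm{Ad}(k^{-1})v]$, which exhibits $k$ as the desired parallel-transport/linear part of the isometry.

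For the ``only if'' direction, assume $x_0\in X(\gamma)$ and exploit the vanishing of the first variation of $d_\gamma^2$ at its minimum $x_0$. Given $b\in\pp$, set $x_t:=p(e^{tb})$. Plugging $\exp_{x_0}^{-1}(\gamma x_0)=a$, $\exp_{\gamma x_0}^{-1}(x_0)=[e^a,-a]$, $d\gamma_{x_0}(b)=[e^a,\mathrm{Ad}(k^{-1})b]$ into the first variation formula and using $\mathrm{Ad}$-invariance of $B$ yields
\begin{equation*}
\tfrac{1}{2}\tfrac{d}{dt}\Big|_{t=0}d_\gamma^{2}(x_t)=B(\mathrm{Ad}(k)a-a,\,b).
\end{equation*}
This must vanish for every $b\in\pp$; since $\widetilde{K}$ preserves $\pp$, we have $\mathrm{Ad}(k)a-a\in\pp$, and the positive-definiteness of $B|_\pp$ forces $\mathrm{Ad}(k)a=a$. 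The main technical point is making the axis/first-variation arguments precise even though $\widetilde{G}$ may fail to be reductive (Remark 1.2.3); the resolution is that only the CAT(0) geometry of $X$ and the compatibility of $\widetilde{K}$ with $\pp$, $B$, and $\theta$ are used, and these are guaranteed by Definition 1.2.2.
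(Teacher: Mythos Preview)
Your argument is correct and follows essentially the same route as the paper, which simply invokes \eqref{eq:Sigmadef} and \eqref{eq:1.2.16pps} and says the proof is a modification of \cite[Theorem 3.1.2]{bismut2011hypoelliptic}. You have in fact written out more detail than the paper does: the reduction to $g=1$, the global Cartan decomposition $\gamma=e^{a}k^{-1}$ in $\widetilde{G}=\exp(\pp)\cdot\widetilde{K}$, the axis argument for the ``if'' direction, and the first-variation computation for the ``only if'' direction are exactly the ingredients one extracts from Bismut's proof once one observes that only the CAT(0) geometry of $X$ and the $\widetilde{K}$-invariance of $\pp$ and $B$ are needed.
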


By Theorem \ref{thm_keythm}, $\gamma\in \widetilde{G}$ is elliptic if and only if it 
is conjugate in $\widetilde{G}$ to an element of $\widetilde{K}$. An element $\gamma\in \widetilde{G}$ is said to be 
hyperbolic if it is conjugate in $\widetilde{G}$ to $e^{a}, a\in\pp$, 
which is always semisimple. Moreover, a hyperbolic element 
always lies in $G$, and can be conjugate to $\exp(\pp)$ by an element 
in $G$.

If $a\in \g$, let $Z(a)\subset G$, $\widetilde{Z}(a)\subset 
\widetilde{G}$ be the stabilizers of $a$, and let $\z(a)\subset \g$, 
$\tilde{\z}(a)\subset \tilde{\g}$ be their Lie algebras.
If $a\in \pp$, by the same arguments as in the proof to 
\cite[Proposition 3.2.8]{bismut2011hypoelliptic}, we have
\begin{equation}\label{eq_1.24}
	Z(a)=Z(e^{a}),\;\widetilde{Z}(a)=\widetilde{Z}(e^a).
\end{equation}
Also we have,
\begin{equation}
\tilde{\z}(a)=\{f\in \tilde{\g}\;:\; [f,a]=0\},\;\z(a)=\tilde{\z}(a)\cap\g,\;
\label{eq:1.3.3pps}
\end{equation}

The group $\widetilde{G}$ may fail to be a reductive Lie group, but it is 
not far from it (twisted by a compact group), so that $\widetilde{G}$ 
still has the properties of a connected reductive Lie 
group discussed in \cite[Theorem 2.19.23]{eberlein1996geometry} and 
\cite[Subsection 3.1]{bismut2011hypoelliptic}. For the sake of 
completeness, we include proofs to these properties.

\begin{proposition}\label{prop_commutative}
	Assume that $\gamma\in\widetilde{G}$ is such that
\begin{equation}
\gamma=e^a k^{-1},\; a\in \pp,\; k\in\widetilde{K},\; \mathrm{Ad}(k)a=a.
\label{eq:1.3.5pps}
\end{equation}
Then we have
\begin{equation}
\widetilde{Z}(\gamma)=\widetilde{Z}(e^a)\cap \widetilde{Z}(k^{-1}).
\label{eq:1.3.6pps}
\end{equation}
\end{proposition}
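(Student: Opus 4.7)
I propose to prove the equality by showing both inclusions. The inclusion $\widetilde{Z}(e^{a})\cap\widetilde{Z}(k^{-1})\subset\widetilde{Z}(\gamma)$ is immediate: any element commuting with both $e^{a}$ and $k^{-1}$ commutes with their product $\gamma$. For the substantive reverse inclusion, the plan is to exploit the fact that, by the hypothesis $\mathrm{Ad}(k)a=a$, the elements $e^{a}$ and $k^{-1}$ commute in $\widetilde{G}$, so $\gamma$ admits a commuting hyperbolic--elliptic decomposition which I will show is canonical.

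The key is a spectral dichotomy. Since $a\in\pp$, a standard computation using the $B$-invariance of the bracket together with $[\pp,\pp]\subset\widetilde{\kk}$ and $[\pp,\widetilde{\kk}]\subset\pp$ shows that $\mathrm{ad}(a)$ is self-adjoint on $\widetilde{\g}$ with respect to $\langle\cdot,\cdot\rangle$, so $\mathrm{Ad}(e^{a})=e^{\mathrm{ad}(a)}$ is positive self-adjoint with spectrum in $\mathbb{R}_{>0}$. Since $k\in\widetilde{K}$ preserves $\langle\cdot,\cdot\rangle$, $\mathrm{Ad}(k^{-1})$ is orthogonal with spectrum on the unit circle. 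These commuting diagonalizable operators admit a joint eigenspace decomposition of $\widetilde{\g}_{\mathbb{C}}$; on a joint eigenspace with eigenvalue pair $(\lambda,\mu)$, $\lambda>0$, $|\mu|=1$, the product $\mathrm{Ad}(\gamma)$ acts by $\lambda\mu$, and $(\lambda,\mu)$ is recovered uniquely as $(|\lambda\mu|,\lambda\mu/|\lambda\mu|)$. Hence $\mathrm{Ad}(e^{a})$ and $\mathrm{Ad}(k^{-1})$ are determined by $\mathrm{Ad}(\gamma)$, and in particular one already gets $\widetilde{\z}(\gamma)=\widetilde{\z}(e^{a})\cap\widetilde{\z}(k^{-1})$.

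For $g\in\widetilde{Z}(\gamma)$, the identity $g\gamma g^{-1}=\gamma$ rewrites as the still-commuting product $(ge^{a}g^{-1})(gk^{-1}g^{-1})=e^{a}k^{-1}$. Conjugation preserves spectra, so the uniqueness above forces $\mathrm{Ad}(ge^{a}g^{-1})=\mathrm{Ad}(e^{a})$ and $\mathrm{Ad}(gk^{-1}g^{-1})=\mathrm{Ad}(k^{-1})$. The main obstacle is to lift these adjoint equalities to equalities in $\widetilde{G}$ itself, since $\widetilde{G}=G\rtimes\Sigma$ need not be reductive or connected (Remark~\ref{rmk:1.2.3ugcd}). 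The plan is to invoke the global Cartan diffeomorphism $\widetilde{G}=\exp(\pp)\cdot\widetilde{K}$: writing $ge^{a}g^{-1}$ in its Cartan decomposition and comparing with $e^{a}\cdot 1$, uniqueness of Cartan factors together with injectivity of $\exp|_{\pp}$ forces $ge^{a}g^{-1}=e^{a}$; the relation $gk^{-1}g^{-1}=k^{-1}$ then follows by cancellation against $g\gamma g^{-1}=\gamma$. This lifting is the real-reductive analogue of uniqueness of Jordan decomposition and follows the scheme of \cite[Theorem~2.19.23]{eberlein1996geometry} and of the parallel statement in \cite[Chapter~3]{bismut2011hypoelliptic} which this proposition extends; no connectedness of $\widetilde{G}$ is needed, since the inputs (self-adjointness of $\mathrm{ad}(a)$, orthogonality of $\mathrm{Ad}(\widetilde{K})$, and the Cartan diffeomorphism) all remain valid in the semidirect-product setting.
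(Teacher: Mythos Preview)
Your approach differs substantially from the paper's. The paper argues geometrically: for $h=e^{f}k'\in\widetilde{Z}(\gamma)$ it uses that $h$ preserves $X(\gamma)$, builds a geodesic rectangle with vertices $p1,\,\gamma p1,\,hp1,\,h\gamma p1\in X(\gamma)$, and computes the second variation of the energy via the Jacobi field along $s\mapsto pe^{sa}$. The vanishing of $E_{f}''(0)=\int_{0}^{1}(|\dot J_{s}|^{2}+|[a,J_{s}]|^{2})\,ds$ forces $[a,f]=0$ and $\mathrm{Ad}(k)f=f$ directly at the group level. After cancelling $e^{f}$, the relation $h\gamma=\gamma h$ becomes $e^{\mathrm{Ad}(k')a}k'k^{-1}=e^{a}k^{-1}k'$, to which uniqueness of the global Cartan decomposition now legitimately applies, giving $\mathrm{Ad}(k')a=a$ and $k'k=kk'$.

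Your algebraic route via the hyperbolic--elliptic Jordan decomposition of $\mathrm{Ad}(\gamma)$ is sound up to and including $\mathrm{Ad}(ge^{a}g^{-1})=\mathrm{Ad}(e^{a})$, but the lifting step has a gap. ``Comparing Cartan decompositions'' of $ge^{a}g^{-1}$ and $e^{a}$ does not work: from the adjoint equality you only obtain $ge^{a}g^{-1}=e^{a}z$ with $z$ central in $G$, and for general $g\in\widetilde{G}$ the element $ge^{a}g^{-1}$ need not lie in $\exp(\pp)$ at all, so there is no reason its Cartan factors are $(e^{a},1)$. Nothing in your argument rules out a nontrivial central defect (in this section $\z_{\g}\cap\pp=0$ is \emph{not} assumed). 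The paper's Jacobi-field computation sidesteps this precisely because it produces $[a,f]=0$ and $\mathrm{Ad}(k)f=f$ for the actual Cartan factor $f$ of $h$, not merely an $\mathrm{Ad}$-level statement. Your scheme can be completed---it is essentially the uniqueness statement the paper proves immediately afterwards as Lemma~\ref{thm_uniqueness}, where the central defect is handled by combining the ellipticity of $gk^{-1}g^{-1}$ with Theorem~\ref{thm_keythm} (forcing the $\pp$-part of $z$ to vanish) and then invoking \cite[Theorem~2.19.23]{eberlein1996geometry} for the residual $K$-central piece---but that is considerably more than the one-line Cartan comparison you propose.
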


\begin{proof}
	By Theorem \ref{thm_keythm}, $\gamma$ is semisimple, and 
	$x_{0}=p1\in X(\gamma)$. We only need to prove that 
	$\widetilde{Z}(e^a)\cap \widetilde{Z}(k^{-1})\supset 
	\widetilde{Z}(\gamma)$. We will adapt the arguments of \cite[Theorem 3.2.6 and Proposition 3.2.8]{bismut2011hypoelliptic}.

	Take $h\in \widetilde{Z}(\gamma)$. Then there exists unique $f\in 
	\pp$ and $k'\in \widetilde{K}$ such that $h=e^f k'$. Then $\gamma 
	x_{0}, hx_{0}=pe^f, \gamma h x_{0}\in X(\gamma)$. 
	
	Let $y_s=pe^{sa}, s\in [0,1]$ be the unique geodesic in $X$ joining 
	$x_{0}$ and $\gamma x_{0}$. Let $x_t=pe^{tf},t\in [0,1]$ be the unique 
	geodesic connecting $x_{0}$ and $hx_{0}$. Since $X(\gamma)$ is 
	geodesically convex, then the paths $y_\cdot,x_\cdot$ lie in 
	$X(\gamma)$. Also we have two other geodesics $\gamma x_\cdot, 
	hy_\cdot$ in $X(\gamma)$. These four geodesics form a geodesic 
	rectangle in $X(\gamma)$ with the vertexes $x_{0},\gamma x_{0}, 
	hx_{0},\gamma hx_{0}=h\gamma x_{0}$.
	
	Let $c_t(s),0\leq s\leq 1$ be the geodesic connecting $x_t$ and $\gamma x_t$ for all $t$. If $t\in [0,1]$, let $E_f(t)$ be the energy function associated with $c_t(\cdot)$, i.e.,
	\begin{equation}\label{energyfunction}
	E_f(t)=\frac{1}{2}d^2_\gamma(x_t).
	\end{equation}
	In particular, $E_f(t)$ is a constant function in $t$, so that
	\begin{equation}
	E''_f(0)=0.
	\label{eq:1.3.9sud}
	\end{equation}

	Put $J_s=\frac{\partial}{\partial t}|_{t=0} c_t(s)$ the Jacobi 
	field along $c_{0}(s)$. In the trivialization by parallel transport,
	\begin{equation}
	\begin{split}
	&\ddot{J}_s-\mathrm{ad}^2(a)J_s=0,\\
	&J_0 =f, \;\; J_1=\mathrm{Ad}(k^{-1})f,
	\end{split}
	\label{eq:1.3.8sudugc}
	\end{equation}
	where $\dot{J},\ddot{J}$ are taken with respect 
	to the Levi-Civita connection along $y_{\cdot}$.

We also have
	\begin{equation}
	E''_f(0)=\int_0^1 \left(|\dot{J}_s|^2+\big|[a, J_s]\big|^2\right)ds.
	\label{eq:1.3.11sud}
	\end{equation}

By \eqref{eq:1.3.9sud}, \eqref{eq:1.3.8sudugc}, \eqref{eq:1.3.11sud}, we get
	\begin{equation}\label{eq_1.26}
		f\in \mathfrak{z}(a)\cap \pp,\;\;\mathrm{Ad}(k)f=f.
	\end{equation}
	Applying \eqref{eq_1.26} to $h=e^f k'$, $h\gamma=\gamma h$, we obtain
	\begin{equation}
	e^{\mathrm{Ad}(k')a} k'k^{-1}=e^a k^{-1}k'.
	\label{eq:1.3.9pps}
	\end{equation}
	Using the uniqueness of global Cartan decomposition of $G$, we get
	\begin{equation}
	\mathrm{Ad}(k')a=a,\; k'k^{-1}=k^{-1}k'.
	\label{eq:1.3.10pps}
	\end{equation}	
	By \eqref{eq_1.26}, \eqref{eq:1.3.10pps}, we get $h\in 
	\widetilde{Z}(e^a)\cap \widetilde{Z}(k^{-1})$. This completes our 
	proof.
\end{proof}

In general, if $\gamma\in \widetilde{G}$ is semisimple, then by Theorem \ref{thm_keythm}, there exist $g\in\widetilde{G}$, $a\in\pp$, $k\in\widetilde{K}$ such that
\begin{equation}
	\gamma=g e^a k^{-1} g^{-1},\; \mathrm{Ad}(k)a=a.
	\label{eq:1.3.15ugcsud}
\end{equation} 

Put
\begin{equation}
\gamma_h=ge^ag^{-1},\;\; \gamma_e=g k^{-1}g^{-1}.
\label{eq:1.3.16ugc}
\end{equation}
The element $\gamma_h$ (resp. $\gamma_e$) is called the hyperbolic (resp. elliptic) part of $\gamma$. Then $\gamma=\gamma_h\gamma_e=\gamma_e\gamma_h$. By Proposition \ref{prop_commutative},
\begin{equation}\label{centralizerint}
\widetilde{Z}(\gamma)=\widetilde{Z}(\gamma_e)\cap \widetilde{Z}(\gamma_h).
\end{equation}

\begin{lemma}\label{thm_uniqueness}
	If $\gamma\in \widetilde{G}$ is semisimple, then the 
	decomposition of $\gamma$ as the commuting product of a 
	hyperbolic element and an elliptic element in $\widetilde{G}$ is 
	unique.
\end{lemma}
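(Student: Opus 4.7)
The plan is to apply Proposition \ref{prop_commutative} and reduce uniqueness to a statement about commuting semisimple elements. By Theorem \ref{thm_keythm}, after fixing $x_{0}\in X(\gamma)$ and conjugating in $\widetilde{G}$ (a move that preserves hyperbolicity, ellipticity, and the commutativity of a pair of factors), I may assume $\gamma=e^{a}k^{-1}$ with $a\in\pp$, $k\in\widetilde{K}$, $\mathrm{Ad}(k)a=a$. In this normalization, \eqref{eq:1.3.16ugc} reads $\gamma_{h}=e^{a}$, $\gamma_{e}=k^{-1}$. Suppose now that $\gamma=he$ is any other commuting decomposition with $h$ hyperbolic and $e$ elliptic. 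Both $h$ and $e$ commute with $\gamma$, so they lie in $\widetilde{Z}(\gamma)$, and Proposition \ref{prop_commutative} yields
\begin{equation*}
	h,\,e\in \widetilde{Z}(e^{a})\cap\widetilde{Z}(k^{-1}).
\end{equation*}
In particular $h$ commutes with $e^{a}$ and $e$ commutes with $k^{-1}$.

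These commutations allow me to rewrite $he=e^{a}k^{-1}$ as
\begin{equation*}
	u \,:=\, h\,e^{-a} \,=\, k^{-1}\,e^{-1}.
\end{equation*}
The left-hand side presents $u$ as a commuting product of two hyperbolic elements, while the right-hand side presents it as a commuting product of two elliptic elements. I will then invoke the structural fact that such products are again hyperbolic, respectively elliptic. For the hyperbolic case, by \eqref{eq_1.24} the centralizer $\widetilde{Z}(a)=\widetilde{Z}(e^{a})$ is $\theta$-stable and admits its own Cartan decomposition inherited from $G$; inside this subgroup $h$ can be conjugated to an element $e^{b}$ with $b\in\pp$ and $[a,b]=0$, so that $u$ is conjugate (via an element of $\widetilde{Z}(a)$, which commutes with $e^{-a}$) to $e^{b-a}\in\exp(\pp)$, hence hyperbolic. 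For the elliptic case, two commuting elliptic elements admit a common fixed point in $X$ (the elliptic $k^{-1}$ restricted to the totally geodesic fixed set of $e$ still has a fixed point), and hence generate a subgroup contained in a maximal compact subgroup of $\widetilde{G}$; the product $u$ is therefore elliptic.

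Finally, an element of $\widetilde{G}$ that is simultaneously hyperbolic and elliptic must be the identity: being elliptic forces the minimum displacement $m_{u}=0$, while being hyperbolic forces $m_{u}=|v|$ for some $v\in\pp$; hence $v=0$ and $u=\mathrm{id}$. This gives $h=e^{a}=\gamma_{h}$ and $e=k^{-1}=\gamma_{e}$, proving the desired uniqueness. The main obstacle will be the structural claim above on commuting hyperbolic (respectively elliptic) elements in $\widetilde{G}$: since $\widetilde{G}$ is not reductive in general (Remark \ref{rmk:1.2.3ugcd}), one must verify carefully that $\widetilde{Z}(a)$ for $a\in\pp$ inherits enough reductive structure from $G$ to admit a Cartan decomposition and to allow simultaneous normalization of commuting semisimple elements, the $\theta$-invariance of $\widetilde{Z}(a)$ being the key input.
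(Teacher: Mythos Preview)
Your argument is correct and takes a genuinely different route from the paper. The paper argues via the Jordan decomposition of $\mathrm{Ad}(\gamma)$ in $\mathrm{Aut}(\g)$: uniqueness of Jordan decomposition forces $\mathrm{Ad}(\gamma'_h)=\mathrm{Ad}(e^a)$ and $\mathrm{Ad}(\gamma'_e)=\mathrm{Ad}(k^{-1})$, so the two decompositions differ by an element $h\in\ker\mathrm{Ad}$; writing $h=e^f k''$ and using that $\gamma'_e=h^{-1}k^{-1}$ is elliptic gives $f=0$, and then an appeal to \cite[Theorem 2.19.23]{eberlein1996geometry} applied to $\gamma'_h=e^a h$ forces $h=1$. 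This is short and purely algebraic once one has the linear Jordan decomposition and Eberlein's result.

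Your approach instead stays inside the geometric framework of displacement functions: after invoking Proposition \ref{prop_commutative} you reduce to showing that a commuting product of two hyperbolic elements is hyperbolic and a commuting product of two elliptic elements is elliptic, and then that an element which is both must be trivial. These structural claims are true, and your sketch is on the right track, but the step you flag as ``the main obstacle'' deserves to be made precise: the key missing ingredient is that for two commuting semisimple isometries $\phi,\psi$ of $X$ one has $X(\phi)\cap X(\psi)\neq\emptyset$. This follows because $\psi$ preserves the closed convex set $X(\phi)$, the nearest-point projection $P\colon X\to X(\phi)$ is $1$-Lipschitz and commutes with $\psi$, so for any $y\in X(\psi)$ one gets $d_\psi(Py)\le d_\psi(y)=m_\psi$, whence $Py\in X(\phi)\cap X(\psi)$. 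With this in hand, picking $x\in X(h)\cap X(e^a)$ and $g\in Z(a)$ with $pg=x$, Theorem \ref{thm_keythm} gives $C(g^{-1})h=e^b$ with $b\in\pp$ (the elliptic part vanishes since $h$ is hyperbolic, as one checks by transporting from any point of $X(h)$), and $e^b\in Z(a)$ forces $[a,b]=0$; the elliptic case is handled by the Cartan fixed-point theorem applied to the compact group $\overline{\langle k^{-1}\rangle}$ acting on $X(e)$. Your route avoids the Jordan decomposition and the external citation, at the cost of this extra CAT(0) lemma; the paper's route is terser but less self-contained within the geometric setup of Section~\ref{s1}.
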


\begin{proof}
	It is enough to prove our lemma for $\gamma$ given in 
	\eqref{eq:1.3.5pps}, where we have
	\begin{equation}
		\gamma_{h}=e^{a}\in G,\; \gamma_{e}=k^{-1}\in \widetilde{K}.
		\label{eq:oberwolfach}
	\end{equation}
	Now suppose that $\gamma'_{h}\in G, \gamma'_{e}\in \widetilde{G}$ are 
	respectively hyperbolic, elliptic elements such that
	\begin{equation}
		\gamma=\gamma'_{h}\gamma'_{e}=\gamma'_{e}\gamma'_{h}.
		\label{eq:MFOb}
	\end{equation}
	Then we only need to prove that
	\begin{equation}
		\gamma'_{h}=\gamma_{h},\; \gamma'_{e}=\gamma_{e}.
		\label{eq:MFOc}
	\end{equation}

	Note that the conjugation of $\widetilde{G}$ preserves $G$, then 
	$\gamma'_{h}\in G$. Set
	\begin{equation}
		H=\ker(\mathrm{Ad}:\widetilde{G}\rightarrow \mathrm{Aut}(\g)).
		\label{eq:MFOd}
	\end{equation}
	Then $H\cap G$ is just the center of $G$.
	
	Then the uniqueness of the Jordan 
	decomposition of $\mathrm{Ad}(\gamma)$ implies
	\begin{equation}
		\mathrm{Ad}(\gamma'_{h})=\mathrm{Ad}(\gamma_{h}),\;\mathrm{Ad}(\gamma'_{e})=\mathrm{Ad}(\gamma_{e})\in \mathrm{Aut}(\g).
		\label{eq:MFOe}
	\end{equation}
	This implies that there exists $h\in 
	H\cap G\cap \widetilde{Z}(\gamma'_{e})\cap\widetilde{Z}(\gamma_{e})$ 
	such that
	\begin{equation}
		\gamma'_{h}=h\gamma_{h},\; \gamma'_{e}=h^{-1}\gamma_{e}.
		\label{eq:MFO2019}
	\end{equation}
	
	Write $h=e^{f}k''\in G$ with $f\in \pp$, $k''\in K$. Then by 
	Theorem \ref{thm_keythm} and the assumption 
	that $\gamma'_{e}$ is elliptic, we get $f=0$, so that $h\in K$ 
	and $\gamma'_{e}\in K$.
	
	Since $\gamma'_{h}$ is hyperbolic, then there exist $g'\in G$, 
	$a'\in\pp$ such that $\gamma'_{h}=g e^{a'}g^{-1}$. Then we rewrite the first identity of \eqref{eq:MFO2019} as follows
	\begin{equation}
		ge^{a'}g^{-1}=e^{a}h\in G,\;\mathrm{Ad}(h)a=a.
		\label{eq:MFO2019b}
	\end{equation}
	Using the uniqueness of the elliptic part of a semisimple element 
	in $G$ (cf. \cite[Theorem 2.19.23]{eberlein1996geometry}), we get $h=1$, 
	which implies exactly \eqref{eq:MFOc}. This completes the proof 
	of our lemma.
\end{proof}

\subsection{The minimizing set}\label{s1-5}
Take $\gamma\in G$, $\sigma\in \Sigma$ such that 
$\gamma\sigma\in\widetilde{G}$ is semisimple. For $g\in G$, we have
\begin{equation}
C(g)(\gamma\sigma)=g\gamma \sigma(g^{-1}) \sigma\in G^\sigma.
\label{eq:1.4.1pps}
\end{equation}
Let $C^\sigma : G\rightarrow G$ be such that if $g,h\in G$,
\begin{equation}\label{Csigma}
C^\sigma(g)h = gh\sigma(g^{-1})\in G.
\end{equation}
If $g\in G$, $C^{\sigma}(g)$ acts on the left on $G$, and moreover, 
$C^{\sigma}(g)C^{\sigma}(g')=C^{\sigma}(gg')$.

\begin{definition}\label{def:thalys}
If $\gamma\in G$, let $Z_\sigma(\gamma)\subset G$ be the stabilizer 
of $\gamma$ under the action of $G$ by $C^\sigma$, which is also 
called the $\sigma$-twisted centralizer of $\gamma$ in $G$. Then
\begin{equation}\label{eq:thalys0102}
Z_\sigma(\gamma)=G\cap \widetilde{Z}(\gamma\sigma).
\end{equation}
The orbit 
of $\gamma$ under this action is called $\sigma$-twisted conjugacy 
class of $\gamma$ in $G$. 
\end{definition}

Fix $g\in G$ such that $x=pg\in X(\gamma\sigma)$. By Theorem \ref{thm_keythm}, there exists $a\in\mathfrak{p}$, $k\in K$ such that
\begin{equation}\label{fixg0}
\mathrm{Ad}(k)a=\sigma a,\;\;\gamma=C^\sigma(g)(e^ak^{-1}).
\end{equation}
We have $X(\gamma\sigma)= g X(e^ak^{-1}\sigma)$. Then it is enough to consider the case
\begin{equation}\label{eq:fixg1}
\gamma=e^ak^{-1}\in G,\;a\in\pp,\, k\in K,\, \mathrm{Ad}(k)a=\sigma a.
\end{equation}
In the sequel, we always take $\gamma$ as in \eqref{eq:fixg1}.

By \eqref{centralizerint}, we have
\begin{equation}\label{prop_centralizer}
Z_\sigma(\gamma)=Z(e^a)\cap Z_\sigma(k^{-1}).
\end{equation}
Let $\z_\sigma(\gamma),\,\mathfrak{z}_\sigma(k^{-1})$ be the Lie algebras of $Z_\sigma(\gamma),\,Z_\sigma(k^{-1})$. Then
\begin{equation}\label{lieksigma}
 \mathfrak{z}_{\sigma}(k^{-1})=\{f\in\mathfrak{g}\;:\; \mathrm{Ad}(k)f=\sigma f\}.
\end{equation}
By \eqref{eq_1.24}, \eqref{prop_centralizer}, we get
\begin{equation}\label{minsetdecom}
\mathfrak{z}_\sigma(\gamma)=\mathfrak{z}(a)\cap \mathfrak{z}_\sigma(k^{-1}).
\end{equation}

Put 
\begin{equation}
\pp_\sigma(\gamma):=\z_{\sigma}(\gamma)\cap\pp,\;\;\kk_{\sigma}(\gamma):=\z_{\sigma}(\gamma)\cap \kk.
\end{equation}
Since $\sigma$ preserves the splitting \eqref{cartandecom1}, by 
\eqref{lieksigma}, \eqref{minsetdecom}, we get
\begin{equation}\label{liegammasigma}
\z_\sigma(\gamma)=\pp_{\sigma}(\gamma)\oplus \kk_{\sigma}(\gamma).
\end{equation}

Put
\begin{equation}
K_{\sigma}(\gamma)=Z_\sigma(\gamma)\cap K.
\label{eq:1.4.24ugc}
\end{equation}
Then $\kk_{\sigma}(\gamma)$ is the Lie algebra of 
$K_{\sigma}(\gamma)$.

Let $Z^0_\sigma(\gamma)$ denote the identity component of 
$Z_\sigma(\gamma)$. The following 
result extends \cite[Theorem 3.3.1]{bismut2011hypoelliptic}. Note 
that $x_{0}=p1\in X(\gamma\sigma)$.

\begin{theorem}\label{prop_minset1}
We have
\begin{equation}
X(\gamma\sigma)= X(e^a)\cap X(k^{-1}\sigma)\subset X.
\label{eq:1.4.9ugc}
\end{equation}

In the coordinate system $(\pp,\exp_{x_{0}})$, we have
\begin{equation}\label{eq:thalysparis}
X(e^a)=\pp(a)=\mathfrak{z}(a)\cap \mathfrak{p},\;\; 
X(k^{-1}\sigma)=\pp_\sigma(k^{-1}).
\end{equation}
Then 
\begin{equation}\label{eq1.50}
X(\gamma\sigma)=\pp_{\sigma}(\gamma).
\end{equation}

The action of $Z^{0}_\sigma(\gamma)$ on $X(\gamma\sigma)$ is transitive, 
and the stabilizer of $x=p1\in X(\gamma\sigma)$ is given by 
$Z^{0}_\sigma(\gamma)\cap K$. Then we have the following 
identifications,
\begin{equation}
\begin{split}
 X(\gamma\sigma) &\simeq Z_\sigma(\gamma)/K_{\sigma}(\gamma)\simeq Z^0_\sigma(\gamma)/(Z^0_\sigma(\gamma)\cap K).
 \end{split}
 \label{eq:1.4.25ugc}
\end{equation}

Moreover, $Z^0_\sigma(\gamma)\cap K$ coincides with the identity component $K^0_\sigma(\gamma)$ of $K_\sigma(\gamma)$.
The embedding $K_{\sigma}(\gamma)\rightarrow Z_\sigma(\gamma)$ 
induces the isomorphism of finite groups,
\begin{equation}\label{eq1.69}
\begin{split}
K^0_{\sigma}(\gamma)\backslash K_{\sigma}(\gamma) \simeq Z^0_\sigma(\gamma)\backslash Z(\gamma).
\end{split}
\end{equation}
\end{theorem}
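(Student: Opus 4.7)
The plan is to adapt the proof of the untwisted statement \cite[Theorem 3.3.1]{bismut2011hypoelliptic} by systematically tracking the $\sigma$-twist through each step. The key preliminary observation is that under the hypothesis $\mathrm{Ad}(k)a = \sigma a$, a direct computation in $\widetilde{G}$ yields
\begin{equation*}
(k^{-1}\sigma)\, e^a\, (k^{-1}\sigma)^{-1} = e^{\mathrm{Ad}(k^{-1})\sigma a} = e^a,
\end{equation*}
so $e^a$ and $k^{-1}\sigma$ commute, and $\gamma\sigma = e^a \cdot (k^{-1}\sigma)$ realizes $\gamma\sigma$ as a commuting product of a hyperbolic element ($e^a$ with $a\in\pp$) and an elliptic element ($k^{-1}\sigma\in \widetilde{K}$, which fixes $x_0 = p(1)$). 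By Lemma \ref{thm_uniqueness}, this is the unique Jordan decomposition of $\gamma\sigma$. Applying Theorem \ref{thm_keythm} with $\widetilde{k} := \sigma^{-1}k \in \widetilde{K}$ (for which $\mathrm{Ad}(\widetilde{k})a = a$) identifies $m_{\gamma\sigma} = |a|$ and places $x_0$ in $X(\gamma\sigma)$.

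To prove \eqref{eq:1.4.9ugc}, I would argue both inclusions. For $\supset$, if $x\in X(e^a)\cap X(k^{-1}\sigma)$, then $(k^{-1}\sigma)x = x$ since an elliptic isometry fixes its minimizing set pointwise, hence $\gamma\sigma\, x = e^a x$ and $d(x,\gamma\sigma x) = m_{e^a} = |a| = m_{\gamma\sigma}$. For $\subset$, given $x = p(g)$ in $X(\gamma\sigma)$, Theorem \ref{thm_keythm} provides $a'\in\pp$, $k'\in\widetilde{K}$ with $\mathrm{Ad}(k')a' = a'$ and $\gamma\sigma = g e^{a'}k'^{-1} g^{-1}$; the uniqueness in Lemma \ref{thm_uniqueness} forces $e^a = g e^{a'} g^{-1}$ and $k^{-1}\sigma = g k'^{-1} g^{-1}$, and since $x_0$ lies in both $X(e^{a'})$ and $X(k'^{-1})$, acting by $g$ places $x$ in $X(e^a)\cap X(k^{-1}\sigma)$.

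The identity $X(e^a) = \z(a)\cap\pp$ is standard (\cite[Theorem 3.3.1]{bismut2011hypoelliptic}); for $X(k^{-1}\sigma)$ I compute directly that a candidate $e^Y x_0$ with $Y\in\pp$ satisfies $(k^{-1}\sigma)(e^Y x_0) = e^{\mathrm{Ad}(k^{-1})\sigma Y}x_0$, because $\sigma$ and $K$ fix $x_0$, so the fixed-point condition becomes $\mathrm{Ad}(k^{-1})\sigma Y = Y$, i.e.\ $Y \in \z_\sigma(k^{-1})\cap\pp$. Combined with \eqref{minsetdecom} this yields $X(\gamma\sigma) = \pp_\sigma(\gamma)$. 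For transitivity, any $f\in\pp_\sigma(\gamma)$ satisfies $[f,a]=0$ and $\mathrm{Ad}(k^{-1})\sigma f = f$, so $e^f$ commutes with both $e^a$ and $k^{-1}\sigma$; a short calculation $e^f \gamma \sigma(e^{-f}) = e^f e^a e^{-\mathrm{Ad}(k^{-1})\sigma f} k^{-1} = e^a k^{-1} = \gamma$ then gives $e^f\in Z^0_\sigma(\gamma)$, with $e^f\cdot x_0 = f$ in the $\exp_{x_0}$-coordinate, so the action is transitive with stabilizer $Z^0_\sigma(\gamma)\cap K$.

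The remaining identifications in \eqref{eq:1.4.25ugc} and \eqref{eq1.69} reduce to showing that $Z^0_\sigma(\gamma)$ and $\widetilde{Z}^0(\gamma\sigma)$ are connected real reductive Lie groups whose maximal compact subgroups are $K^0_\sigma(\gamma)$ and $\widetilde{K}^0(\gamma\sigma)$, respectively. The Lie algebras are $\theta$-stable, since $\theta$ commutes with $\sigma$ and preserves $\z(a)$, with Cartan decomposition provided by \eqref{liegammasigma}; the resulting global Cartan decomposition forces $Z^0_\sigma(\gamma)\cap K$ and $\widetilde{Z}^0(\gamma\sigma)\cap\widetilde{K}$ to be connected, hence equal to $K^0_\sigma(\gamma)$ and $\widetilde{K}^0(\gamma\sigma)$. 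The finite-group isomorphisms \eqref{eq1.69} then follow at once by comparing the compatible transitive actions on $X(\gamma\sigma)$. The principal obstacle in this final step is verifying the reductive structure of the twisted centralizers in Bismut's sense, since $\widetilde{G}$ itself need not be reductive (Remark \ref{rmk:1.2.3ugcd}); this parallels the untwisted case in \cite[Chapter 3]{bismut2011hypoelliptic} and requires careful bookkeeping through the $\theta$-stable decomposition \eqref{liegammasigma}.
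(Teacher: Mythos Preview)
Your proposal is correct and follows essentially the same approach as the paper: the paper's proof consists of the single sentence that one uses Theorem~\ref{thm_keythm} and Lemma~\ref{thm_uniqueness} to modify the proof of \cite[Theorem 3.3.1]{bismut2011hypoelliptic}, and you have carried out precisely that modification in detail. Your handling of both inclusions in \eqref{eq:1.4.9ugc} via the uniqueness of the Jordan decomposition, the direct computation of $X(k^{-1}\sigma)$ in exponential coordinates, and the verification that $e^f\in Z^0_\sigma(\gamma)$ for $f\in\pp_\sigma(\gamma)$ are all sound and match what the adaptation of Bismut's argument requires.
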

\begin{proof}
	We only prove \eqref{eq:1.4.9ugc} and \eqref{eq:thalysparis}, 
	since other results, as their consequences, follow from the 
	standard arguments on symmetric spaces.
	
	Note that $p:G\rightarrow X$ is surjective. For $y\in 
	X(\gamma\sigma)$, there exists $g\in G$ such that $y=pg$. By Theorem \ref{thm_keythm}, there 
	exists $a'\in\pp,k'\in K$ such that $\gamma\sigma = 
	C(g)(e^{a'}(k')^{-1}\sigma)$. By Lemma \ref{thm_uniqueness}, 
	$e^{a}=ge^{a'}g^{-1}$, $k^{-1}\sigma=g(k')^{-1}\sigma g^{-1}$, 
	thus from Theorem \ref{thm_keythm}, we get
\begin{equation}
y=pg\in X(e^a)\cap X(k^{-1}\sigma).
\label{eq:1.4.10ugc}
\end{equation}
Then
\begin{equation}
	X(\gamma\sigma) \subset X(e^a)\cap X(k^{-1}\sigma).
	\label{eq:1.4.11ugc}
\end{equation}

If $y=pg\in X(e^a)\cap X(k^{-1}\sigma)$. By Theorem \ref{thm_keythm}, there exist $a'\in \pp$, $k_1,k_2\in K$ such that 
\begin{equation}
e^a=C(g) (e^{a'}k_1^{-1})\,,\; \mathrm{Ad}(k_1)a'=a'\,,\; k^{-1}= C^\sigma(g)(k_2^{-1}).
\label{eq:1.4.12ugc}
\end{equation}

By \eqref{centralizerint}, \eqref{fixg0}, we have $k_2^{-1}\sigma\in C(g^{-1})\widetilde{Z}(e^a)=\widetilde{Z}(a')\cap\widetilde{Z}(k_1)$. Put $k'=k_2 k_1\in K$, then 
$e^ak^{-1} \sigma= g e^{a'} (k')^{-1} \sigma g^{-1}$ with 
$\mathrm{Ad}(k')a'=\sigma a'$. Thus $y=pg\in 
X(e^ak^{-1}\sigma)=X(\gamma\sigma)$. This proves \eqref{eq:1.4.9ugc}.

The first identification in \eqref{eq:thalysparis} is proved in 
\cite[Theorem 3.2.6]{bismut2011hypoelliptic}. We only prove the 
second one.  Clearly, $\pp_{\sigma}(k^{-1})\subset X(k^{-1}\sigma)$ under the coordinate $(\pp,\exp_{x_{0}})$.
If $b\in \pp$ is such that $\exp_{x_{0}}(b)\in X(k^{-1}\sigma)$, then there exists $k'\in K$ such that
\begin{equation}
k^{-1}\exp(\sigma(b))=\exp(b)k'. 
\label{eq:1.4.11pps}
\end{equation}

We can rewrite \eqref{eq:1.4.11pps} as
\begin{equation}
\exp\left(\mathrm{Ad}(k^{-1})\sigma(b)\right)k^{-1}=\exp(b)k'.
\label{eq:1.4.12pps}
\end{equation}
Then we get
\begin{equation}
\mathrm{Ad}(k^{-1}\sigma)b=b\;,\; k'=k^{-1}.
\label{eq:1.4.13pps}
\end{equation}
This means exactly $b\in \pp_{\sigma}(k^{-1})$. Then the proof to 
\eqref{eq:thalysparis} is completed.
\end{proof}

\begin{remark}\label{rk:new2022}
 Note that $\theta$ acts on $Z^{0}_{\sigma}(\gamma)$ as an 
	automorphism so that $Z^{0}_{\sigma}(\gamma)$ is a real reductive 
	Lie group, in the sense of \cite[\S 
7.2]{knapp2002liegroupe}, equipped with the Cartan involution 
$\theta|_{Z^{0}_{\sigma}(\gamma)}$ and the invariant bilinear form $	
B|_{\z_{\sigma}(\gamma)}$.
\end{remark}

By \eqref{eq:1.4.25ugc}, as in \eqref{eq_TXidentity}, the tangent 
bundle of $X(\gamma\sigma)$ is given as follows
\begin{equation}
	TX(\gamma\sigma)=Z_{\sigma}(\gamma)\times_{K_{\sigma}(\gamma)}\pp_{\sigma}(\gamma).
	\label{eq:1.5.17gallieni}
\end{equation}
Let $\z^{\perp}_{\sigma}(\gamma)$ be the orthogonal subspace of $\z_{\sigma}(\gamma)$ in $\g$ with respect to $B$. Put
\begin{equation}
\pp^{\perp}_{\sigma}(\gamma)=\z^{\perp}_{\sigma}(\gamma)\cap\pp,\;\kk^{\perp}_{\sigma}(\gamma)=\z^{\perp}_{\sigma}(\gamma)\cap\kk.
\label{eq:1.5.5ugc}
\end{equation}
By \eqref{liegammasigma}, we get
\begin{equation}
\z^{\perp}_{\sigma}(\gamma)=\pp^{\perp}_{\sigma}(\gamma)\oplus \kk^{\perp}_{\sigma}(\gamma).
\label{eq:1.5.6ugc}
\end{equation}

If $N_{X(\gamma\sigma)/ X}$ is the normal vector bundle of 
$X(\gamma\sigma)$ in $X$, by \eqref{eq:1.5.17gallieni}, then
\begin{equation}
\begin{split}
 N_{X(\gamma\sigma)/ X} = Z_\sigma(\gamma) \times_{K_\sigma(\gamma)} {\pp}^{\perp}_\sigma(\gamma).
 \end{split}
 \label{eq:1.5.7ugc}
\end{equation}
Let $\mathcal{N}_{X(\gamma\sigma)/ X}$ be the total space of $N_{X(\gamma\sigma)/ X}\rightarrow X(\gamma\sigma)$. 

Let $P_{\gamma\sigma}: X\rightarrow 
X(\gamma\sigma)$ be the orthogonal 
projection from $X$ into $X(\gamma\sigma)$ \cite[Proposition 
1.6.3]{eberlein1996geometry}. Due to the geometric 
structures established in Theorem \ref{prop_minset1}, the same 
proof (using essentially the convexity of displacement functions) of \cite[Theorems 
3.4.1 and 3.4.3]{bismut2011hypoelliptic} gives the following estimates 
for the displacement function $d_{\gamma\sigma}$ along the normal 
vector space of $X(\gamma\sigma)$. It will guarantee the 
convergence of the twisted orbital integrals defined in Section 
\ref{section3}.

\begin{theorem}\label{thm_normalcoord}
We have the diffeomorphism of $Z_{\sigma}(\gamma)$-manifolds,
\begin{equation}
\rho_{\gamma\sigma}:(g,f)\in \mathcal{N}_{X(\gamma\sigma)/ X 
}\longrightarrow p(g\exp(f))\in X.
\label{eq:1.5.8ugc}
\end{equation}

The action of $\gamma\sigma$ on $X$, through the above diffeomorphism, is represented by the 
map $(g,f)\mapsto (\exp(a)g,\Ad(k^{-1})\sigma(f)\,)$, and the projection $P_{\gamma\sigma}$ is given by $P_{\gamma\sigma}(g,f)=(g,0)$.

There exists $c_{\gamma\sigma}>0$, such that if 
	$f\in\pp^{\perp}_{\sigma}(\gamma)$, $|f|\geq 1$, then
	\begin{equation}\label{lineardist}
	d_{\gamma\sigma}(\rho_{\gamma\sigma}(1,f))\geq |a|+ c_{\gamma\sigma}|f|.
	\end{equation}
	
	There exist $C^\prime_{\gamma\sigma}>0,\; C^{\prime\prime}_{\gamma\sigma}>0$ such that, for $f\in \pp^\perp_\sigma(\gamma)$, if $|f|\geq 1$,
	\begin{equation}
		\big|\nabla d_{\gamma\sigma}(\rho_{\gamma\sigma}(1,f))\big|\geq C^{\prime}_{\gamma\sigma},
		\label{eq:1.5.12ugc}
	\end{equation}
	and if $|f|\leq 1$,
	\begin{equation}
		\big|\nabla 
		d^2_{\gamma\sigma}(\rho_{\gamma\sigma}(1,f))/2\big|\geq C^{\prime\prime}_{\gamma\sigma} |f|.
		\label{eq:1.5.13ugc}
	\end{equation}
\end{theorem}

The group $K_\sigma(\gamma)$ acts on the left on $K$ and on 
$\pp^{\perp}_{\sigma}(\gamma)$. Let
$\pp^{\perp}_{\sigma}(\gamma)_{K_\sigma(\gamma)} \times K$ be the vector bundle on $K_\sigma(\gamma)\backslash 
K$ given by the relation, for $f\in \pp^{\perp}_{\sigma}(\gamma)$, $k\in K$ and $h\in K_\sigma(\gamma)$,
\begin{equation}
(f,k)\sim (\Ad(h)f, hk).
\label{eq:1.5.22}
\end{equation}
Right multiplication by $K$ lifts to 
$\pp^{\perp}_{\sigma}(\gamma)_{K_\sigma(\gamma)} \times K$. By 
Theorem \ref{thm_normalcoord}, we get a diffeomorphism,
\begin{equation}
	\begin{split}
	\varrho_{\gamma\sigma}:(g,f,k)\in Z_{\sigma}(\gamma)\times_{K_\sigma(\gamma)} (\pp^{\perp}_{\sigma}(\gamma)\times K) \rightarrow g e^f k\in G.
	\end{split}
	\label{eq:1.5.23ugc}
\end{equation}
As a consequence, we have
	\begin{equation}\label{eq1.89}
	\begin{split}
	\pp^{\perp}_{\sigma}(\gamma)_{K_\sigma(\gamma)}\times  K = Z_{\sigma}(\gamma)\backslash G.
	\end{split}
\end{equation}
Similarly, by taking the identity components of the twisted 
centralizers, we get
\begin{equation}
	\pp^{\perp}_{\sigma}(\gamma)_{K^0_{\sigma}(\gamma)}\times  K = 
	Z^0_{\sigma}(\gamma)\backslash G.
\end{equation}

\begin{remark}\label{rmk:1.5.5ugc}
	Let $Z^{\sigma}(\gamma\sigma)$, $K^{\sigma}(\gamma\sigma)$ denote 
	the centralizers of $\gamma\sigma$ in $G^{\sigma}$, $K^{\sigma}$ 
	respectively. In Theorems \ref{thm_normalcoord} and \eqref{eq1.89}, if we replace $Z_{\sigma}(\gamma)$ , 
	$K_\sigma(\gamma)$, $G$, and $K$ by $Z^{\sigma}(\gamma\sigma)$, 
	$K^{\sigma}(\gamma\sigma)$, $G^{\sigma}$, and $K^{\sigma}$ respectively, we 
	still have analogue results. The reason is that our proof to them 
	relies on the identities obtained in Proposition 
	\ref{prop_commutative} and Lemma \ref{thm_uniqueness}, which 
	holds for group $G^{\sigma}$, $K^{\sigma}$.
\end{remark}

The following result is classical for linear algebraic groups, such 
as \cite[18.2 Proposition pp.117]{humphreysLinearAlgebraicGroups1975}, \cite[III. Theorem 9.2]{borel1991linear}, \cite[Chapter 1, 
pp.22]{Clozel1989}, etc. Here in our setting, we reproduce a proof using the above 
geometric constructions.

\begin{proposition}\label{prop:closedclass}
	For $\gamma\in G$, the element $\gamma\sigma$ is semisimple if and only if 
	the 
	$\sigma$-conjugacy class of $\gamma$ in $G$ is a closed subset.
\end{proposition}
\begin{proof}
	At first, we assume that $\gamma\sigma$ is semisimple, moreover, 
	we may and we will assume that $\gamma$ is given as in
	\eqref{eq:fixg1}, and let 
$[\gamma]_{\sigma}\subset G$ denote the $\sigma$-conjugacy class of 
$\gamma$. 
Let $\{\gamma_i\}_{i\in\bN}\subset 
[\gamma]_{\sigma}$ be a Cauchy sequence in $G$ with the 
limit $h_0\in G$. In particular, we have, as $i\rightarrow +\infty$,
\begin{equation}
d\big(p\gamma_i, ph_0\big)\rightarrow 0.
\label{eq:1.5.28kkm}
\end{equation}

By \eqref{eq1.89}, for $i\in \bN$, there exists $g_i=e^{f_i}k_i$, 
$f_i\in \pp_{\sigma}^\perp(\gamma)$, $k_i\in K$ such that
\begin{equation}
	\gamma_i=g_i^{-1}\gamma\sigma(g_i).
	\label{eq:1.5.28ppz}
\end{equation}
Then we get, as $i\rightarrow +\infty$,
\begin{equation}
d\big(\gamma\sigma pe^{f_i}, g_i ph_0\big)\rightarrow 0.
\label{eq:1.5.30ppz}
\end{equation}
Using the triangle inequality for the distance $d$ on $X$, by \eqref{eq:1.5.30ppz}, we get, as $i\rightarrow +\infty$,
\begin{equation}
d_{\gamma\sigma}(e^{f_{i}})=d\big(\gamma\sigma pe^{f_i}, 
pe^{f_i}\big)\rightarrow d\big(p1, ph_0\big).
\label{eq:1.5.31ppz}
\end{equation}

Then by the estimate \eqref{lineardist}, we get the set 
$\{f_i\}_{i\in \bN}$ is a bounded set in 
$\pp_{\sigma}^\perp(\gamma)$. Then we can assume that there exist 
$f'\in \pp_{\sigma}^\perp(\gamma)$, $k'\in K$ such that, by 
extracting a 
sub-sequence, as $i\rightarrow +\infty$,
\begin{equation}
f_i\rightarrow f',\;\; k_i\rightarrow k'.
\label{eq:1.5.32ppz}
\end{equation}

Put $g'=e^{f'}k'\in\widetilde{G}$, then as $i\rightarrow +\infty$,
\begin{equation}
	g_i\rightarrow g'.
	\label{eq:1.5.33ppz}
\end{equation}
By \eqref{eq:1.5.28ppz}, we get
\begin{equation}
h_0 = (g')^{-1}\gamma\sigma(g') \in [\gamma]_{\sigma},
\label{eq:1.5.34ppz}
\end{equation}
so that $[\gamma]_{\sigma}$ is a closed subset of $G$.

Now we prove another direction, and assume that $[\gamma]_{\sigma}$ is a 
closed subset. For the semisimplicity of $\gamma\sigma$, it is 
enough to find $x\in X$ such that 
$d_{\gamma\sigma}(x)=m_{\gamma\sigma}$. Let 
$\{g_{i}\}_{i\in\bN}\subset G$ be such that as $i\rightarrow +\infty$,
\begin{equation}
	d_{\gamma\sigma}(pg_{i})=d(pg_{i},\gamma\sigma pg_{i})\rightarrow 
m_{\gamma\sigma}.
	\label{eq:2.5.42paris2022}
\end{equation}
Set $\gamma_{i}=g_{i}^{-1}\gamma\sigma(g_{i})\in[\gamma]_{\sigma}$. 
Then \eqref{eq:2.5.42paris2022} is equivalent to 
$d(p1,p\gamma_{i})\rightarrow m_{\gamma\sigma}$. As a consequence, 
the set
$\{\gamma_{i}\}$ lies in a bounded subset of $G$, hence there exists 
a subsequence $\{\gamma_{k_{i}}\}_{i\in\bN}$ which converges to an 
element 
$h_{0}\in G$ as $i\rightarrow +\infty$. The closedness of 
$[\gamma]_{\sigma}$ infers that $h_{0}=g^{-1}\gamma\sigma(g)$ for 
some $g\in G$. Taking $x=pg\in X$, then 
$d_{\gamma\sigma}(x)=m_{\gamma\sigma}$ and $\gamma\sigma$ is 
semisimple by definition. This completes the proof of our proposition.
\end{proof}

\subsection{The locally symmetric space $Z$}
\label{s1.9}

We fix $\sigma\in \Sigma$ and fix a discrete torsion-free cocompact subgroup 
$\Gamma\subset G$ such that $\sigma(\Gamma)=\Gamma$.
The following lemma is given by \cite[Lemmas 
  1 and 2]{Selberg1960}. 
\begin{lemma}\label{lm_semisimplecocompact}
If $\gamma\in \Gamma$, then 
$\gamma\sigma\in \widetilde{G}$ is semisimple, and 
$\Gamma \cap Z_{\sigma}(\gamma)$ is a cocompact discrete subgroup of 
$Z_{\sigma}(\gamma)$.
\end{lemma}

\begin{definition}\label{def:sigmacc}
	We denote by $[\Gamma]_\sigma$ the set of $\sigma$-twisted conjugacy classes in $\Gamma$. If $\gamma\in \Gamma$, let 
	$\underline{[\gamma]}_{\sigma}$ be the $\sigma$-twisted conjugacy class 
	of $\gamma$ in $\Gamma$. If $\gamma\sigma$ is elliptic, we say that 
	$\underline{[\gamma]}_{\sigma}$ is an elliptic class. Let $\underline{E}_\sigma$ be the set of elliptic classes in 
	$[\Gamma]_\sigma$.
	
	The map $\gamma'\in \Gamma\mapsto 
(\gamma')^{-1}\gamma\sigma(\gamma')\in\underline{[\gamma]}_\sigma$ 
induces the identification
\begin{equation}
	\underline{[\gamma]}_\sigma\simeq {\Gamma\cap 
	Z_{\sigma}(\gamma)}\backslash \Gamma.
	\label{eq:idclass}
\end{equation}

\end{definition}

\begin{lemma}\label{lem:finiteelliptic}
	 The set $\underline{E}_\sigma$ is finite.
\end{lemma}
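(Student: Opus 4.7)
The plan is to show that every elliptic class admits a representative whose fixed point lies in a fixed compact fundamental domain for $\Gamma$ acting on $X$, and then to invoke the discreteness of $\Gamma$ together with the compactness of $K$ to conclude that only finitely many such representatives can exist.

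First, since $\Gamma\backslash X$ is compact, I fix a compact subset $D\subset X$ with $\Gamma\cdot D = X$. Because $p:G\rightarrow X$ is a $K$-principal bundle with $K$ compact, the map $p$ is proper, so $\widetilde{D}:=p^{-1}(D)\subset G$ is compact as well. Next, given an elliptic class $\underline{[\gamma]}_{\sigma}\in \underline{E}_{\sigma}$, by Definition \ref{def:1.1.2n} there exists $x_{0}\in X$ with $\gamma\sigma(x_{0})=x_{0}$. Choose $\gamma_{0}\in\Gamma$ such that $\gamma_{0}x_{0}\in D$; a direct computation using $\sigma(\Gamma)=\Gamma$ shows that the element
\begin{equation*}
\gamma':=C^{\sigma}(\gamma_{0})\gamma=\gamma_{0}\gamma\sigma(\gamma_{0}^{-1})\in\Gamma
\end{equation*}
lies in $\underline{[\gamma]}_{\sigma}$ and satisfies $\gamma'\sigma(\gamma_{0}x_{0})=\gamma_{0}x_{0}\in D$. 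Hence every class in $\underline{E}_{\sigma}$ has a representative in
\begin{equation*}
S:=\{\gamma'\in\Gamma\,:\,\gamma'\sigma\text{ has a fixed point in }D\}.
\end{equation*}

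It remains to show that $S$ is finite. Suppose $\gamma'\in S$ with $\gamma'\sigma(p(h))=p(h)$ for some $h\in\widetilde{D}$. Since the $\sigma$-action on $X$ descends from $g\mapsto\sigma(g)$ on $G$, this means $\gamma'\sigma(h)\in hK$, so
\begin{equation*}
\gamma'\in hK\sigma(h)^{-1}\subset\widetilde{D}\cdot K\cdot\sigma(\widetilde{D})^{-1}.
\end{equation*}
The right-hand side is compact in $G$ as a continuous image of the compact set $\widetilde{D}\times K\times\sigma(\widetilde{D})$. Because $\Gamma$ is discrete in $G$, the intersection $\Gamma\cap\widetilde{D}\cdot K\cdot\sigma(\widetilde{D})^{-1}$ is finite, so $S$ is finite and therefore $|\underline{E}_{\sigma}|\leq|S|<\infty$.

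The only mild subtlety is the compatibility of $\sigma$-twisted conjugation in $\Gamma$ with ordinary conjugation of $\gamma\sigma$ in $\widetilde{G}$ at the level of fixed points, but this is forced by the definition \eqref{Csigma} of $C^{\sigma}$ together with $\sigma(\Gamma)=\Gamma$. Everything else is a standard combination of cocompactness and discreteness, so I do not anticipate a serious obstacle.
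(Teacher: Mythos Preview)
Your proof is correct and follows essentially the same approach as the paper: choose a compact fundamental set (the paper takes $U\subset G$ and sets $V=U\cdot K=p^{-1}(p(U))$, while you take $D\subset X$ and set $\widetilde{D}=p^{-1}(D)$), translate each elliptic representative so its fixed point lies in the fundamental set, and then observe that such representatives lie in a compact set of the form $\widetilde{D}\,K\,\sigma(\widetilde{D})^{-1}$, which meets the discrete group $\Gamma$ in finitely many points. The two arguments are interchangeable.
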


\begin{proof}
Let $U\subset 
G$ be a compact fundamental domain for the left action of $\Gamma$ on 
$G$ such that $G=\cup_{\gamma\in\Gamma}\gamma U$. Note that $p:G\rightarrow X$ is a proper map. Put
\begin{equation}
	V=p^{-1}(p(U))=U\cdot K.
	\label{eq:Vdef}
\end{equation}
Then $V$ is a compact subset of $G$. We denote by $V^{-1}$ the set of 
the inverses of elements in $V$, both $V^{-1}$ and $V\cdot 
\sigma(V^{-1})$ are 
compact.

For any $\underline{[\gamma]}_{\sigma}\in \underline{E}_\sigma$, there 
exists $\gamma'\in \underline{[\gamma]}_{\sigma}$ such that 
$\gamma'\sigma$ has fixed 
points in $p(V)=p(U)$. Let $g_{\gamma}\in U$ be such that $pg_{\gamma}$ 
is fixed by $\gamma'\sigma$. Then we get
\begin{equation}
	\gamma'\in U K\sigma(U^{-1})\cap \Gamma\subset V\cdot 
	\sigma(V^{-1})\cap \Gamma.
	\label{eq:UK}
\end{equation}
Since $V\cdot \sigma(V^{-1})$ is compact, $V\cdot 
	\sigma(V^{-1})\cap \Gamma$ is a finite set, and the lemma follows.
\end{proof}

Put $Z=\Gamma\backslash X= \Gamma\backslash G/ K$, then $Z$ is a 
compact locally symmetric manifold. The homogeneous vector bundle 
$(F,h^{F},\nabla^{F})$ 
on $X$ defined in Subsection \ref{section1-1} descends to a vector 
bundle on $Z$, which we still denote by $(F,h^{F},\nabla^{F})$.  In particular, the tangent bundle $TX$ descends to 
the tangent bundle $TZ$, and $N$ also descends to a Euclidean vector 
bundle, which we still denote it by $N$. 

Since $\sigma(\Gamma)=\Gamma$. Then $\sigma$ acts isometrically on 
$Z$. Let ${}^\sigma Z\subset Z$ be the fixed point set of 
$\sigma$ in $Z$. If $g\in G$ (resp. $x\in X$), we denote by $[g]_Z$ 
(resp. $[x]_Z$) the corresponding 
point in $Z$.

\begin{lemma}\label{lm:fixedpoint}
	If $\gamma_1,\gamma_2\in \Gamma$ are $\sigma$-twisted conjugate in $\Gamma$, then
	\begin{equation}\label{eq_fixedpoint1}
     [X(\gamma_1\sigma)]_Z=[X(\gamma_2\sigma)]_Z\subset Z.
 \end{equation} 	
	
     If $g\in G$, then $[g]_Z\in{}^\sigma Z$ if and only if there is 
	 $\gamma\in \Gamma$ such that $\gamma\sigma$ is elliptic and that 
	 $pg\in X(\gamma\sigma)\subset X$. 
     If $\underline{[\gamma_1]}_\sigma,\;\underline{[\gamma_2]}_\sigma\in \underline{E}_\sigma$ are distinct classes, then
    \begin{equation}\label{eq_fixedpoint2}
    [X(\gamma_1\sigma)]_Z\cap[X(\gamma_2\sigma)]_Z=\emptyset.
    \end{equation} 
\end{lemma}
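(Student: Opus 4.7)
My plan is to treat the three assertions of the lemma separately, each reducing to a direct manipulation of representatives in $G$, using the two standing facts: (a) $\gamma\sigma$ acts on $X = G/K$ via $g K \mapsto \gamma\sigma(g) K$, and (b) since $\Gamma$ is torsion free and $X$ has nonpositive sectional curvature, $\Gamma$ acts freely on $X$ (any element of $\Gamma$ with a fixed point on $X$ would be elliptic, hence of finite order by the structure of $K$, hence trivial).

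For the first assertion, write $\gamma_2 = \alpha\gamma_1\sigma(\alpha^{-1})$ with $\alpha\in\Gamma$. For any $x\in X$, a direct computation gives
\begin{equation*}
  \gamma_2\sigma(\alpha x) = \alpha\gamma_1\sigma(\alpha^{-1})\sigma(\alpha)\sigma(x) = \alpha\gamma_1\sigma(x),
\end{equation*}
so that $d_{\gamma_2\sigma}(\alpha x) = d(\alpha x,\alpha\gamma_1\sigma(x)) = d_{\gamma_1\sigma}(x)$ by the isometric action of $\alpha$. Since $m_{\gamma\sigma}$ depends only on $\underline{[\gamma]}_\sigma$, this shows $\alpha X(\gamma_1\sigma) = X(\gamma_2\sigma)$, which descends to \eqref{eq_fixedpoint1}.

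For the second assertion, recall $Z = \Gamma\backslash G/K$, so $[g]_Z \in {}^\sigma Z$ means $\Gamma\sigma(g)K = \Gamma g K$, i.e.\ there exist $\gamma\in\Gamma$ and $k\in K$ with $\sigma(g) = \gamma g k$, equivalently $\gamma^{-1}\sigma(g)K = gK$ in $X$. The latter says $[g]_X \in X(\gamma^{-1}\sigma)$ and that $\gamma^{-1}\sigma$ fixes $[g]_X$, hence is elliptic. The converse is the reverse of these steps: if $[g]_X\in X(\gamma\sigma)$ with $\gamma\sigma$ elliptic, then $\gamma\sigma(g)K = gK$, hence $\Gamma\sigma(g)K = \Gamma\gamma\sigma(g)K = \Gamma g K$, giving $\sigma[g]_Z = [g]_Z$.

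The third assertion is the main point. Suppose $[X(\gamma_1\sigma)]_Z \cap [X(\gamma_2\sigma)]_Z \neq \emptyset$. Then there exist $x_1\in X(\gamma_1\sigma)$, $x_2\in X(\gamma_2\sigma)$ and $\alpha\in\Gamma$ with $\alpha x_1 = x_2$. By ellipticity, $\gamma_i\sigma(x_i)=x_i$. Substituting $\sigma(x_1)=\gamma_1^{-1}x_1$ into $\gamma_2\sigma(\alpha)\sigma(x_1)=\alpha x_1$ yields
\begin{equation*}
  \bigl(\alpha^{-1}\gamma_2\sigma(\alpha)\gamma_1^{-1}\bigr)\,x_1 = x_1.
\end{equation*}
The key step, and the only real obstacle, is that this element of $\Gamma$ must then be trivial: this uses torsion freeness of $\Gamma$ together with free action of $\Gamma$ on $X$ (a nontrivial element with a fixed point on $X$ would be elliptic, and a torsion free subgroup of a real reductive group intersects any compact subgroup trivially). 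Triviality gives $\gamma_2 = \alpha\gamma_1\sigma(\alpha^{-1})$, contradicting the hypothesis that $\underline{[\gamma_1]}_\sigma\neq\underline{[\gamma_2]}_\sigma$. This establishes \eqref{eq_fixedpoint2}.
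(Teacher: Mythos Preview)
Your proof is correct and follows essentially the same route as the paper's: both arguments reduce the third assertion to producing an element of $\Gamma$ that fixes a point of $X$, then invoke torsion-freeness to force it to be trivial. One small wording issue: your parenthetical justification ``a torsion free subgroup of a real reductive group intersects any compact subgroup trivially'' is false without the discreteness hypothesis (e.g.\ an irrational one-parameter subgroup of a torus); what you need is that $\Gamma$ is \emph{discrete} and torsion-free, so $\Gamma$ meets any conjugate of $K$ in a finite, hence trivial, set. This is implicit in the setup, so the argument stands.
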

 
\begin{proof}
	The first part of our lemma is clear. If $[g]_Z\in  {}^\sigma Z$, 
	then there are $\gamma_0\in \Gamma$, $k_0\in K$ such that
    \begin{equation}
    \sigma(g)=\gamma_0 gk_0.
    \end{equation}
    Then $\gamma_0^{-1}\sigma(g)=gk_0$, so that $pg\in X$ is a fixed point of 
    $\gamma_{0}^{-1}\sigma$, and $\gamma_{0}^{-1}\sigma$ is 
    elliptic. If $x\in X$ and $\gamma\sigma(x)=x$ with some $\gamma\in \Gamma$, then $[x]_Z\in {}^\sigma Z$ by definition. 
    
    Suppose that $\underline{[\gamma_{1}]}_{\sigma}$, 
	$\underline{[\gamma_{2}]}_{\sigma}\in \underline{E}_\sigma$. 
    If $[X(\gamma_1\sigma)]_Z\cap[X(\gamma_2\sigma)]_Z\neq\emptyset$ in 
    $Z$, since $\gamma_{1}\sigma, \gamma_{2}\sigma$ are elliptic,
    there exists $\gamma\in\Gamma$ and $x\in X$ such that
    \begin{equation}\label{eq_prooffix}
    \gamma^{-1}\gamma_1\sigma(\gamma)\sigma(x)=\gamma_2\sigma(x)=x.
    \end{equation}
    Then $\gamma_2^{-1}\gamma^{-1}\gamma_1\sigma(\gamma)\sigma(x)=\sigma(x)$. 
    Since $\Gamma$ is torsion-free, 
    then $\gamma_2=\gamma^{-1}\gamma_1\sigma(\gamma)$, i.e.,
    $\underline{[\gamma_{1}]}_{\sigma}=\underline{[\gamma_{2}]}_{\sigma}$. 
    Then we get \eqref{eq_fixedpoint2}. This completes our proof.
\end{proof}

Using Lemma \ref{lm:fixedpoint}, we get that 
    \begin{equation}\label{eq:1.9.21mm}
    {}^\sigma Z=\cup_{\underline{[\gamma]}_{\sigma} \in 
    \underline{E}_\sigma} [X(\gamma\sigma)]_Z.
    \end{equation}
Moreover, the right-hand side in \eqref{eq:1.9.21mm} is a finite disjoint union.
By Lemma \ref{lm_semisimplecocompact}, $\Gamma\cap Z_{\sigma}(\gamma)$ 
is a cocompact torsion-free discrete subgroup of $Z_{\sigma}(\gamma)$, so that $\Gamma\cap Z_{\sigma}(\gamma)\backslash 
X(\gamma\sigma)$ is a  
    compact smooth manifold

Take $\underline{[\gamma]}_{\sigma}\in 
\underline{E}_\sigma$, let $\gamma\in \Gamma$ be one representative of $\underline{[\gamma]}_{\sigma}$.  If $x\in X(\gamma\sigma)$, if $\gamma_0\in\Gamma$ is such that $\gamma_0 x\in X(\gamma\sigma)$, then an argument like \eqref{eq_prooffix} 
gives that $\gamma_0 \in Z_{\sigma}(\gamma)$. 
Thus the projection $X\rightarrow Z$
induces an identification between 
$\Gamma\cap Z_{\sigma}(\gamma)\backslash X(\gamma\sigma)$ 
and $[X(\gamma\sigma)]_Z\subset Z$. Then \eqref{eq:1.9.21mm} can be 
rewritten as
\begin{equation}\label{eq:7.4.10bb}
    {}^\sigma Z=\cup_{\underline{[\gamma]}_{\sigma} \in \underline{E}_\sigma} \Gamma\cap Z_{\sigma}(\gamma)\backslash X(\gamma\sigma),
\end{equation}

Let $C(Z,F)$ be the 
vector space of continuous sections of $F$ on $Z$, which can be 
identified with 
the subspace of $C(X,F)$ of $\Gamma$-invariant sections over $X$, i.e.,
\begin{equation}\label{ZFsection}
C(Z,F)=C(X,F)^\Gamma.
\end{equation}
Then by \eqref{XFsection}, we get
\begin{equation}\label{ZFsection2}
C(Z,F)=C_K(G,E)^\Gamma.
\end{equation}

Assume now that the vector bundle $F$ is defined via a 
$K^\sigma$-representation $(E,\rho^E)$. Since $\sigma$ preserves $\Gamma$, the action of $\sigma$ descends to $F\rightarrow Z$.

\begin{proposition}\label{prop:1.9.6}
    Take $\underline{[\gamma]}_{\sigma}\in 
\underline{E}_\sigma$. Under the 
identification \eqref{eq:7.4.10bb}, the action of $\sigma$ on the 
bundle $F$ restricted   
   to $[X(\gamma\sigma)]_Z \subset {}^\sigma Z$ is given by the action of 
    $\gamma\sigma$ on the vector bundle $F$ over $\Gamma\cap Z_{\sigma}(\gamma)\backslash X(\gamma\sigma)$.
\end{proposition}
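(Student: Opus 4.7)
The plan is to trace through the identification carefully using the homogeneous description $F = G \times_K E$ and the explicit lifts of both actions to this realization. First I would fix a representative $\gamma \in \Gamma$ of the class $\underline{[\gamma]}_\sigma$ and pick any $x \in X(\gamma\sigma)$, writing $x = [g]_X$ for some $g \in G$. Since $\gamma\sigma$ fixes $x$, we have $\gamma\sigma(g) \in gK$, so there exists a unique $k_0 = k_0(g) \in K$ with
\begin{equation}
\gamma\sigma(g) = g k_0.
\label{eq:propkeyid}
\end{equation}
This single identity is what makes the two actions match, and everything else is bookkeeping.

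Next I would compute the $\sigma$-action on $F$ over $Z$ at a point $[g, v]_Z \in F$ with $g$ as above. By construction of the lift of $\sigma$ to $F = G \times_K E$, one has $\sigma \cdot [g, v]_X = [\sigma(g), \rho^E(\sigma)v]_X$, hence on the quotient
\begin{equation}
\sigma \cdot [g, v]_Z = [\sigma(g), \rho^E(\sigma)v]_Z = [\gamma^{-1} g k_0, \rho^E(\sigma)v]_Z = [g, \rho^E(k_0 \sigma)v]_Z,
\end{equation}
where in the second equality I used \eqref{eq:propkeyid} rewritten as $\sigma(g) = \gamma^{-1} g k_0$, and in the third equality I dropped $\gamma^{-1} \in \Gamma$ and then applied the $K$-equivalence $[g k_0, w]_X = [g, \rho^E(k_0) w]_X$ inside $G \times_K E$. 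On the other hand, the action of $\gamma\sigma$ on $F$ over $X(\gamma\sigma)$ satisfies
\begin{equation}
\gamma\sigma \cdot [g, v]_X = [\gamma\sigma(g), \rho^E(\sigma) v]_X = [g k_0, \rho^E(\sigma) v]_X = [g, \rho^E(k_0 \sigma) v]_X,
\end{equation}
by the same computation but without the $\Gamma$-reduction step. So in both cases the fibrewise transformation at $x$ is multiplication by $\rho^E(k_0(g) \sigma)$.

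Finally I would reconcile this with the identification \eqref{eq:7.4.10bb}. The projection $X \to Z$ restricted to $X(\gamma\sigma)$ factors through $\Gamma \cap Z_\sigma(\gamma) \backslash X(\gamma\sigma)$ as a diffeomorphism onto $[X(\gamma\sigma)]_Z$, and this identification lifts to $F$. Since $\gamma\sigma$ commutes with every element of $Z_\sigma(\gamma)$ (by definition of the $\sigma$-twisted centralizer), the action of $\gamma\sigma$ on $F|_{X(\gamma\sigma)}$ descends to a well-defined automorphism of $F$ on $\Gamma \cap Z_\sigma(\gamma) \backslash X(\gamma\sigma)$, and the two formulas computed above show that this descended automorphism coincides with $\sigma$ acting on $F|_{[X(\gamma\sigma)]_Z}$. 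The main obstacle is just notational: keeping track of the three different quotients (by $K$, by $\Gamma$, and by $\Gamma \cap Z_\sigma(\gamma)$) and verifying that the $K$-equivariance used to absorb $k_0$ is exactly what produces the factor $\rho^E(k_0 \sigma)$ in both computations.
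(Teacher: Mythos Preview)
Your proof is correct and follows essentially the same route as the paper's: both hinge on the identity $\gamma\sigma(g) = gk_0$ for a lift $g$ of a point in $X(\gamma\sigma)$, and then compute the fibrewise action as $\rho^E(k_0\sigma)$ on both sides. The only organizational difference is that the paper first fixes a basepoint $g_0$ with $\gamma = C^\sigma(g_0)(k^{-1})$ and parametrizes $X(\gamma\sigma)$ through $Z_\sigma(k^{-1})$ so that the element $k^{-1}$ is constant along the whole minimizing set, whereas you work pointwise and allow $k_0 = k_0(g)$ to vary with the lift; this makes your argument marginally more direct but otherwise identical in content.
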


\begin{proof}
    Take $x_0=pg_0\in X(\gamma\sigma)$. There is $k\in K$ such that
    \begin{equation}\label{eq_5.71}
    \gamma=C^\sigma(g_0)(k^{-1}).
    \end{equation}
    By Proposition \ref{prop_minset1} and \eqref{eq_5.71}, we have
    \begin{equation}
    X(\gamma\sigma)={g_0}(X(k^{-1}\sigma)).
    \label{eq:1.9.44ugc}
    \end{equation}
    
 By \eqref{eq_Fid}, \eqref{eq:1.9.44ugc}, we have the identification 
    of vector bundles,
    \begin{equation}\label{eq_equivalencebundles}
    F|_{[X(\gamma\sigma)]_Z}\simeq \Gamma\cap Z_{\sigma}(\gamma)\backslash\; {g_0}\big( Z_\sigma(k^{-1})\times_{K_\sigma(k^{-1})} E\big).
    \end{equation}
    If $g\in Z_\sigma(k^{-1})$, by \eqref{eq_5.71}, we get
    \begin{equation}
    \sigma(g_0 g)=\gamma^{-1} g_0 g k^{-1}.
    \label{eq:1.9.46ugc}
    \end{equation}
     Put $x=p(g_0 g)\in X(\gamma\sigma)$ and $z=[g_0g]_Z\in [X(\gamma\sigma)]_Z$. If $v\in F_{z}\simeq E$, then
    \begin{equation}
    \begin{split}
	    \sigma(z, v)&=(\sigma(z), \sigma v)\\
	    &=[(\sigma(g_0 
	    g),\rho^E(\sigma) v)]_Z\\
	    &=[(g_0 g,\rho^E(k^{-1}\sigma)v)]_Z\in 
	    F_{\sigma(z)}.
	    \end{split}
	    \label{eq:1.9.47ugc}
    \end{equation}
    Take the lift of $[(g_0 g,\rho^E(k^{-1}\sigma)v)]_Z$ around $x$, 
	as $gk^{-1}\sigma=k^{-1}\sigma g$, we have 
    \begin{equation}
    [(g_0 g,\rho^E(k^{-1}\sigma^E)v)]_Z={g_0} {k^{-1}\sigma}{g^{-1}_0}(x,v)={\gamma\sigma}(x,v).
    \label{eq:1.9.48ugc}
    \end{equation}
    This completes the proof of our proposition.
\end{proof}

\section{The twisted orbital integrals}
\label{section3}
In this section, we give a geometric interpretation for the twisted 
orbital integrals associated with a semisimple element in 
$\widetilde{G}$. The constructions given here generalize the
results of \cite[Chapter 4]{bismut2011hypoelliptic}. We fix one element $\sigma\in \Sigma$.

\subsection{An algebra of invariant kernels on $X$}\label{section4.1}
Recall that $(E,\rho^{E})$ is a unitary representation of $K^{\sigma}$, and 
that $F=G\times_{K}E$ is the associated Hermitian vector bundle on $X$.
We introduce a vector 
space $\mathcal{Q}^{\sigma}$ of continuous invariant kernels as follows.
\begin{definition}
	Let $\mathcal{Q}^{\sigma}$ be the vector space of maps $q\in 
	C(G, \mathrm{End}(E))$ which satisfy that
	\begin{itemize}
		\item There exist $C,C'>0$, such that 
		\begin{equation}\label{kerneldecay}
		|q(g)|\leq C\exp\left(-C'd^2(p1,pg)\right),\;\forall\; g\in G.
		\end{equation}
		\item For $k, k'\in K$, we have
		\begin{equation}\label{kernelk}
		q(kgk')=\rho^E(k)q(g)\rho^E(k').
		\end{equation}
		\item Set $\sigma^E=\rho^E(\sigma)\in\mathrm{Aut}(E)$,
		\begin{equation}\label{kernelinvariant2}
q(\sigma(g))=\sigma^E q(g)(\sigma^E)^{-1}\in\mathrm{End}(E).
\end{equation}
	\end{itemize}
\end{definition}

Let $C^b(G,E)$ be the set of bounded continuous functions on $G$ 
valued in $E$.
For $q\in \mathcal{Q}^{\sigma}$ and $g,g'\in G$, put 
\begin{equation}
q(g,g')=q(g^{-1}g')\in \mathrm{End}(E). 
\label{eq:4.1.4ugcdidot}
\end{equation}
By \eqref{kernelk}, $q(g,g')$ defines an integral operator $Q$ acting on 
$C^{b}(G,E)$, which is $K$-equivariant. Then it descends to an 
operator acting on $C^{b}(X,F)$. Let $q(x,x')\in 
\mathrm{Hom}(F_{x'}, F_x)$ be the corresponding continuous kernel on 
$X\times X$, which is just the descent of $q(g,g')$ to $X\times X$. 
Moreover, the condition \eqref{kernelinvariant2} is 
equivalent to say that, for $x,x'\in X$,
\begin{equation}\label{kernelinvariant}
q^X\left(\sigma(x),\sigma(x')\right)=\sigma q^X(x,x')\sigma^{-1}\in \mathrm{Hom}(F_{\sigma(x')}, F_{\sigma(x)}).
\end{equation}
By  \eqref{eq:4.1.4ugcdidot}, and 
\eqref{kernelinvariant}, $Q$ commutes with $G^{\sigma}$-action.

\begin{remark}
	The vector space $\mathcal{Q}^{\sigma}$ with the convolution of 
	kernel functions becomes an associative algebra, it is a 
subalgebra of the one defined in \cite[Definition 
4.1.1]{bismut2011hypoelliptic}.
\end{remark}

We can extend $q\in \mathcal{Q}^\sigma$ to a continuous map 
$\tilde{q}\in C(G^\sigma, \mathrm{End}(E))$ by setting
\begin{equation}\label{eq:3.47n}
\tilde{q}(g\mu)=q(g)\rho^E(\mu)\in \mathrm{End}(E)\; ,\; g\in G, \mu\in\Sigma^\sigma. 
\end{equation}
Then it lifts to a continuous kernel defined on $G^\sigma\times 
G^\sigma$ such that
\begin{equation}\label{eq:3.48n}
 \tilde{q}(g\mu, h\mu')=\tilde{q}((g\mu)^{-1} h\mu')\in \text{End}(E).
\end{equation}
The operator $Q$ can be also expressed as the integral operator on 
$C^b_{K^\sigma}(G^\sigma,E)$ associated with kernel $\tilde{q}$.

Since we are going to define the twisted orbital integral in next 
subsection, we need to introduce the volume measures which are 
involved here. Let $dx$ be the volume element on $X$ induced by the 
Riemannian 
metric. 
Let $dk$ be the normalized Haar measure of $K$. Put
\begin{equation}
	dg=dxdk.
	\label{eq:dgdef}
\end{equation}
Then $dg$ is a left-invariant Haar measure on $G$.  Since $G$ is unimodular, $dg$ is also right-invariant.

Let $dy$ be the volume element on $X(\gamma\sigma)$ induced by Riemannian metric, let $df$ be the volume element on the Euclidean space $\pp^\perp_\sigma(\gamma)$. Then $dydf$ is a volume element on $Z_\sigma(\gamma)\times_{K_\sigma(\gamma)} \pp^\perp(\gamma\sigma)$ which is $Z_\sigma(\gamma)$-invariant.  By Theorem \ref{thm_normalcoord}, there is a smooth positive $K_\sigma(\gamma)$-invariant function $r(f)$ on $\pp^\perp_{\sigma}(\gamma)$ such that we have the identity of volume elements on $X$,
\begin{equation}\label{eq3.15}
dx=r(f)dydf,
\end{equation}
with $r(0)=1$. Moreover, there exist $C>0,\; C'>0$ such that for 
$f\in\pp^\perp(\gamma\sigma)$ (cf. \cite[Chapter IV, Theorem 4.1]{MR0145455}),
\begin{equation}\label{eq_1.99}
r(f)\leq C\exp\big(C' |f|\big).
\end{equation}

Let $dk'$ be the Haar measure on $K_\sigma(\gamma)$ that gives volume $1$ to $K_\sigma(\gamma)$, and let $du$ be the $K-$invariant volume form on $K_\sigma(\gamma)\backslash K$, so that 
\begin{equation}\label{eq3.17}
dk=dk'du.
\end{equation}
Then $dydk'$ defines an invariant Haar measure on the reductive Lie 
group
$Z_{\sigma}(\gamma)$ such that
\begin{equation}
dg=dydk'\cdot r(f)dfdu.
\label{eq:1.5.41ugc}
\end{equation}
By \eqref{eq1.89}, \eqref{eq:1.5.41ugc}, $dv=r(f)dfdu$ is a 
$G$-invariant measure on $Z_{\sigma}(\gamma)\backslash G$.

\subsection{Twisted orbital integrals}\label{section2-4}
Let $\gamma\in G$ be as 
follows
\begin{equation}\label{eq:fixg1bonn}
\gamma=e^ak^{-1}\in G,\;a\in\pp,\, k\in K,\, \mathrm{Ad}(k)a=\sigma a.
\end{equation}
Then $\gamma\sigma$ is semisimple. 

If $q\in\mathcal{Q}^\sigma$, then for $x\in X$, we have $\gamma\sigma 
q\big(x,\gamma\sigma(x)\big)\in \mathrm{End}(F_{\gamma\sigma(x)})$. 
Therefore, $\mathrm{Tr}^F\big[\gamma\sigma 
q\big(x,\gamma\sigma(x)\big)\big]$ is well-defined 
function on $X$. Let $h(y)$ be a compactly supported bounded measurable function on $X(\gamma\sigma)$. 
\begin{proposition}\label{prop:integration}
	The function $\mathrm{Tr}^F\big[\gamma\sigma 
	q\big(x,\gamma\sigma(x)\big)\big]h(p_{\gamma\sigma}x)$ is integrable on $X$. 
	Moreover,
	\begin{equation}\label{eq_3.21oi}
		\begin{split}
			&\int_X \mathrm{Tr}^F\big[\gamma\sigma 
			q\big(x,\gamma\sigma(x)\big)\big]h(p_{\gamma\sigma}x)dx\\
			&\qquad\qquad\qquad=\int_{\pp^{\perp}_{\sigma}(\gamma)} 
			\mathrm{Tr}^E\big[\sigma^E q(e^{-f}\gamma e^{\sigma 
			f})\big]r(f)df \int_{X(\gamma\sigma)}h(y)dy.
		\end{split}
	\end{equation}
\end{proposition}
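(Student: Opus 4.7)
The plan is to change variables via the normal coordinate system from Theorem \ref{thm_normalcoord}, compute the pointwise trace in these coordinates, and then factor the resulting integral by Fubini.

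First I would parametrize $X$ by pairs $(y,f)$, where $y=p(g)\in X(\gamma\sigma)$ (with $g\in Z_{\sigma}(\gamma)$, well-defined modulo $K_{\sigma}(\gamma)$) and $f\in\pp^{\perp}_{\sigma}(\gamma)$, so that $x=\rho_{\gamma\sigma}(g,f)=p(ge^{f})$. By Theorem \ref{thm_normalcoord}, the orthogonal projection reads $p_{\gamma\sigma}(x)=y$, so $h(p_{\gamma\sigma}x)=h(y)$ splits off as a factor. The volume decomposition \eqref{eq3.15} gives $dx=r(f)\,df\,dy$.

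Next I would compute the pointwise trace. Using the identification $F=G^{\sigma}\times_{K^{\sigma}}E$ from \eqref{eq_Fid}, the lift $\tilde{h}=ge^{f}\in G$ of $x$ provides a frame identifying $F_{x}\simeq E$, while $\gamma\sigma(\tilde{h})=\gamma\sigma(g)\sigma(e^{f})\in G$ provides one of $F_{\gamma\sigma(x)}$. Since $(\gamma,\sigma)\cdot(\tilde{h},1)=(\gamma\sigma(\tilde{h}),\sigma)$ in $G\rtimes\Sigma^{\sigma}$, the action of $\gamma\sigma: F_{x}\to F_{\gamma\sigma(x)}$ reads as multiplication by $\sigma^{E}$ in these frames, while $q(x,\gamma\sigma(x))$ reads as $q(\tilde{h}^{-1}\gamma\sigma(\tilde{h}))$. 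The condition $g\in Z_{\sigma}(\gamma)$ is equivalent to $g^{-1}\gamma\sigma(g)=\gamma$, so
\begin{equation*}
\tilde{h}^{-1}\gamma\sigma(\tilde{h})=e^{-f}\,g^{-1}\gamma\sigma(g)\,\sigma(e^{f})=e^{-f}\gamma e^{\sigma f}.
\end{equation*}
By cyclicity of the trace, this yields $\mathrm{Tr}^{F}[\gamma\sigma\, q(x,\gamma\sigma(x))]=\mathrm{Tr}^{E}[\sigma^{E}\,q(e^{-f}\gamma e^{\sigma f})]$, an expression that depends only on $f$ and not on $g$.

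For the integrability and the final Fubini step, I would use \eqref{kerneldecay} together with $d(p1,p(e^{-f}\gamma e^{\sigma f}))=d_{\gamma\sigma}(p(e^{f}))$ and the linear lower bound \eqref{lineardist} to deduce a Gaussian estimate $|\mathrm{Tr}^{E}[\sigma^{E}q(e^{-f}\gamma e^{\sigma f})]|\le C\exp(-c|f|^{2})$ for $|f|\ge 1$. Combined with the at-most-exponential growth \eqref{eq_1.99} for $r(f)$, this forces absolute convergence of the $f$-integral; since $h$ is bounded with compact support on $X(\gamma\sigma)$, Fubini produces the factorized identity \eqref{eq_3.21oi}. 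The main technical care is in step two, namely tracking how the semidirect-product multiplication in $G^{\sigma}$ converts the action of $\gamma\sigma$ on the frame into the factor $\sigma^{E}$ on $E$, and then exploiting the defining identity $g^{-1}\gamma\sigma(g)=\gamma$ for $g\in Z_{\sigma}(\gamma)$ to eliminate the $g$-dependence before integrating over the base $X(\gamma\sigma)$.
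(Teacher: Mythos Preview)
Your proposal is correct and follows exactly the approach the paper intends: the paper's own proof simply says ``Using \eqref{lineardist} and \eqref{kerneldecay}, the proof of this proposition is the same as the proof of \cite[Theorem 4.2.1]{bismut2011hypoelliptic},'' and you have written out precisely those details---the normal-coordinate change of variables from Theorem~\ref{thm_normalcoord}, the reduction of the fibrewise trace to $\mathrm{Tr}^{E}[\sigma^{E}q(e^{-f}\gamma e^{\sigma f})]$ via $g\in Z_{\sigma}(\gamma)$, and the Gaussian-versus-exponential estimate from \eqref{kerneldecay}, \eqref{lineardist}, \eqref{eq_1.99} that justifies Fubini.
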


\begin{proof}
	By \eqref{lineardist} and \eqref{kerneldecay}, the 
	function $\mathrm{Tr}^E[\sigma^E q(e^{-f}\gamma e^{\sigma f})]$ 
	is bounded by $C'\exp(-C|f|^{2})$ with some constants $C, C'>0$ for 
	$f\in \pp^{\perp}_{\sigma}(\gamma)$. By \eqref{eq_1.99}, the 
	integrals in the right-hand side of \eqref{eq_3.21oi} are well-defined. By the identification 
	$\rho_{\gamma\sigma}$ defined in \eqref{eq:1.5.8ugc} and using 
	the Fubini's theorem, we get 
	exactly \eqref{eq_3.21oi}.
\end{proof}

By \eqref{eq1.89}, and using the fact that the 
Haar measures of $K$, $K_\sigma(\gamma)$ have volume $1$, we have
\begin{equation}
\begin{split}
\int_{\pp^{\perp}_{\sigma}(\gamma)} \mathrm{Tr}^E\big[\sigma^E 
q(e^{-f}\gamma e^{\sigma f})\big]r(f)df 
=\int_{Z_\sigma(\gamma)\backslash G}\mathrm{Tr}^E\big[\sigma^E 
q\big(v^{-1}\gamma\sigma(v)\big)\big]dv.
\end{split}
\label{eq:4.2.3ugc}
\end{equation}

Recall that $Z^\sigma(\gamma\sigma)$ denotes the centralizer of $\gamma\sigma$ 
in $G^\sigma$. As said in Remark \ref{rmk:1.5.5ugc}, an analogue of \eqref{eq1.89} for the pair 
$(G^{\sigma},K^{\sigma},Z^\sigma(\gamma\sigma))$ still holds. Then 
the above integrals can be rewritten as integrals on the quotient $Z^\sigma(\gamma\sigma)\backslash G^\sigma$ with the kernel $\tilde{q}$ defined in \eqref{eq:3.47n}. More precisely, put
\begin{equation}\label{eq:3.2.4bonn}
d\tilde{k}=dkd\mu.
\end{equation}
Then $d\tilde{k}$ is the normalized Haar measure on $K^\sigma$. Let 
$d\tilde{k}^\sigma$ be the normalized Haar measure on $K^\sigma(\gamma\sigma)$, and let $d\tilde{\mu}^\sigma$ be the $K^\sigma$-invariant measure on $K^\sigma(\gamma\sigma)\backslash K^\sigma$ such that
\begin{equation}\label{eq:3.2.5bonn}
d\tilde{k}=d\tilde{k}^\sigma d\tilde{\mu}^\sigma.
\end{equation}
Then by \eqref{eq1.89}, we get that
\begin{equation}\label{eq:3.2.6bonn}
d\tilde{v}^\sigma=r(f)dfd\tilde{\mu}^\sigma
\end{equation}
is a measure on $Z^\sigma(\gamma\sigma)\backslash G^\sigma$. Then
\begin{equation}\label{eq3.35}
\int_{\pp^{\perp}_{\sigma}(\gamma)} \mathrm{Tr}^E[\sigma^E q(e^{-f}\gamma e^{\sigma f})]r(f)df=\int_{Z^\sigma(\gamma\sigma)\backslash G^\sigma}\mathrm{Tr}^E[ \tilde{q}(\tilde{v}^{-1}\gamma\sigma \tilde{v}]d\tilde{v}^\sigma.
\end{equation}

Let $[\gamma\sigma]$ denote the conjugation class of $\gamma\sigma$ in $G^\sigma$.

\begin{definition}\label{defn_orbitalintegral}
	For $q\in \mathcal{Q}^\sigma$, set
	\begin{equation}\label{orbitaldef1}
		\begin{split}
		\mathrm{Tr}^{[\gamma\sigma]}[Q]	
		&=\int_{Z_{\sigma}(\gamma)\backslash G}\mathrm{Tr}^E\big[\sigma^E 
		q(v^{-1}\gamma\sigma(v))\big]dv\\
		&=\int_{\pp^{\perp}_{\sigma}(\gamma)} \mathrm{Tr}^E\big[\sigma^E 
		q(e^{-f}\gamma e^{\sigma f})\big]r(f)df.
		\end{split}
	\end{equation}
Integrals like \eqref{eq:4.2.3ugc}, \eqref{eq3.35}, 
\eqref{orbitaldef1} are called twisted orbital integrals. By 
\eqref{eq3.35}, we see that $\mathrm{Tr}^{[\gamma\sigma]}[Q]$ only 
depends on the conjugacy class of $\gamma\sigma$ in $G^\sigma$. In 
particular, if $\gamma'\in G$ is $\sigma$-twisted 
conjugate to $\gamma$, then 
$\mathrm{Tr}^{[\gamma'\sigma]}[Q]=\mathrm{Tr}^{[\gamma\sigma]}[Q]$.

If taking $\sigma=\mathrm{Id}_G$ in \eqref{orbitaldef1}, we get the ordinary (un-twisted) orbital integral $\mathrm{Tr}^{[\gamma]}[Q]$ 
(cf. \cite[Definition 4.2.2]{bismut2011hypoelliptic})
associated with a semisimple element $\gamma\in G$.
\end{definition}

The following proposition extends \cite[Theorem 
4.2.3]{bismut2011hypoelliptic}.
\begin{proposition}\label{prop_3.3.4}
	For $Q,Q'\in \mathcal{Q}^\sigma$, we have
	\begin{equation}\label{eq:3.2.10bonn}
	\mathrm{Tr}^{[\gamma\sigma]}\big[[Q,Q']\big]=0.
	\end{equation}
	Equivalently, $\mathrm{Tr}^{[\gamma\sigma]}[\cdot]$ is a trace on the algebra $\mathcal{Q}^\sigma$.
\end{proposition}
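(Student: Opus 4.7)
The plan is to mimic Bismut's proof of \cite[Theorem 4.2.3]{bismut2011hypoelliptic} but working on the ambient group $G^\sigma$, where the $\sigma$-twist is absorbed into genuine conjugation. The statement $\mathrm{Tr}^{[\gamma\sigma]}[[Q,Q']]=0$ is equivalent to
\begin{equation*}
\mathrm{Tr}^{[\gamma\sigma]}[QQ']=\mathrm{Tr}^{[\gamma\sigma]}[Q'Q],
\end{equation*}
so by symmetry it suffices to produce an integral identity turning one side into the other.

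First, I would verify that under the extension $q\mapsto\tilde{q}$ of \eqref{eq:3.47n} the two algebra structures match, i.e.\ $\widetilde{q\ast q'}=\tilde{q}\ast\tilde{q}'$ with respect to the convolution on $G^\sigma$ equipped with the product Haar measure $d\tilde{g}=dg\,d\mu$; here one uses the equivariance \eqref{kernelinvariant2} to push $\rho^E(\mu')$ past $q'(\mu'^{-1}(\cdot))$ and the normalization of $d\mu$ on $\Sigma^\sigma$. Thanks to \eqref{eq3.35}, the twisted orbital integral is then
\begin{equation*}
\mathrm{Tr}^{[\gamma\sigma]}[Q]=\int_{Z^\sigma(\gamma\sigma)\backslash G^\sigma}\mathrm{Tr}^E[\tilde{q}(\tilde{v}^{-1}\gamma\sigma\tilde{v})]\,d\tilde{v}^\sigma,
\end{equation*}
so the entire computation can be done as in the classical (untwisted) case, but in the group $G^\sigma$ with $\gamma\sigma$ playing the role of the semisimple element.

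Concretely, I would expand
\begin{equation*}
\mathrm{Tr}^{[\gamma\sigma]}[QQ']=\int_{Z^\sigma(\gamma\sigma)\backslash G^\sigma}\int_{G^\sigma}\mathrm{Tr}^E[\tilde{q}(h)\tilde{q}'(h^{-1}\tilde{v}^{-1}\gamma\sigma\tilde{v})]\,dh\,d\tilde{v}^\sigma,
\end{equation*}
and perform the substitution $h\mapsto \tilde{v}^{-1}\gamma\sigma\tilde{v}(h')^{-1}$ in the inner integral, which is legitimate by left invariance of $dh$. After applying cyclicity of $\mathrm{Tr}^E$, I would invoke Fubini and substitute $\tilde{u}=\tilde{v}(h')^{-1}$ in the outer integral; since $d\tilde{v}^\sigma$ is right $G^\sigma$-invariant on the quotient, $d\tilde{u}^\sigma=d\tilde{v}^\sigma$, and the identity $\tilde{v}^{-1}\gamma\sigma\tilde{v}(h')^{-1}=(h')^{-1}\tilde{u}^{-1}\gamma\sigma\tilde{u}$ reproduces the convolution $\tilde{q}'\ast\tilde{q}$. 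Swapping Fubini back yields $\mathrm{Tr}^{[\gamma\sigma]}[Q'Q]$.

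The step that deserves care is the justification of Fubini and of the absolute convergence of these iterated integrals, since $Z^\sigma(\gamma\sigma)\backslash G^\sigma$ is noncompact. This is exactly the analytic input used in Proposition~\ref{prop:integration}: combining the Gaussian decay \eqref{kerneldecay} for $\tilde{q},\tilde{q}'$, the linear growth estimate \eqref{lineardist} for the displacement function $d_{\gamma\sigma}$ away from $X(\gamma\sigma)$, and the polynomial bound \eqref{eq_1.99} on $r(f)$, one shows that $\mathrm{Tr}^E[\tilde{q}(h)\tilde{q}'(h^{-1}\tilde{v}^{-1}\gamma\sigma\tilde{v})]$ is integrable on $G^\sigma\times(Z^\sigma(\gamma\sigma)\backslash G^\sigma)$; this is the main technical obstacle, but it is handled by exactly the same estimates already in use in Subsection~\ref{section2-4}.
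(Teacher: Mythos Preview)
Your approach is correct and is essentially the one the paper intends: the paper's proof says precisely to use the formalism of \eqref{eq3.35} and Definition~\ref{defn_orbitalintegral} and adapt \cite[Theorem 4.2.3]{bismut2011hypoelliptic}, which is exactly your passage to $G^\sigma$ via $\tilde q$ followed by the standard substitution argument. Your sketch in fact fills in more detail than the paper provides, and the analytic justification you outline (Gaussian decay plus \eqref{lineardist} and \eqref{eq_1.99}) is the right one.
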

\begin{proof}
Let $R$ be an integral operator defined by a kernel function in 
$\mathcal{Q}^{\sigma}$, and let $R'$ be an integral operator 
associated with a bounded continuous 
invariant kernel function in $C^{b}(G^{\sigma},\mathrm{End}(E))$. They act on continuous sections of $F$ over $X$ with compact 
support. The operators $RR'$, $R'R$ also have integral kernels which 
are bounded on $G^{\sigma}$. We have
\begin{equation}\label{eq:3.2.11bonn}
\mathrm{Tr}^{[1]}\big[[R,R']\big]=0.
\end{equation} 

Let $\delta_{\gamma\sigma}$ be the current on $G^\sigma$ so that
\begin{equation}
\int_{G^\sigma} f\delta_{\gamma\sigma}=\int_{Z^\sigma(\gamma\sigma)\backslash G^\sigma} f((\tilde{v})^{-1}\gamma\sigma\tilde{v})d\tilde{v}^\sigma.
\label{eq:4.2.8didotugc}
\end{equation}
Since $d\tilde{v}^\sigma$ is invariant under the right action of 
$G^\sigma$ on $Z^\sigma(\gamma\sigma)\backslash G^\sigma$, 
$\delta_{\gamma\sigma}$ is invariant by conjugation. For $q\in 
\mathcal{Q}^\sigma$, $\tilde{q}$ is defined by 
\eqref{eq:3.47n}. By \eqref{eq3.35}, \eqref{orbitaldef1},
\begin{equation}
\mathrm{Tr}^{[\gamma\sigma]}[Q]=\int_{G^\sigma}\mathrm{Tr}^E[\tilde{q}]\delta_{\gamma\sigma}=\mathrm{Tr}^E[\tilde{q} * \delta_{(\gamma\sigma)^{-1}} (1)],
\label{eq:4.2.9didotsud}
\end{equation}
where $*$ denotes the convolution on $G^{\sigma}$.

The current 
$\delta_{(\gamma\sigma)^{-1}}$ defines a linear operator 
$R_{(\gamma\sigma)^{-1}}$ acting on $C^{b}(X,F)$. If $Q\in \mathcal{Q}^\sigma$, we have
\begin{equation}
Q R_{(\gamma\sigma)^{-1}}=R_{(\gamma\sigma)^{-1}}Q.
\label{eq:4.2.10didotsud}
\end{equation}
The operator $Q R_{(\gamma\sigma)^{-1}}$ is an integral operator with a bounded continuous invariant kernel.

Then we can rewrite \eqref{eq:4.2.9didotsud} as
\begin{equation}
\mathrm{Tr}^{[\gamma\sigma]}[Q]=\mathrm{Tr}^{[1]}[Q R_{(\gamma\sigma)^{-1}}].
\label{eq:4.2.11didotsud}
\end{equation}
By \eqref{eq:3.2.11bonn}, \eqref{eq:4.2.10didotsud} and \eqref{eq:4.2.11didotsud}, we get
\begin{equation}
\mathrm{Tr}^{[\gamma\sigma]}\big[[Q,Q']\big]=\mathrm{Tr}^{[1]}\big[[Q,Q']R_{(\gamma\sigma)^{-1}}\big]=\mathrm{Tr}^{[1]}\big[[Q,Q'R_{(\gamma\sigma)^{-1}}]\big]=0.
\label{eq:4.2.12didotsud}
\end{equation}
This completes the proof of our proposition.
\end{proof}

\subsection{Infinite dimensional orbital integrals}\label{section-infinite}
Let $dY^\pp$, $dY^\kk$ denote the 
volume elements on the Euclidean spaces $\pp$, $\kk$. Then these volume 
elements are $K^\sigma$-invariant. Moreover, $dY=dY^\pp dY^\kk$ is a 
$G^\sigma$-invariant
volume element on $\g$. Let $dY^{TX},dY^N, dY$ be the corresponding volume elements on the fibres of $TX,N, TX\oplus N$ over $X$.

Let $C^{\infty,b}(\g,\R)$ be the vector space of real valued smooth 
bounded functions on $\g$.  We replace the finite-dimensional vector 
space $E$ by the infinite dimensional space 
$$\mathcal{E}=\Lambda^\bullet 
(\pp^*\oplus \kk^*)\otimes C^{\infty,b}(\g,\R)\otimes E$$
equipped with the natural action of $K^\sigma$. Then the vector bundle $F$ on $X$ is replaced by
\begin{equation}
\mathcal{F}=\Lambda^\bullet(T^*X\oplus N^*)\otimes C^{\infty,b}(TX\oplus N,\R)\otimes F.
\end{equation}
Let $C^b(\widehat{\mathcal{X}},\widehat{\pi}^*(\Lambda^\bullet 
(T^*X\oplus N^*)\otimes F))$ be the space of continuous bounded 
sections of $\widehat{\pi}^*(\Lambda^\bullet(T^*X\oplus N^*)\otimes F)$ over $\widehat{\mathcal{X}}$.

The group $K^\sigma$ acts on $C^b(G^\sigma\times \g, \Lambda^\bullet(\pp^*\oplus \kk^*)\otimes E)$, so that if $s\in C^b(G^\sigma\times \g, \Lambda^\bullet(\pp^*\oplus \kk^*)\otimes E)$ then for $\tilde{k}\in K^\sigma$
\begin{equation}
(\tilde{k}\cdot s)(\tilde{g},Y)=\rho^{\Lambda^\bullet(\pp^*\oplus \kk^*)\otimes E}(\tilde{k})s(\tilde{g}\tilde{k}, \text{Ad}(\tilde{k}^{-1})Y).
\end{equation}
Let $C^b_{K^\sigma}(G^\sigma\times\g,\Lambda^\bullet(\pp^*\oplus \kk^*)\otimes E)$ be the vector space of $K^\sigma$-invariant continuous bounded function on $G^\sigma\times \g$ with values in $\Lambda^\bullet(\pp^*\oplus \kk^*)\otimes E$. Then we have 
\begin{equation}\label{sectionid}
\begin{split}
C^b\big(\widehat{\mathcal{X}},\widehat{\pi}^*(\Lambda^\bullet 
(T^*X\oplus N^*)\otimes 
F)\big)&=C^b_{K^\sigma}\big(G^\sigma\times\g,\Lambda^\bullet(\pp^*\oplus \kk^*)\otimes E\big)\\
&=C^b_K\big(G\times\g, \Lambda^\bullet(\pp^*\oplus \kk^*)\otimes E\big).
\end{split}
\end{equation}

\begin{definition}\label{def:3.3.1bis}
	Let $\mathfrak{Q}^\sigma$ be the vector space of continuous kernels $q(g,Y,Y')$ defined on $G\times \g\times \g$ with values in $\mathrm{End}(\Lambda^\bullet(\pp^*\oplus \kk^*)\otimes E)$ such that
	\begin{itemize}
	\item If $g\in G, k,k'\in K,Y,Y'\in\g$, then 
	\begin{equation}\label{condition1}
		\begin{split}
	&q\big(kgk',Y,Y'\big)\\
	&=\rho^{\Lambda^\bullet(\pp^*\oplus \kk^*)\otimes 
	E}(k)q\big(g,\mathrm{Ad}(k^{-1})Y,\mathrm{Ad}(k')Y'\big)\rho^{\Lambda^\bullet(\pp^*\oplus \kk^*)\otimes E}(k').
		\end{split}
	\end{equation}
	\item Put $\sigma^{\Lambda^\bullet(\pp^*\oplus \kk^*)\otimes E}=\rho^{\Lambda^\bullet(\pp^*\oplus \kk^*)\otimes E}(\sigma)\in \mathrm{Aut}(\Lambda^\bullet(\pp^*\oplus \kk^*)\otimes E)$, then
	\begin{equation}\label{condition2}
	q(\sigma(g),\sigma Y,\sigma Y')=\sigma^{\Lambda^\bullet(\pp^*\oplus \kk^*)\otimes E}q(g,Y,Y')(\sigma^{\Lambda^\bullet(\pp^*\oplus \kk^*)\otimes E})^{-1}.
	\end{equation}
	\item There exist $C, C'>0$ such that 
	\begin{equation}\label{condition3}
	|q(g,Y,Y')|\leq C\exp\Big(-C'\big(d^2(p1,pg)+|Y|^2+|Y'|^2\big)\Big).
	\end{equation}
\end{itemize} 

We will denote by $\mathfrak{Q}^{\sigma,\infty}$ the subspace of 
$\mathfrak{Q}^{\sigma}$ consisting of smooth kernels.
\end{definition}

If $q\in\mathfrak{Q}^\sigma$, put $q((g,Y),(g',Y'))=q(g^{-1}g',Y,Y')$.
If $s\in C^b_K(G\times \g,\Lambda^\bullet(\pp^*\oplus \kk^*)\otimes E)$, put
\begin{equation}
(Qs)(g,Y)=\int_{G\times\g} q\big((g,Y),(g',Y')\big)s(g',Y')dg'dY'.
\label{eq:4.3.8didot}
\end{equation}
By \eqref{condition1}, \eqref{condition3}, $Q$ is an operator acting on $C^b_K(G\times \g,\Lambda^\bullet(\pp^*\oplus \kk^*)\otimes E)$. Equivalently, the operator $Q$ acts on $C^b(\widehat{\mathcal{X}},\widehat{\pi}^*(\Lambda^\bullet (T^*X\oplus N^*)\otimes F))$ with a continuous kernel $q((x,Y),(x',Y'))$.

The action of $\sigma$ on $C^b_K(G\times \g,\Lambda^\bullet(\pp^*\oplus 
\kk^*)\otimes E)$ is represented by
\begin{equation}\label{oplsigma}
(\sigma s)(g,Y)=\sigma^{\Lambda^\bullet(\pp^*\oplus \kk^*)\otimes E}s(\sigma^{-1}(g),\sigma^{-1}Y).
\end{equation}
Then \eqref{condition2} is equivalent to 
$Q\sigma=\sigma Q$. Therefore, $Q$ commutes with $G^\sigma$.

By \cite[Proposition 4.3.2]{bismut2011hypoelliptic} and using the 
fact that $\sigma$ preserves $dxdY$, $\mathfrak{Q}^\sigma$ is an 
algebra with respect to the convolution of kernels. Let 
$[\cdot,\cdot]$ denote the supercommutator with respect to the $\Z_2$-graded structure of $\mathrm{End}(\Lambda^\bullet(\pp^*\oplus \kk^*)\otimes E)$, and let $\mathrm{Tr_s}^{\Lambda^\bullet(\pp^*\oplus \kk^*)\otimes E}[\cdot]$ be the supertrace on $\mathrm{End}(\Lambda^\bullet(\pp^*\oplus \kk^*)\otimes E)$.

If $g\in G$, let $q(g)$ be the operator on $\mathcal{E}$ defined by 
the kernel $q(g, Y,Y')$. Let 
$\sigma^\mathcal{E}\in\mathrm{End}(\mathcal{E})$ denote the action of 
$\sigma$ on $\mathcal{E}$. 
Then for $g\in G$, the operator
$\sigma^{\mathcal{E}}q(g^{-1}\gamma\sigma(g))$ acting on 
$\mathcal{E}$ is given by the continuous kernel 
$\sigma^{\Lambda^\bullet(\pp^*\oplus \kk^*)\otimes 
E}q(g^{-1}\gamma\sigma(g),\sigma^{-1}Y,Y')$ on $\g\times \g$. 
By \eqref{condition3}, the function $\mathrm{Tr_s}^{\Lambda^\bullet(\pp^*\oplus \kk^*)\otimes E}[\sigma^{\Lambda^\bullet(\pp^*\oplus \kk^*)\otimes E}q(g^{-1}\gamma\sigma(g),\sigma^{-1}Y,Y)]$ is integrable on $\g$.

If 
$\sigma^{\mathcal{E}}q(g^{-1}\gamma\sigma(g))$ is trace class, by 
\cite[Proposition 3.1.1]{Duflo1972generalite}, we get
\begin{equation}\label{eq:3.37n}
	\begin{split}
		&\mathrm{Tr_s}^{\mathcal{E}}\big[\sigma^{\mathcal{E}}q(g^{-1}\gamma\sigma(g))\big]\\
		&=\int_{\g} \mathrm{Tr_s}^{\Lambda^\bullet(\pp^*\oplus 
		\kk^*)\otimes E}\Big[\sigma^{\Lambda^\bullet(\pp^*\oplus 
		\kk^*)\otimes 
		E}q\left(g^{-1}\gamma\sigma(g),\sigma^{-1}Y,Y\right)\Big]dY.
	\end{split}
\end{equation}
\begin{remark}\label{rem:traceclass}
	A sufficient condition for our operator to be trace class is 
	that the kernel together with its derivatives in $Y,Y'$ of 
	arbitrary orders lie in the Schwartz space of $\g\times\g$.
\end{remark}

By \eqref{condition3}, there exists $C_{\gamma\sigma}>0$ such that
\begin{equation}
	\begin{split}
		 &\bigg| \int_{\g} \mathrm{Tr_{s}}^{\Lambda^\bullet(\pp^*\oplus 
	\kk^*)\otimes E}\Big[\sigma^{\Lambda^\bullet(\pp^*\oplus \kk^*)\otimes 
	E}q(g^{-1}\gamma\sigma(g),\sigma^{-1}Y,Y)\Big]dY\bigg|\\
		&\hspace{60mm}\leq 
	C_{\gamma\sigma} \exp\left(- C' d^{2}(pg, \gamma\sigma pg)\right).
	\end{split}
	\label{eq:integration}
\end{equation}
By \eqref{lineardist}, the arguments in the proof of Proposition 
\ref{prop:integration} show that the left-hand side of 
\eqref{eq:integration} is integrable on 
$\pp^{\perp}_{\sigma}(\gamma)$.

We extend the notion of the twisted orbital integrals to the infinite 
dimensional case, which is a twisted version of \cite[Definition 4.3.3]{bismut2011hypoelliptic}.
\begin{definition}\label{defn_orbitalsupertrace}
	If $Q$ is given by $q\in \mathfrak{Q}^\sigma$, we define $\mathrm{Tr_s}^{[\gamma\sigma]}[Q]$ as follows,
	\begin{equation}\label{orbitaldef2}
	\begin{split}
	&\mathrm{Tr_s}^{[\gamma\sigma]}[Q]\\
	&=\int_{(Z_{\sigma}(\gamma)\backslash 
	G)\times\g}\mathrm{Tr_s}^{\Lambda^\bullet(\pp^*\oplus \kk^*)\otimes 
	E}\Big[\sigma^{\Lambda^\bullet(\pp^*\oplus \kk^*)\otimes E} 
	q(v^{-1}\gamma\sigma(v),Y,\sigma Y)\Big]dvdY\\
	&=\int_{\pp^{\perp}_{\sigma}(\gamma)\times \g} 
	\mathrm{Tr_s}^{\Lambda^\bullet(\pp^*\oplus \kk^*)\otimes 
	E}\Big[\sigma^{\Lambda^\bullet(\pp^*\oplus \kk^*)\otimes E} 
	q(e^{-f}\gamma e^{\sigma f},Y,\sigma Y)\Big]r(f)dfdY.
	\end{split}
	\end{equation}
	Expressions such as \eqref{orbitaldef2} are called twisted orbital supertraces.
\end{definition}

If $\sigma^{\mathcal{E}}q(g^{-1}\gamma\sigma(g))$ is trace class for 
$g\in G$, then we can rewrite 
\eqref{orbitaldef2} as
\begin{equation}\label{orbitalsupertrace}
	 \mathrm{Tr_s}^{[\gamma\sigma]}[Q]=\int_{\pp^{\perp}_{\sigma}(\gamma)} \mathrm{Tr_s}^{\mathcal{E}}[\sigma^{\mathcal{E}} q(e^{-f}\gamma e^{\sigma f})]r(f)df.
\end{equation}

\begin{proposition}\label{prop_3.4.3}
	If $Q,Q'\in\mathfrak{Q}^\sigma$, then
	\begin{equation}
	\mathrm{Tr_s}^{[\gamma\sigma]}\big[[Q,Q']\big]=0.
	\label{eq:4.3.15didotsud}
	\end{equation}
\end{proposition}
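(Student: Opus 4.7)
The plan is to follow the strategy used in Proposition \ref{prop_3.3.4}: extend the kernels to $G^\sigma$ so that the twisted orbital supertrace becomes an ordinary orbital supertrace, and then adapt the argument of \cite[Theorem 4.3.4]{bismut2011hypoelliptic} to this slightly larger group.

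For the first step, I would extend each $q \in \mathfrak{Q}^\sigma$ to a kernel $\tilde{q}$ on $G^\sigma \times \g \times \g$ by setting
\begin{equation*}
\tilde{q}(g\mu, Y, Y') = q(g, Y, Y')\,\rho^{\Lambda^\cdot(\pp^*\oplus\kk^*)\otimes E}(\mu), \qquad g\in G,\ \mu\in\Sigma^\sigma,
\end{equation*}
in direct analogy with \eqref{eq:3.47n}. Conditions \eqref{condition1}, \eqref{condition2} and \eqref{condition3} together imply that $\tilde{q}$ satisfies the natural $K^\sigma$-equivariance and Gaussian-decay conditions for a kernel on $G^\sigma$, and that $\tilde q$ represents the same operator $Q$ of \eqref{eq:4.3.8didot}, now viewed as acting on the subspace $C^b_{K^\sigma}(G^\sigma \times \g, \Lambda^\cdot(\pp^*\oplus\kk^*)\otimes E)$ of \eqref{sectionid}. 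A short computation, using \eqref{condition2} to move $\rho^{\Lambda^\cdot(\pp^*\oplus\kk^*)\otimes E}(\mu')$ past $q'$ and using the $\sigma$-invariance of both the Haar measure $dg$ and the Lebesgue measure $dY$, then shows that the extension is an algebra homomorphism, i.e.\ $\widetilde{q * q'} = \tilde{q} * \tilde{q}'$.

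Using the measure $d\tilde{v}^\sigma$ on $Z^\sigma(\gamma\sigma)\backslash G^\sigma$ introduced in \eqref{eq:3.2.6bonn}, the twisted orbital supertrace \eqref{orbitaldef2} then rewrites as
\begin{equation*}
\mathrm{Tr_s}^{[\gamma\sigma]}[Q] = \int_{Z^\sigma(\gamma\sigma)\backslash G^\sigma} d\tilde{v}^\sigma \int_{\g} dY\, \mathrm{Tr_s}^{\Lambda^\cdot(\pp^*\oplus\kk^*)\otimes E}\bigl[\tilde{q}(\tilde{v}^{-1}\gamma\sigma \tilde{v},\, Y,\, Y)\bigr],
\end{equation*}
which has exactly the shape of a standard (untwisted) orbital supertrace on $G^\sigma$ at the semisimple element $\gamma\sigma$. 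With this reformulation the desired identity \eqref{eq:4.3.15didotsud} becomes the statement that the ordinary orbital supertrace on $G^\sigma$ vanishes on commutators, which I would prove by expanding $(QQ' - Q'Q)(\tilde v^{-1}\gamma\sigma\tilde v)$ as an iterated integral over $G^\sigma \times \g$, applying Fubini, performing the substitution exchanging the two free variables, and invoking the cyclicity of $\mathrm{Tr_s}^{\Lambda^\cdot(\pp^*\oplus\kk^*)\otimes E}$; the Gaussian decay in \eqref{condition3} guarantees that every integral is absolutely convergent, justifying Fubini.

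The main obstacle is that $G^\sigma = G \rtimes \Sigma^\sigma$ need not be reductive, by Remark \ref{rmk:1.2.3ugcd}, so \cite[Theorem 4.3.4]{bismut2011hypoelliptic} cannot be invoked as a black box. However, Bismut's proof really only uses the unimodularity of the ambient group, the bi-$K^\sigma$-equivariance of the kernels, and the Gaussian decay of the kernels. All three persist in our setting: $G^\sigma$ is unimodular because it is the extension of the unimodular group $G$ by the compact group $\Sigma^\sigma$ acting through Lie-algebra automorphisms that preserve $B$ and hence preserve the Haar measure of $G$; the equivariance of $\tilde q$ was established in the first step; and the decay is inherited from \eqref{condition3}. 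Once these ingredients are in place, the adaptation of Bismut's argument to $G^\sigma$ is routine and yields \eqref{eq:4.3.15didotsud}.
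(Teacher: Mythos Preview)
Your proposal is correct and follows essentially the same route as the paper: the paper's proof simply says that one modifies the proof of Proposition \ref{prop_3.3.4}, which in turn relies on the extension of kernels to $G^\sigma$ via \eqref{eq:3.47n}--\eqref{eq3.35} and then adapts Bismut's argument from \cite[Theorems 4.2.3 and 4.3.4]{bismut2011hypoelliptic}. Your write-up spells out exactly these details, including the observation that only unimodularity, $K^\sigma$-equivariance and Gaussian decay are needed (not reductivity of $G^\sigma$), which is precisely what the paper's ``easy modification'' is implicitly invoking.
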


\begin{proof}
By the above constructions, the proof of our proposition is just a modification of the proof of Proposition \ref{prop_3.3.4}. 
\end{proof}
\subsection{Twisted trace formula for locally symmetric spaces}\label{section4.9}
Let $\Gamma$ be a cocompact torsion-free discrete subgroup of $G$ such that $\sigma(\Gamma)= \Gamma$. We still assume that 
$F$ is associated with a finite-dimensional unitary representation 
$(E,\rho^E)$ of $K^\sigma$. Put $Z=\Gamma\backslash X= \Gamma\backslash G/ K$. We use the 
notation in Subsection \ref{s1.9}. Recall that $\Sigma^\sigma$ acts 
isometrically on $Z$ and its action lifts to an action on the 
bundles $TZ$, $F$.

Let $dz$ be the volume element of $Z$ induced by the Riemannian metric. We still denote by $dg$ the volume element on $\Gamma\backslash G$ 
induced by $dg$.

Let $Q$ be an operator with kernel $q\in\mathcal{Q}^\sigma$. Then $Q$ descends to 
an operator $Q^Z$ acting on $C(Z,F)$. Let $q^Z(z,z')$, $z,z'\in Z$ be the 
continuous kernel of $Q^Z$ over $Z$. We also denote by $z,z'$ their 
arbitrary lifts in $X$.
Then
\begin{equation}\label{eq:3.71nn}
q^Z(z,z')=\sum_{\gamma\in\Gamma} \gamma q^X(\gamma^{-1}z,z')=\sum_{\gamma\in\Gamma} q^X(z,\gamma z')\gamma.
\end{equation}
The convergence of the above sums are guaranteed by the cocompactness 
of $\Gamma$ and the condition \eqref{kerneldecay} for $q$.

Note that $\sigma$ acts on $C^\infty(Z,F)$, we will denote it by 
$\sigma^Z$. Then $\sigma Q$ descends to $\sigma^Z Q^Z$.  
By \eqref{kernelinvariant}, \eqref{eq:3.71nn}, the kernel of $\sigma^Z Q^Z$ is given by
\begin{equation}\label{eq:3.73nn}
(\sigma^Z Q^Z)(z,z')=\sum_{\gamma\in \Gamma} q^X(z,\gamma\sigma(z'))\gamma\sigma.
\end{equation}

By \eqref{ZFsection2}, $(\sigma^Z Q^Z)(z,z')$ lifts to $G\times G$, so that
\begin{equation}
(\sigma^Z Q^Z)(g,g')=\sum_{\gamma\in\Gamma} q(g^{-1}\gamma\sigma(g'))\sigma^E\in \text{End}(E).
\label{eq:3.7.5nnn}
\end{equation}
Assume that $Q^Z$ is trace class, so is $\sigma^{Z}Q^{Z}$, then 
\begin{equation}
	\label{eq:3.7.6nnn}
		\mathrm{Tr}[\sigma^Z Q^Z]=\int_Z \mathrm{Tr}^F\big[(\sigma^Z 
		Q^Z)(z,z)\big]dz =\int_{\Gamma\backslash G} \mathrm{Tr}^E\big[(\sigma^Z 
		Q^Z)(g,g)\big]dg.
\end{equation}

By Lemma 
\ref{lm_semisimplecocompact}, if $\gamma\in\Gamma$, $\Gamma\cap Z_{\sigma}(\gamma)\backslash 
Z_{\sigma}(\gamma)$ is a compact smooth manifold. 
Recall that $[\Gamma]_\sigma$ is the set of $\sigma$-twisted 
conjugacy classes in $\Gamma$. The twisted trace formula is given as 
follows,
\begin{equation}\label{eq:tracebonn}
\mathrm{Tr}[\sigma^Z Q^Z]=\sum_{\underline{[\gamma]}_\sigma\in 
[\Gamma]_\sigma} \mathrm{Vol}\big(\Gamma\cap 
Z_{\sigma}(\gamma)\backslash X(\gamma\sigma)\big) \mathrm{Tr}^{[\gamma\sigma]}[Q].
\end{equation}

\section{A formula for semisimple twisted orbital integrals}\label{s4}
The purpose of this section is to present the main results in this 
paper. We get an explicit geometric formula for the twisted 
orbital integrals associated with heat kernels of the Casimir operator, which is an extension 
of Bismut's formula for semisimple orbital integrals \cite[Theorem 
6.1.1]{bismut2011hypoelliptic}. The proof of this formula is deferred 
to Section \ref{section:proof}. In the last 
	subsection, we will apply our formula and explain its 
	consequences in typical examples from cyclic base change theory.

\subsection{The $J$-function $J_{\gamma\sigma}(Y^{\kk}_{0})$ on 
$\kk_{\sigma}(\gamma)$ }\label{s4.1}

The function $\widehat{A}(x)$ is given by 
\begin{equation}
\widehat{A}(x)=\frac{x/2}{\sinh(x/2)}.
\end{equation}
Let $H$ be a finite-dimensional Hermitian vector space. If $B\in 
\mathrm{End}(H)$ is self-adjoint, then $\dfrac{B/2}{\sinh(B/2)}$ is a 
self-adjoint positive endomorphism. Put
\begin{equation}
\widehat{A}(B)=\det{}^{1/2}\left[\frac{B/2}{\sinh(B/2)}\right].
\end{equation}

If $\gamma\in G$, $\sigma\in \Sigma$ are such that 
$\gamma\sigma$ is semisimple, as in Sections \ref{s1} and \ref{section3}, we may and we will 
assume that $\gamma=e^{a}k^{-1}$ with $a\in \pp, 
k\in K$, and $\mathrm{Ad}(k)a=\sigma a$.

Set 
$\z_0=\z(a)=\pp_{0}\oplus \kk_{0}$, then $\z_{\sigma}(\gamma)$ is a 
Lie subalgebra of $\z_0$. Let $\z^{\perp}_0$, $\pp^{\perp}_0$, 
$\kk^{\perp}_0$ be the orthogonal vector spaces to $\z_0$, $\pp_0$, 
$\kk_0$ in $\g, \pp, \kk$, and let 
$\z^{\perp}_{\sigma,0}(\gamma)$, $\pp^{\perp}_{\sigma,0}(\gamma)$, $\kk^{\perp}_{\sigma,0}(\gamma)$ be the orthogonal spaces to $\z_{\sigma}(\gamma)$, $\pp_{\sigma}(\gamma)$, $\kk_{\sigma}(\gamma)$ in $\z_0$, $\pp_0$, $\kk_0$. We have
\begin{equation}
\z^{\perp}_{\sigma,0}(\gamma)=\pp^{\perp}_{\sigma,0}(\gamma)\oplus \kk^{\perp}_{\sigma,0}(\gamma).
\label{eq:5.6.7ugcd}
\end{equation}
For $Y_0^{\kk}\in \kk_{\sigma}(\gamma)$, $\mathrm{ad}(Y^{\kk}_0)$ 
preserves $\pp_{\sigma}(\gamma), \kk_{\sigma}(\gamma), 
\pp^{\perp}_{\sigma,0}(\gamma), \kk^{\perp}_{\sigma,0}(\gamma)$, and it is 
an antisymmetric endomorphism with respect to the scalar product.

As explain in \cite[pp.105]{bismut2011hypoelliptic}, the following function 
$A(Y^{\kk}_{0})$  in $Y^{\kk}_{0}\in \kk_{\sigma}(\gamma)$ has a natural square 
root that is analytic,
\begin{equation}
		A(Y^{\kk}_{0})=\frac{1}{\det 
		(1-\mathrm{Ad}(k^{-1}\sigma))|_{\z^{\perp}_{\sigma,0}(\gamma)}} \frac{\det \big(1-\exp(-i\mathrm{ad}(Y_0^\kk))\mathrm{Ad}(k^{-1}\sigma)\big)|_{\kk^{\perp}_{\sigma,0}(\gamma)}}{\det \big(1-\exp(-i\mathrm{ad}(Y_0^\kk))\mathrm{Ad}(k^{-1}\sigma)\big)|_{\pp^{\perp}_{\sigma,0}(\gamma)}}.
	\label{eq:AinJ}
\end{equation}
Its square root is denoted by
\begin{equation}
		\left[ \frac{1}{\det 
		(1-\mathrm{Ad}(k^{-1}\sigma))|_{\z^{\perp}_{\sigma,0}(\gamma)}} \cdot\frac{\det \big(1-\exp(-i\mathrm{ad}(Y_0^\kk))\mathrm{Ad}(k^{-1}\sigma)\big)|_{\kk^{\perp}_{\sigma,0}(\gamma)}}{\det \big(1-\exp(-i\mathrm{ad}(Y_0^\kk))\mathrm{Ad}(k^{-1}\sigma)\big)|_{\pp^{\perp}_{\sigma,0}(\gamma)}}     \right]^{1/2}.
	\label{eq:Aroot}
\end{equation}
If $Y_{0}^{\kk}=0$, this square root has the value $1/\det(1-\mathrm{Ad}(k^{-1}\sigma))|_{\pp^{\perp}_{\sigma,0}(\gamma)}$.

The following definition is a direct generalization of 
the function $J_\gamma$ defined by \cite[(5.5.5)]{bismut2011hypoelliptic}, we often call them the
$J$-functions.
\begin{definition}\label{def:3.1.1sss}
Let $J_{\gamma\sigma}(Y^{\kk}_{0})$ be the analytic function of 
$Y_0^{\kk}\in \kk_{\sigma}(\gamma)$ given by 
\begin{equation}\label{Jfunction}
\begin{split}
&J_{\gamma\sigma}(Y_0^{\kk})=\frac{1}{\big|\det 
(1-\mathrm{Ad}(\gamma\sigma))|_{\z^{\perp}_0}\big|^{1/2}} 
\frac{\widehat{A}\big(i\mathrm{ad}(Y_0^\kk)|_{\pp_{\sigma}(\gamma)}\big)}{\widehat{A}\big(i\mathrm{ad}(Y_0^\kk)|_{\kk_{\sigma}(\gamma)}\big)}\cdot\\
&\left[ \frac{1}{\det 
(1-\mathrm{Ad}(k^{-1}\sigma))|_{\z^{\perp}_{\sigma,0}(\gamma)}} 
\frac{\det 
\big(1-\exp(-i\mathrm{ad}(Y_0^\kk))\mathrm{Ad}(k^{-1}\sigma)\big)|_{\kk^{\perp}_{\sigma,0}(\gamma)}}{\det \big(1-\exp(-i\mathrm{ad}(Y_0^\kk))\mathrm{Ad}(k^{-1}\sigma)\big)|_{\pp^{\perp}_{\sigma,0}(\gamma)}}     \right]^{1/2}.
\end{split}
\end{equation}
\end{definition}

By \eqref{Jfunction}, there exist $c_{\gamma\sigma},\;C_{\gamma\sigma} > 0$ such that,
\begin{equation}
\big|J_{\gamma\sigma}(Y_0^\kk)\big|\leq c_{\gamma\sigma} 
\exp\big(C_{\gamma\sigma} |Y_0^\kk|\big).
\label{eq:5.1.10pps}
\end{equation}

\begin{remark}\label{rem:BtJ}
	If $t>0$, if we replace $B$ by $B/t$, the function 
	$J_{\gamma\sigma}$ is unchanged.
\end{remark}

\subsection{A formula for the twisted orbital integrals for the heat kernel}\label{s4.2}

Let $U\g$ be the universal enveloping algebra of $\g$. 
If we identify $\g$ to the vector space of left-invariant vector fields on $G$, 
then the enveloping algebra $U\g$ is identified with 
the algebra of left-invariant differential operators on $G$.

Let $C^\g\in U\g$ be the Casimir element of $G$ 
associated with the bilinear form $B$. 
If $e_1$, $\cdots$, $e_{m+n}$ is a basis of $\g$ and 
if $e^*_1$, $\cdots$, $e^*_{m+n}$ is the dual basis of $\g$ with respect to $B$, then
\begin{equation}\label{eq:3.3.1n}
C^\g=-\sum_{i=1}^{m+n} e^*_i e_i.
\end{equation} 
Also $C^\g$ lies in the center of $U\g$ and commutes 
with $\widetilde{G}$.

The scalar product $\langle\cdot,\cdot\rangle$ of $\g$ is given by 
$-B(\cdot,\theta\cdot)$. If $e_{1}$, $\cdots$, $e_{m}$ is an orthonormal basis of $\pp$, and if $e_{m+1}$, $\cdots$, $e_{m+n}$ is an orthonormal basis of $\kk$, by \eqref{eq:3.3.1n}, we have
\begin{equation}\label{eq:3.3.3gg}
C^\g=-\sum_{i=1}^{m} e^2_i+\sum_{i=m+1}^{m+n} e^2_i.
\end{equation} 

Set
\begin{equation}
    C^{\g,H}=-\sum_{i=1}^{m} e^2_i, \; C^\kk=\sum_{i=m+1}^{m+n} e_i^2.
    \label{eq:3.3.4gg}
\end{equation}
Note that $C^\kk\in U\kk$ is just the Casimir element of $K$ 
associated with $B|_{\kk}$.

By \eqref{eq:3.3.1n} - \eqref{eq:3.3.4gg}, we have
\begin{equation}\label{eq:3.2.5kolntt}
C^\g=C^{\g,H}+C^\kk.
\end{equation}

Let $F=G\times_{K} E$ be a homogeneous vector 
bundle on $X$ defined from a unitary finite-dimensional 
$K^{\sigma}$-representation $(E,\rho^{E})$. 

The operator $C^{\g}$ acts on $C^\infty(X,F)$ via the identification 
\eqref{eq:1.8.4bis}. Let $C^{\g,X}$ denote the action of $C^{\g}$ on 
$C^\infty(X,F)$, which commutes with $G^{\sigma}$. 

Let $\Delta^{H,X}$ denote the Bochner Laplacian acting on 
$C^\infty(X,F)$. Then $C^{\mathfrak{g},H}$ descends to $-\Delta^{H,X}$. Let $C^{\kk,E}\in\mathrm{End}(E)$ denote the 
action of $C^\kk$ on $E$ given by
\begin{equation}
    \label{eq:1.1.9n}
C^{\kk,E}=\sum_{i=m+1}^{m+n} \rho^{E,2}(e_i).
\end{equation}
Then $C^{\kk,E}$ descends to an invariant parallel section $C^{\kk,F}$ of 
$\mathrm{End}(F)$. By \eqref{eq:3.2.5kolntt},
\begin{equation}
C^{\g,X}=-\Delta^{H,X}+C^{\mathfrak{k},F}.
\label{eq:3.6.1pp}
\end{equation}

Let $\kappa^\g\in \Lambda^3(\g^*)$ be such that if $a,b,c\in \g$,
\begin{equation}\label{eq:4.4.2nn}
	\kappa^\g(a,b,c)=B([a,b],c).
\end{equation}
The form $\kappa^\g$ is invariant by the adjoint action of $G\rtimes \Sigma$. We can view $\kappa^\g$ as a closed left and right invariant $3$-form on $G$. 

Let $B^{*}$ be 
the bilinear form on $\Lambda^\bullet(\g^*)$ induced by $B$. 
Let $C^{\kk,\kk}\in\mathrm{End}(\kk)$, 
$C^{\kk,\pp}\in\mathrm{End}(\pp)$ be the actions of $C^{\kk}$ on 
$\kk$, $\pp$ respectively via the adjoint actions of $\kk$ as in 
\eqref{eq:1.1.9n}. By \cite[(2.6.4), (2.6.11)]{bismut2011hypoelliptic}, 
\begin{equation}\label{eq:4.4.5nm}
	B^*(\kappa^\g,\kappa^\g)=\frac{1}{6}
	\sum_{i,j=1}^{m+n} B\left([e_i,e_j],[e^*_i,e^*_j]\right)=\frac{1}{2}\mathrm{Tr^\pp}[C^{\kk,\pp}]
    +\frac{1}{6}\mathrm{Tr^\kk}[C^{\kk,\kk}].
\end{equation}

\begin{definition}\label{def:3.2.1sss}
Let $\mathcal{L}^X$ be the operator acting on $C^\infty(X,F)$,
\begin{equation}\label{ellipticoperator}
\mathcal{L}^X=\frac{1}{2}C^{\g,X}+\frac{1}{8} B^*(\kappa^\g,\kappa^\g).
\end{equation}
Then $\mathcal{L}^X$ commutes with $G^\sigma$.
\end{definition}

Let $A$ be a self-adjoint element of $\mathrm{End}(E)$ which commutes 
with the action of $K^\sigma$. Then $A$ descends to a self-adjoint parallel section of $\mathrm{End}(F)$ which commutes with $G^\sigma$.

\begin{definition}\label{def:LXA}
Let $\mathcal{L}^X_A$ be the operator acting on $C^\infty(X,F)$,
\begin{equation}\label{eq:LXA}
\mathcal{L}^X_A=\mathcal{L}^X+A.
\end{equation}
\end{definition}

It is clear that $\mathcal{L}^X_A$ is a Bochner-like Laplacian. For 
$t>0$, the heat operator $\exp(-t\mathcal{L}^X_A)$ has a smooth kernel $p^X_t(x,x')$ with respect to $dx$ on $X$.

\begin{proposition}\label{prop:kernelinQ}
For $t>0$, $p^X_t\in \mathcal{Q}^\sigma$.
\end{proposition}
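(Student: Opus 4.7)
The plan is to verify the three defining properties of $\mathcal{Q}^\sigma$ for $p^X_t$ in sequence, each of which follows from the $G^\sigma$-invariance of $\mathcal{L}^X_A$ together with standard heat-kernel estimates.

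I would first observe that $\mathcal{L}^X_A$ commutes with the action of $G^\sigma$ on $C^\infty(X,F)$: the Casimir $C^{\g,X}$ commutes with $G^\sigma$ (as noted after \eqref{ellipticoperator}), $B^*(\kappa^\g,\kappa^\g)$ is a scalar, and $A$ is by hypothesis a $G^\sigma$-invariant parallel section of $\mathrm{End}(F)$. Consequently $\exp(-t\mathcal{L}^X_A)$ commutes with $G^\sigma$, and this invariance is inherited by its smooth kernel $p^X_t(x,x')$.

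Next, using the identification $F = G^\sigma \times_{K^\sigma} E$ from \eqref{eq_Fid}, I would lift $p^X_t$ to an $\mathrm{End}(E)$-valued smooth function on $G \times G$. The $G$-invariance then reduces it to a function $q$ of $g^{-1} g'$ alone, which provides the required element of $C(G, \mathrm{End}(E))$. The $K$-biequivariance condition \eqref{kernelk} is immediate from the fiberwise $K$-action on $E$, while the $\sigma$-equivariance \eqref{kernelinvariant2} comes directly from the commutation with $\sigma$ and the fact that $\sigma$ fixes the basepoint $p1 \in X$.

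The only substantive analytic input needed is the Gaussian decay \eqref{kerneldecay}, and this is the step I anticipate to be the main obstacle. I would obtain it from the heat-kernel upper bound on $X$ already established in \cite[Section 4.4]{bismut2011hypoelliptic}: since $X$ has nonpositive sectional curvature and $\mathcal{L}^X_A$ differs from $-\frac{1}{2}\Delta^{H,X}$ by a bounded self-adjoint parallel endomorphism of $F$, a standard off-diagonal bound of the form $|p^X_t(x,x')| \leq C_t \exp(-C'_t d^2(x,x'))$ holds. Since this estimate is precisely what is invoked in defining $p^X_t$ from the cited reference, the verification of the remaining conditions of $\mathcal{Q}^\sigma$ essentially reduces to unwinding definitions.
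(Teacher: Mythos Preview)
Your proposal is correct and follows essentially the same approach as the paper: the paper's proof simply cites \cite[Proposition 4.4.2]{bismut2011hypoelliptic} for membership in $\mathcal{Q}$ (which encapsulates the Gaussian decay and $K$-biequivariance you spell out), and then invokes the commutation of $\mathcal{L}^X$ with $\sigma$ for the additional $\sigma$-equivariance condition \eqref{kernelinvariant2}. Your version is a faithful unpacking of that citation.
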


\begin{proof}
This follows from \cite[Proposition 4.4.2]{bismut2011hypoelliptic} and from the fact that $\mathcal{L}^X$ commutes with the action of $\sigma$.
\end{proof}

It follows from Subsection \ref{section2-4} and Proposition 
\ref{prop:kernelinQ} that for $t>0$, the twisted orbital integral 
$\mathrm{Tr}^{[\gamma\sigma]}[\exp(-t\mathcal{L}^X_A)]$ is 
well-defined. Recall that $p=\dim \pp_{\sigma}(\gamma)$, $q=\dim 
\kk_{\sigma}(\gamma)$.

\begin{theorem}\label{thm_orbitalintegral}
For any $t>0$, the following identity holds:
\begin{equation}
	\begin{split}
	&\mathrm{Tr}^{[\gamma\sigma]}\big[\exp(-t \mathcal{L}_A^X)\big]=\frac{\exp(-|a|^2/2t)}{(2\pi t)^{p/2}}\\
		&\cdot\int_{\kk_\sigma(\gamma)} J_{\gamma\sigma}(Y^{\kk}_0) 
		\mathrm{Tr}^{E}\big[\rho^E(k^{-1}\sigma)\exp(-i\rho^E(Y^\kk_0)-t A)\big]e^{-|Y^\kk_0|^2/2t} \frac{dY^\kk_0}{(2\pi t)^{q/2}}.
	\end{split}
	\label{eq:4.2.1}
\end{equation}
\end{theorem}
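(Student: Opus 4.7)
The strategy closely follows Bismut's approach to the untwisted case in \cite[Chapter 9]{bismut2011hypoelliptic}, using the family of hypoelliptic Laplacians $\mathcal{L}^X_b$ recalled via \eqref{hypoopX00}. The plan is to establish, for $b > 0$ and $t > 0$, an identity of the form
\begin{equation*}
\mathrm{Tr}^{[\gamma\sigma]}[\exp(-t\mathcal{L}^X_A)] = \mathrm{Tr_s}^{[\gamma\sigma]}[\exp(-t(\mathcal{L}^X_b + A))],
\end{equation*}
and then to compute the right-hand side by letting $b \to +\infty$, where the hypoelliptic heat kernel localizes on a geometric fixed set described by Proposition \ref{prop:returnmap}.

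First, I would prove $b$-independence of the hypoelliptic twisted orbital supertrace. Differentiating in $b$ and using the specific structure of \eqref{hypoopX00}, one expresses $\partial_b(\mathcal{L}^X_b + A)$ (after conjugation by a suitable bigrading operator) as a supercommutator of elements in $\mathfrak{Q}^{\sigma,\infty}$, so that Proposition \ref{prop_3.4.3} yields constancy in $b$. The limit $b \to 0$ then recovers the left-hand side using Bismut's convergence theorem for $q^X_{b,t}$ to the heat kernel of $\exp(-t\mathcal{L}^X)$. The twisted setting requires only that $\sigma$ acts by commuting automorphisms on all operators involved, which holds because $\sigma \in \Sigma$ preserves the data $(G,B,\theta)$ entering \eqref{hypoopX00}.

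Second, I would analyze the limit $b \to +\infty$. Using the normal coordinate system of Theorem \ref{thm_normalcoord} and the identification \eqref{eq1.89} from Theorem \ref{thm:1.5.4ugc}, the orbital supertrace is rewritten as an integral over $\pp^{\perp}_\sigma(\gamma) \times \g$ (with a Clifford factor) of the hypoelliptic kernel evaluated at a displaced point. Bismut's uniform estimates on $q^X_{b,t}$ for large $b$ show that the integrand concentrates near the fixed set $\widehat{\mathcal{F}}_{\gamma\sigma} = \widehat{i}_a \mathcal{N}(k^{-1}\sigma)$ of Proposition \ref{prop:returnmap}. The transverse return map \eqref{eq:dvarphi} controls the Gaussian weights in the transverse directions $\pp^{\perp}_{\sigma,0}(\gamma) \oplus \kk^{\perp}_{\sigma,0}(\gamma)$ and $\pp^{\perp}_\sigma(\gamma)$, and produces the determinantal square root in \eqref{eq:Aroot}. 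A Getzler-type rescaling on $\Lambda^{\cdot}(T^*X \oplus N^*)$, combined with the parallel Jacobi equation along the geodesic from $x$ to $\gamma\sigma(x)$, produces the $\widehat{A}$-genus ratio appearing in $J_{\gamma\sigma}(Y_0^{\kk})$; the longitudinal propagation along $X(\gamma\sigma) \simeq \pp_\sigma(\gamma)$ yields the Gaussian factor $\exp(-|a|^2/2t)/(2\pi t)^{p/2}$, and the remaining Gaussian in $Y_0^{\kk} \in \kk_\sigma(\gamma)$ emerges from the $\kk_\sigma(\gamma)$-directions of the fibre where $d((\gamma\sigma)^{-1}\varphi_1)$ acts as the identity.

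The main obstacle will be the uniform control of $q^X_{b,t}$ on the transverse directions as $b \to +\infty$ in the present twisted setting: Bismut's estimates in \cite[Chapters 12, 15]{bismut2011hypoelliptic} must be adapted to the fact that $\gamma\sigma$ acts on $X$ through the composition of a left translation by $\gamma$ with an automorphism $\sigma$ of $G$, rather than by left translation alone. This requires using the trivialization of $TX \oplus N$ over $X(\gamma\sigma)$ from \eqref{eq:1.5.7ugc} together with the modified return map \eqref{eq:dvarphi}, in which $\mathrm{Ad}(k)$ is replaced by $\sigma^{-1}\mathrm{Ad}(k)$ and $\z^\perp$ by $\z_{\sigma}^\perp(\gamma)$. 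Once these twisted estimates are in place, the local index-theoretic computation reduces formally to Bismut's computation in \cite[Chapter 9]{bismut2011hypoelliptic} with $k^{-1}$ replaced by $k^{-1}\sigma$ throughout, yielding exactly the integrand $J_{\gamma\sigma}(Y_0^{\kk})\,\mathrm{Tr}^E[\rho^E(k^{-1}\sigma)\exp(-i\rho^E(Y_0^{\kk}) - tA)]$ in \eqref{eq:4.2.1}.
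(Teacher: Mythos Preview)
Your proposal is correct and follows essentially the same route as the paper: establish the fundamental identity $\mathrm{Tr}^{[\gamma\sigma]}[\exp(-t\mathcal{L}^X_A)]=\mathrm{Tr_s}^{[\gamma\sigma]}[\exp(-t\mathcal{L}^X_{A,b})]$ via $b$-independence and the $b\to 0$ limit, then evaluate the hypoelliptic side as $b\to +\infty$ by localization near $\widehat{\mathcal{F}}_{\gamma\sigma}$, with the twisted setting handled by replacing $k^{-1}$ by $k^{-1}\sigma$ throughout Bismut's computations. Two small precisions: the $b$-independence is obtained directly from $\partial_b\mathcal{L}^X_{A,b}=\tfrac{1}{2}[\mathfrak{D}^X_b,\partial_b\mathfrak{D}^X_b]$ and $[\mathfrak{D}^X_b,\mathcal{L}^X_{A,b}]=0$ (no bigrading conjugation is needed), and the ``Getzler-type rescaling'' is implemented concretely by passing to the conjugate $\underline{\mathcal{L}}^X_{A,b}=F_{-b}\mathcal{L}^X_{A,b}F_{-b}^{-1}$, adding the auxiliary element $\alpha=\sum c(e_i)\underline{e}^i$ to form $\mathfrak{L}^X_{A,b}$, and using the extended supertrace $\widehat{\mathrm{Tr_s}}$ so that $\mathrm{Tr_s}[\gamma\sigma\,\underline{q}^X_b]=b^{-2r}\widehat{\mathrm{Tr_s}}[\gamma\sigma\,\mathfrak{q}^X_b]$; the limit of the latter is then identified with $J_{\gamma\sigma}(Y^\kk_0)$ via the model operator $\mathcal{P}_{a,Y^\kk_0}$ of \cite[Chapter~5]{bismut2011hypoelliptic}.
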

\begin{proof}
	The proof of our theorem will be given in Section 
	\ref{section:proof}.  
\end{proof}

As we explained in Subsection \ref{section2-4}, our twisted orbital 
integral has an expression as an ordinary (un-twisted) orbital integral for a 
larger group $G^{\sigma}$ (cf. \eqref{eq3.35}), whose Lie algebra is 
the semi-direct sum of $\g$ and the Lie algebra of 
$\Sigma^{\sigma}$. One surprising point 
here is that in our formula \eqref{eq:4.2.1}, only the Lie 
subalgebras of $\g$ appears, specially for the cases where $\sigma$ is not 
of finite order. Indeed, in our setting, 
the twist $\sigma$ plays a role of an equivariant action on the vector bundles, 
so that when we apply the local index techniques to prove \eqref{eq:4.2.1}, 
the Lie algebra of $\Sigma^{\sigma}$, as we will see, is not involved 
through the 
computations.

In Section \ref{section5bonn}, we will look into some geometric operators on $X$, 
such as Laplacians for spinors and Hodge Laplacians for flat vector 
bundles. They all can be written as $\mathcal{L}^X_A$ with suitable $A$'s. 
Therefore, we can evaluate the corresponding equivariant heat traces 
via \eqref{eq:tracebonn} and \eqref{eq:4.2.1} for $Z=\Gamma\backslash X$.

\subsection{The twisted orbital integrals of wave operators}\label{section:5.3}
Let $\Delta^{\z_{\sigma}(\gamma)}$ be the standard Laplacian on 
$\z_{\sigma}(\gamma)$ with 
respect to the scalar product on $\z_\sigma(\gamma)$. For $t>0$, let $\exp(t\Delta^{\z_{\sigma}(\gamma)}/2)$ be the 
corresponding heat operator. Let $y$, $Y^\kk_0$ denote the elements in $\pp_{\sigma}(\gamma)$, $\kk_{\sigma}(\gamma)$ respectively. Let $dy dY^\kk_0$ be the Euclidean volume element of $\z_\sigma(\gamma)$, and let $\exp(t\Delta^{\z_{\sigma}(\gamma)}/2)((y,Y^{\kk}_{0}),(y',Y_{0}^{\kk'}))$ denote the smooth kernel of $\exp(t\Delta^{\z_{\sigma}(\gamma)}/2)$ with respect to $dy' dY^{\kk'}_0$.

Let $\delta_{y=a}$ be a distribution on 
$\z_{\sigma}(\gamma)=\pp_{\sigma}(\gamma)\oplus\kk_{\sigma}(\gamma)$ 
associated with the affine subspace $\{y=a\}$. Then 
$J_{\gamma\sigma}(Y^{\kk}_0) 
\rho^E(k^{-1}\sigma)\exp(-i\rho^E(Y^\kk_0))\delta_{y=a}$ is a 
distribution on $\z_{\sigma}(\gamma)$ with values in $\mathrm{End}(E)$. 
Applying the heat operator $\exp(t\Delta^{\z_{\sigma}(\gamma)}/2-t A)$ 
to this distribution, we get a smooth function on 
$\z_{\sigma}(\gamma)$ with values in $\mathrm{End}(E)$. It 
can be evaluated at $0\in\z_{\sigma}(\gamma)$. 
Then Theorem \ref{thm_orbitalintegral} can be rewritten as follows,
\begin{equation}
	\begin{split}
				&\mathrm{Tr}^{[\gamma\sigma]} 
		\big[\exp(-t\mathcal{L}^{X}_{A})\big]=\mathrm{Tr}^{E}\Big[\exp\left(t\Delta^{\z_{\sigma}(\gamma)}/2-tA\right)\\
		&\qquad\qquad\qquad\qquad\left[J_{\gamma\sigma}(Y^{\kk}_0) 
\rho^E(k^{-1}\sigma)\exp\big(-i\rho^E(Y^\kk_0)\big)\delta_{y=a}\right]\Big](0).
	\end{split}
	\label{eq:4.2.3}
\end{equation}

Let $\mathcal{S}(\R)$ be the Schwartz space of $\R$, let 
$\mathcal{S}^{\mathrm{even}}(\R)$ be the space of even functions in 
$\mathcal{S}(\R)$. The Fourier transform of $\mu\in 
\mathcal{S}(\R)$ is given by
\begin{equation}
	\widehat{\mu}(y)=\int_{\R} e^{-2 i \pi y x} \mu(x)dx.
	\label{eq:4.2.4n}
\end{equation}
Take $\mu\in \mathcal{S}^{\mathrm{even}}(\R)$, then $\widehat{\mu}\in 
\mathcal{S}^{\mathrm{even}}(\R)$. We now assume that there exists $C>0$ such that for any $k\in 
\mathbb{N}$, 
there exists $c_{k}>0$ such that
\begin{equation}
	|\widehat{\mu}^{(k)}(y)|\leq c_{k}\exp(-C|y|^{2}).
	\label{eq:4.2.5n}
\end{equation}

Then $\mu\Big(\sqrt{\mathcal{L}^{X}+A}\Big)$ is a self-adjoint operator with 
a smooth kernel. We denote its smooth kernel by 
$\mu\Big(\sqrt{\mathcal{L}^{X}+A}\Big)(x,x')\in \mathrm{Hom}(F_{x'},F_{x})$, 
$x,x'\in X$. As explained in \cite[Section 
6.2]{bismut2011hypoelliptic}, we have
\begin{equation}
\mu\Big(\sqrt{\mathcal{L}^{X}+A}\Big)\in \mathcal{Q}.
\label{eq:5.3.5didot}
\end{equation}
This is a consequence of the finite propagation speed for wave 
operators. We refer to \cite[Section 4.4]{MR618463} for more details.

Since $\sigma$ commutes with $\mathcal{L}^{X}+A$, we can get
$\mu\Big(\sqrt{\mathcal{L}^{X}+A}\Big)\in \mathcal{Q}^{\sigma}$. Then the twisted orbital integral 
$\mathrm{Tr}^{[\gamma\sigma]}\Big[\mu\Big(\sqrt{\mathcal{L}^{X}+A}\Big)\Big]$ is 
well-defined. Similarly, the kernel of 
$\mu\Big(\sqrt{-\Delta^{\z_{\sigma}(\gamma)}/2+A}\Big)$ on 
$\z_{\sigma}(\gamma)$ also has a Gaussian-like decay.

Using Theorem 
\ref{thm_orbitalintegral} and by \eqref{eq:5.1.10pps}, 
\eqref{eq:4.2.3}, \eqref{eq:4.2.5n}, a modification of the 
proof to \cite[Theorem 
6.2.2]{bismut2011hypoelliptic}, using essentially the denseness of 
$y^{2k}e^{-ty^{2}/2}$, $k\in \bN$ in $\mathcal{S}^{\mathrm{even}}(\R)$, shows the following result.

\begin{theorem}\label{thm:general}
	The following identity holds:
	\begin{equation}
		\begin{split}
			\mathrm{Tr}^{[\gamma\sigma]}\Big[\mu\Big(\sqrt{\mathcal{L}^{X}+A}\Big)\Big] = & \mathrm{Tr}^{E}\Big[ \mu \Big(\sqrt{-\Delta^{\z_{\sigma}(\gamma)}/2+A}\Big)J_{\gamma\sigma}(Y^{\kk}_{0}) \\
			&\hspace{2mm} 
			\rho^{E}(k^{-1}\sigma)\exp\big(-i\rho^{E}(Y^{\kk}_{0})\big)\delta_{y=a}\Big](0).
		\end{split}
		\label{eq:4.2.6n}
	\end{equation}
\end{theorem}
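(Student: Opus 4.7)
My plan is to reduce Theorem \ref{thm:general} to the heat-kernel identity of Theorem \ref{thm_orbitalintegral} by continuity plus analytic continuation in a complex parameter. Let $\mathcal{H}$ denote the space of even Schwartz functions $\mu$ whose Fourier transforms satisfy \eqref{eq:4.2.5n}, equipped with the natural Fréchet topology built from the Schwartz seminorms of $\widehat\mu$ and the constants $C, c_k$ in \eqref{eq:4.2.5n}. First, I would check that both sides of \eqref{eq:4.2.6n} are continuous linear functionals of $\mu\in\mathcal{H}$. On the left, \eqref{eq:5.3.5didot} (a consequence of the finite propagation speed for $\sqrt{\mathcal{L}^X+A}$, see \cite[Section 4.4]{MR618463}) gives $\mu(\sqrt{\mathcal{L}^X + A}) \in \mathcal{Q}^\sigma$ with kernel decay quantitatively controlled by the seminorms of $\widehat\mu$, and \eqref{lineardist} then yields continuity of $\mathrm{Tr}^{[\gamma\sigma]}[\mu(\sqrt{\mathcal{L}^X + A})]$. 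On the right, $\mu(\sqrt{-\Delta^{\z_\sigma(\gamma)}/2 + A})$ has a smooth kernel on $\z_\sigma(\gamma)$ with Gaussian decay likewise controlled by $\widehat\mu$; combined with the exponential bound \eqref{eq:5.1.10pps} on $J_{\gamma\sigma}$ and the $\delta_{y=a}$ evaluation, the pairing is absolutely convergent and continuous in $\mu$.

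Second, I would verify \eqref{eq:4.2.6n} on the family $\mu_z(x) = e^{-zx^2}$ for $\mathrm{Re}(z)>0$. For real $z=s>0$ one has $\mu_s(\sqrt{\mathcal{L}^X+A})=\exp(-s\mathcal{L}^X_A)$ and $\mu_s(\sqrt{-\Delta^{\z_\sigma(\gamma)}/2+A}) = \exp(s\Delta^{\z_\sigma(\gamma)}/2-sA)$, so \eqref{eq:4.2.6n} applied to $\mu_s$ is precisely the reformulation \eqref{eq:4.2.3} of Theorem \ref{thm_orbitalintegral}. Both sides of \eqref{eq:4.2.6n} with $\mu=\mu_z$ are holomorphic in $z$ on $\{\mathrm{Re}(z)>0\}$ (both $e^{-z(\mathcal{L}^X+A)}$ and $e^{z\Delta^{\z_\sigma(\gamma)}/2-zA}$ generate holomorphic semigroups whose orbital-integral, respectively pairing, expressions are correspondingly holomorphic), and they agree on the positive real axis, so by analytic continuation the identity extends to the full right half-plane.

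Finally, I would represent an arbitrary $\mu\in\mathcal{H}$ as a superposition of the complex Gaussians $\mu_z$. Starting from Fourier inversion $\mu(x)=\int_{\mathbb{R}}\widehat\mu(\xi)e^{2\pi i x\xi}\,d\xi$ and using the Gaussian bound \eqref{eq:4.2.5n}, one deforms the integration contour into the right half-plane of an auxiliary Gaussian parameter to write $\mu=\int_\Gamma\rho(z)\mu_z\,dz$ over a contour $\Gamma\subset\{\mathrm{Re}(z)>0\}$ with density $\rho$ built from $\widehat\mu$. Interchanging this $z$-integral with the orbital integral on the left and with the Euclidean evaluation on the right, which is permitted by the continuity from the first step together with uniform kernel bounds along $\Gamma$, reduces the general case to the already-established identity for $\mu_z$. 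The main technical obstacle is this last contour-deformation step: one must justify the interchange and verify that the contour stays inside the region of holomorphy while matching the Fourier representation on the real axis. A convenient alternative, closer in spirit to Bismut's treatment in \cite[Chapter 6]{bismut2011hypoelliptic}, is to use the subordination identity
\begin{equation*}
e^{-a\sqrt{\lambda}}=\frac{a}{2\sqrt{\pi}}\int_0^\infty t^{-3/2}e^{-a^2/(4t)}e^{-t\lambda}\,dt,\qquad \mathrm{Re}(a)>0,
\end{equation*}
to first express Poisson-type spectral multipliers of $\sqrt{\mathcal{L}^X+A}$ through heat operators, and then recover a general $\mu\in\mathcal{H}$ by Fourier superposition in $a=2\pi i\xi$.
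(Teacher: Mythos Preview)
Your strategy --- continuity in $\mu$, verification on Gaussians via \eqref{eq:4.2.3}, then extension to general $\mu$ --- is the same outline the paper follows by deferring to \cite[Theorem 6.2.2]{bismut2011hypoelliptic} with the ingredients \eqref{eq:5.1.10pps}, \eqref{eq:4.2.3}, \eqref{eq:4.2.5n}. Steps 1 and 2 are sound.

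Step 3, however, has a genuine gap in both variants you propose. The contour-deformation claim --- passing from $\mu(x)=\int\widehat\mu(\xi)e^{2\pi ix\xi}\,d\xi$ to a representation $\mu=\int_\Gamma\rho(z)e^{-zx^2}\,dz$ with $\Gamma\subset\{\mathrm{Re}\,z>0\}$ --- does not arise from any natural manipulation of the Fourier integral: the kernels $e^{2\pi ix\xi}$ and $e^{-zx^2}$ are not related by a change of variable in $\xi$. Equivalently, since \eqref{eq:4.2.5n} makes $\mu$ entire of order $2$, the function $\lambda\mapsto\mu(\sqrt\lambda)$ is entire of exponential type, so its inverse Laplace transform is a distribution supported on a symmetric interval about $0$, not on $(0,\infty)$; you cannot avoid $\mathrm{Re}\,z\le 0$. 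The subordination route has the same defect: the identity you quote requires $\mathrm{Re}\,a>0$, while recovering $\mu$ by Fourier superposition forces $a=2\pi i\xi$ on the imaginary axis, where $\int_0^\infty t^{-3/2}e^{\pi^2\xi^2/t}e^{-t\lambda}\,dt$ diverges at $t=0$.

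The clean way to close the argument is injectivity rather than an explicit representation. Your step 1 shows that both sides of \eqref{eq:4.2.6n} are continuous linear functionals of $\widehat\mu$ on the space of Schwartz functions with Gaussian decay; by duality they are given by pairing with even distributions $D_L,D_R$ on $\mathbb R$ of at most Gaussian growth. The heat-kernel case $\widehat{\mu_t}(s)=\sqrt{\pi/t}\,e^{-\pi^2s^2/t}$ then reads $\langle D_L-D_R,e^{-us^2}\rangle=0$ for every $u>0$. Pushing the even distribution $D_L-D_R$ forward by $s\mapsto s^2$ and invoking uniqueness of the Laplace transform on this growth class gives $D_L=D_R$, which is exactly \eqref{eq:4.2.6n}. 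No superposition formula is needed.
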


Let 
$\mathrm{Tr}^{[\gamma\sigma]}\Big[\cos\Big(s\sqrt{\mathcal{L}^X+A}\Big)\Big]$ be the even distribution on $s\in \R$ such that for any $\mu\in \mathcal{S}^{\mathrm{even}}(\R)$ with $\widehat{\mu}$ having compact support,
\begin{equation}
\label{eq:4.3.7bonn}
\mathrm{Tr}^{[\gamma\sigma]}\Big[\mu\Big(\sqrt{\mathcal{L}^X+A}\Big)\Big]=\int_\R \widehat{\mu}(s)\mathrm{Tr}^{[\gamma\sigma]}\Big[\cos\Big(s\sqrt{\mathcal{L}^X+A}\Big)\Big]ds.
\end{equation}

Let $P^\perp_{\sigma}(\gamma)\subset X$ be the image of 
$\pp^{\perp}_{\sigma}(\gamma)$ by the map $f\rightarrow pe^{f}$. Put
\begin{equation}
	\Delta^{\gamma\sigma}_{X}=\{(x,\gamma\sigma(x))\;:\; x\in 
	P^\perp_{\sigma}(\gamma)\}.
	\label{eq:sousvariete}
\end{equation}
Then $\Delta^{\gamma\sigma}_{X}$ is a submanifold of $X\times X$. We view $\R\times \Delta^{\gamma\sigma}_{X}$ as a distribution on $\R\times X\times X$. 
By 
analyzing the wave front sets for both 
$\cos\Big(s\sqrt{\mathcal{L}^X+A}\Big)$ and $\R\times 
\Delta^{\gamma\sigma}_{X}$ (\cite[Theorem 8.2.10]{MR1996773}, 
\cite[Theorem 23.1.4]{hormander2007analysis}), we get that the 
distribution $\gamma\sigma\cos\Big(s\sqrt{\mathcal{L}^X+A}\Big)(\R\times 
\Delta^{\gamma\sigma}_{X})$ is well-defined on $\R\times X\times X$. 
Using again the finite propagation speed of 
$\cos\Big(s\sqrt{\mathcal{L}^X+A}\Big)$, we see that the push-forward of 
$\mathrm{Tr}^F\Big[\gamma\sigma\cos\Big(s\sqrt{\mathcal{L}^X+A}\Big)\Big](\R\times \Delta^{\gamma\sigma}_{X})$ by the projection $\R\times X\times X\rightarrow \R$ is well-defined, which will be denoted by
\begin{equation}
\label{eq:4.3.13bonn}
\int_{\Delta^{\gamma\sigma}_{X}} \mathrm{Tr}^F\Big[\gamma\sigma 
\cos\Big(s\sqrt{\mathcal{L}^X+A}\Big)\Big].
\end{equation}

By \eqref{orbitaldef1}, \eqref{eq:4.3.7bonn}, \eqref{eq:sousvariete}, we have the identity of even distributions on $\R$,
\begin{equation}
\label{eq:4.3.14bonn}
\mathrm{Tr}^{[\gamma\sigma]}\Big[\cos\Big(s\sqrt{\mathcal{L}^X+A}\Big)\Big]=\int_{\Delta^{\gamma\sigma}_{X}} \mathrm{Tr}^F\Big[\gamma\sigma \cos\Big(s\sqrt{\mathcal{L}^X+A}\Big)\Big].
\end{equation}

The even 
distribution on $\R$,
\begin{equation}
	\mathrm{Tr}^{E}\Big[ \cos 
	\Big(s\sqrt{-\Delta^{\z_{\sigma}(\gamma)}/2+A}\Big)J_{\gamma\sigma}(Y^{\kk}_{0})\rho^{E}(k^{-1}\sigma)\exp\big(-i\rho^{E}(Y^{\kk}_{0})\big)\delta_{y=a}\Big](0)
	\label{eq:4.3.14hh}
\end{equation}
is defined by
\begin{equation}
	\begin{split}
			\mathrm{Tr}^{E}\Big[ \mu 
	\Big(\sqrt{-\Delta^{\z_{\sigma}(\gamma)}/2+A}\Big)J_{\gamma\sigma}(Y^{\kk}_{0})\rho^{E}(k^{-1}\sigma)\exp\big(-i\rho^{E}(Y^{\kk}_{0})\big)\delta_{y=a}\Big](0)&\\
	=\int_{\R}\widehat{\mu}(s)	\mathrm{Tr}^{E}\Big[ \cos 
	\Big(2\pi 
	s\sqrt{-\Delta^{\z_{\sigma}(\gamma)}/2+A}\Big)J_{\gamma\sigma}(Y^{\kk}_{0})&\\
	\rho^{E}(k^{-1}\sigma)\exp\big(-i\rho^{E}(Y^{\kk}_{0})\big)\delta_{y=a}\Big](0).&
	\end{split}
	\label{eq:4.3.15hhs}
\end{equation}

Let 
$(a,\kk_{\sigma}(\gamma))$ denote the affine subspace of 
$\z_{\sigma}(\gamma)=\pp_{\sigma}(\gamma)\oplus 
\kk_{\sigma}(\gamma)$. Set
\begin{equation}
	H^{\gamma}_{\sigma}= \{0\}\times (a,\kk_{\sigma}(\gamma))\subset 
	\z_{\sigma}(\gamma)\times\z_{\sigma}(\gamma).
	\label{eq:Haffine}
\end{equation}
Then we have the tautological 
identification of even distributions on $\R$,
\begin{equation}
	\begin{split}			
	&\mathrm{Tr}^{E}\Big[ \cos 
	\Big(s\sqrt{-\Delta^{\z_{\sigma}(\gamma)}/2+A}\Big)J_{\gamma\sigma}(Y^{\kk}_{0})\rho^{E}(k^{-1}\sigma)\exp\big(-i\rho^{E}(Y^{\kk}_{0})\big)\delta_{y=a}\Big](0)\\
	&=\int_{H_{\sigma}^{\gamma}} \mathrm{Tr}^{E}\Big[ \cos 
	\Big(s\sqrt{-\Delta^{\z_{\sigma}(\gamma)}/2+A}\Big)J_{\gamma\sigma}(Y^{\kk}_{0})\rho^{E}(k^{-1}\sigma)\exp\big(-i\rho^{E}(Y^{\kk}_{0})\big)\Big].
	\end{split}
	\label{eq:4.3.16hhs}
\end{equation}
This is an analogue of \eqref{eq:4.3.14bonn}.

Following the above constructions, we extend 
\cite[Theorem 6.3.2]{bismut2011hypoelliptic} for the twisted orbital 
integrals, where the supports and singular supports of the above 
distributions are obtained as in \cite[Proposition 
6.3.1]{bismut2011hypoelliptic}.
\begin{theorem}\label{thm:waveoperator}
	We have the identity of even distributions on $\mathbb{R}$ 
	supported on $\{s\in\mathbb{R}\,:\, |s|\geq \sqrt{2}|a|\}$ with 
	singular support included in $\pm \sqrt{2}|a|$,
	\begin{equation}
	\begin{split}
		&\int_{\Delta^{\gamma\sigma}_{X}}\mathrm{Tr}^{F}\Big[\gamma\sigma 
		\cos\Big(s\sqrt{\mathcal{L}^{X}+A}\Big)\Big]\\
		&\;\;\;\;=\int_{H^{\gamma}_{\sigma}} 
		\mathrm{Tr}^{E}\Big[\cos\Big(s\sqrt{-\Delta^{\z_{\sigma}(\gamma)}/2+A}\Big)J_{\gamma\sigma}(Y^{\kk}_{0})\rho^{E}(k^{-1}\sigma)\exp\big(-i\rho^{E}(Y^{\kk}_{0})\big)\Big].
	\end{split}
		\label{eq:waveop}
	\end{equation}
\end{theorem}

\subsection{Representation of $K^{\sigma}$ and vanishing of twisted 
orbital integrals}\label{newsection}

In Subsections \ref{section4.1} and \ref{section2-4}, for the 
twisted orbital integral, we always start with a 
$K^{\sigma}$-representation $\rho^{E}$. Now we 
study the irreducible representations of $K^{\sigma}$ and show that 
only $\sigma$-stable irreducible representations of $K$ give the
non-vanishing twisted orbital integrals. Let 
	$\mathrm{Irr}(\cdot)$ denote the set of equivalent classes of 
	irreducible (complex) representations of a compact Lie group.

\begin{proposition}\label{lm:Krepbonn}
If $(E,\rho^E)\in\mathrm{Irr}(K^{\sigma})$ and if the restriction of 
$(E,\rho^E)$ to $K$ is not 
irreducible, then for $k\in K$, we have
\begin{equation}
\label{eq:Krepbonn}
\mathrm{Tr}^E[\rho^E(\sigma)\rho^E(k)]=0.
\end{equation}
\end{proposition}
\begin{proof}
	At first, we assume that $K$ is semisimple. Let $\mathrm{Inn}(K)$ 
	denote the inner automorphism group of $K$. The outer 
	automorphism group of $K$ is
\begin{equation}
\label{eq:5.1.29bonn}
\mathrm{Out}(K)=\mathrm{Aut}(K)/\mathrm{Inn}(K).
\end{equation}
By fixing a maximal torus $T$ of $K$ and an associated positive root 
system $R^+$, $\mathrm{Out}(K)$ can be realized as a finite 
subgroup of $\mathrm{Aut}(K)$ whose elements preserve $T$ and $R^+$ \cite[Chapter VIII, \S 4.4 and Chapter 
IX, \S 4.10]{bourbaki2004lie}. 
Moreover,
\begin{equation}
\label{eq:5.1.30bonn}
\mathrm{Aut}(K)=\mathrm{Inn}(K)\rtimes \mathrm{Out}(K).
\end{equation}

Take $k_0\in K$, $\tau\in \mathrm{Out}(K)$ such that for $k\in K$,
\begin{equation}
\label{eq:5.1.31bonn}
\sigma(k)=k_0\tau(k)k^{-1}_{0}.
\end{equation}
Let $K^\tau$ be the subgroup of $K\rtimes\mathrm{Out}(K)$ generated 
by $K$ and $\tau$.
We claim that there exists $c_\tau\in \bbC$ such that if set
\begin{equation}\label{eq:tauvalue}
\rho^{E,\prime}(\tau)=c_\tau \rho^E(k_0^{-1})\rho^E(\sigma),\; \; 
\rho^{E,\prime}(k)=\rho^E(k),
\end{equation}
then $(E,\rho^{E,\prime})$ is an irreducible representation of $K^\tau$. Note that such number $c_\tau$ is not unique, it depends on the order of $\tau$ and the choice of $k_0$.

Indeed, set
\begin{equation}\label{eq:Ataubonn}
A=\rho^E(k_0^{-1})\rho^E(\sigma)\in\mathrm{End}(E).
\end{equation}
Let $N_0\geq 1$ be the order of $\tau$ in $\mathrm{Out}(K)$.
Set
\begin{equation}\label{eq:5.1.34bonn}
\widehat{k}=k_0\tau(k_0)\cdots\tau^{N_0-1}(k_0)\in K.
\end{equation}
Then 
\begin{equation}
\label{eq:5.1.35bonn}
\sigma(\widehat{k})=\widehat{k}\in K,\; 
\sigma^{N_0}=\mathrm{Ad}(\widehat{k})\in \mathrm{Inn}(K).
\end{equation}

Also we have
\begin{equation}
\label{eq:5.1.36bonn}
A^{N_0}=\rho^{E}(\widehat{k}^{-1})\rho^{E}(\sigma^{N_0}).
\end{equation}
We can verify directly that $A^{N_0}$ commutes with $K^\sigma$. Since 
$(E,\rho^E)$ is irreducible as $K^\sigma$-representation, then 
$A^{N_0}$ is a non-zero scalar endomorphism of $E$, then we take 
$c_\tau\in \bbC^{*}$ such that $c_\tau^{N_0}A^{N_0}=\mathrm{Id}_E$.

We define $\rho^{E,\prime}$ as in \eqref{eq:tauvalue}. Then for $k\in 
K$, 
\begin{equation}\label{eq:5.1.38bonn}
\rho^{E,\prime}(\tau)\rho^{E,\prime}(k)\rho^{E,\prime}(\tau^{-1})=\rho^{E,\prime}(\tau(k)).
\end{equation}
Therefore, $(E,\rho^{E,\prime})$ become an irreducible representation 
of $K^\tau$.

By \eqref{eq:tauvalue}, for any $k\in K$, we have
\begin{equation}
	\mathrm{Tr}^{E}[\rho^{E,\prime}(\tau)\rho^{E,\prime}(k)]=\mathrm{Tr}^{E}[c_{\tau}\rho^{E}(k^{-1}_{0})\rho^{E}(\sigma)\rho^{E}(k)]=c_{\tau}\mathrm{Tr}^{E}[\rho^{E}(\sigma)\rho^{E}(kk^{-1}_{0})].
\end{equation}	
Note that $c_{\tau}\neq 0$. Then for proving \eqref{eq:Krepbonn}, it is equivalent to prove that for all $k\in 
K$, 
\begin{equation}\label{eq:5.1.39bonn}
\mathrm{Tr}^E[\rho^{E,\prime}(\tau)\rho^{E,\prime}(k)]=0.
\end{equation}

In the sequel, we prove \eqref{eq:5.1.39bonn}. Let $P_{++}$ be the dominant weights for the pair $(K,T)$ with 
respect to $R^+$. Then $\tau$ acts on $P_{++}$. If $\lambda\in 
P_{++}$, let $V_{\lambda}\in\mathrm{Irr}(K)$ denote the one with the highest weight $\lambda$.

Now we take a dominant weight $\lambda\in P_{++}$ such that 
$V_\lambda$ embeds into 
$(E,\rho^E)$ as a $K$-subrepresentation. Let 
$\{\tau^{i}(\lambda)\}_{i=0}^{d-1}\subset P_{++}$ be the orbit of 
$\lambda$ under the action of $\tau$.  Note that $d\geq 1$ is the 
length of the orbit and $d\,|\,N_0$. By the description of all the 
irreducible representations of non-connected compact Lie groups in 
\cite[Corollary 4.13.2 and Proposition 4.13.3]{Duistermaat_2000}, 
we get that the representation $(E,\rho^{E,\prime})$ restricting on $K$ is of the form 
\begin{equation}
\label{eq:decompEtau}
\oplus_{i=0}^{d-1} V_{\tau^{i}(\lambda)}.
\end{equation}
Moreover, the action $\rho^{E,\prime}(\tau)$ on $E$ sends the 
component $V_{\tau^{i}(\lambda)}$ to $V_{\tau^{i+1}(\lambda)}$. As a 
consequence, we get \eqref{eq:5.1.39bonn}.

	If $K$ is not semisimple (but always reductive), let $Z_{K}^{0}$ be the 
	identity component of the center of $K$, and let 
	$K_{\mathrm{ss}}$ be the analytic subgroup of $K$ associated with $\kk_{\mathrm{ss}}=[\kk,\kk]$. Then 
	$Z_{K}^0\times K_{\mathrm{ss}}$ is a finite cover of $K$. Note 
	that $Z_{K}^{0}$ is a torus, the 
	action of $\sigma$ on it is of finite order. Then if we 
	proceed as in the above for $K_{\mathrm{ss}}$, we can still apply \cite[Corollary 4.13.2 and Proposition 
	4.13.3]{Duistermaat_2000} to get \eqref{eq:decompEtau} and then 
	\eqref{eq:5.1.39bonn}. This completes the proof of our proposition.
\end{proof}

By our formula in Theorem \ref{thm_orbitalintegral}, the integrand 
contains a term $\mathrm{Tr}^{E}[\rho^{E}(\sigma)\cdots]$. Then we get the following result.
\begin{corollary}
	Let $F$ be the Hermitian vector bundle on $X$ defined from 
	$(E,\rho^E)\in\mathrm{Irr}(K^{\sigma})$ which is not irreducible 
	as $K$-representation. Then for semisimple $\gamma\sigma$ as 
	before, and for $t>0$,
	\begin{equation}
		\mathrm{Tr}^{[\gamma\sigma]}[\exp(-t \mathcal{L}_A^X)]=0.
		\label{eq:3.4.16evian}
	\end{equation}
	
	Moreover, if $\mu\in \mathcal{S}^{\mathrm{even}}(\R)$ is such that 
	\eqref{eq:4.2.5n} holds, then
	\begin{equation}
		\mathrm{Tr}^{[\gamma\sigma]}\Big[\mu\Big(\sqrt{\mathcal{L}^{X}+A}\Big)\Big] =0.
	\end{equation}
\end{corollary}

\begin{remark}
	In \cite[Section 4.13]{Duistermaat_2000}, a Weyl character 
	formula for the non-connected compact Lie group (such as 
	$K^{\tau}$) was established. Then,
	via \eqref{eq:tauvalue}, the 
	trace term $\mathrm{Tr}^{E}[\rho^{E}(\sigma)\cdots]$ in 
	\eqref{eq:4.2.1} can be written in terms of $\lambda$ and the root data 
	associated with $(K,T)$. In Subsection \ref{ss:5.2elliptic}, we use this observation to 
	evaluate the twisted orbital integrals more explicitly in the 
	geometric context. 
\end{remark}

The proof of Proposition \ref{lm:Krepbonn} also gives a 
correspondence between 
$\mathrm{Irr}(K^{\sigma})$ and $\tau$-orbits in $P_{++}$. For simplicity, we assume $K$ to be 
semisimple. Note that $\tau$ generates a finite group 
$\langle\tau\rangle$ in $\mathrm{Out}(K)$. The set 
$\mathrm{Irr}(\langle\tau\rangle)$ can be viewed as a finite abelian group 
($\simeq \Z/{N_{0}}\Z$), it acts on $\mathrm{Irr}(K^{\tau})$ by 
tensor product of representations. By \cite[Corollary 4.13.2 and 
Proposition 4.13.3]{Duistermaat_2000}, we have the canonical 
bijection,
	\begin{equation}
	 \mathrm{Irr}(\langle 
		\tau\rangle)\backslash 
		\mathrm{Irr}(K^{\tau})\simeq  \langle 
		\tau\rangle\backslash P_{++}.
		\label{eq:3.4.14paris}
	\end{equation}

Similarly, 
$\mathrm{Irr}(\Sigma^{\sigma})$ acts on $\mathrm{Irr}(K^{\sigma})$. 
Then the construction given by \eqref{eq:tauvalue} implies an 
injective map
	\begin{equation}
		\mathrm{Irr}(\Sigma^{\sigma})\backslash 
		\mathrm{Irr}(K^{\sigma})\rightarrow \mathrm{Irr}(\langle 
		\tau\rangle)\backslash 
		\mathrm{Irr}(K^{\tau}).
		\label{eq:3.4.15paris}
	\end{equation}
Now we explain that the map in \eqref{eq:3.4.15paris} is also a 
bijection. By \eqref{eq:3.4.14paris}, we consider a $\tau$-orbit in 
$P_{++}$, and let $\lambda$ be one element in this orbit. Let 
$\mathrm{Ind}^{K^{\sigma}}_{K}(V_{\lambda})$ be the induced 
$K^{\sigma}$-representation, and let $(E,\rho^{E})$ be a 
$K^{\sigma}$-irreducible component of it, which contains a 
$V_{\lambda}$-component when restricting to $K$. Then, by the 
arguments as in \eqref{eq:tauvalue} - \eqref{eq:decompEtau}, we get 
the representation $(E,\rho^{E})\in \mathrm{Irr}(K^{\sigma})$ 
corresponds exactly to the $\tau$-orbit of $\lambda$ in $P_{++}$. Therefore, the 
map in \eqref{eq:3.4.15paris} is surjective, then a bijection.

\begin{proposition}
	If $(E,\rho^{E})$ is a finite dimensional unitary 
	$K$-representation, then it can extend to an irreducible 
	representation of $K^{\sigma}$ if and only if the highest weights 
	of its $K$-irreducible components form exactly one $\tau$-orbit 
	in $P_{++}$. Therefore, an irreducible $K^{\sigma}$-representation 
	is also $K$-irreducible if and only if its highest weight is 
	fixed by $\tau$.
\end{proposition}

\subsection{Examples from cyclic base change 
theory}\label{section:basechange}
The twisted orbital integral plays an important role in the cyclic base change 
theory, where $\sigma$ is of finite 
order. The typical examples are as follows,
\begin{itemize}
\item a connected semisimple complex linear Lie group $G_{\bbC}$, where $\sigma$ is taken to be 
the conjugation of a matrix and its fixed point set is just the the real 
matrix subgroup $G_{\R}$;
\item the product case where $G=G^{\ell}_{0}$ is given as $\ell$-copies of a 
connected real semisimple Lie group $G_{0}$ and $\sigma$ is given as 
the cyclic permutation of the copies. The simplest case is $\ell=2$.
\end{itemize}

In this subsection, we focus on such examples and explain how 
to make use of our formula in Theorem \ref{thm_orbitalintegral}. In 
particular, we show via elementary computations how the twisted orbital 
integrals (for $G_{\bbC}$ or $G^{\ell}_{0}$) relate to the ordinary 
orbital integrals (for $G_{\R}$ or $G_{0}$). Note that we have no 
any regularity condition on the semisimple element $\gamma\sigma$.

\begin{example}[Complex semisimple Lie group and matrix conjugation]
	Let $N\in\bN$ be large integer. Let $G=G_{\bbC}\subset \mathrm{GL}(N,\bbC)$ be 
	a connected and simply connected semisimple linear algebraic group which is invariant 
	under transpose and conjugation. Then the Cartan involution is 
	given as $\theta(A)=(\bar{A}^{T})^{-1}$, where $(\cdot)^{T}$ 
	denotes the matrix transpose. 
	
	We also view $G_{\bbC}$ as a real semisimple Lie group with (real) Lie 
	algebra $\g$, the 
	bilinear form $B$ on $\g$ is taken to be the real trace form, 
	which is equal to the real part of the complex Killing form on 
	$\g_{\bbC}$ up a positive multiple. Let 
	$\sigma\in\mathrm{Aut}(G_{\bbC})$ be such that $\sigma(A)=\bar{A}$. 
	Then its fixed points are exactly the real points of 
	$G_{\bbC}$, denoted by $G_{\R}$, the subgroup of real 
	matrices in $G_{\bbC}$. 
	Alternatively speaking, $G_{\R}$ is a split real form of 
	$G_{\bbC}$, and $G_{\bbC}$ is the complexification 
	of $G_{\R}$. We will put $X_{\bbC}=G_{\bbC}/K$, 
	$X_{\R}=G_{\R}/K_{\R}$.
	
	Let $\g_{\R}=\pp_{\R}\oplus\kk_{\R}$ denote the Cartan 
	decomposition of Lie algebra of $G_{\R}$, and let $K_{\R}\subset 
	G_{\R}$ be the maximal compact subgroup corresponding $\kk_{\R}$.
	Then
	\begin{equation}
		\g=\g_{\R}\oplus i\g_{\R},
	\end{equation}
	and the Cartan decomposition is given by
	\begin{equation}
		\g=\pp\oplus\kk,\; \pp=\pp_{\R}\oplus i\kk_{\R},\; 
		\kk=\kk_{\R}\oplus i\pp_{\R}.
	\end{equation}
	
	Then the maximal compact subgroup $K$ (with Lie algebra $\kk$) of 
	$G_{\bbC}$ is just the compact real form of $G_{\bbC}$, and also the 
	compact form of $G_{\R}$. Moreover, it 
	is simply connected. For $\gamma\in G_{\bbC}$, if $\gamma\sigma$ is 
	semisimple if and only if $\gamma\sigma(\gamma)\in G_{\bbC}$ is 
	semisimple (cf. \cite[Lemme 2.2]{Clozel1982}). Here, we consider 
	the elliptic element $\sigma$ itself, for which the associated twisted 
	orbital integral $\mathrm{Tr}^{[\sigma]}[\cdot]$ has been studied 
vastly (cf. \cite[\S 
8]{CM_1991__80_2_197_0}, \cite{CM_1992__81_3_261_0}, 
\cite{BeLip2017}, etc).
	In previous notation, we have 
	$Z_{\sigma}(1)=G_{\R}$, $K_{\sigma}(1)=K_{\R}$. Set $p=\dim 
	\pp_{\R}$, $q=\dim \kk_{\R}$.
	
	We consider the following representations of $G_{\bbC}$. Let $(E_{0},\rho_{0})$ be a finite 
	dimensional holomorphic representation of $G_{\bbC}$, the unitary 
	trick implies that the restriction of $\rho_{0}$ to $K$ 
	or $G_{\R}$ determines uniquely $\rho_{0}$. Let 
	$(E_{0}^{\sigma}:=E_{0},\rho^{\sigma}_{0})$ be the representation of 
	$G_{\bbC}$ twisted by $\sigma$, i.e., 
	$\rho^{\sigma}_{0}(g)=\rho_{0}(\sigma(g))$, $g\in G_{\bbC}$. For 
	$v_{1},v_{2}\in E_{0}$, set
	\begin{equation}
		\rho^{E}(\sigma)(v_{1}\otimes v_{2})=v_{2}\otimes v_{1}\in 
		E_{0}\otimes E^{\sigma}_{0}.
	\end{equation}
	This way, we get a representation $(E,\rho^{E}):=(E_{0}\otimes 
	E^{\sigma}_{0},\rho_{0}\otimes \rho^{\sigma}_{0})$ of 
	$(G_{\bbC})^{\sigma}=G_{\bbC}\rtimes \{1,\sigma\}$. Taking a 
	$K$-invariant Hermitian metric on $E_{0}$, we make $(E,\rho^{E})$ as a unitary representation of $K^{\sigma}=K\rtimes \{1,\sigma\}$. We 
	consider the Laplacian $\mathcal{L}^{X_{\bbC},F}$ acting on 
	$C^{\infty}(X_{\bbC},F=G_{\bbC}\times_{K}E)$ defined in \eqref{ellipticoperator}.
	
	For $Y\in \kk_{\R}$, the $J$-function $J^{G_{\R}}_{1}$ for the 
	identity element $1\in G_{\R}$ is 
	\begin{equation}
		J^{G_{\R}}_{1}(Y)=\frac{\widehat{A}\big(i\mathrm{ad}(Y)|_{\pp_{\R}}\big)}{\widehat{A}\big(i\mathrm{ad}(Y)|_{\kk_{\R}}\big)}.
	\end{equation}
	Note that one should not confuse the imaginary unit $i$ appearing in 
	the $J$-functions with the one in the Lie algebra $\g$.
	
	An elementary computation shows that as a function in $Y\in 
	\kk_{\R}$, 
	\begin{equation}
		\widehat{A}\big(i\mathrm{ad}(Y)|_{\pp_{\R}}\big)\left[\frac{1}{\det(1+e^{-i\mathrm{ad}(Y)})|_{\pp_{\R}}}\right]^{1/2}=\frac{1}{2^{p/2}}\widehat{A}\big(i\mathrm{ad}(2Y)|_{\pp_{\R}}\big).
		\label{eq:3.4.5tt}
	\end{equation}
	Similar for $\widehat{A}\big(i\mathrm{ad}(Y)|_{\kk_{\R}}\big)$. The twist 
	$\sigma$ acts on $i\pp_{\R}\oplus i\kk_{\R}$ as $-1$. Let $J_{\sigma}$ be the $J$-function associated with $\sigma$ and 
	$G_{\bbC}$, then for $Y\in \kk_{\R}=\kk_{\sigma}(1)$,
	\begin{equation}
		J_{\sigma}(Y)=\frac{1}{2^{p}} 
		J^{G_{\R}}_{1}(2Y)\left[\frac{\det(1+e^{-i\mathrm{ad}(Y)})|_{\pp_{\R}}}{\det(1+e^{-i\mathrm{ad}(Y)})|_{\kk_{\R}}}\right].
	\end{equation}
	
	For the trace of $\rho^{E}$, we have, for $Y\in \kk_{\R}$,
	\begin{equation}
		\mathrm{Tr}^{E}\big[\rho^{E}(\sigma)\exp(-i\rho^{E}(Y))\big]=\mathrm{Tr}^{E_{0}}\big[\exp(-i\rho^{E_{0}}(2Y))\big].
	\end{equation}

	By \eqref{eq:4.2.1} in our theorem, we have, for $t>0$,
	\begin{equation}
		\begin{split}
				\mathrm{Tr}^{[\sigma]}[\exp(-t\mathcal{L}^{X_{\bbC},F})]
				=\frac{1}{(8\pi 
				t)^{p/2}}\int_{\kk_{\R}}J^{G_{\R}}_{1}(2Y)\left[\frac{\det(1+e^{-i\mathrm{ad}(Y)})|_{\pp_{\R}}}{\det(1+e^{-i\mathrm{ad}(Y)})|_{\kk_{\R}}}\right]&\\
				\cdot\mathrm{Tr}^{E_{0}}\big[e^{-i\rho^{E_{0}}(2Y)}\big]e^{-\frac{|Y|^{2}}{2t}}\frac{dY}{(2\pi t)^{q/2}}.&
		\end{split}
		\label{eq:3.4.6new}
	\end{equation}
	
	As we will see in Section \ref{section5bonn}, after twisting 
	$(E,\rho^{E})$ with the graded (virtual) 
	$K^{\sigma}$-representations $\rho^{\Lambda^\bullet(\pp^{*})}$ on
	$\sum_{j}(-1)^{j}\Lambda^{j}(\pp^{*})$ or 
	$\sum_{j}(-1)^{j}j\Lambda^{j}(\pp^{*})$, the denominator 
	$\det(1+e^{-i\mathrm{ad}(Y)})|_{\kk_{\R}}$ can be canceled out 
	properly. Such constructions, in geometric setting, appear in 
	the evaluations of Lefschetz numbers or equivariant real analytic 
	torsions. We will use $\mathrm{Tr_{s}}^{[\bullet]}[\cdots]$ with 
	the subscript $\mathrm{s}$ to
	denote the (twisted) orbital integrals which take the supertrace 
	of the endomorphisms of the $\Z_{2}$-graded vector bundles.

	If $\kt_{\R}$ is a Cartan subalgebra of $\kk_{\R}$, put 
	$$\kb_{\R}=\{f\in\pp_{\R}\;:\; [f,v]=0, \;\mathrm{for\;all\;} v\in\kt_{\R}\}.$$ 
	Then $\kt_{\R}\oplus\kb_{\R}$ is Cartan subalgebra of $\g_{\R}$ 
	(\cite[pp.129]{KnappRep1986}), 
	and the fundamental rank $\delta(G_{\R})$ is defined as 
	$\dim_{\R}\kb_{\R}$. Let $N^{\Lambda^{\bullet}(\pp^{*})}$ denote the number operator on 
	$\Lambda^{\bullet}(\pp^{*})$ which 
	acts on $\Lambda^{j}(\pp^{*})$ as multiplication by $j$. Then we have the following identities for 
	$Y\in\kk_{\R}$,
	\begin{equation}
		\begin{split}
\left[\frac{\det(1+e^{-i\mathrm{ad}(Y)})|_{\pp_{\R}}}{\det(1+e^{-i\mathrm{ad}(Y)})|_{\kk_{\R}}}\right]&\mathrm{Tr_{s}}^{\Lambda^{\bullet}(\pp^{*})}\big[\rho^{\Lambda^\bullet(\pp^{*})}(\sigma)e^{-i\rho^{\Lambda^\bullet(\pp^{*})}(Y)}\big] \\
&= \begin{cases}
\mathrm{Tr_{s}}^{\Lambda^{\bullet}(\pp_{\R}^{*})}\big[e^{-i\rho^{\Lambda^\bullet(\pp_{\R}^{*})}(2Y)}\big] &\text{if $\delta(G_{\R})=0$};\\
0 &\text{if $\delta(G_{\R})\geq 1$},
\end{cases}
\end{split}
\label{eq:3.4.9volvic}
\end{equation}
and
\begin{equation}
\begin{split}
\left[\frac{\det(1+e^{-i\mathrm{ad}(Y)})|_{\pp_{\R}}}{\det(1+e^{-i\mathrm{ad}(Y)})|_{\kk_{\R}}}\right]&\mathrm{Tr_{s}}^{\Lambda^{\bullet}(\pp^{*})}\big[N^{\Lambda^{\bullet}(\pp^{*})}\rho^{\Lambda^\bullet(\pp^{*})}(\sigma)e^{-i\rho^{\Lambda^\bullet(\pp^{*})}(Y)}\big] \\
& = \begin{cases}
\big(\dfrac{p+q}{2}\big)\mathrm{Tr_{s}}^{\Lambda^{\bullet}(\pp_{\R}^{*})}\big[e^{-i\rho^{\Lambda^\bullet(\pp_{\R}^{*})}(2Y)}\big] &\text{if $\delta(G_{\R})=0$};\\
2\mathrm{Tr_{s}}^{\Lambda^{\bullet}(\pp_{\R}^{*})}\big[N^{\Lambda^{\bullet}(\pp_{\R}^{*})}e^{-i\rho^{\Lambda^\bullet(\pp_{\R}^{*})}(2Y)}\big] &\text{if $\delta(G_{\R})=1$};\\
0 &\text{if $\delta(G_{\R})\geq 2$}.
\end{cases}		
\end{split}
\label{eq:3.4.9paris}
\end{equation}
We briefly explain how to obtain the above identities. Note that if $g$ is an 
isometry of a finite dimensional Euclidean space $V$, then 
\begin{equation}
	\begin{split}
		&\mathrm{Tr_{s}}^{\Lambda^{\bullet}(V^{*})}[g]=\det(1-g^{-1})|_{V},\\
		&\mathrm{Tr_{s}}^{\Lambda^{\bullet}(V^{*})}\big[N^{\Lambda^{\bullet}(V^{*})}g\big]=\frac{\partial}{\partial s}|_{s=0} \det(1-g^{-1}e^{s})|_{V}.
	\end{split}
	\label{eq:4.5.10volvic}
\end{equation}
Moreover, if $V$ is even-dimensional and $g$ preserves the 
orientation, or if $V$ is odd-dimensional and $g$ reverses the 
orientation, then
\begin{equation}
	\mathrm{Tr_{s}}^{\Lambda^{\bullet}(V^{*})}\Big[\big(N^{\Lambda^{\bullet}(V^{*})}-\frac{\dim V}{2}\big)g\Big]=0.
	\label{eq:4.5.11volvic}
\end{equation}
Due to 
the invariance by adjoint action of $K_{\R}$, we only need to prove 
\eqref{eq:3.4.9volvic}, \eqref{eq:3.4.9paris}
for $Y\in \kt_{\R}$. In this case, $\mathrm{ad}(Y)$ acts $\kb_{\R}$ 
as zero. Note that $\pp=\pp_{\R}\oplus i\kk_{\R}$, then the first part of \eqref{eq:3.4.9volvic} follows directly 
from the first identity in \eqref{eq:4.5.10volvic}. Using further 
\eqref{eq:4.5.11volvic}, we get the first case ($\delta(G_{\R})=0$) 
in \eqref{eq:3.4.9paris}. The case where $\delta(G_{\R})=\dim_{\R} \kb_{\R}\geq 
2$ follows from the second identity in \eqref{eq:4.5.10volvic}. 
Finally, when $\delta(G_{\R})=1$, $\kb_{\R}$ is a real line, then, 
by taking the orthogonal splitting 
$\pp_{\R}=\kb_{\R}\oplus\kb_{\R}^{\perp}$, the corresponding result in 
\eqref{eq:3.4.9paris} follows from 
\begin{equation}
	\mathrm{Tr_{s}}^{\Lambda^{\bullet}(\pp_{\R}^{*})}\big[N^{\Lambda^{\bullet}(\pp_{\R}^{*})}e^{-i\rho^{\Lambda^\bullet(\pp_{\R}^{*})}(Y)}\big]=-\det(1-e^{-i\mathrm{ad}(Y)})|_{\kb_{\R}^{\perp}}.
\end{equation}

As a consequence, if $\delta(G_{\R})=0$,
\begin{equation}
\mathrm{Tr_{s}}^{[\sigma]}\big[\exp(-t\mathcal{L}^{X_{\bbC},\Lambda^{\bullet}(T^{*}X_{\bbC})\otimes F})\big]=\mathrm{Tr_{s}}^{[1]}\big[\exp(-4t\mathcal{L}^{X_{\R},\Lambda^{\bullet}(T^{*}X_{\R})\otimes F_{0}})\big],
	\label{eq:3.4.10paris}
\end{equation}
and if $\delta(G_{\R})=1$,
\begin{equation}
	\begin{split}
		&\mathrm{Tr_{s}}^{[\sigma]}\big[N^{\Lambda^{\bullet}(T^{*}X_{\bbC})}\exp(-t\mathcal{L}^{X_{\bbC},\Lambda^{\bullet}(T^{*}X_{\bbC})\otimes F})\big]\\
		&\qquad\qquad=2\mathrm{Tr_{s}}^{[1]}\big[N^{\Lambda^{\bullet}(T^{*}X_{\R})}\exp(-4t\mathcal{L}^{X_{\R},\Lambda^{\bullet}(T^{*}X_{\R})\otimes F_{0}})\big].
	\end{split}
	\label{eq:3.4.11paris}
\end{equation}
The other identities in \eqref{eq:3.4.9paris} will imply the vanishing of 
Lefschetz numbers or equivariant analytic torsions, we refer to 
Section
\ref{section5bonn} and also \cite[Theorem 3.3.2]{LIU2021109117} for such results.
\end{example}

\begin{example}[Product case]
	Let $(G_{0},K_{0},\theta_{0},B_{0})$ be a connected real 
	reductive Lie group. Put
	\begin{equation}
		(G,K,\theta)=(G_{0},K_{0},\theta_{0})\times 
		(G_{0},K_{0},\theta_{0}).
		\label{eq:3.4.100s}
	\end{equation}
	Let $\g_{0}=\kk_{G_{0}}\oplus \pp_{G_{0}}$ denote the Cartan 
	decomposition of the Lie algebra of 
	$G_{0}$. Then 
	\begin{equation}
		\g=\g_{0}\oplus \g_{0},\; \kk=\kk_{G_{0}}\oplus\kk_{G_{0}}, 
		\pp=\pp_{G_{0}}\oplus\pp_{G_{0}}.
		\label{eq:3.4.101s}
	\end{equation}
	We define the bilinear form on $\g$ by
	\begin{equation}
		B=B_{0}\oplus B_{0}.
		\label{eq:3.4.102s}
	\end{equation}
	The symmetric space $X$ is identify with $X_{0}\times X_{0}$, 
	where $X_{0}=G_{0}/K_{0}$. 
	
	The twist $\sigma$ is defined as follows, for $(g_{1},g_{2})\in 
	G=G_{0}\times G_{0}$,
	\begin{equation}
		\sigma(g_{1},g_{2})=(g_{2},g_{1}).
		\label{eq:3.4.103s}
	\end{equation}
	The fixed point set of $\sigma$, i.e. the $\sigma$-twisted 
	centralizer $Z_{\sigma}(1)$ of $1\in G$, is exactly the diagonal 
	of the product $G_{0}\times G_{0}$. Then $Z_{\sigma}(1)\simeq G_{0}$ 
	canonically, and the induced Cartan involution on $Z_{\sigma}(1)$ 
	from $\theta$ is just $\theta_{0}$. By \eqref{eq:3.4.102s}, the bilinear form $B$ restricting to $\z_{\sigma}(1)\simeq \g_{0}$ coincides with $2B_{0}$.

	Let $(E_{0},\rho_{0})$ be a unitary representation of $K_{0}$. 
	Set $(E,\rho^{E})=(E_{0},\rho_{0})\otimes (E_{0},\rho_{0})$, a 
	unitary representation of $K$. For $v_{1}, v_{2}\in E_{0}$, set 
	$\rho^{E}(\sigma)(v_{1}\otimes v_{2})=v_{2}\otimes v_{1}$. Then 
	$(E,\rho^{E})$ extends as a representation of $K^{\sigma}$. We 
	define the vector bundles $F$, $F_{0}$ on $X$, $X_{0}$ 
	respectively.  Let 
	$\mathcal{L}^{X,F}$, $\mathcal{L}^{X_{0},F_{0}}$ denote the operators as in \eqref{ellipticoperator} 
	acting on  
	$C^{\infty}(X,F)$, $C^{\infty}(X_{0}, F_{0})$ respectively. In 
	particular, we have
	\begin{equation}
		\mathcal{L}^{X,F}=\mathcal{L}^{X_{0},F_{0}}\otimes 1+1\otimes 
		\mathcal{L}^{X_{0},F_{0}}.
		\label{eq:3.4.105s}
	\end{equation}
	
	In \cite[\S 8]{Langlands1988base}, under the above setting, Langlands deduced an identity 
	between the 
	$\sigma$-twisted orbitals integrals and the ordinary orbital 
	integrals, where the matching functions are given via 
	convolution. We specialize his result in our simple example 
	here. For $\gamma_1,\gamma_{2}\in G_{0}$, take 
	$\gamma=(\gamma_1, \gamma_{2})\in G$ such that $\gamma\sigma$ 
	semisimple. We may assume that 
	\begin{equation}
		\gamma_{1}=e^{a_{1}}k_{1}^{-1},\; 
		\gamma_{2}=e^{a_{2}}k_{2}^{-1}\; a_{1},a_{2}\in\pp_{G_{0}}, 
		k_{1},k_{2}\in K_{0},
	\end{equation}
	and by Theorem \ref{thm_keythm}, $\mathrm{Ad}(k_{1}^{-1})a_{2}=a_{1}, 
	\mathrm{Ad}(k_{2}^{-1})a_{1}=a_{2}$. 
	The norm of $\gamma$ is defined as 
	$N\gamma=\gamma_{1}\gamma_{2}\in G_{0}$, which has the form
	\begin{equation}
		\gamma_{1}\gamma_{2}=e^{a}k^{-1}, a=2a_{1}, k=k_{2}k_{1}, 
		\mathrm{Ad}(k)a=a.
		\label{eq:3.4.108s}
	\end{equation}
	Then $\gamma_{1}\gamma_{2}$ is a semisimple element in 
	$G_{0}$.
	
	Let $Z_{0}(N\gamma)$ be the centralizer of $N\gamma$ in $G_{0}$ 
	with Lie algebra $\z_{G_0}(N\gamma)=\pp_{G_{0}}(N\gamma)\oplus 
	\kk_{G_{0}}(N\gamma)\subset \g_{0}$. Then by \eqref{prop_centralizer}, we have
	\begin{equation}
		Z_{\sigma}(\gamma)=\{(g,k_{1}gk_{1}^{-1})\in G\;:\; g\in 
		Z_{0}(N\gamma)\}\simeq Z_{0}(N\gamma).
	\end{equation}
	The diffeomorphism $(g_{1},g_{2})\in 
	G\mapsto (g_{1},g_{2}^{-1}\gamma_{2}g_{1})\in G$ induces an 
	identification 
	\begin{equation}
		Z_{\sigma}(\gamma)\backslash G\simeq 
		(Z_{0}(N\gamma)\backslash G_{0})\times G_{0}.
		\label{eq:3.4.109s}
	\end{equation}
	The result in \cite[\S 8]{Langlands1988base}, as a consequence of 
	\eqref{eq:3.4.109s}, says that for 
	$t>0$,
	\begin{equation}
\mathrm{Tr}^{[\gamma\sigma]}[\exp(-t 
\mathcal{L}^{X,F})]=\frac{1}{2^{p/2}}\mathrm{Tr}^{[N\gamma]}[\exp(-2t 
\mathcal{L}^{X_{0},F_{0}})],
\label{eq:3.4.106s}
	\end{equation}
	where the right-hand side is the ordinary orbital integrals for 
	$(G_{0},B_{0})$, and the factor $2^{p/2}$ comes from the volume 
	conventions with 
	$B|\z_{\sigma}(\gamma)=2B_{0}|_{\z_{G_{0}}(N\gamma)}$.
	
	Now we explain how our formula \eqref{eq:4.2.1} is compatible 
	with \eqref{eq:3.4.106s}. We start with the $J$-function 
	$J_{\gamma\sigma}$. Set $p=\dim\pp_{G_{0}}(N\gamma), 
	q=\dim\kk_{G_{0}}(N\gamma)$. For $Y\in \kk_{G_{0}}(N\gamma)$, 
	$(Y,\mathrm{Ad}(k_{1})Y)\in \kk_{\sigma}(\gamma)$. In this case, 
	$\z_{0}=\z((a_{1},a_{2}))=\z_{G_{0},0}\oplus\mathrm{Ad}(k_{1})\z_{G_{0},0}$, where $\z_{G_{0},0}=\z_{G_{0}}(a)\subset \g_{0}$. Then we have
	\begin{equation}
		\kk^{\perp}_{\sigma,0}(\gamma)\simeq 
		\big(\kk^{\perp}_{G_{0},0}(N\gamma),\mathrm{Ad}(k_{1})\kk^{\perp}_{G_{0},0}(N\gamma)\big) \oplus \kk_{G_{0}}(N\gamma).
		\label{eq:3.4.105}
	\end{equation}
	As a consequence, we get
	\begin{equation}
		\begin{split}
			&\det\big(1-\exp(-i\mathrm{ad}(Y,\mathrm{Ad}(k_{1})Y))\mathrm{Ad}((k_{1},k_{2})^{-1}\sigma)\big)|_{\kk^{\perp}_{\sigma,0}(\gamma)}\\
			&=\det 
			\big(1-\exp(-i\mathrm{ad}(2Y))\mathrm{Ad}(k^{-1})\big)|_{\kk^{\perp}_{G_{0},0}(N\gamma)}\\
			&\quad\cdot\det\big(1+\exp(-i\mathrm{ad}(Y))\big)|_{\kk_{G_{0}}(N\gamma)}.
		\end{split}
		\label{eq:3.4.110s}
	\end{equation}
	Similar computations hold for $\pp^{\perp}_{\sigma,0}(\gamma)$ 
	and $\z^{\perp}_{\sigma,0}(\gamma)$. Then by \eqref{Jfunction} 
	and \eqref{eq:3.4.5tt},
	\begin{equation}
		J_{\gamma\sigma}(Y,\mathrm{Ad}(k_{1})Y)=\frac{1}{2^{p}} 
		J^{G_{0}}_{N\gamma}(2Y),
	\end{equation}
	where $J^{G_{0}}_{N\gamma}$ is the corresponding $J$-function defined 
	with $N\gamma$ and $G_{0}$.
	
Moreover, a direct computation shows,
	\begin{equation}
		\begin{split}
					&\mathrm{Tr}^{E}\left[\rho^{E}((k_{1},k_{2})^{-1}\sigma)\exp\left(-i\rho^{E}(Y,\mathrm{Ad}(k_{1})Y)\right)\right]\\
					&=\mathrm{Tr}^{E_{0}}\left[\rho_{0}(k^{-1})\exp\left(-i2\rho_{0}(Y)\right)\right].
		\end{split}
	\end{equation}
Note that $|(Y,\mathrm{Ad}(k_{1})Y)|^{2}_{B}=2|Y|^{2}_{B_{0}}$, where 
the subscripts indicate the corresponding norms. Then by 
\eqref{eq:4.2.1}, we get 
\begin{equation}
	\begin{split}
	&\mathrm{Tr}^{[\gamma\sigma]}[\exp(-t 
	\mathcal{L}^{X,F})]=\frac{1}{2^{p/2}}\frac{\exp(-|a|_{B_{0}}^2/4t)}{(4\pi t)^{p/2}}\\
		&\cdot\int_{\kk_{G_{0}}(N\gamma)} J^{G_{0}}_{N\gamma}(2Y) 
		\mathrm{Tr}^{E_{0}}\left[\rho_{0}(k^{-1})\exp(-i\rho_{0}(2Y))\right]e^{-2|Y|_{B_{0}}^2/2t} \frac{2^{q}|dY|_{B_{0}}}{(4\pi t)^{q/2}}.
	\end{split}
	\label{eq:4.2.1new}
\end{equation}
After the coordinate change $2Y\rightarrow Y$ in the above integral, 
we get exactly the quantity
$\dfrac{1}{2^{p/2}}\mathrm{Tr}^{[N\gamma]}[\exp(-2t 
\mathcal{L}^{X_{0},F_{0}})]$.

One can consider generally 
$\ell\geq 2$ copies of $G_{0}$ with cyclic permutation $\sigma$, the 
above computations are still applicable with suitable change. Using 
the formula \eqref{eq:waveop} for wave operators, one can also verify the 
identity \eqref{eq:3.4.106s} for a general class of integral kernel functions.
\end{example}

\section{The hypoelliptic Laplacian on 
$X$}\label{ch3}
The purpose of this section is to recall the construction of the hypoelliptic 
Laplacian of Bismut \cite[Chapter 
2]{bismut2011hypoelliptic}.

\subsection{Clifford algebras}\label{section3.1}
Let $V$ be a real vector space of dimension $m$ equipped with a 
real-valued nondegenerate symmetric bilinear form $B$. The Clifford 
algebra $c(V)$ of $V$ with respect to $B$ is the algebra generated by 
$1$ and $a\in V$ and the relations, 
\begin{equation}
	ab+ba=-2B(a,b),\; a,b\in V.
	\label{eq:clifford}
\end{equation}
We will denote by $\widehat{c}(V)$ the Clifford algebra of $V$ associated 
with $-B$. Also they are 
$\Z_{2}$-graded algebras, we write
\begin{equation}
	c(V)=c_{+}(V)\oplus c_{-}(V),\;\; 
	\widehat{c}(V)=\widehat{c}_{+}(V)\oplus \widehat{c}_{-}(V).
	\label{eq:mp}
\end{equation}

Since $B$ is nondegenerate, it induces an 
isomorphism $\varphi$ between $V$ and $V^{*}$ such that if $a, b\in V$, 
then
\begin{equation}
	\langle \varphi(a), b\rangle=B(a,b).
	\label{eq:varphi}
\end{equation}
Let $B^{*}$ be the corresponding bilinear form on $V^{*}$, which also extends to a nondegenerate symmetric bilinear form on 
$\Lambda^\bullet(V^{*})$.

If $\alpha\in V^{*}$, $a\in V$, let $\alpha\,\wedge$ denote 
the exterior product of $\alpha$ acting on 
$\Lambda^{\bullet}(V^{*})$, and let $i_{a}$ denote the interior 
product (or the contraction) of $a$ acting on 
$\Lambda^{\bullet}(V^{*})$.
If $a\in V$, let $c(a)$, $\widehat{c}(a)\in 
\mathrm{End}(\Lambda^{\bullet}(V^{*}))$ be given by
\begin{equation}
	c(a)=\varphi(a)\wedge - \;i_{a},\; 
	\widehat{c}(a)=\varphi(a)\wedge + \;i_{a}.
	\label{eq:actionscc}
\end{equation}
Then $c(a)$, $\widehat{c}(a)$ are odd operators, which are 
respectively antisymmetric, symmetric with respect to $B^{*}$. If $a, b\in V$, then
\begin{equation}
	[c(a), c(b)]=-2B(a,b),\; [\widehat{c}(a), 
	\widehat{c}(b)]=2B(a,b),\; [c(a), \widehat{c}(b)]=0.
	\label{eq:bracketrelations}
\end{equation}
By \eqref{eq:bracketrelations}, $\Lambda^\bullet(V^{*})$ is a 
$c(V)\widehat{\otimes}\widehat{c}(V)$-module. If $D\in c(V)$ or $\widehat{c}(V)$, 
then we denote by $c(D)$ or $\widehat{c}(D)$ the corresponding actions 
on $\Lambda^\bullet(V^{*})$ defined by \eqref{eq:actionscc}.

Let $e_{1}$, $\cdots$, $e_{m}$ be a basis of $V$, and let $e_{1}^*$, 
$\cdots$, $e_{m}^{*}$ be the dual basis of $V$ with respect to $B$, 
so that $B(e_{i}, e^{*}_{j})=\delta_{ij}$. 
Let $e^{1}$, $\cdots$, $e^{m}$ be the basis of $V^{*}$ which is dual 
to the basis $e_{1}$, $\cdots$, $e_{m}$. Then 
$e^{i}=\varphi(e^{*}_{i})$.

Note that $1\in \R=\Lambda^{0}(V^{*})$. The symbol map 
$\bm{\sigma}:D\in\widehat{c}(V)\mapsto \widehat{c}(D)\cdot 
1\in\Lambda^\bullet(V^{*})$ is an isomorphism 
of $\Z_{2}$-graded vector spaces. If $\alpha\in \Lambda^{p}(V^{*})$, then the inverse map of 
$\bm{\sigma}$ is given by
\begin{equation}
	\widehat{c}(\alpha)= \frac{1}{p!}\sum_{1\leq i_{1},\cdots,i_{p}\leq 
	m}\alpha(e^{*}_{i_{1}},\cdots, e^{*}_{i_{p}}) \widehat{c}(e_{i_{1}})\cdots 
	\widehat{c}(e_{i_{p}})\in \widehat{c}(V).
	\label{eq:calpha}
\end{equation}
If $A\in\mathrm{End}(V)$ is antisymmetric with respect to $B$, set
\begin{equation}
	\widehat{c}(A)=-\frac{1}{4}\sum_{i,j} B(e^{*}_{i}, 
	e^{*}_{j})\widehat{c}(e_{i})\widehat{c}(e_{j}).
	\label{eq:chatA}
\end{equation}

\begin{definition}
	The number operator $N^{\Lambda^\bullet(V^*)}$ on $\Lambda^\bullet(V^*)$ is such that, if $\alpha\in \Lambda^p(V^*)$, then
	\begin{equation}
	N^{\Lambda^\bullet(V^*)}\alpha=p\alpha.
	\end{equation}
	One verifies easily that
	\begin{equation}
		N^{\Lambda^\bullet(V^*)}=\frac{1}{2}\sum_{i=1}^{m}c(e^{*}_{i})\widehat{c}(e_{i})+\frac{m}{2}.
	\label{eq:4.1.9guo}
	\end{equation}
\end{definition}

We refer to \cite[Chapter I]{Lawson1989spin}, \cite[Chapter 
3]{berline2003heat} for more detailed discussions 
on Clifford algebras. 

\subsection{Harmonic oscillators}\label{section3.3ssd}
Now we consider the Lie algebra $\g$ of $G$ equipped with the 
bilinear form $B$ introduced in Subsection \ref{section1-1}.
Let $c(\g)$, 
$\widehat{c}(\g)$ be the Clifford algebras associated with $(\g,B)$, 
$(\g,-B)$. By restricting $B$ to $\pp$, $\kk$, we get the Clifford algebras 
$c(\pp)$, $\widehat{c}(\pp)$, $c(\kk)$, $\widehat{c}(\kk)$. By \eqref{cartandecom1}, 
\begin{equation}
    c(\g)=c(\pp)\widehat{\otimes}c(\kk),\qquad 
    \widehat{c}(\g)=\widehat{c}(\pp)\widehat{\otimes}\widehat{c}(\kk).
    \label{eq:cplusg}
\end{equation}

If $a\in\g$, let $\nabla_{a}$ denote the corresponding 
differentiation operator along $\g$.
Let $e_{1}$, $\cdots$, $e_{m}$ be an orthonormal basis of $\pp$, 
and let $e_{m+1}$, $\cdots$, $e_{m+n}$ be an orthonormal basis of $\kk$. If $Y\in\g$, we split $Y$ in the form $Y=Y^\pp+Y^\kk$ with $Y^\pp\in\pp$, $Y^\kk\in \kk$.
Set
\begin{equation}
\label{eq:3.2.9zz}
\begin{split}
	&\mathcal{D}^\pp=\sum_{j=1}^m c(e_j)\nabla_{e_j},\; \; 
	\mathcal{E}^\pp=\widehat{c}(Y^\pp),\\
	&\mathcal{D}^\kk=-\sum_{j=m+1}^{m+n} c(e_j)\nabla_{e_j},\; \; \mathcal{E}^\kk=\widehat{c}(Y^\kk).
\end{split}
\end{equation}

Since $K$ preserves the scalar products on $\pp$ and $\kk$, the above 
constructions are $K$-equivariant. The operators $\mathcal{D}^\pp$, $\mathcal{E}^\pp$, $\mathcal{D}^{\kk}$, $\mathcal{E}^{\kk}$ are linear differential operators acting on $\Lambda^\bullet(\g^*)\otimes C^\infty(\g)$. Moreover,
\begin{equation}
[\mathcal{D}^\pp+\mathcal{E}^\pp, -i\mathcal{D}^\kk+i\mathcal{E}^\kk]=0.
\label{eq:3.2.16zz}
\end{equation}
Let $\Delta^\g$ be the Euclidean Laplacian of $(\g, 
\langle\cdot,\cdot\rangle)$. Then by \eqref{eq:bracketrelations}, 
\eqref{eq:4.1.9guo}, we get
\begin{equation}\label{eq:2.2.9bonn}
\begin{split}
\frac{1}{2}\big(\mathcal{D}^\pp+\mathcal{E}^\pp -i\mathcal{D}^\kk+i\mathcal{E}^\kk\big)^2= \frac{1}{2}\big(-\Delta^\g+|Y|^2-(m+n)\big)+N^{\Lambda^\bullet(\g^*)}.
\end{split}
\end{equation}
The kernel of the unbounded operator in \eqref{eq:2.2.9bonn} is 
one-dimensional line spanned by the function $\exp(-|Y|^2/2)/\pi^{(m+n)/4}$.

\subsection{The Dirac operator of 
Kostant}\label{section3.3}
Recall that $C^{\g}\in U\g$ is defined in 
\eqref{eq:3.3.1n} and that $\kappa^{\g}\in\Lambda^{3}(\g^{*})$ is 
defined in \eqref{eq:4.4.2nn}. Let $\kappa^\kk\in \Lambda^3(\kk^{*})$ be the element defined by the 
same formula as in \eqref{eq:4.4.2nn} with respect to $(\kk,B|_{\kk})$. Then by 
\eqref{eq:4.4.5nm}, we get
\begin{equation}
    B^*(\kappa^\kk,\kappa^\kk)=\frac{1}{6}\mathrm{Tr^\kk}[C^{\kk,\kk}].
    \label{eq:4.4.6nn}
\end{equation}

The Clifford elements $c(\kappa^\g)$, 
 $\widehat{c}(-\kappa^\g)$, 
  $c(\kappa^\kk)$, $\widehat{c}(-\kappa^\kk)$ are defined as in 
 \eqref{eq:calpha}. If $e\in \kk$, let $\mathrm{ad}(e)|_{\pp}$ be the 
 restriction of $\mathrm{ad}(e)$ to $\pp$. Then 
 $\widehat{c}(\mathrm{ad}(e)|_{\pp})\in \widehat{c}(\pp)$. By 
 \cite[(2.7.4)]{bismut2011hypoelliptic}, we have
 \begin{equation}
     \widehat{c}(-\kappa^\g)=-2 \sum_{i=m+1}^{m+n} 
     \widehat{c}(e_{i})\widehat{c}(\mathrm{ad}(e_{i})|_{\pp})+\widehat{c}(-\kappa^\kk).
     \label{eq:3.3.11gg}
 \end{equation}

\begin{definition}\label{def:kostant}
    Let $\widehat{D}^\g\in 
    \widehat{c}(\g)\otimes U\g$ be the Dirac operator,
    \begin{equation}
	   \widehat{D}^\g=\sum_{i=1}^{m+n} 
	    \widehat{c}(e^{*}_{i})e_{i}+\frac{1}{2}\widehat{c}(-\kappa^\g).
	\label{eq:3.3.10gg}
    \end{equation}
    The operator $\widehat{D}^\g$ is called the Dirac operators of Kostant \cite{KOSTANT1997275}.
\end{definition}

By \cite{KOSTANT1997275} (cf. \cite[Theorem 
2.7.2]{bismut2011hypoelliptic}), we have
\begin{equation}
\widehat{D}^{\g,2}=-C^\g-\frac{1}{4}B^*(\kappa^\g, \kappa^\g).
\label{eq:3.4.13ugc}
\end{equation}

\subsection{The operator $\mathfrak{D}^X_b$}\label{section3.5s}
As we saw in Subsection \ref{section3.3}, $\widehat{D}^\g$ acts on $C^\infty(G,\Lambda^\bullet(\g^*))$. 
Recall that 
$\mathcal{D}^\mathfrak{p}+\mathcal{E}^\mathfrak{p}-i\mathcal{D}^\mathfrak{k}+
i \mathcal{E}^\mathfrak{k}$ 
is a differential operator acting on $C^\infty(\g,\Lambda^\bullet(\g^*))$.

\begin{definition}
For $b>0$, let $\mathfrak{D}_b$ be the differential operator,
\begin{equation}
\mathfrak{D}_b=\widehat{D}^\mathfrak{g}+i c([Y^\mathfrak{k},Y^\mathfrak{p}])+\frac{1}{b}(\mathcal{D}^\mathfrak{p}+\mathcal{E}^\mathfrak{p}-i\mathcal{D}^\mathfrak{k}+i \mathcal{E}^\mathfrak{k}).
\label{eq:3.4.2zz}
\end{equation}
Then $\mathfrak{D}_b$ acts on $C^{\infty}(G\times 
\mathfrak{g},\Lambda^\bullet(\mathfrak{g^*}))$.
\end{definition}

If $Y\in\mathfrak{g}$, let $\underline{Y^\mathfrak{p}}$, $\underline{Y^\mathfrak{k}}$ 
denote the tangent vector fields on $G$ 
associated with $Y^\mathfrak{p}$, $Y^\mathfrak{k}\in\mathfrak{g}$. 
Let $\nabla^{\g}_{[Y^{\kk},Y^{\pp}]}$ denote the differentiation 
operator in the direction $[Y^{\kk},Y^{\pp}]\in\pp$ along the vector 
space $\g$.
The following identity is obtained in \cite[Section 
2.11]{bismut2011hypoelliptic}.
\begin{theorem}\label{thm:3.5.2}
We have the following formula for $\mathfrak{D}^2_b$,
\begin{equation}
\begin{split}
\frac{\mathfrak{D}_b^2}{2}=\frac{\widehat{D}^{\mathfrak{g},2}}{2}+\frac{1}{2}\big|[Y^\mathfrak{k},Y^\mathfrak{p}]\big|^2+\frac{1}{2 b^2}\big(-\Delta^{\mathfrak{p}\oplus\mathfrak{k}}+|Y|^2-m-n\big)+\frac{N^{\Lambda^\bullet(\mathfrak{g}^*)}}{b^2}&\\
                         +\frac{1}{b}\Big(\underline{Y^{\mathfrak{p}}}+i\underline{Y^{\mathfrak{k}}}-i\nabla^{\mathfrak{g}}_{[Y^\mathfrak{k},Y^\mathfrak{p}]}+\widehat{c}(\mathrm{ad}(Y^\mathfrak{p}+iY^\mathfrak{k}))&\\
                         +2 i c 
						 (\mathrm{ad}(Y^\mathfrak{k})|_\mathfrak{p})-c(\mathrm{ad}(Y^\mathfrak{p}))\Big).&
\end{split}
\label{eq:3.4.3zz}
\end{equation}
\end{theorem}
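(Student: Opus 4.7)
The plan is to decompose $\mathfrak{D}_b = A + B + b^{-1} C$ with
$A = \widehat{D}^\g$, $B = ic([Y^\kk, Y^\pp])$, and
$C = \mathcal{D}^\pp + \mathcal{E}^\pp - i\mathcal{D}^\kk + i\mathcal{E}^\kk$, and to expand
\begin{equation*}
\tfrac{1}{2}\mathfrak{D}_b^2 \;=\; \tfrac{1}{2}A^2 + \tfrac{1}{2}\{A,B\} + \tfrac{1}{2}B^2
+ \tfrac{1}{2b}\bigl(\{A,C\} + \{B,C\}\bigr) + \tfrac{1}{2b^2}C^2,
\end{equation*}
then to match each summand with the right-hand side of \eqref{eq:3.4.3zz}.

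The constant-in-$b$ and $b^{-2}$ contributions are short. Since $[Y^\kk,Y^\pp]\in\pp$ by \eqref{eq:1.1.2ugc}, the Clifford relation \eqref{eq:bracketrelations} gives $\tfrac{1}{2}B^2 = \tfrac{1}{2}|[Y^\kk,Y^\pp]|^2$. The cross term $\{A,B\}$ vanishes: the left-invariant vector fields $e_i$ on $G$ commute with the fiber-multiplication $c([Y^\kk,Y^\pp])$, each $\widehat{c}(e^*_i)$ anticommutes with $c([Y^\kk,Y^\pp])$ by \eqref{eq:bracketrelations}, and the cubic piece $\tfrac{1}{2}\widehat{c}(-\kappa^\g)$ anticommutes with $c([Y^\kk,Y^\pp])$ as a product of three $\widehat{c}$'s. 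The $\tfrac{1}{2}A^2 = \tfrac{1}{2}\widehat{D}^{\g,2}$ piece is kept unexpanded, and $\tfrac{1}{2b^2}C^2$ is exactly the harmonic-oscillator identity \eqref{eq:2.2.9bonn}.

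The main work is the $b^{-1}$ cross term $\tfrac{1}{2}(\{A,C\} + \{B,C\})$, which must produce all six summands on the last two lines of \eqref{eq:3.4.3zz}. For $\{A,C\}$ the $e_i$'s commute with $C$, reducing it to $\{\widehat{c}(e^*_i), C\}\, e_i$ plus the cubic contribution. The relation $\{\widehat{c}(e^*_i), c(e_j)\} = 0$ kills the $\mathcal{D}$-pieces, while $\{\widehat{c}(e^*_i), \widehat{c}(Y^\pp + iY^\kk)\} = 2B(e^*_i, Y^\pp + iY^\kk)$ contracts against $e_i$ via the identity $\sum_i B(e^*_i, v)e_i = v$ to yield $\underline{Y^\pp} + i\underline{Y^\kk}$. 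The anticommutator of $\tfrac{1}{2}\widehat{c}(-\kappa^\g)$ with $\mathcal{E}^\pp + i\mathcal{E}^\kk$, evaluated via \eqref{eq:chatA} and the decomposition \eqref{eq:3.3.11gg}, produces $\widehat{c}(\ad(Y^\pp + iY^\kk))$; its anticommutator with $\mathcal{D}^\pp - i\mathcal{D}^\kk$ contributes residual Clifford quadratics in $c$. For $\{B,C\}$, applying Leibniz to $\nabla_{e_j} c([Y^\kk, Y^\pp])$ produces the directional derivative term via $\{c(e_j), c([Y^\kk,Y^\pp])\} = -2B(e_j, [Y^\kk,Y^\pp])$, yielding $-i\nabla^\g_{[Y^\kk, Y^\pp]}$, together with further Clifford products of the form $\sum_j c(e_j)c([Y^\kk, e_j])$ and $\sum_j c(e_j)c([Y^\pp, e_j])$. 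Reassembling all Clifford quadratics via \eqref{eq:cA} and the $\ad$-invariance $B([X,Y],Z) = B(X,[Y,Z])$ collapses them into $2ic(\ad(Y^\kk)|_\pp) - c(\ad(Y^\pp))$. The main delicacy of the argument is the sign bookkeeping throughout these anticommutators, especially the convention $e^*_i = -e_i$ for $i>m$ (where $B$ is negative) and the careful separation of sums over $\pp$ versus $\kk$.
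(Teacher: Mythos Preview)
The paper does not give its own proof of this theorem; it simply states that the identity is obtained in \cite[Section 2.11]{bismut2011hypoelliptic}. Your direct expansion of the square $\mathfrak{D}_b^2 = (A+B+b^{-1}C)^2$ is exactly the natural route, and is essentially what one finds in Bismut's book.

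Your sketch is correct in all the essential pieces, with one small misattribution. You claim that the anticommutator of $\tfrac12\widehat{c}(-\kappa^\g)$ with $\mathcal{D}^\pp - i\mathcal{D}^\kk$ ``contributes residual Clifford quadratics in $c$.'' In fact this anticommutator vanishes: $\widehat{c}(-\kappa^\g)$ is an odd product of three $\widehat{c}$'s and hence anticommutes with each single $c(e_j)$, while it is constant along the fibre $\g$ and therefore commutes with $\nabla_{e_j}$. So $\{\widehat{c}(-\kappa^\g), c(e_j)\nabla_{e_j}\}=0$ for every $j$. All the $c$-quadratic terms $2ic(\ad(Y^\kk)|_\pp)-c(\ad(Y^\pp))$ in the final answer come entirely from $\tfrac12\{B,\mathcal{D}^\pp - i\mathcal{D}^\kk\}$ via the Leibniz contributions $ic(e_j)c(\partial_{e_j}[Y^\kk,Y^\pp])$ you identified. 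Once you drop the spurious cubic--$\mathcal{D}$ contribution, the bookkeeping you describe (using \eqref{eq:cA} and the $\ad$-invariance of $B$) indeed collapses those sums to the stated form, and the argument goes through.
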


Recall that $(E,\rho^E)$ is a unitary representation of $K^\sigma$. 
Let $C_K^\infty(G\times \mathfrak{g},\Lambda^\bullet(\mathfrak{g}^*)\otimes E)$ 
denote the set of $K$-invariant sections. Recall that $\widehat{\pi}:\widehat{\mathcal{X}}\rightarrow X$ is the total 
space of $TX\oplus N$. Then we have
\begin{equation}
	C_K^\infty\big(G\times 
	\mathfrak{g},\Lambda^\bullet(\mathfrak{g}^*)\otimes E\big)= 
	C^\infty\big(\widehat{\mathcal{X}}, 
	\widehat{\pi}^*(\Lambda^\bullet(T^*X\oplus N^*)\otimes F)\big).
	\label{eq:4.44.44.4}
\end{equation}

Let $Y=Y^{TX}+Y^N$, $Y^{TX}\in TX$, $Y^N\in N$ be the tautological section of $\widehat{\pi}^*(TX\oplus N)$ over $\widehat{\mathcal{X}}$. 
\begin{definition}
Let $\mathcal{H}$ be the vector space of smooth sections over $X$ of 
the vector bundle $C^\infty\big(TX\oplus N, 
\widehat{\pi}^*(\Lambda^\bullet(T^*X\oplus N^*)\otimes F)\big)$.
\end{definition}

We can identify $\mathcal{H}$ with 
$C^\infty\big(\widehat{\mathcal{X}},\widehat{\pi}^*(\Lambda^\bullet(T^*X\oplus N^*)\otimes F)\big)$.
Let $\nabla^\mathcal{H}$ be the connection on $\mathcal{H}$ induced by the connection form $\omega^\mathfrak{k}$ on $X$.

Let $e\in\mathfrak{k}$, then the vector field $[e,Y]$ on $\mathfrak{g}$ is 
a Killing vector field. Let $L^V_{[e,Y]}$ be the Lie derivative 
acting on $C^\infty(\g, \Lambda^\bullet(\g^*))$. Then by \cite[(2.12.4)]{bismut2011hypoelliptic},
\begin{equation}
L^V_{[e,Y]}=\nabla_{[e,Y]}-(c+\widehat{c})(\mathrm{ad}(e)).
\label{eq:3.5.9ugc}
\end{equation}

Note that $\widehat{D}^\mathfrak{g}$ is $K$-invariant. Let 
$\widehat{D}^{\g,X}$ be the corresponding differential operators on 
the smooth sections of $\mathcal{H}$. By \cite[Theorem 2.12.2]{bismut2011hypoelliptic}, 
\begin{equation}
\widehat{D}^{\mathfrak{g},X}=\sum_{j=1}^m 
\widehat{c}(e_i)\nabla^\mathcal{H}_{e_i}-\sum_{j=m+1}^{m+n}\widehat{c}(e_j)\big(L^V_{[e_j,Y]}+\widehat{c}(\text{ad}(e_j)|_\mathfrak{p})-\rho^E(e_j)\big)+\frac{1}{2}\widehat{c}(-\kappa^\mathfrak{k}).
\label{eq:3.5.11ugc}
\end{equation}

Let $\mathcal{D}^{TX}$, $\mathcal{E}^{TX}$, $\mathcal{D}^N$, $\mathcal{E}^N$
be the operators acting on $\widehat{\pi}^*(\Lambda^\bullet(T^*X\oplus N^*)\otimes F)$ 
along the fibre $\widehat{\mathcal{X}}$ 
induced by $\mathcal{D}^{\pp}$, $\mathcal{E}^{\pp}$, $\mathcal{D}^\kk$, $\mathcal{E}^\kk$. 
Then $\mathfrak{D}_b$ defined in \eqref{eq:3.4.2zz} descends to 
an operator $\mathfrak{D}^X_b$ on 
$C^\infty(TX\oplus N, \widehat{\pi}^*(\Lambda^\bullet(T^*X\oplus N^*)\otimes F))$. 
Then
\begin{equation}
\mathfrak{D}^X_b=\widehat{D}^{\mathfrak{g},X}+ic([Y^N,Y^{TX}])
+\frac{1}{b}(\mathcal{D}^{TX}+\mathcal{E}^{TX}-i\mathcal{D}^N+i\mathcal{E}^N).
\label{eq:3.5.12ugc}
\end{equation}

\subsection{The hypoelliptic Laplacian}
\label{s3.7}

Recall that $A\in \mathrm{End}(E)$ is a $K^{\sigma}$-invariant such 
that it gives a parallel section of $\mathrm{End}(F)$ on $X$. Recall 
that for $t>0$, $p^{X}_{t}(x,x')$ is the heat kernel of 
$\mathcal{L}^{X}_{A}$.

Let $(\cdot,\cdot)$ denote the Hermitian metric on 
$\Lambda^\bullet(T^{*}X\oplus N^{*})\otimes F$ associated with 
$\langle\cdot,\cdot\rangle$ and 
$g^F$. The Cartan involution $\theta$ acts on 
$\widehat{\mathcal{X}}$, so that 
\begin{equation}
    \theta(Y^{TX}+Y^{N})= -Y^{TX}+Y^N.
    \label{eq:3.6.3ugc}
\end{equation}
Let $dv_{\widehat{\mathcal{X}}}$ be the volume form on 
$\widehat{\mathcal{X}}$ coming from the Riemann metric on $X$ 
and the Euclidean scalar product on $TX\oplus N$. Let $\eta(\cdot,\cdot)$ be the Hermitian form on the space of smooth 
compactly supported sections of 
$\widehat{\pi}^{*}(\Lambda^\bullet(T^{*}X\oplus N^{*})\otimes F)$ over 
$\widehat{\mathcal{X}}$,
\begin{equation}
    \eta(s,s')=\int_{\widehat{\mathcal{X}}} (s\circ \theta, 
    s')dv_{\widehat{\mathcal{X}}}.
    \label{eq:3.6.4mk2}
\end{equation}

As in \cite[Sections 2.12 and 2.13]{bismut2011hypoelliptic}, for 
$b>0$, we put
\begin{equation}
\label{eq:4.4.12hh}
\mathcal{L}^X_b=-\frac{1}{2}\widehat{D}^{\g,X,2}+\frac{1}{2} \mathfrak{D}^{X,2}_b.
\end{equation}
It acts on 
$C^\infty\big(\widehat{\mathcal{X}}, \widehat{\pi}^*(\Lambda^\bullet 
(T^*X\oplus N^*)\otimes F)\big)$, whose formula is given as follows, 
\begin{equation}\label{hypoopX}
\begin{split}
\mathcal{L}^X_b = \frac{1}{2}\left|[Y^N,Y^{TX}]\right|^2+\frac{1}{2b^2} 
\left(-\Delta^{TX\oplus N}+|Y|^2-m-n\right)
+\frac{N^{\Lambda^\bullet(T^*X\oplus N^*)}}{b^2}&\\
 +\frac{1}{b}\Big( \nabla^{\mathcal{H}}_{Y^{TX}}+ 
\widehat{c}\big(\mathrm{ad}(Y^{TX})\big)- c\big(\mathrm{ad}(Y^{TX})+i\theta 
\mathrm{ad}(Y^N)\big)-i\rho^E(Y^N)\Big).&
\end{split}
\end{equation} 
By H\"{o}rmander's theorem \cite{Hormander1967}, both 
$\mathcal{L}^X_{b}$ and $\frac{\partial}{\partial t}+ 
\mathcal{L}^X_{b}$ are hypoelliptic.
The operator $\mathcal{L}^X_b$ is called the
hypoelliptic Laplacian associated with $(G,K)$. Moreover, it is formally self-adjoint with respect to $\eta(\cdot,\cdot)$.

By \cite[Proposition 2.15.1]{bismut2011hypoelliptic}, we have the identity
\begin{equation}
[\mathfrak{D}_b^X, \mathcal{L}^X_b]=0.
\label{eq:3.6.5ugc}
\end{equation}

Since $\sigma$ preserves $B$ and the splitting \eqref{cartandecom1}, both $\widehat{D}^{\g,X}$ and $\mathfrak{D}^X_b$ commute 
with $G^\sigma$, so that 
$\mathcal{L}^X_b$ commutes with $G^\sigma$. The section $A$ lifts to $\widehat{\mathcal{X}}$. Let 
$\mathcal{L}^X_{A,b}$ be the operator acting on 
$C^\infty(\widehat{\mathcal{X}},\hat{\pi}^*(\Lambda^\bullet (T^*X\oplus N^*)\otimes F))$ given by
\begin{equation}
\mathcal{L}^X_{A,b}=\mathcal{L}^X_b+A.
\label{eq:3.7.2ugcd}
\end{equation}
In \cite[Sections 4.5 and 
11.8]{bismut2011hypoelliptic}, the heat operator 
$\exp(-t\mathcal{L}^X_{A,b})$) is well-defined for $b>0, t>0$ with a smooth kernel $q^X_{b,t}((x,Y),(x',Y'))$.

Let 
$\mathbf{P}$ be the projection from $\Lambda^\bullet(T^*X\oplus E^*)\otimes F$ on $\Lambda^0(T^*X\oplus E^*)\otimes F$. For $t>0$ and $(x,Y), (x',Y')\in \widehat{\mathcal{X}}$, put
\begin{equation}
	q^X_{0,t}\big((x,Y),(x',Y')\big)=\mathbf{P}p^X_t(x,x')\pi^{-(m+n)/2}\exp\Big(-\frac{1}{2}\big(|Y|^2+|Y'|^2\big)\Big)\mathbf{P}.
	\label{eq::3.7.5ugcd}
\end{equation}

We recall a result established in \cite[Theorem 4.5.2 and Chapter 
14]{bismut2011hypoelliptic}.
\begin{theorem}\label{thm:twokernels}
Given $M\geq\epsilon>0$, there exist $C,C'>0$ such that for 
$0<b\leq M,\epsilon \leq t\leq M, 
(x,Y),(x',Y')\in\widehat{\mathcal{X}}$,
\begin{equation}
	\Big|q^{X}_{b,t}\big((x,Y),(x',Y')\big)\Big|\leq 
	C\exp\Big(-C'\big(d^{2}(x,x')+|Y|^{2}+|Y'|^{2}\big)\Big).
	\label{eq:bkernelinQ}
\end{equation}
As $b\rightarrow 0$, we have the uniform convergence on 
compact subsets of $\widehat{\mathcal{X}}\times\widehat{\mathcal{X}}$,
\begin{equation}\label{eq:3.61n}
q^X_{b,t}\big((x,Y),(x',Y')\big)\rightarrow 
q^X_{0,t}\big((x,Y),(x',Y')\big).
\end{equation}
\end{theorem}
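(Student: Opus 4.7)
The plan is to follow Bismut's strategy in \cite[Chapters 11--14]{bismut2011hypoelliptic} almost verbatim, since Theorem \ref{thm:twokernels} is purely a statement about the scalar-valued hypoelliptic heat kernel on $\widehat{\mathcal{X}}$ and the finite-dimensional bundle $\widehat{\pi}^*(\Lambda^\cdot(T^*X\oplus N^*)\otimes F)$; the presence of the parallel endomorphism $A$ only contributes a factor $e^{-tA}$ that is globally bounded on $[0,M]$. Since the kernel $q^X_{b,t}$ is a smooth kernel between finite-dimensional fibres of a bundle, it suffices to bound its norm, and this is independent of the twist by $\sigma$.

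The first step is to realize $\exp(-t\mathcal{L}^X_{A,b})$ by a probabilistic formula. Looking at the principal scalar part of $\mathcal{L}^X_b$ given by \eqref{hypoopX}, one sees the generator of a diffusion $(x_s,Y_s)_{s\geq 0}$ on $\widehat{\mathcal{X}}$: the fibre variable $Y_s$ runs a rescaled Ornstein--Uhlenbeck process with generator $\tfrac{1}{2b^2}(-\Delta^{TX\oplus N}+|Y|^2-m-n)$, while the base variable satisfies $\dot{x}_s = Y^{TX}_s/b$ (parallel transport of $Y^N$ along $x_s$). The zeroth order and first order terms in $Y$ contribute a multiplicative Feynman--Kac factor acting on $\Lambda^\cdot(T^*X\oplus N^*)\otimes F$, whose norm is controlled polynomially in $|Y|$ uniformly for $b\leq M$. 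This is exactly the setting of \cite[Chapters 11, 12, 14]{bismut2011hypoelliptic}.

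For the uniform Gaussian bound \eqref{eq:bkernelinQ}, I would combine two ingredients. The harmonic-oscillator part produces the factor $e^{-C'(|Y|^2+|Y'|^2)}$ through Mehler's formula and an integration-by-parts argument in the fibre: this is the hypoelliptic analogue of the classical Gaussian decay and can be proved uniformly in $b\leq M$ because the oscillator is unchanged after rescaling $Y\to Y/b$. The spatial decay $e^{-C'd^2(x,x')}$ is obtained from exponential moment estimates on the displacement $d(x_0,x_t)$ of the underlying diffusion, using the bound $|x_s-x_0|\leq \int_0^s|Y^{TX}_u|du/b$ together with Gaussian moments of $Y_\cdot$; for the time interval $[\epsilon,M]$ with $b\leq M$ one absorbs the $1/b$ into the Gaussian estimates using Malliavin-type calculus as in \cite[Chapter 14]{bismut2011hypoelliptic}. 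The key technical point, and the one I expect to be the main obstacle, is the lack of uniform ellipticity: the symbol of $\mathcal{L}^X_b$ degenerates along the fibres $TX$ of $\widehat{\mathcal{X}}$, so one must use the H\"ormander bracket condition through Malliavin covariance and verify that the Malliavin covariance matrix is uniformly invertible in $b$ on $[0,M]$ once $t\geq \epsilon>0$.

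For the convergence \eqref{eq:3.61n} as $b\to 0$, this is a singular perturbation/averaging phenomenon. When $b\to 0$, the fibre process $Y_s$ relaxes instantaneously to its invariant measure, the standard Gaussian $\pi^{-(m+n)/2}e^{-|Y|^2/2}$, while the base process $x_s$ converges to the diffusion generated by the averaged operator, which one checks is exactly $\mathcal{L}^X$ (on the horizontal piece $\Lambda^0$ the $\widehat{c}$ and $c$ terms average out to zero after integration against the Gaussian, and the quadratic term $|[Y^N,Y^{TX}]|^2/2$ together with $N^{\Lambda^\cdot}/b^2$ annihilates the positive form degrees in the limit, which is why only $\mathbf{P}$ survives in \eqref{eq::3.7.5ugcd}). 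Combined with the uniform a priori estimate \eqref{eq:bkernelinQ} and equicontinuity of $q^X_{b,t}$ (obtained from derivative estimates of the same type), Arzel\`a--Ascoli together with identification of the limit via matching of generators gives the uniform convergence on compact subsets. All three steps are worked out in detail in \cite[Theorem 4.5.2, Chapter 14]{bismut2011hypoelliptic}, so the \emph{proof} in this paper consists in invoking those results, observing that the bundle $\widehat{\pi}^*(\Lambda^\cdot(T^*X\oplus N^*)\otimes F)$ and the addition of $A$ do not affect the argument.
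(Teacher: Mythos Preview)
Your proposal is correct and matches the paper's approach exactly: the paper does not give an independent proof but simply recalls this as a result established in \cite[Theorem 4.5.2 and Chapter 14]{bismut2011hypoelliptic}, precisely as you anticipate in your final sentence. Your sketch of the probabilistic/Malliavin machinery and the $b\to 0$ averaging is an accurate summary of Bismut's method, but for the purposes of this paper the theorem is quoted, not reproved.
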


\begin{example}[A simple example of the hypoelliptic Laplacian]
	A simple example of our setting is the real line $\R$ with 
	additive Lie group structure. In this case, $G=\R$, $K=0$, so that $X=\R$ 
	with the standard Euclidean metric. Let $x\in \R$ denote the global 
	coordinate of $X$, and let $y=y\frac{\partial}{\partial x}$ 
	denote the coordinate along the tangent vector space of $X$.
	Then $\widehat{\mathcal{X}}=\mathcal{X}=\R_{x}\times \R_{y}$ is 
	just the total space of $T\R$, where the subscripts $x$, $y$ 
	indicate the respective coordinates. By \eqref{hypoopX}, the 
	operator $\mathcal{L}^{\R}_{b}$ acting on 
	$C^{\infty}(\mathbb{R}_{x} \times \mathbb{R}_{y}, \Lambda^\bullet(\R^{*}_{y}))$ is 
	given by
	\begin{equation}
		\mathcal{L}^{\R}_{b}=\frac{1}{2b^{2}}(-\Delta_{y}+y^{2}-1)+\frac{N^{\Lambda^\bullet(\R^{*}_{y})}}{b^{2}}+\frac{1}{b} y\frac{\partial}{\partial x}.
	\end{equation}
	Note that $\frac{\partial}{\partial t}+\mathcal{L}^{\R}_{b}$ is 
	just the Kolmogorov operator (\cite{10.2307/1968123}, up to a 
	conjugation). The heat kernel of $\mathcal{L}^{\R}_{b}$ has an explicit 
	expression given in \cite[Subsection 
	10.5]{bismut2011hypoelliptic}, so that the convergence 
	\eqref{eq:3.61n} can be verified directly. Here, we would like to 
	give another straightforward computation to understand this 
	convergence. 
	
	The geodesic flow $\varphi_{t}$, $t\in\R$ on 
	$\mathcal{X}$ is given by $\varphi_{t}(x,y)=(x+ty,y)$. For $f\in 
	C^{\infty}(\R_{x},\R)$, we identify it with the section 
	$f(x)\frac{1}{\pi^{1/4}}\exp(-y^{2}/2)\in C^{\infty}(\mathbb{R}_{x} \times \mathbb{R}_{y}, 
	\Lambda^\bullet(\R^{*}_{y}))$. This identification preserves the 
	$L_{2}$-metrics for $L_{2}$-functions. A direct computation shows 
	that for $b>0$, 
	\begin{equation}\label{eq:4.5.11new}
		\mathcal{L}^{\R}_{b}\varphi^{*}_{-b}\big(f(x)\frac{1}{\pi^{1/4}}e^{-y^{2}/2}\big)=\varphi^{*}_{-b}\big(-\frac{1}{2}\Delta_{x}(f)\frac{1}{\pi^{1/4}}e^{-y^{2}/2}\big).
	\end{equation}
	This gives an explicit relation, conjugation by the geodesic 
	flow, between the hypoelliptic 
	Laplacian $\mathcal{L}^{\R}_{b}$ and the elliptic Laplacian 
	$-\frac{1}{2}\Delta_{x}$ on $X=\R$. If we take $b\rightarrow 0$ 
	in \eqref{eq:4.5.11new}, it explains well the 
	convergence in \eqref{eq:3.61n}.
\end{example}

\section{A proof of Theorem 
\ref{thm_orbitalintegral}}\label{section:proof}
The purpose of this section is to establish Theorem 
\ref{thm_orbitalintegral}. The geometric constructions in Sections \ref{s1} and \ref{section3} play 
important roles in the proof. In particular, due to the geometric formulations of 
the twisted orbital supertrace $\mathrm{Tr_s}^{[\gamma\sigma]}[\exp(-t\mathcal{L}^X_{A,b})]$, the local 
index techniques used in \cite[Chapter 9]{bismut2011hypoelliptic} are 
still applicable to compute explicitly its limit as $b\rightarrow 
\infty$. Therefore, our proof to Theorem \ref{thm_orbitalintegral} is partly derived from \cite[Chapter 9]{bismut2011hypoelliptic}.

\subsection{A fundamental identity for twisted orbital 
supertraces}\label{section4.4}
Recall that $\mathcal{L}^X_A$, $\mathcal{L}^X_{A,b}$ 
were defined in Subsections \ref{s4.2} and \ref{s3.7}, and that $p^X_t, 
q^X_{b,t}$ are the associated elliptic and hypoelliptic heat kernels. Using \eqref{eq:bkernelinQ} and the fact that $\mathcal{L}_{A,b}^X$ 
commutes with $\sigma$, for $b>0,\;t>0$, 
$q^X_{b,t}\in\mathfrak{Q}^{\sigma,\infty}$ (cf. Definition \ref{def:3.3.1bis}). By the results of Subsection 
\ref{section-infinite}, 
$\mathrm{Tr_s}^{[\gamma\sigma]}[\exp(-t\mathcal{L}^X_{A,b})]$ is well-defined. The following theorem extends \cite[Theorem 
4.6.1]{bismut2011hypoelliptic}.

\begin{theorem}\label{thm_trequaltrs}
For any $b>0,t>0$, the following identity holds,
\begin{equation}\label{eq:3.62n}
\mathrm{Tr_s}^{[\gamma\sigma]}\left[\exp(-t\mathcal{L}^X_{A,b})\right]=\mathrm{Tr}^{[\gamma\sigma]}\left[\exp(-t \mathcal{L}^X_A)\right].
\end{equation}
\end{theorem}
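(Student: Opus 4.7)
The plan is to follow the strategy outlined in Subsection \ref{section4.4} of the introduction: first prove that the left-hand side $\mathrm{Tr_{s}}^{[\gamma\sigma]}[\exp(-t\mathcal{L}^{X}_{A,b})]$ is independent of $b>0$ for fixed $t>0$, and then take $b\to 0$, identifying the limit with the elliptic orbital integral on the right.

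For the $b$-independence, I would differentiate in $b$ using Duhamel's formula:
\[
\frac{\partial}{\partial b}\exp(-t\mathcal{L}^{X}_{A,b})=-\int_{0}^{t}\exp(-s\mathcal{L}^{X}_{A,b})\bigl(\partial_{b}\mathcal{L}^{X}_{b}\bigr)\exp(-(t-s)\mathcal{L}^{X}_{A,b})\,ds.
\]
The crucial point is that, using the commutation \eqref{eq:3.6.5ugc} and the explicit form \eqref{hypoopX}, the operator $\partial_{b}\mathcal{L}^{X}_{b}$ can be written as a supercommutator $[\mathfrak{D}^{X}_{b},R_{b}]$ (or a sum of such) for an operator $R_{b}$ commuting with $\sigma$ and with $G$; in particular $\partial_{b}\mathcal{L}^{X}_{A,b}=[\mathfrak{D}^{X}_{b},R_{b}]$ since $A$ is parallel and commutes with $\mathfrak{D}^{X}_{b}$. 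Combined with the cyclicity of $\mathrm{Tr_{s}}^{[\gamma\sigma]}[\cdot]$ (Proposition \ref{prop_3.4.3}), this makes $\partial_{b}\mathrm{Tr_{s}}^{[\gamma\sigma]}[\exp(-t\mathcal{L}^{X}_{A,b})]=0$. The verification that all operators involved lie in the algebra $\mathfrak{Q}^{\sigma}$, and that the Duhamel manipulations are legitimate, is the main technical burden; it is controlled by the uniform Gaussian bounds \eqref{eq:bkernelinQ} (and their analogues for $b$-derivatives) coming from Bismut's functional-analytic machinery.

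For the $b\to 0$ limit, I would apply Theorem \ref{thm:twokernels}: the uniform convergence $q^{X}_{b,t}\to q^{X}_{0,t}$ on compact sets of $\widehat{\mathcal{X}}\times\widehat{\mathcal{X}}$, combined with the uniform bound \eqref{eq:bkernelinQ}, the polynomial-type bound \eqref{eq_1.99} on $r(f)$, and the linear distance growth \eqref{lineardist} for $d_{\gamma\sigma}$ along $\pp^{\perp}_{\sigma}(\gamma)$, provide an integrable dominating function on $\pp^{\perp}_{\sigma}(\gamma)\times\g$. Dominated convergence then yields
\[
\lim_{b\to 0}\mathrm{Tr_{s}}^{[\gamma\sigma]}[\exp(-t\mathcal{L}^{X}_{A,b})]=\int_{\pp^{\perp}_{\sigma}(\gamma)\times\g}\mathrm{Tr_{s}}^{\Lambda^{\cdot}\otimes E}\bigl[\sigma^{\Lambda^{\cdot}\otimes E}q^{X}_{0,t}(e^{-f}\gamma e^{\sigma f},Y,\sigma Y)\bigr]r(f)\,df\,dY.
\]
By the explicit expression \eqref{eq::3.7.5ugcd}, the projector $\mathbf{P}$ collapses the Clifford/exterior part so that the $\Lambda^{\cdot}$-supertrace reduces to the ordinary trace on $E$. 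Since $\sigma$ preserves the scalar product on $\g$, one has $|\sigma Y|=|Y|$, and the Gaussian factor integrates to $\int_{\g}\pi^{-(m+n)/2}\exp(-|Y|^{2})\,dY=1$. The remaining expression is exactly $\int_{\pp^{\perp}_{\sigma}(\gamma)}\mathrm{Tr}^{E}[\sigma^{E}p^{X}_{t}(e^{-f}\gamma e^{\sigma f})]r(f)\,df=\mathrm{Tr}^{[\gamma\sigma]}[\exp(-t\mathcal{L}^{X}_{A})]$ by Definition \ref{defn_orbitalintegral}.

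The hard part will be the first step: concretely identifying $\partial_{b}\mathcal{L}^{X}_{b}$ as a supercommutator within $\mathfrak{Q}^{\sigma}$ and rigorously justifying the interchange of $\partial_{b}$, the Duhamel integration in $s$, and the noncompact integration over $\pp^{\perp}_{\sigma}(\gamma)\times\g$ that defines $\mathrm{Tr_{s}}^{[\gamma\sigma]}$. This requires the delicate Gaussian-type estimates on $q^{X}_{b,t}$ and its $b$-derivative from \cite[Chapters 11, 14]{bismut2011hypoelliptic}, together with a careful check that the operators $R_{b}$ appearing in the commutator representation preserve the decay conditions \eqref{condition3} uniformly in $b\in(0,M]$; everything else reduces to adapting the corresponding untwisted argument of \cite[Theorem 4.6.1]{bismut2011hypoelliptic} via the identification \eqref{eq3.35} between $\sigma$-twisted orbital integrals on $G$ and ordinary orbital integrals on $G^{\sigma}$.
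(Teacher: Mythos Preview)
Your proposal is correct and follows essentially the same approach as the paper. The only simplification you are missing is that the paper identifies the supercommutator explicitly as $\partial_{b}\mathcal{L}^{X}_{A,b}=\tfrac{1}{2}[\mathfrak{D}^{X}_{b},\partial_{b}\mathfrak{D}^{X}_{b}]$ (so $R_{b}=\tfrac{1}{2}\partial_{b}\mathfrak{D}^{X}_{b}$), and then uses $[\mathfrak{D}^{X}_{b},\mathcal{L}^{X}_{A,b}]=0$ to collapse the Duhamel integral at once, writing $\partial_{b}\mathrm{Tr_{s}}^{[\gamma\sigma]}[\exp(-t\mathcal{L}^{X}_{A,b})]=-\tfrac{t}{2}\mathrm{Tr_{s}}^{[\gamma\sigma]}\big[[\mathfrak{D}^{X}_{b},\partial_{b}\mathfrak{D}^{X}_{b}\exp(-t\mathcal{L}^{X}_{A,b})]\big]=0$ directly; your $b\to 0$ argument via \eqref{eq:bkernelinQ}, \eqref{eq:3.61n}, \eqref{lineardist} and dominated convergence is exactly what the paper does.
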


\begin{proof}
By \eqref{orbitaldef2} and using Proposition \ref{prop_3.4.3}, we get
\begin{equation}\label{eq:3.64n}
\frac{\partial}{\partial b} 
\mathrm{Tr_s}^{[\gamma\sigma]}[\exp(-t\mathcal{L}^X_{A,b})]= -t 
\mathrm{Tr_s}^{[\gamma\sigma]} \Big[\big(\frac{\partial}{\partial 
b}\mathcal{L}^X_{A,b}\big) \exp(-t \mathcal{L}^X_{A,b})\Big]. 
\end{equation}
By \eqref{eq:4.4.12hh} and \eqref{eq:3.6.5ugc}, we have
\begin{equation}\label{eq:3.65n}
\frac{\partial }{\partial b}\mathcal{L}^X_{A,b}= \frac{1}{2}[\mathfrak{D}^X_b, \frac{\partial}{\partial b} \mathfrak{D}^X_b],\;\; [\mathfrak{D}^X_b, \mathcal{L}^X_{A,b}]=0.
\end{equation}
Then we get
\begin{equation}
	\begin{split}
		\frac{\partial}{\partial b} 
		\mathrm{Tr_s}^{[\gamma\sigma]}[\exp(-t\mathcal{L}^X_{A,b})]&=-\frac{t}{2}\mathrm{Tr_s}^{[\gamma\sigma]} \Big[\big[\mathfrak{D}^X_b, \frac{\partial}{\partial b} \mathfrak{D}^X_b\big] \exp(-t \mathcal{L}^X_{A,b})\Big] \\
		&=-\frac{t}{2}\mathrm{Tr_s}^{[\gamma\sigma]} 
		\Big[\big[\mathfrak{D}^X_b, \big(\frac{\partial}{\partial b} 
		\mathfrak{D}^X_b\big)\exp(-t \mathcal{L}^X_{A,b})\big]\Big].
	\end{split}
	\label{eq:3.66n}
\end{equation}

Applying again Proposition \ref{prop_3.4.3}, we get
\begin{equation}
	\frac{\partial}{\partial b} 
	\mathrm{Tr_s}^{[\gamma\sigma]}[\exp(-t\mathcal{L}^X_{A,b})]=0.
	\label{eq:3.67n}
\end{equation}

Now we only need to prove that
\begin{equation}
	\lim_{b\rightarrow 0} 
	\mathrm{Tr_s}^{[\gamma\sigma]}\left[\exp(-t\mathcal{L}^X_{A,b})\right]= 
	\mathrm{Tr}^{[\gamma\sigma]}\left[\exp(-t \mathcal{L}^X_A)\right].
	\label{eq:limit}
\end{equation}
By  \eqref{lineardist} and Theorem \ref{thm:twokernels}, given $t>0$, 
there exist $C,C'>0$ such that for $0<b\leq 1, 
f\in\pp^{\perp}_{\sigma}(\gamma), Y\in (TX\oplus N)_{e^{f}p1}$,
\begin{equation}
		\Big|q^{X}_{b,t}\big((e^{f}p1,Y),\gamma\sigma(e^{f}p1,Y)\big)\Big|\leq 
	C\exp\big(-C'(|f|^{2}+|Y|^{2})\big)
	\label{eq:3.69n}
\end{equation}

Using \eqref{orbitaldef1}, \eqref{orbitaldef2}, \eqref{eq:3.61n} and dominated convergence, we get \eqref{eq:limit}. The proof of our 
theorem is completed.
\end{proof}

\subsection{An identity for 
$J_{\gamma\sigma}(Y^{\kk}_{0})$}\label{proof5.2}

Recall that $p=\dim \pp_{\sigma}(\gamma)$, $q=\dim \kk_{\sigma}(\gamma)$, $r=\dim \z_{\sigma}(\gamma)=p+q$. Let $e_1,\cdots, e_p$ be an orthonormal 
basis of $\pp_{\sigma}(\gamma)$, and let $e_{p+1},\cdots, e_r$ be an 
orthonormal basis of $\kk_{\sigma}(\gamma)$. Let $e^1$, $\cdots$, 
$e^r$ be the corresponding dual basis of $\z_{\sigma}(\gamma)^*$. Let 
$\uzs(\gamma)$, $\uzs(\gamma)^*$ be another copies of 
$\z_{\sigma}(\gamma)$, $\z_{\sigma}(\gamma)^*$. We underline the 
obvious objects associated with $\uzs(\gamma)$, $\uzs(\gamma)^*$. Let 
$c(\z_\sigma(\gamma))$ denote the Clifford algebra associated with 
$(\z_\sigma(\gamma),B|_{\z_\sigma(\gamma)})$, we also identify the 
elements in $c(\z_\sigma(\gamma))$ with their actions on 
$\Lambda^\bullet(\g^*)$ given in \eqref{eq:actionscc}.

By \eqref{cartandecom1}, we get
\begin{equation}\label{eq_6.2}
\pp\times\g=\pp\times (\pp\oplus\kk).
\end{equation}
We denote by $y$ the tautological section of the first copy of $\pp$ 
in the right-hand side of \eqref{eq_6.2}, and by $Y^\g=Y^\pp+Y^\kk$ 
the tautological section of $\g=\pp\oplus\kk$. We also denote by
$dy,\, dY^\g=dY^\pp dY^\kk$ the volume forms on $\pp$, $\g$
respectively. Recall that
$\Delta^{\pp\oplus\kk}$ is the standard
Laplacian on $\g=\pp\oplus\kk$, i.e., the second factor in the
right-hand side of \eqref{eq_6.2}. Let $\nabla^H$ denote
differentiation in the variable $y\in\pp$, and let $\nabla^V$ denote
the differentiation in the variable $Y^\g\in\g$.

Put 
\begin{equation}
\alpha=\sum_{i=1}^{r} c(e_i)\underline{e}^i\in
c(\z_{\sigma}(\gamma))\widehat{\otimes}
\Lambda^\bullet(\uzs(\gamma)^*).
\label{eq:5.1.15didot}
\end{equation}
As an analogue in \cite[Section 
5.1]{bismut2011hypoelliptic}, if $Y^\kk_0\in\kk_{\sigma}(\gamma)$, let 
$\mathcal{P}_{a,Y_0^\kk}$ be the differential operator acting on $C^\infty(\pp\times
\g,\Lambda^\bullet(\g^*)\widehat{\otimes}
\Lambda^\bullet(\uzs(\gamma)^*))$ defined as follows,
	\begin{equation}\label{eq_operatorP}
	\begin{split}
	\mathcal{P}_{a,
Y^\kk_0}=&\frac{1}{2}\big|[Y^\kk,a]+[Y^\kk_0,Y^\pp]\big|^2-\frac{1}{2}\Delta^{\pp\oplus\kk}+\alpha-\nabla^H_{Y^\pp}\\
	&-\nabla^V_{[a+Y^\kk_0,[a,y]]}
-\widehat{c}(\mathrm{ad}(a))+c\left(\mathrm{ad}(a)+i\theta\mathrm{ad}(Y^\kk_0)\right).
	\end{split}
	\end{equation}
By H\"{o}rmander's theorem \cite{Hormander1967}, both $\mathcal{P}_{a, Y^\kk_0}$, $\frac{\partial}{\partial t}+ \mathcal{P}_{a, Y^\kk_0} $ are
hypoelliptic.

Let $R_{Y^\kk_0}$ be the smooth kernel of 
$\exp(-\mathcal{P}_{a, Y^\kk_0})$ with respect to the volume $dy 
dY^\g$ on $\pp\times\g$. Then by \cite[(5.1.10)]{bismut2011hypoelliptic}, for $(y,Y^\g),(y',Y^{\g\prime})\in 
\pp\times\g$,
\begin{equation}
R_{Y^\kk_0}\big((y,Y^\g),(y',Y^{\g\prime})\big)\in\mathrm{End}(\Lambda^\bullet(\z^\perp_\sigma(\gamma)^*))\widehat{\otimes}
c(\z_{\sigma}(\gamma))\widehat{\otimes}
\Lambda^\bullet(\uzs(\gamma)^*).
\end{equation}

\begin{definition}\label{def:hatstr}
Let $\widehat{\mathrm{Tr_s}}$ denote the supertrace functional on 
$c(\z_{\sigma}(\gamma))\widehat{\otimes} 
\Lambda^\bullet(\uzs(\gamma)^*)$  such that it 
vanishes on monomials of nonmaximal length, and gives the value
$(-1)^r$ to the monomial $c(e_1)\underline{e}^1 \cdots 
c(e_r)\underline{e}^r$. It also extends to a supertrace functional on $\mathrm{End}(\Lambda^\bullet(\z^\perp_\sigma(\gamma)^*))\widehat{\otimes}
c(\z_{\sigma}(\gamma))\widehat{\otimes}
\Lambda^\bullet(\underline{\z_{\sigma}}(\gamma)^*)$ by tensoring with the supertrace on 
$\mathrm{End}(\Lambda^\bullet(\z_{\sigma}^\perp(\gamma)^*))$. We still 
denote it by $\widehat{\mathrm{Tr_s}}$.
\end{definition}

Now we give the important result established in \cite[Theorem 5.5.1]{bismut2011hypoelliptic}.
\begin{proposition}
	For $Y^\kk_0\in\kk_{\sigma}(\gamma)$, we have
\begin{equation}\label{eq_traceJ}
	\begin{split}
		J_{\gamma\sigma}(Y^\kk_0)=(2\pi)^{r/2}\int_{y\in\pp^\perp_{\sigma}(\gamma), Y^\g\in \pp\oplus \kk^\perp_\sigma(\gamma)}
\widehat{\mathrm{Tr_s}}\bigg[\mathrm{Ad}(k^{-1}\sigma) &\\
R_{Y^\kk_0}\Big((y,Y^\g),\mathrm{Ad}(k^{-1}\sigma)(y,Y^{\g})\Big)
\bigg]dydY^\g.&
	\end{split}
\end{equation}
\end{proposition}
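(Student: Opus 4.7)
The plan is to follow the strategy of Bismut for the untwisted analogue \cite[Theorem 5.5.1]{bismut2011hypoelliptic} and adapt it to the $\sigma$-twisted setting. The operator $\mathcal{P}_{a,Y^\kk_0}$ of \eqref{eq_operatorP} is a model hypoelliptic operator whose quartic-plus-Laplacian piece $\frac{1}{2}|[Y^\kk,a]+[Y^\kk_0,Y^\pp]|^2-\frac{1}{2}\Delta^{\pp\oplus\kk}$ is essentially a harmonic oscillator with spectral data determined by $\mathrm{ad}(a)$ and $\mathrm{ad}(Y^\kk_0)$, while the transport terms $-\nabla^H_{Y^\pp}$ and $-\nabla^V_{[a+Y^\kk_0,[a,y]]}$ generate an affine drift coupling $y$ and $Y^\g$. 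Consequently $\exp(-\mathcal{P}_{a,Y^\kk_0})$ is accessible via Mehler-type formulas, after which the evaluation at $\mathrm{Ad}(k^{-1}\sigma)(y,Y^\g)$, the supertrace $\widehat{\mathrm{Tr_s}}$, and the Gaussian integration over $(y,Y^\g)$ should combine to produce exactly the factors in the definition \eqref{Jfunction} of $J_{\gamma\sigma}(Y^\kk_0)$.

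The key steps, carried out blockwise, are as follows. First I would decompose the fibre as
\begin{equation*}
\g=\z_\sigma(\gamma)\oplus\z^\perp_{\sigma,0}(\gamma)\oplus\z^\perp_0,
\end{equation*}
so that $\mathrm{ad}(a)$ vanishes on the first two summands and is invertible on the third, while $\mathrm{Ad}(k^{-1}\sigma)$ preserves each summand and has no eigenvalue $1$ on the last two. The operator $\mathcal{P}_{a,Y^\kk_0}$ then decouples into three mutually commuting pieces, and $\exp(-\mathcal{P}_{a,Y^\kk_0})$ factors accordingly. On the $\z_\sigma(\gamma)$-block, the Clifford form $\alpha$ of \eqref{eq:5.1.15didot} combined with $\widehat{\mathrm{Tr_s}}$ (which forces the top Clifford monomial on $\z_\sigma(\gamma)$) produces the ratio $\widehat{A}(i\mathrm{ad}(Y^\kk_0)|_{\pp_\sigma(\gamma)})/\widehat{A}(i\mathrm{ad}(Y^\kk_0)|_{\kk_\sigma(\gamma)})$, with $(2\pi)^{r/2}$ absorbing the Mehler normalization. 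On $\z^\perp_{\sigma,0}(\gamma)$, where $\mathrm{ad}(a)=0$ but $\mathrm{Ad}(k^{-1}\sigma)\neq 1$, a Gaussian computation against the twisted diagonal yields the determinant ratio on $\pp^\perp_{\sigma,0}(\gamma)$ and $\kk^\perp_{\sigma,0}(\gamma)$ appearing in \eqref{eq:AinJ}. On $\z^\perp_0$, the interplay of $\mathrm{ad}(a)$ with the evaluation at $\mathrm{Ad}(k^{-1}\sigma)(y,Y^\g)$ yields the normalization factor $|\det(1-\mathrm{Ad}(\gamma\sigma))|_{\z^\perp_0}|^{-1/2}$.

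The main obstacle is the careful bookkeeping of the $\sigma$-twist and the analytic square roots. In Bismut's untwisted argument, the action on the normal directions is $\mathrm{Ad}(k^{-1})$ and the determinants in $J_\gamma$ are canonically fixed by the joint spectral decomposition of $\mathrm{ad}(Y^\kk_0)$ and $\mathrm{Ad}(k^{-1})$. Here one must instead track $\mathrm{Ad}(k^{-1}\sigma)$ throughout; its interplay with the Cartan involution $\theta$ on the $\pp$- and $\kk$-parts of $\z^\perp_{\sigma,0}(\gamma)$ (which enter with opposite signs in the Mehler exponent) is what dictates the precise placement of $\pp^\perp_{\sigma,0}(\gamma)$ and $\kk^\perp_{\sigma,0}(\gamma)$ in the denominator and numerator of \eqref{eq:AinJ}. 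It must also be verified that the natural analytic square roots fixed in \eqref{eq4.21} and \eqref{eq:Aroot} coincide with those produced by the heat-kernel computation; this is a continuity argument in $Y^\kk_0$ anchored at $Y^\kk_0=0$, where the integral reduces to the elementary Gaussian determinant $\det(1-\mathrm{Ad}(\gamma\sigma))|_{\z^\perp_0}^{1/2}$ and both square-root conventions agree. Once these sign and square-root conventions are matched block by block, the proof reduces to Bismut's blockwise Mehler and supertrace computation, now applied in the presence of the additional $\sigma$-factor.
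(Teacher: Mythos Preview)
Your proposal is correct and follows essentially the same approach as the paper: both reduce the twisted identity to Bismut's computation in \cite[Chapter 5]{bismut2011hypoelliptic} with $k^{-1}\sigma$ substituted for $k^{-1}$. The paper is simply more terse, observing in one sentence that Bismut's supertrace computations depend only on the adjoint actions of $\gamma$, $k^{-1}$, $a$ and their mutual commutation, so the substitution $k^{-1}\mapsto k^{-1}\sigma$ goes through verbatim; your outline of the blockwise Mehler and square-root bookkeeping is exactly what that verbatim reproduction amounts to.
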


\begin{proof}
In the proof of \cite[Theorem 5.5.1]{bismut2011hypoelliptic},
the computations of the supertrace functional in the right-hand side 
of \eqref{eq_traceJ} only depend on the adjoint actions of $\gamma$, 
$k^{-1}$ and $a$ and the fact that they commute with each other. 
Therefore, when replacing $\gamma$, $k^{-1}$ by $\gamma\sigma$, 
$k^{-1}\sigma$, the computations in \cite[Chapter 
5]{bismut2011hypoelliptic} still hold, so that \eqref{eq_traceJ} holds. 
\end{proof}

\subsection{A proof of Theorem 
\ref{thm_orbitalintegral}}\label{proof5.3}

Recall that $\widehat{\mathcal{X}}$ is the total space of the vector 
bundle $TX\oplus N\rightarrow X$, which can be canonically identified 
with $X\times \g$ as in \eqref{eq:1.1.13sud}.
For $b\neq 0$, $s(x,Y)\in 
C^{\infty}\big(\widehat{\mathcal{X}},\widehat{\pi}^{*}(\Lambda^\bullet(T^{*}X\oplus
N^{*})\otimes F)\big)$, set
\begin{equation}
	F_{b}s(x,Y)=s(x,bY).
	\label{eq:Frescaling}
\end{equation}
For $t>0$, we denote with an extra subscript $t$ the hypoelliptic 
Laplacian defined in Subsection \ref{s3.7} associated with the 
bilinear form $B/t$. By \eqref{hypoopX}, we have
\begin{equation}
	F_{\sqrt{t}}t^{N^{\Lambda^\bullet(T^*X\oplus N^*)/2}}\mathcal{L}^X_{b,t}t^{-N^{\Lambda^\bullet(T^*X\oplus N^*)/2}}F_{\sqrt{t}}^{-1}=t\mathcal{L}^X_{\sqrt{t}b}.
	\label{eq:timerescaling}
\end{equation}
By Remark \eqref{rem:BtJ} and \eqref{eq:timerescaling}, it is 
enough to prove \eqref{eq:4.2.1} with $t=1$. When $t=1$, we will drop the subscript $t$ in the corresponding notation of heat kernels, such as $q^X_{b}=q^X_{b,1}$.

By 
\eqref{eq:3.62n}, we will make $b\rightarrow +\infty$ in 
$\mathrm{Tr_s}^{[\gamma\sigma]}[\exp(-\mathcal{L}^X_{A,b})]$. 
Generally speaking, all the analytic and geometric constructions of \cite{bismut2011hypoelliptic} only depend on the fact that $G$ acts on $X$ as a group of isometries, replacing $G$ by $G^\sigma$ does not change anything from that point of view. This is why we will freely use the arguments in \cite[Chapter 9]{bismut2011hypoelliptic}. We sketch the main steps of the proof as follows.

At first, we introduce the $\gamma\sigma$-periodic points of the 
geodesic flow on $\widehat{\mathcal{X}}$ . Let $\{\varphi_t\}_{t\in 
\mathbb{R}}$ denote the geodesic flow on $\mathcal{X}$ associated with $g^{TX}$. The flow $\{\varphi_{t}\}_{t\in\R}$ lifts to a flow of diffeomorphisms of $\widehat{\mathcal{X}}$. If $(x,Y^{TX},Y^{N})\in \widehat{\mathcal{X}}$, set
\begin{equation}
 (x_{t},Y^{TX}_{t},Y^{N}_{t})=\varphi_{t}(x,Y^{TX}, Y^{N}).
 \label{eq:1.6.19ugc}
 \end{equation}
Then $x_{t}$ is just the geodesic starting at $x$ with speed 
$Y_{t}^{TX}$, and $Y^{N}_{t}$ is the parallel transport of $Y^{N}$ along $x_{t}$.
Set
\begin{equation}
	\widehat{\mathcal{F}}_{\gamma\sigma}=\{z\in\widehat{\mathcal{X}}\;:\;
	(\gamma\sigma)^{-1} \varphi_{1} z=z\}.
	\label{eq:fixset}
\end{equation}

The vector 
$a\in \pp$ defines a constant section of $X\times \g$. By \eqref{eq:1.1.13sud}, we can view $a$ as a smooth section of $TX\oplus N$. Let $a^{TX}$, $a^{N}$ the 
corresponding parts of this section in $TX$, 
$N$ respectively. In the global coordinate system 
$(\pp,\exp_{x_{0}})$ of $X$ defined in Subsection \ref{section1-1}, 
for $Y^{\pp}\in \pp$, by \cite[Proposition 
3.2.4]{bismut2011hypoelliptic}, we have
\begin{equation}
	a^{TX}(Y^{\pp})=\cosh(\mathrm{ad}(Y^{\pp}))a.
\end{equation}
Set
\begin{equation}
		\begin{split}
			N_{\sigma}(k^{-1})&=Z^{0}_{\sigma}(\gamma)\times_{K^{0}_{\sigma}(\gamma)}\kk_{\sigma}(k^{-1})\\
			&=\{Y^{N}\in N|_{X(\gamma\sigma)}\;:\; 
			\mathrm{Ad}(k^{-1}\sigma)Y^{N}=Y^{N}\}
		\end{split}
			\label{eq:idenset}
			\end{equation}
Then we have
\begin{equation}
	\widehat{\mathcal{F}}_{\gamma\sigma}=\big\{\big(x, a^{TX}(x), 
	Y^{N}\big)\in\widehat{\mathcal{X}}\;:\; (x, Y^{N})\in 
	N_{\sigma}(k^{-1}), x\in X(\gamma\sigma)\big\}.
\end{equation}

Put
	\begin{equation}\label{eq:bonnsunday}
	\underline{\mathcal{L}}^X_{A,b}=F_{-b}\mathcal{L}^X_{A,b}F_{-b}^{-1}.
	\end{equation}
Let $\underline{q}^X_{b}$ be the kernel associated with 
$\exp(-\underline{\mathcal{L}}^X_{A,b})$ with respect to $dxdY$. By 
\eqref{orbitaldef2}, we can use $\underline{q}^X_b$ instead of $q^X_b$ to define $\mathrm{Tr_s}^{[\gamma\sigma]}[\exp(-\mathcal{L}^X_{A,b})]$.

An important property of the hypoelliptic heat kernel proved in 
\cite[Theorem 9.1.1]{bismut2011hypoelliptic}, after adapting to our 
twisted case, is that for the points $(x,Y)\in \widehat{\mathcal{X}}$ away from 
$\widehat{\mathcal{F}}_{\gamma\sigma}$, the norm of 
$\underline{q}^X_b((x,Y),\gamma\sigma(x,Y))$ decays to $0$ 
exponentially with respect to $d_{\gamma\sigma}(x)$ and $|Y|$ (cf. 
\eqref{eq:bkernelinQ}).
As a consequence, by the arguments 
	in \cite[Section 9.2]{bismut2011hypoelliptic}, if 
	$\beta\in\;]0,1]$ is fixed, $\mathrm{Tr_s}^{[\gamma\sigma]}[\exp(-\mathcal{L}^X_{A,b})]$ is given by the limit as $b\rightarrow +\infty$ of the following integral,
	\begin{equation}\label{eq:4.2.4bonn}
	\int_{\small\substack{f\in\pp_\sigma^\perp(\gamma),\; |f|\leq 
	\beta; \\ Y\in\;\g,\\ |Y^{TX} - a^{TX}|\leq \beta. }} 
	\mathrm{Tr_s}^{\Lambda^\bullet(T^*X\oplus N^*)\otimes 
	F}\Big[\gamma\sigma 
	\underline{q}^X_b\big((pe^f,Y),\gamma\sigma(pe^f, Y)\big)\Big]r(f)dfdY.
	\end{equation} 

	Recall that $dY=dY^{TX}dY^N$. Put 
	$N_{\sigma}(\gamma)=Z^0_\sigma(\gamma)\times_{K^0_\sigma(\gamma)} 
	\kk_\sigma(\gamma)$ the vector bundle on $X(\gamma\sigma)$. Let 
	$N^{\perp}_{\sigma}(\gamma)$ be the orthogonal bundle of 
	$N_{\sigma}(\gamma)$ in $N|_{X(\gamma\sigma)}$, then 
	$N^{\perp}_{\sigma}(\gamma)=Z^0_\sigma(\gamma)\times_{K^0_\sigma(\gamma)}\kk^\perp_\sigma(\gamma)$. Recall that the projection $P_{\gamma\sigma}: X\rightarrow X(\gamma\sigma)$ is described in Theorem \ref{thm_normalcoord}.
	
We trivialize the vector bundles $TX$, $N$ by parallel transport 
along the geodesics orthogonal to $X(\gamma\sigma)$ with respect to $\nabla^{TX}, \nabla^{N}$, then the vector bundles $TX$, $N$ 
on $X$ can be identified 
with $P^*_{\gamma\sigma}(TX|_{X(\gamma\sigma)})$,  $P^*_{\gamma\sigma}(N|_{X(\gamma\sigma)})$. 
If $f\in \pp^\perp_\sigma(\gamma)$, at $\rho_{\gamma\sigma}(1,f)$, we may write $Y^N\in N$ in the form
	\begin{equation}\label{eq:bonnmonday1}
	Y^N=Y^\kk_0+ Y^{N,\perp}, \; Y^\kk_0\in \kk_\sigma(\gamma),\; Y^{N,\perp}\in \kk^\perp_\sigma(\gamma).
	\end{equation}
	Let $dY^\kk_0$, $dY^{N,\perp}$ be the volume elements on $\kk_\sigma(\gamma)$, $\kk^\perp_\sigma(\gamma)$, so that $dY^N=dY^\kk_0 dY^{N,\perp}$. We rewrite the integral in \eqref{eq:4.2.4bonn} as follows,
	\begin{equation}\label{eq:4.2.6bonn}
	\begin{split}
	b^{-4m-2n+2r}&\int_{\begin{subarray}\; f\in\pp_\sigma^\perp(\gamma),\; |f|\leq b^2\beta \\ Y\in\;\g,\; |Y^{TX}|\leq b^2\beta \end{subarray}} \mathrm{Tr_s}^{\Lambda^\bullet(T^*X\oplus N^*)\otimes F}\\
	&\bigg[\gamma\sigma \underline{q}^X_b\Big(\big(pe^{f/b^2},a^{TX}+\frac{Y^{TX}}{b^2}, 
	Y^\kk_0+\frac{Y^{N,\perp}}{b^2}\big),\\
	&\gamma\sigma\big(pe^{f/b^2},a^{TX}+\frac{Y^{TX}}{b^2}, 
	Y^\kk_0+\frac{Y^{N,\perp}}{b^2}\big)\Big)\bigg]r(\frac{f}{b^{2}})dfdY^{TX}dY^\kk_0 dY^{N,\perp}.
	\end{split}
	\end{equation} 

Now we deal with the factor $b^{2r}$ in 
\eqref{eq:4.2.6bonn} as in \cite[Sections 9.3 - 
9.5]{bismut2011hypoelliptic}. Recall that $\alpha$ is defined in 
\eqref{eq:5.1.15didot}. By \eqref{eq:1.1.13sud},  $TX\oplus N$ is identified with the trivial vector bundle 
$\g$ on $X$. Let $(TX\oplus N)_{\sigma}(\gamma)$ be the subbundle of $TX\oplus 
N$ corresponding $\z_\sigma(\gamma)\subset \g$, and let $(\underline{TX\oplus N})_{\sigma}(\gamma)^*$ be one copy of the dual bundle of $(TX\oplus N)_{\sigma}(\gamma)$. We regard $\alpha$ as a constant section of the trivial bundle $c(\g)\otimes \underline{\z_\sigma}(\gamma)^*$, hence a section of $c(TX\oplus N)\otimes (\underline{TX\oplus N})_{\sigma}(\gamma)^*$.

	Set
	\begin{equation}
	\label{eq:4.2.7bonn}
	\mathfrak{L}^X_{A,b}=\underline{\mathcal{L}}^X_{A,b}+\alpha.
	\end{equation}
	It acts on $C^\infty\big(\widehat{\mathcal{X}}, 
	\widehat{\pi}^*\big(\Lambda^\bullet(T^*X\oplus N^*)\otimes 
	F\widehat{\otimes}\Lambda^\bullet((\underline{TX\oplus 
	N})_{\sigma}(\gamma)^*)\big)\big)$. Note that $\mathfrak{L}^X_{A,b}$ commutes with the action of $\gamma\sigma$, $e^a$ and $k^{-1}\sigma$. Let $\mathfrak{q}^X_b$ be the smooth kernel of $\exp(-\mathfrak{L}^X_{A,b})$ with respect to the volume $dxdY$. 

We extend the basis $\{e_i\}_{i=1}^r$ of 
	$\z_\sigma(\gamma)$ to an orthonormal basis $\{e_i\}_{i=1}^{m+n}$ 
	of $(\g, \langle\cdot,\cdot\rangle)$. Since 
	$\mathrm{End}(\Lambda^\bullet(\g^*))=c(\g)\widehat{\otimes}\widehat{c}(\g)$, 
	we can extend $\widehat{\mathrm{Tr_s}}$ in Definition 
	\ref{def:hatstr} to a linear functional on 
	$\mathrm{End}(\Lambda^\bullet(\g^*))\widehat{\otimes}\Lambda^\bullet(\underline{\z_{\sigma}}(\gamma)^{*})$ by making it vanish on all the monomials in the $c(e_i)$, $\widehat{c}(e_i)$, $\underline{e}^k$, $1\leq i\leq m+n$, $1\leq k \leq r$ except on
	\begin{equation}\label{eq:termsbonn}
	c(e_1)\underline{e}^1\cdots c(e_r)\underline{e}^r c(e_{r+1})\widehat{c}(e_{r+1})\cdots c(e_{m+n})\widehat{c}(e_{m+n}).
	\end{equation}
Moreover,
\begin{equation}
	\begin{split}
			\widehat{\mathrm{Tr_{s}}}\big[c(e_1)\underline{e}^1\cdots 
			c(e_r)\underline{e}^r 
			c(e_{r+1})\widehat{c}(e_{r+1})\cdots 
			c(e_{m+n})\widehat{c}(e_{m+n})\big]&\\
			=(-1)^{r+n-q}(-2)^{m+n-r}.&
	\end{split}
	\label{eq:5.3.888}
\end{equation}
The map $\widehat{\mathrm{Tr_s}}$ also extends to a linear functional 
on 
$\mathrm{End}(\Lambda^\bullet(\g^*))\widehat{\otimes}\Lambda^\bullet(\underline{\z_{\sigma}}(\gamma)^*)\otimes\mathrm{End}(E)$ by tensoring with $\mathrm{Tr}^{E}[\cdot]$.

By \cite[Theorem 9.5.2 and Proposition 
9.5.4]{bismut2011hypoelliptic}, the operator 
$\mathfrak{L}^X_{A,b}$ is conjugate to 
$\underline{\mathcal{L}}^X_{A,b}$, and if $(x,Y)\in \widehat{\mathcal{X}}$, then
	\begin{equation}
	\label{eq:4.2.8bonn}
	\mathrm{Tr_s}\big[\gamma\sigma\underline{q}^X_b\big((x,Y),\gamma\sigma(x,Y)\big)\big]=b^{-2r}\widehat{\mathrm{Tr_s}}\big[\gamma\sigma \mathfrak{q}^X_b\big((x,Y),\gamma\sigma(x,Y)\big)\big].
	\end{equation}
Now we proceed as in \cite[Sections 9.8 - 
	9.11]{bismut2011hypoelliptic}, we can establish an analog of 
	\cite[Theorem 9.6.1]{bismut2011hypoelliptic}, which says that as 
	$b\rightarrow +\infty$, 
		\begin{equation}
	\label{eq:4.2.15bonn}
	\begin{split}
		b^{-4m-2n}&\gamma\sigma  
		\mathfrak{q}^X_b\Big(\big(pe^{f/b^2}, a^{TX}+Y^{TX}/b^2, 
		Y^\kk_0+Y^{N,\perp}/b^2\big), \\
		&\gamma\sigma\big(pe^{f/b^2}, a^{TX}+Y^{TX}/b^2, 
		Y^\kk_0+Y^{N,\perp}/b^2\big)\Big)\\
		&\rightarrow 
		\exp\big(-|a|^{2}/2-|Y^{\kk}_{0}|^{2}/2\big)\mathrm{Ad}(k^{-1}\sigma) 
		R_{Y^{0}_{\kk}}\big((f,Y),\mathrm{Ad}(k^{-1}\sigma)(f,Y)\big)\\
		&\;\;\;\;\;\;\;\;\;\;\; 
		\rho^{E}(k^{-1}\sigma)\exp\big(-i\rho^{E}(Y^{\kk}_{0})-A\big).
	\end{split}
	\end{equation}
	
	As in \cite[Theorem 9.5.6]{bismut2011hypoelliptic}, there exist 
	$C_{\beta}>0$, $C_{\gamma\sigma,\beta}>0$ such that for $b\geq 
	1$, $f\in \pp^\perp_\sigma(\gamma)$, $|f|\leq b^2\beta$, 
	$|Y^{TX}|\leq b^2\beta$, then left-hand side of \eqref{eq:4.2.15bonn} is bounded by 
	\begin{equation}
	\label{eq:4.2.19bonn}
	C_{\beta} 
	\exp\Big(-C_{\gamma\sigma,\beta}\big(|f|^2+|Y^{TX}|^2+|Y^\kk_0|^2 
	+ |Y^{N,\perp}|\big) \Big).
	\end{equation}

	Combining \eqref{eq:4.2.6bonn} and \eqref{eq:4.2.8bonn} - \eqref{eq:4.2.19bonn}, we get 
	\begin{equation}\label{eq:5.6n}
	\begin{split}
	&\lim_{b\rightarrow 
	+\infty}\mathrm{Tr_s}^{[\gamma\sigma]}[\exp(-\mathcal{L}^X_{A,b})]=\exp(-|a|^2/2)\int_{\begin{subarray}\;\;(y,Y^\g, Y^\kk_0) \\ \in \pp^\perp_{\sigma}(\gamma)\times \big(\pp\oplus \kk^\perp_{\sigma}(\gamma)\big)\times \kk_{\sigma}(\gamma)\end{subarray}}\\
	&\qquad\qquad\widehat{\mathrm{Tr_{s}}}\Big[\mathrm{Ad}(k^{-1}\sigma)
	R_{Y^{\kk}_{0}}\big((f,Y^\g),\mathrm{Ad}(k^{-1}\sigma)(f,Y^\g)\big)\Big]\\
	&\qquad\qquad\mathrm{Tr}^E\Big[\rho^{E}(k^{-1}\sigma)\exp\big(-i\rho^{E}(Y^{\kk}_{0})-A\big)\Big]\exp(-|Y^{\kk}_{0}|^{2}/2)dydY^\g dY^\kk_0.
	\end{split}
	\end{equation}
By \eqref{eq_traceJ}, \eqref{eq:5.6n}, we get \eqref{eq:4.2.1}. 
	This completes the proof of Theorem \ref{thm_orbitalintegral}.

\section{Connections with the local equivariant index 
theory}\label{section5bonn}
In this section, we show that our formula in \eqref{eq:4.2.1} is 
compatible with the local equivariant index theorems for compact 
locally symmetric spaces. We also apply our formula to study the 
twisted $L_{2}$-torsions introduced in 
\cite{BeLip2017} for compact locally symmetric spaces.


\subsection{The classical Dirac operator on $X$}\label{s4.5}
In this subsection, we will assume $\pp$ to be even dimensional and oriented, 
and $K$ to be semisimple and simply connected. Recall that $m=\dim\pp$.

Let $\mathrm{Spin}(\pp)$ be the Spin group of Euclidean space $\pp$, 
then the adjoint 
representation $K\rightarrow \mathrm{SO}(\pp)$ 
lifts to a homomorphism $K\rightarrow
\mathrm{Spin}(\pp)$. Let $S^\pp=S_{+}^\pp\oplus S_{-}^\pp$ be the 
$\mathds{Z}_2$-graded Hermitian vector space of 
$\pp$-spinors. To avoid confusion with the notation in Subsection \ref{section3.1}, let 
$\bar{c}(\pp)$ denote the Clifford 
algebra of $(\pp, B|_{\pp})$ acting on $S^\pp$.
Therefore, $K$ acts on $S^\pp_{\pm}$ via the spin 
representation $\rho^{S^\pp_{\pm}}$.

Set
\begin{equation}
	P_{\mathrm{SO}}(X)=G\times_{K} \mathrm{SO}(\pp),\; 
	P_{\mathrm{Spin}}(X)=G\times_{K} \mathrm{Spin}(\pp),
	\label{eq:7.1.4didot}
\end{equation}
where $K$ acts on $\mathrm{SO}(\pp)$, $\mathrm{Spin}(\pp)$ by 
conjugation. Then we get a double covering  $P_{\mathrm{Spin}}(X)\rightarrow 
P_{\mathrm{SO}}(X)$, which defines a spin structure on $X$. 
Moreover, $S^\pp$ descends to the Hermitian vector bundle $S^{TX}=S_+^{TX}\oplus 
S^{TX}_-$ of $(TX,g^{TX})$-spinors. Let $\nabla^{S^{TX}}$ denote the 
induced Clifford connection on $S^{TX}$ by $\omega^{\kk}$.

We fix $\sigma\in\Sigma$, and we assume that 
its action on $\pp$ preserves the orientation. Then $K^\sigma$ acts naturally on 
$P_{\mathrm{SO}}(X)$.
We also assume that the homomorphism $K\rightarrow
\mathrm{Spin}(\pp)$ can be extended to a homomorphism 
$K^\sigma\rightarrow
\mathrm{Spin}(\pp)$, so that the action of $K^\sigma$ on 
$P_{\mathrm{SO}}(X)$ lifts to an action on 
$P_{\mathrm{Spin}}(X)$. By \cite[Definition 14.10 in Chapter 
3]{Lawson1989spin}, this is equivalent to say that $K^\sigma$ preserves the 
spin structure.

Take $(E,\rho^E)$ a unitary representation of $K^\sigma$. Now 
$G^{\sigma}$ acts on $S^{TX}\otimes F$ over $X$ preserving $\nabla^{S^{TX}\otimes F}$.  Let $D^X$ be the classical Dirac operator acting on $C^\infty(X, S^{TX}\otimes F)$. 
If $e_1,\cdots, e_m$ is an orthogonal basis of $TX$, then
\begin{equation}\label{eq_Diracoperator}
	D^X=\sum^m_{i=1} \bar{c}(e_i) \nabla^{S^{TX}\otimes F}_{e_i}.
\end{equation}

Let $\mathcal{L}^X$ be the operator defined in 
\eqref{ellipticoperator}, with $E$ replaced by $S^\pp\otimes E$. Put
\begin{equation}
	\mathcal{A}=-\frac{1}{48} 
	\mathrm{Tr}^{\kk}[C^{\kk,\kk}]-\frac{1}{2} C^{\kk,E}\in\mathrm{End}(E).
	\label{eq:calAdirac}
\end{equation}
It is clear that $\mathcal{A}$ commutes with $K^\sigma$. Then 
by \cite[Theorem 7.2.1]{bismut2011hypoelliptic}, 
\begin{equation}
	\label{eq:5.1.13bonn}
	\frac{1}{2}D^{X,2}=\mathcal{L}^X_{\mathcal{A}}.
\end{equation}

The integral kernel of $\exp(-tD^{X,2}/2)$, $t>0$, lies in 
$\mathcal{Q}^{\sigma}$, so that, by taking the supertrace with respect to the $\Z_{2}$-grading of 
$S^{TX}$, the twisted orbital integral
$\mathrm{Tr_s}^{[\gamma\sigma]}[\exp(-t D^{X,2}/2)]$ is well-defined.

Let $\gamma\in G$ be such that $\gamma\sigma$ is semisimple. We still 
assume that 
\begin{equation}
	\label{eq:7.1.8n}
	\gamma=e^a k^{-1}, a\in\pp, k\in K, \mathrm{Ad}(k)a=\sigma a.
\end{equation}

\begin{theorem}\label{thm_nonelliptic}
	If $\gamma\sigma$ is non-elliptic, 
	i.e., if $a\neq 0$, for $Y_0^\kk\in \kk_{\sigma}(\gamma)$,
	\begin{equation}\label{eq:6.2.9n}
		\mathrm{Tr_s}^{S^\pp}\Big[\rho^{S^\pp}(k^{-1}\sigma)\exp(-i 
		\rho^{S^\pp}(Y_0^\kk))\Big]=0.
	\end{equation}
	For any $t>0$, we have
	\begin{equation}
		\mathrm{Tr_s}^{[\gamma\sigma]}[\exp(-t D^{X,2}/2)]=0.
		\label{eq:6.2.12n}
	\end{equation}
\end{theorem}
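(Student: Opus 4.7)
The plan is to deduce \eqref{eq:6.2.12n} from \eqref{eq:6.2.9n} via the main orbital integral formula, and to derive \eqref{eq:6.2.9n} from the classical vanishing of the spinor character for elements whose induced action on $\pp$ fixes a nonzero vector. The second identity is thus reduced to a purely kinematic statement about $\gamma\sigma$ and a Lie algebra element in the twisted centralizer.

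For the reduction, by \eqref{eq:5.1.13bonn} we have $\tfrac{1}{2} D^{X,2}=\mathcal{L}^X_{\mathcal{A}}$ on sections of $S^{TX}\otimes F$. Applying the $\Z_2$-graded analogue of Theorem \ref{thm_orbitalintegral} (obtained by separately applying \eqref{eq:4.2.1} to the $S^\pp_\pm$ components and taking the difference), and noting that $\mathcal{A}\in\mathrm{End}(E)$ acts trivially on $S^\pp$ while the $K^\sigma$-action on $S^\pp\otimes E$ is the tensor product, the integrand on the right of \eqref{eq:4.2.1} factors multiplicatively as
\begin{equation*}
\mathrm{Tr_s}^{S^\pp}\bigl[\rho^{S^\pp}(k^{-1}\sigma)e^{-i\rho^{S^\pp}(Y_0^\kk)}\bigr]\cdot \mathrm{Tr}^{E}\bigl[\rho^E(k^{-1}\sigma)e^{-i\rho^E(Y_0^\kk)-t\mathcal{A}}\bigr],
\end{equation*}
so \eqref{eq:6.2.9n} forces \eqref{eq:6.2.12n}.

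For \eqref{eq:6.2.9n}, I first observe that the complex linear map $M:=e^{-i\,\mathrm{ad}(Y_0^\kk)}\mathrm{Ad}(k^{-1}\sigma)$ on $\pp\otimes\C$ fixes the nonzero vector $a$. Indeed, by \eqref{eq:fixg1bonn}, $\mathrm{Ad}(k^{-1}\sigma)a=\mathrm{Ad}(k^{-1})\sigma(a)=a$; and by \eqref{minsetdecom}, $Y_0^\kk\in\kk_\sigma(\gamma)\subset\z(a)$, so $[Y_0^\kk,a]=0$ and $e^{-i\,\mathrm{ad}(Y_0^\kk)}a=a$. Hence $\det(1-M)|_\pp=0$. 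The conclusion then follows from a standard square-root identity for the spin supertrace, of the form
\begin{equation*}
\bigl(\mathrm{Tr_s}^{S^\pp}[\rho^{S^\pp}(k^{-1}\sigma)e^{-i\rho^{S^\pp}(Y_0^\kk)}]\bigr)^{2}=\varepsilon\cdot\det(1-M)|_\pp,
\end{equation*}
for some universal sign $\varepsilon\in\{\pm 1\}$; this identity is proved in the untwisted case in \cite[Chapter 7]{bismut2011hypoelliptic} and extends to $K^\sigma$ thanks to the lift of the spin representation to $K^\sigma$ assumed in Subsection \ref{s4.5}.

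The main obstacle is justifying this squared-trace identity in our complexified setting, where $Y_0^\kk$ is inserted with a factor of $i$, so that $e^{-i\,\mathrm{ad}(Y_0^\kk)}$ is a non-unitary endomorphism of $\pp\otimes\C$ rather than an element of $\mathrm{SO}(\pp)$. This will be resolved by analytic continuation in $Y_0^\kk\in\kk_\sigma(\gamma)$: both sides are holomorphic in $Y_0^\kk$, so equality on a real locus, where the Weyl denominator formula on $\mathrm{Spin}(\pp)$ applies, extends globally.
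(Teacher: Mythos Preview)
Your proof is correct and follows essentially the same approach as the paper: factor the integrand in the orbital integral formula as a product of the spinor supertrace and the $E$-trace, then use the squared-supertrace identity $(\mathrm{Tr_s}^{S^\pp}[\cdots])^2=\pm\det(1-M)|_\pp$ together with the observation that $a$ is a fixed vector of $M$. The paper simply cites \cite[Proposition 3.23]{berline2003heat} for the squared-trace identity and does not dwell on the complexification issue you raise; your analytic continuation argument is a fine way to make this explicit, though the identity is in fact a polynomial identity in the entries of the representing matrices, so it extends automatically.
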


\begin{proof}
	By \cite[Proposition 3.23]{berline2003heat}, we have
	\begin{equation}
		\begin{split}
			&(-1)^{m/2}\Big(\mathrm{Tr_s}^{S^\pp}\big[\rho^{S^\pp}(k^{-1}\sigma)\exp(-i 
			\rho^{S^\pp}(Y_0^\kk))\big]\Big)^{2}=\\
			&\qquad\qquad\qquad\det 
			\left(1-\mathrm{Ad}(k^{-1}\sigma)\exp(-i\mathrm{ad}(Y^{\kk}_{0}))\right)|_{\pp}.
		\end{split}
		\label{eq:squaredet}
	\end{equation}
	
	If $a\neq 0$, then $a$ is an eigenvector in $\pp$ of 
	$\mathrm{Ad}(k^{-1}\sigma)\exp(-i\mathrm{ad}(Y^{\kk}_{0}))$ 
	associated with the eigenvalue $1$, so that \eqref{eq:6.2.9n} 
	holds.

	Note that
	\begin{equation}
		\begin{split}
			&\mathrm{Tr_s}^{S^{\pp}\otimes E}\left[\rho^{S^{\pp}\otimes 
			E}(k^{-1}\sigma)\exp\left(-i\rho^{S^{\pp}\otimes E}(Y^\kk_0)-t 
			\mathcal{A}\right)\right]\\
			&=\mathrm{Tr_s}^{S^{\pp}}\left[\rho^{S^{\pp}}(k^{-1}\sigma)\exp\left(-i\rho^{S^{\pp}}(Y^\kk_0)\right)\right]\\
			&\qquad\cdot\mathrm{Tr}^{E}\left[\rho^{
			E}(k^{-1}\sigma)\exp\left(-i\rho^{E}(Y^\kk_0)-t 
			\mathcal{A}\right)\right].
		\end{split}
		\label{eq:6.2.10nnnnn}
	\end{equation}
	Using Theorem \ref{thm_orbitalintegral}, and combining \eqref{eq:6.2.9n} and \eqref{eq:6.2.10nnnnn}, we get \eqref{eq:6.2.12n}. 
\end{proof}

\subsection{The case of elliptic $\gamma\sigma$}\label{ss:5.2elliptic}
We use the same assumptions as in Subsection 
\ref{s4.5}. Now we fix an elliptic element $\gamma\sigma$, i.e. $\gamma=k^{-1}\in K$. Recall that $p=\dim \pp_\sigma(\gamma)$.

Recall that $N_{X(\gamma\sigma)/X}$ is the normal vector bundle
of $X(\gamma\sigma)$ in $X$. Then
\begin{equation}
	\label{eq:7.2.1nn}
	TX|_{X(\gamma\sigma)}=TX(\gamma\sigma)\oplus N_{X(\gamma\sigma)/X}.
\end{equation}
Note that
\begin{equation}
	\mathrm{rank}\; TX(\gamma\sigma)=p,\; \mathrm{rank}\; N_{X(\gamma\sigma)/X}=m-p.
	\label{eq:dims}
\end{equation}
Since we assume that $\sigma$ preserves the orientation of $\pp$, 
both $p$ and $m-p$ are even. Also the action of 
$\gamma\sigma$ on 
$TX|_{X(\gamma\sigma)}$ preserves the splitting in \eqref{eq:7.2.1nn}.

Let $\widehat{A}^{\gamma\sigma}(TX|_{X(\gamma\sigma)}, 
\nabla^{TX|_{X(\gamma\sigma)}})$, 
$\widehat{A}^{\gamma\sigma}(N|_{X(\gamma\sigma)}, 
\nabla^{N|_{X(\gamma\sigma)}})$ be the equivariant 
$\widehat{A}$-genus given in \cite[Subsection 
7.7]{bismut2011hypoelliptic} for vector bundles 
$TX|_{X(\gamma\sigma)}$, $N|_{X(\gamma\sigma)}$. Note that there are 
questions of signs to be taken care of in these forms, we refer to 
\cite{AtiyahBott67,AtiyahBott68} and also \cite[Theorem 14.11 in Chapter 3]{Lawson1989spin}, \cite[Chapter 6]{berline2003heat}  for more detail.
Let $o(TX(\gamma\sigma))$, 
$o(N_{X(\gamma\sigma)/X})$ be the orientation lines of 
$TX(\gamma\sigma)$, $N_{X(\gamma\sigma)/X}$ respectively.
Because of the aforementioned sign ambiguity, $\widehat{A}^{\gamma\sigma}(TX|_{X(\gamma\sigma)}, \nabla^{TX|_{X(\gamma\sigma)}})$
should be regarded as a section of $\Lambda^\bullet 
(T^{*}X(\gamma\sigma))\otimes o(N_{X(\gamma\sigma)/X})$. Since $\pp$ 
is oriented by assumption, $\widehat{A}^{\gamma\sigma}(TX|_{X(\gamma\sigma)}, 
\nabla^{TX|_{X(\gamma\sigma)}})$ can be 
viewed as a section of $\Lambda^\bullet 
(T^{*}X(\gamma\sigma))\otimes o(TX(\gamma\sigma))$. Similarly, one can 
define $\widehat{A}^{\gamma\sigma}(N|_{X(\gamma\sigma)}, 
\nabla^{N|_{X(\gamma\sigma)}})$.

Let $\widehat{A}^{\gamma\sigma|_\pp}(0)$ be the degree 
$0$ component of $\widehat{A}^{\gamma\sigma}(TX|_{X(\gamma\sigma)}, 
\nabla^{TX|_{X(\gamma\sigma)}})$, and let
$\widehat{A}^{\gamma\sigma|_\kk}(0)$ be the degree $0$ component of 
$\widehat{A}^{\gamma\sigma}(N|_{X(\gamma\sigma)}, \nabla^{N|_{X(\gamma\sigma)}})$. These are constants on $X(\gamma\sigma)$. Put
\begin{equation}\label{eq5.65}
	\widehat{A}^{\gamma\sigma}(0)=\widehat{A}^{\gamma\sigma|_\pp}(0)\cdot\widehat{A}^{\gamma\sigma|_\kk}(0).
\end{equation}

The equivariant Chern character form of the bundle $(F,\nabla^{F})$ 
is given by 
\begin{equation}\label{eq5.15}
	\mathrm{ch}^{\gamma\sigma}\big(F|_{X(\gamma\sigma)},\nabla^{F|_{X(\gamma\sigma)}}\big)=\mathrm{Tr}\Big[\rho^E(k^{-1}\sigma) \exp\Big(-\frac{R^F|_{X(\gamma\sigma)}}{2\pi i}\Big)\Big].
\end{equation}
The above closed forms on 
$X(\gamma\sigma)$ are exactly the 
ones that appear in the Lefschetz fixed point formula of Atiyah-Bott 
\cite{AtiyahBott67,AtiyahBott68}.

Let 
$\omega^{\z_{\sigma}(\gamma)}=\omega^{\kk_{\sigma}(\gamma)}+\omega^{\pp_{\sigma}(\gamma)}$ be the 
left-invariant $1$-form  on 
$Z^0_{\sigma}(\gamma)$ valued in $\z_{\sigma}(\gamma)$.  
Let $\Omega^{\z_{\sigma}(\gamma)}$ 
be the curvature form of the connection form 
$\omega^{\kk_{\sigma}(\gamma)}$ on the principal bundle
$Z^0_{\sigma}(\gamma)\rightarrow X(\gamma\sigma)$. As in 
\eqref{eq:1.1.5n}, we have
\begin{equation}
	\Omega^{\z_{\sigma}(\gamma)}=-\frac{1}{2}[\omega^{\pp_{\sigma}(\gamma)},\omega^{\pp_{\sigma}(\gamma)}]\in \Lambda^2(\pp_{\sigma}(\gamma)^{*})\otimes 
	\kk_{\sigma}(\gamma).
	\label{eq:7.5.14bis}
\end{equation}
Then the curvatures $R^{F}$, $R^{TX}$ restricting to $X(\gamma\sigma)$ are 
represented by the equivariant actions of $\Omega^{\z_{\sigma}(\gamma)}$.

Using the same arguments as in the proof of 
\cite[Proposition 7.1.1]{bismut2011hypoelliptic} and 
\eqref{eq:7.5.14bis}, we get the 
following identities of differential forms on $X(\gamma\sigma)$,
\begin{equation}\label{eq5.69}
	\begin{split}
		&\widehat{A}^{\gamma\sigma}\big(TX|_{X(\gamma\sigma)}, 
		\nabla^{TX|_{X(\gamma\sigma)}}\big)\widehat{A}^{\gamma\sigma}\big(N|_{X(\gamma\sigma)}, \nabla^{N|_{X(\gamma\sigma)}}\big)=\widehat{A}^{\gamma\sigma}(0).\\		
		&\mathrm{ch}^{\gamma\sigma}\big(TX|_{X(\gamma\sigma)},\nabla^{TX|_{X(\gamma\sigma)}}\big)+ \mathrm{ch}^{\gamma\sigma}\big(N|_{X(\gamma\sigma)},\nabla^{N|_{X(\gamma\sigma)}}\big) =\mathrm{Tr}^\g\big[\mathrm{Ad}(k^{-1}\sigma)\big].
	\end{split}
\end{equation}

Let $\Psi$ be the canonical element of norm $1$ in 
$\Lambda^p(\pp_{\sigma}(\gamma)^*)\otimes o(\pp_{\sigma}(\gamma))$ 
(respectively, a section of norm $1$ of
$\Lambda^p(T^{*}X(\gamma\sigma))\otimes o(TX(\gamma\sigma))$). For $\alpha\in \Lambda^\bullet 
(\pp_{\sigma}(\gamma)^*)\otimes o(\pp_{\sigma}(\gamma))$ (respectively $ \Lambda^\bullet 
(T^*X(\gamma\sigma))\otimes o(TX(\gamma\sigma))$), for $0\leq l \leq p$, 
let $\alpha^{(l)}$ be the component of $\alpha$ of degree $l$. We 
define $\alpha^{\mathrm{max}}\in \R$ by 
\begin{equation}
	\alpha^{(p)}=\alpha^{\mathrm{max}} \Psi.
	\label{eq:7.2.9ssss}
\end{equation}
If $A\in\mathrm{End}(\pp_{\sigma}(\gamma))$ is antisymmetric, let 
$\mathrm{Pf}[A]$ be the Pfaffian of $A$. It is a polynomial function 
of $A$ (with values twisted by $o(\pp_{\sigma}(\gamma))$), which is a square root of $A$. The form $\omega_{A}\in 
\Lambda^{2}(\pp^{*}_{\sigma}(\gamma))$ associated with $A$ is given 
by $U,V\in \pp_{\sigma}(\gamma)\mapsto \langle U, AV\rangle$. Then 
\begin{equation}
	\mathrm{Pf}[A]=\left[\exp(\omega_{A})\right]^{\mathrm{max}}.
\end{equation}

\begin{theorem}\label{thm_localindex}
	If $\gamma=k^{-1}\in K$, for any $t>0$,
	\begin{equation}\label{localindex}
		\begin{split}
			&\mathrm{Tr_s}^{[\gamma\sigma]}[\exp(-t D^{X,2}/2)]\\
			&=\frac{1}{ (2\pi t)^{p/2}} \int_{\kk_{\sigma}(\gamma)}  
			J_{\gamma\sigma}(Y_0^\kk)\mathrm{Tr_s}^{S^\pp\otimes 
			E}\Big[\rho^{S^\pp\otimes 
			E}(k^{-1}\sigma)\exp\left(-i\rho^{S^\pp\otimes E}(Y_0^\kk)- t 
			\mathcal{A}\right)\Big]\\
			&\hspace{73mm}\cdot\exp(-|Y^\kk_0|^2/2t) \frac{dY^\kk_0}{(2\pi t)^{q/2}}\\
			&= \Big[\widehat{A}^{\gamma\sigma}\big(TX|_{X(\gamma\sigma)}, 
			\nabla^{TX|_{X(\gamma\sigma)}}\big)\mathrm{ch}^{\gamma\sigma}\big(F|_{X(\gamma\sigma)},\nabla^{F|_{X(\gamma\sigma)}}\big)\Big]^{\mathrm{max}}.
		\end{split}
	\end{equation}
\end{theorem}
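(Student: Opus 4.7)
The first equality is immediate: apply Theorem \ref{thm_orbitalintegral} with $E$ replaced by $S^{\mathfrak{p}}\otimes E$ (which is a unitary $K^{\sigma}$-representation by the assumption on the spin lift) and $A=\mathcal{A}$, then invoke \eqref{eq:5.1.13bonn} to rewrite $\mathcal{L}^{X}_{\mathcal{A}}$ as $\tfrac{1}{2}D^{X,2}$. In the elliptic case $a=0$, so the Gaussian prefactor $\exp(-|a|^{2}/2t)$ is $1$, and the claimed expression drops out.

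The second equality is a twisted version of \cite[Theorem 7.3.1]{bismut2011hypoelliptic}. The plan is to read the $Y^{\mathfrak{k}}_{0}$-integral as a Chern--Weil representative. First, the supertrace factorizes as $\mathrm{Tr_{s}}^{S^{\mathfrak{p}}\otimes E}[\rho^{S^{\mathfrak{p}}\otimes E}(k^{-1}\sigma)\exp(-i\rho^{S^{\mathfrak{p}}\otimes E}(Y^{\mathfrak{k}}_{0})-t\mathcal{A})]$ as the product of the spinor supertrace on $S^{\mathfrak{p}}$ and $\mathrm{Tr}^{E}[\rho^{E}(k^{-1}\sigma)\exp(-i\rho^{E}(Y^{\mathfrak{k}}_{0})-t\mathcal{A})]$. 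Using \eqref{eq:squaredet} (i.e.\ \cite[Prop.~3.23]{berline2003heat}), we have, up to a fixed sign depending on the orientation of $\mathfrak{p}$,
\[
	\mathrm{Tr_{s}}^{S^{\mathfrak{p}}}[\rho^{S^{\mathfrak{p}}}(k^{-1}\sigma)\exp(-i\rho^{S^{\mathfrak{p}}}(Y^{\mathfrak{k}}_{0}))] = \pm\, \det{}^{1/2}\bigl(1-\mathrm{Ad}(k^{-1}\sigma)e^{-i\mathrm{ad}(Y^{\mathfrak{k}}_{0})}\bigr)\big|_{\mathfrak{p}}.
\]
The above determinant splits according to $\mathfrak{p}=\mathfrak{p}_{\sigma}(\gamma)\oplus \mathfrak{p}^{\perp}_{\sigma}(\gamma)$; on the tangent factor $\mathfrak{p}_{\sigma}(\gamma)$ the automorphism is $1-e^{-i\mathrm{ad}(Y^{\mathfrak{k}}_{0})}$, producing (after multiplication by the $\widehat{A}$-ratio in $J_{\gamma\sigma}$) exactly the integrand familiar from the Mathai--Quillen/Berezin representation of $\widehat{A}(TX(\gamma\sigma))$. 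On the normal factor $\mathfrak{p}^{\perp}_{\sigma}(\gamma)$, the remaining determinantal factors in $J_{\gamma\sigma}$ combine with this contribution to form $\widehat{A}^{\gamma\sigma}(N_{X(\gamma\sigma)/X})$, using that $\det(1-\mathrm{Ad}(k^{-1}\sigma))|_{\mathfrak{z}^{\perp}_{\sigma,0}(\gamma)}=\det(1-\mathrm{Ad}(\gamma\sigma))|_{\mathfrak{z}^{\perp}_{0}}$ in the elliptic case (where $\mathfrak{z}_{0}=\mathfrak{g}$).

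The main step is then to convert the Gaussian integral $\int_{\mathfrak{k}_{\sigma}(\gamma)}(\cdot)\,e^{-|Y^{\mathfrak{k}}_{0}|^{2}/2t}\frac{dY^{\mathfrak{k}}_{0}}{(2\pi t)^{q/2}}$ into the top-degree form on $X(\gamma\sigma)$. Here one uses the Chern--Weil formula \eqref{eq:7.5.14bis} expressing $R^{TX}|_{X(\gamma\sigma)}$ as $\mathrm{ad}(\Omega^{\mathfrak{z}_{\sigma}(\gamma)})$ with $\Omega^{\mathfrak{z}_{\sigma}(\gamma)}\in\Lambda^{2}(T^{*}X(\gamma\sigma))\otimes\mathfrak{k}_{\sigma}(\gamma)$. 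The Gaussian average in $Y^{\mathfrak{k}}_{0}$ is precisely the standard device (Duistermaat--Heckman/Mathai--Quillen) that replaces the variable $Y^{\mathfrak{k}}_{0}$ by $-i\Omega^{\mathfrak{z}_{\sigma}(\gamma)}$ and produces a closed form on $X(\gamma\sigma)$; extraction of the top degree absorbs the normalizing power $(2\pi t)^{-p/2}$ and removes the $t$-dependence (one can equivalently check $t$-independence directly by differentiating under the integral sign and using that $\mathcal{L}^{X}_{\mathcal{A}}$ is the square of a self-adjoint operator). After collecting factors, the Gaussian integral reproduces exactly $\widehat{A}^{\gamma\sigma}(TX|_{X(\gamma\sigma)})\,\mathrm{ch}^{\gamma\sigma}(F)$ in the appropriate twisted orientation line, and the top-degree component yields the right-hand side of \eqref{localindex}.

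The hard part will be the careful bookkeeping of signs: one needs to track both the sign in \eqref{eq:squaredet} (resolved by the orientation of $\mathfrak{p}$, which gives meaning to the square root globally) and the identification of $\widehat{A}^{\gamma\sigma}(TX|_{X(\gamma\sigma)})$ as a section of $\Lambda^{\cdot}(T^{*}X(\gamma\sigma))\otimes o(TX(\gamma\sigma))$ rather than $o(N_{X(\gamma\sigma)/X})$. These sign subtleties are exactly the content of the discussion following \eqref{eq:dims}; once they are pinned down as in \cite[\S 7.2, 7.7]{bismut2011hypoelliptic} and \cite[Chapter 6]{berline2003heat}, the rest of the computation is a direct twisted transcription of Bismut's argument.
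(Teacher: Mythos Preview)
Your first identity is fine and matches the paper.

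The second identity, however, has a real gap: the ``Gaussian replacement'' step is not a standard device and does not work as you describe. There is no general principle by which
\[
\frac{1}{(2\pi t)^{p/2}}\int_{\kk_{\sigma}(\gamma)} f(Y^{\kk}_{0})\,e^{-|Y^{\kk}_{0}|^{2}/2t}\,\frac{dY^{\kk}_{0}}{(2\pi t)^{q/2}}
\]
becomes $[f(-i\Omega^{\z_{\sigma}(\gamma)})]^{\mathrm{max}}$ for an invariant $f$; dimensionally the left side is a $t$-dependent scalar while the right side is a fixed number, and generically these do not agree. Duistermaat--Heckman and Mathai--Quillen are not the right references here. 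Your parenthetical about $t$-independence via a McKean--Singer argument is legitimate (it follows from Proposition~\ref{prop_3.3.4}), but it does not tell you \emph{what} the value is, and the integrand contains an explicit $e^{-t\mathcal{A}}$ factor whose cancellation is the whole point.

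The paper's proof supplies exactly the missing mechanism. First it disposes of the case where $E$ is $K^{\sigma}$-irreducible but not $K$-irreducible, where both sides vanish by Proposition~\ref{lm:Krepbonn} and \eqref{eq:5.1.43bonn}. In the remaining case one decomposes $\sigma|_{K}=C(k_{0})\circ\tau$ with $\tau\in\mathrm{Out}(K)$, conjugates $k$ into a maximal torus $S$ of $K_{\widehat{\tau}}(k^{-1})$, and reduces the $\kk_{\sigma}(\gamma)$-integral to one over $\ks=\mathrm{Lie}(S)$ via Weyl integration. The crucial input is the \emph{twisted} Weyl character formula for the non-connected group $K^{\tau}$ \cite[Proposition~4.13.1]{Duistermaat_2000}: it shows that
\[
e^{2\pi\langle\rho_{\kk},y\rangle}\,\delta(e^{-iy}k^{-1}\tau)\,\mathrm{Tr}^{E}\bigl[\rho^{E}(k^{-1}\tau)e^{-i\rho^{E}(y)}\bigr]
\]
is an eigenfunction of $\Delta^{\ks}$ with eigenvalue $4\pi^{2}|\rho_{\kk}+\lambda|^{2}=2\mathcal{A}$. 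Since the Gaussian integral is $\exp(t\Delta^{\ks}/2)$ evaluated at $0$, this eigenvalue exactly cancels the $e^{-t\mathcal{A}}$ factor, after which the remaining evaluation follows \cite[Eqs.~(7.7.10)--(7.7.13)]{bismut2011hypoelliptic} verbatim. None of this structure---the outer-automorphism decomposition, the reduction to $\ks$, or the twisted character formula---appears in your sketch, and it is not recoverable from a formal curvature substitution.
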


\begin{proof}
	The first identity in \eqref{localindex} follows from 
	Theorem \ref{thm_orbitalintegral} and \eqref{eq:5.1.13bonn}. 
	If $(E,\rho^E)$ is an irreducible unitary representation of $K^\sigma$ which is not irreducible when 
	restricting to 
	$K$, then by \eqref{eq:1.1.5n}, \eqref{eq:Krepbonn} and \eqref{eq5.15}, we get
	\begin{equation}
		\mathrm{ch}^{\gamma\sigma}(F|_{X(\gamma\sigma)},\nabla^{F|_{X(\gamma\sigma)}})=0.
		\label{eq:5.1.43bonn}
	\end{equation}
	Then the second identity of \eqref{localindex} follows from 
	\eqref{eq:3.4.16evian}.
	
	We only need to prove the second identity in \eqref{localindex} for 
	the case where $(E,\rho^{E})$ is irreducible for both groups 
	$K^{\sigma}$ and $K$. Recall that $K_{\sigma}(1)\subset K$ is just the fixed 
	point set of $\sigma$ action on $K$. 
	
	Since $K$ is semisimple and simply connected, by \cite[Lemma 
	(3.15.4), Corollary (3.15.5)]{Duistermaat_2000}, 
	$K_{\sigma}(1)$ is a connected subgroup of $K$, and there exists $v\in 
	\kk_{\sigma}(1)$ such that $v$ is regular in $\kk$. Then 
	$\kt=\kk(v)$ is a Cartan subalgebra of $\kk$. Let $T$ be the maximal torus of $K$ 
	corresponding to $\kt$, and let $R^{+}$ 
	be the positive root system of $(K,T)$ corresponding to the Weyl 
	chamber of $v$. Then we get a decomposition of 
	$\mathrm{Aut}(K)$ as in \eqref{eq:5.1.30bonn} with respect to $(T, 
	R^{+})$. There exists $k_{0}\in T$, $\tau\in \mathrm{Out}(K)$ such that the action of 
	$\sigma$ on $K$ is given by $C(k_{0})\circ \tau$. Moreover, 
	$\ks=\kt\cap \kk_{\sigma}(1)$ is a Cartan subalgebra of $\kk_{\tau}(1)$, 
	which is just the fixed point set of $\tau$ in $\kt$. Let 
	$S$ be the corresponded maximal torus of $K_{\tau}(1)$.

	We extend 
	$\tau\in\mathrm{Out}(K)$ to a $\widehat{\tau}\in\Sigma$, so 
	that if $g\in G$, then
	\begin{equation}
		\widehat{\tau}(g)=k_{0}^{-1}\sigma(g)k_{0}\in G.
		\label{eq:tauactionG}
	\end{equation}
	Note that $\widehat{\tau}$ may not be 
	of finite order. When acting on $\g, \pp, \kk$, the adjoint action of 
	$k^{-1}\sigma$ is the same as the adjoint action of 
	$k^{-1}k_{0}\widehat{\tau}$. Following the same constructions as in the proof 
	of Proposition \ref{lm:Krepbonn}, we may assume that $E$ is an 
	irreducible representation for both $K^{\widehat{\tau}}$ and $K$. The 
	group $K^{\widehat{\tau}}$ also acts on $S^{\pp}$, so that the 
	analogue of \eqref{eq:tauvalue} holds. Then we will prove the second 
	identity in \eqref{localindex} for $k^{-1}\widehat{\tau}$ instead of 
	$k^{-1}\sigma$ with $k\in K$. By \cite[Proposition I.4]{Segal1968}, and using the fact that the both sides of the second 
	identity in \eqref{localindex} are invariant by conjugations of $K$, 
	we can continue to assume that $k\in S$. Thus $S$ is also a maximal 
	torus of $K_{\widehat{\tau}}(k^{-1})$. 
	
	Since $(E,\rho^{E})$ is an irreducible representation of $K$, then 
	its highest weight $\lambda\in P_{++}$ is fixed by $\tau$. 
	Set
	\begin{equation}
		\rho_{\kk}=\frac{1}{2}\sum_{\alpha\in R^{+}}\alpha.
		\label{eq:5.2.14cologne}
	\end{equation}
	It is also fixed by $\tau$. Then
	\begin{equation}
		\lambda,\; \rho_{\kk},\; \lambda+\rho_{\kk}\in \ks^{*}.
		\label{eq:5.2.13bs}
	\end{equation}
	By \cite[Proposition 7.5.2]{bismut2011hypoelliptic}, we have
	\begin{equation}
		\mathcal{A}=2\pi^{2} |\rho_{\kk}+\lambda|^{2}.
		\label{eq:5.2.14bs}
	\end{equation}

	As in \cite[(7.7.7)]{bismut2011hypoelliptic}, if 
	$y\in\kk_{\widehat{\tau}}(k^{-1})$,
	\begin{equation}
		\label{eq:7.3.51kk}
		\begin{split}
			&\mathrm{Tr_s}^{S^\pp}\Big[\rho^{S^\pp}(k^{-1}\widehat{\tau})\exp(-i 
			\bar{c}(\mathrm{ad}(y)))\Big]\\
			&=\mathrm{Pf}\big[\mathrm{ad}(y)|_{\pp_{\widehat{\tau}}(k^{-1})}\big]\widehat{A}^{-1}\big(i 
			\mathrm{ad}(y)|_{\pp_{\widehat{\tau}}(k^{-1})}\big)\Big(\widehat{A}^{\widehat{\tau}^{-1}k 
			e^{iy}|_{\pp^\perp_{\widehat{\tau}}(k^{-1})}}(0)\Big)^{-1}.
		\end{split}
	\end{equation}
	Then by \eqref{Jfunction}, we have
	\begin{equation}\label{eq:7.3.27n}
		\begin{split}
			&J_{k^{-1}\widehat{\tau}}(y)\mathrm{Tr_s}^{S^\pp}\Big[\rho^{S^\pp}(k^{-1}\widehat{\tau})\exp\big(-i\bar{c}(\mathrm{ad}(y))\big)\Big]\\
			&=(-1)^{\dim 
			\pp_{\widehat{\tau}}^{\perp}(k^{-1})/2}\mathrm{Pf}\big[\mathrm{ad}(y)|_{\pp_{\widehat{\tau}}(k^{-1})}\big]
			\widehat{A}^{-1}\big(i 
			\mathrm{ad}(y)|_{\kk_{{\tau}}(k^{-1})}\big)\\
			&\qquad\widehat{A}^{\widehat{\tau}^{-1}k|_{\pp_{\widehat{\tau}}^{\perp}(k^{-1})}}(0)
			\left\{ \frac{\det(1-\exp(-i\mathrm{ad}(y)) 
			\mathrm{Ad}(k^{-1}{\tau}))|_{\kk_{{\tau}}^{\perp}(k^{-1})}}
			{\det(1- 
			\mathrm{Ad}(k^{-1}{\tau}))|_{\kk_{{\tau}}^{\perp}(k^{-1})}}\right\}^{1/2}.
		\end{split}
	\end{equation}

	Combining \eqref{eq:4.2.1} with 
	\eqref{eq:7.3.27n}, we get
	\begin{equation}
		\begin{split}
			&\mathrm{Tr_s}^{[k^{-1}\widehat{\tau}]}\big[\exp(-t 
			D^{X,2}/2)\big]\\
			&=\frac{(-1)^{\dim \pp^{\perp}_{\widehat{\tau}}(k^{-1})/2}}{ (2\pi t)^{p/2}} 
			e^{-2\pi^2 t|\lambda+\rho_\kk|^2}\\
			&\int_{\kk_{{\tau}}(k^{-1})} 
			\mathrm{Pf}\big[\mathrm{ad}(y)|_{\pp_{\widehat{\tau}}(k^{-1})}\big]
			\widehat{A}^{-1}\big(i 
			\mathrm{ad}(y)|_{\kk_{{\tau}}(k^{-1})}\big)\\
			&\qquad\widehat{A}^{\widehat{\tau}^{-1}k|_{\pp^{\perp}_{\widehat{\tau}}(k^{-1})}}(0)
			\left\{ \frac{\det(1-\exp(-i\mathrm{ad}(y)) 
			\mathrm{Ad}(k^{-1}{\tau}))|_{\kk^{\perp}_{{\tau}}(k^{-1})}}
			{\det(1- 
			\mathrm{Ad}(k^{-1}{\tau}))|_{\kk^{\perp}_{\tau}(k^{-1})}}\right\}^{1/2}\\
			&\qquad\qquad\qquad\cdot\mathrm{Tr}^{E}\Big[\rho^{E}(k^{-1}{\tau})
			\exp(-i\rho^{E}(y))\Big]\exp(-|y|^2/2t) \frac{dy}{(2\pi t)^{q/2}}.
		\end{split}
		\label{eq:7.3.28n}
	\end{equation}
	
	Let $\Omega^{\z_{\widehat{\tau}}(k^{-1})}$ be the 
	curvature form associated with 
	$Z^0_{\widehat{\tau}}(k^{-1})\rightarrow X(k^{-1}\widehat{\tau})$ as an analogue of 
	$\Omega$ in \eqref{eq:1.1.5n}, when replacing $\g$ by 
	$\z_{\widehat{\tau}}(k^{-1})$. In 
	particular,
	\begin{equation}
		\Omega^{\z_{\widehat{\tau}}(k^{-1})}\in 
		\Lambda^2\big(\pp_{\widehat{\tau}}(k^{-1})^{*}\big)\otimes 
		\kk_{\tau}(k^{-1}).
		\label{eq:7.3.30bis}
	\end{equation}
	If $\alpha, \beta\in  \Lambda^{\bullet}(\pp_{\widehat{\tau}}(k^{-1})^{*})$, $a, 
	b\in\kk_{\tau}(k^{-1})$, we define
	\begin{equation}
		\langle \alpha\otimes a,\beta\otimes b\rangle'=\alpha\wedge 
		\beta \langle a,b\rangle\in 
		\Lambda^{\bullet}\big(\pp_{\widehat{\tau}}(k^{-1})^{*}\big).
		\label{eq:norm}
	\end{equation}
	A direct calculation shows that
	\begin{equation}
		\big|\Omega^{\z_{\widehat{\tau}}(k^{-1})}\big|^{\prime,2}=0.
		\label{eq:6.2.23DD}
	\end{equation}

	By \cite[(7.5.17)]{bismut2011hypoelliptic}, we have
	\begin{equation}
		\mathrm{Pf}\big[\mathrm{ad}(y)|_{\pp_{\widehat{\tau}}(k^{-1})}\big]=\Big[\exp\big(-\langle 
		y, \Omega^{\z_{\widehat{\tau}}(k^{-1})}\rangle^{\prime}\big)\Big]^{\mathrm{max}}.
		\label{eq:6.2.24DD}
	\end{equation}
	
	Let $\Delta^{\kk_{\tau}(k^{-1})}$ and $\Delta^\ks$ be the 
	standard (negative) Laplacian in $\kk_{\tau}(k^{-1})$ and $\ks$ 
	respectively. Using the integral kernel of 
	$\exp(t\Delta^{\kk_{\tau}(k^{-1})}/2)$ and 
	\eqref{eq:6.2.23DD}, \eqref{eq:6.2.24DD}, we can rewrite \eqref{eq:7.3.28n} as follows,
	\begin{equation}\label{eq5.104}
		\begin{split}
			&\mathrm{Tr_s}^{[k^{-1}\widehat{\tau}]}\big[\exp(-t 
			D^{X,2}/2)\big]\\
			&=\frac{(-1)^{\dim \pp^{\perp}_{\widehat{\tau}}(k^{-1})/2}}{ (2\pi t)^{p/2}} 
			e^{-2\pi^2 t|\lambda+\rho_\kk|^2}\\
			&\bigg[\exp\big(t\Delta^{\kk_{\tau}(k^{-1})}/2\big)\bigg\{
			\widehat{A}^{-1}\big(i 
			\mathrm{ad}(y)|_{\kk_{{\tau}}(k^{-1})}\big)\\
			&\qquad\widehat{A}^{\widehat{\tau}^{-1}k|_{\pp^{\perp}_{\widehat{\tau}}(k^{-1})}}(0)
			\Big\{ \frac{\det(1-\exp(-i\mathrm{ad}(y)) 
			\mathrm{Ad}(k^{-1}{\tau}))|_{\kk^{\perp}_{{\tau}}(k^{-1})}}
			{\det(1- 
			\mathrm{Ad}(k^{-1}{\tau}))|_{\kk^{\perp}_{\tau}(k^{-1})}}\Big\}^{1/2}\\
			&\qquad\qquad\qquad\cdot\mathrm{Tr}^{E}\Big[\rho^{E}(k^{-1}{\tau})
			\exp(-i\rho^{E}(y))\Big]\bigg\}\big(-t\Omega^{\z_{\widehat{\tau}}(k^{-1})}\big)\bigg]^{\mathrm{max}}.
		\end{split}
	\end{equation}

		Let $R'$ be the root system of $(\kk_{\tau}(k^{-1}),\ks)$ and let $R'_+$ be a positive root system in $R'$.
		Let $\pi_{\kk_{\tau}(k^{-1})}(y)$, $y\in\ks$ be the functions 
		defined as $\Pi_{\beta\in R'_+}\langle 2\pi i 
		\beta,y\rangle$. Note that the function contained in 
		$\{\cdots\}$ in \eqref{eq5.104} is invariant by adjoint 
		action of $K_{\tau}(k^{-1})$. Then, we have
	\begin{equation}\label{eq5.107}
		\begin{split}
			&\exp(t\Delta^{\kk_{\tau}(k^{-1})}/2)\big\{\cdots\big\}(y)\\
			&=\frac{1}{\pi_{\kk_{\tau}(k^{-1})}(y)}\exp(t\Delta^{\ks}/2)\Big[\pi_{\kk_{\tau}(k^{-1})}(y)\big\{\cdots\big\}\Big](y).
		\end{split}
	\end{equation}
	The function in the right-hand side of \eqref{eq5.107} is
	viewed as a function in $y\in\ks$, which is 
	invariant by the Weyl group $W(K^0_{\tau}(k^{-1}), S)$, 
	and lifts to a central function on $\kk_{\tau}(k^{-1})$. 
	Then it can be evaluated 
	at $-t\Omega^{\z_{\widehat{\tau}}(k^{-1})}$.
	
	If $\alpha\in R^{+}$, let $\kk_{\alpha}\subset \kk_{\bbC}$ be the 
	associated root space. Put
	\begin{equation}
		\kn=\sum_{\alpha\in R^{+}} \kk_{\alpha}.
		\label{eq:5.2.15bs}
	\end{equation}
	Then $\tau$ preserves $\kn$. For $t\in T$, set 
	$\delta(t\tau)=\det(1-\mathrm{Ad}(\tau^{-1}t^{-1}))|_{\kn}$.
	Note that up to multiplication by some constant, for $y\in\ks$, the 
	analytic function $e^{2\pi\langle\rho_{\kk},y\rangle}\delta(e^{-iy} k^{-1}\tau 
	)$ coincides with 
	\begin{equation}\label{eq:7.3.39n}
		\begin{split}
			\pi_{\kk_{\tau}(k^{-1})}(y) \widehat{A}^{-1}\big(i 
			\mathrm{ad}(y)|_{\kk_{{\tau}}(k^{-1})}\big)
			\Big[\det(1-\exp(-i\mathrm{ad}(y)) 
			\mathrm{Ad}(k^{-1}{\tau}))|_{\kk^{\perp}_{{\tau}}(k^{-1})}\Big]^{1/2}.
		\end{split}
	\end{equation}
	
	A Weyl character formula for the non-connected compact Lie group 
	$K^{\tau}$ was given in \cite[Section 4.13, Proposition 
	4.13.1]{Duistermaat_2000}, we apply it to the $K^{\tau}$-representation 
	$(E,\rho^E)$, we get that for $y\in \mathfrak{s}$,
	\begin{equation}\label{eq5.113}
		\begin{split}
			&e^{2\pi \langle \rho_{\kk},y\rangle}\delta(k^{-1}{\tau} 
			e^{-iy})\mathrm{Tr}^E\big[k^{-1}\tau \exp(-iy)\big]\\
			&=\sum_{\omega\in W(\tau)} \det(\omega) 
			\det\big(\mathrm{Ad}(\tau^{-1}k)\big)|_{\kk_{R_+\backslash \omega\cdot R_+}}
			\mathrm{Tr}\big[\rho^E(k^{-1}\tau)|_{E_{\omega\cdot 
			\lambda}}\big] 
			e^{2\pi \langle \omega\cdot(\rho_{\kk}+\lambda), y\rangle},
		\end{split}
	\end{equation}
	where $W(\tau)$ is a subgroup of Weyl group $W(K,T)$, and we 
	refer to \cite[Section 4.13]{Duistermaat_2000} for the precise 
	meaning of other notation. We only use this formula to make an 
	observation 
	that, by \eqref{eq:5.2.13bs}, \eqref{eq:7.3.39n} and 
	\eqref{eq5.113}, the function 
	$\pi_{\kk_{\tau}(k^{-1})}(y)\big\{\cdots\big\}$, $y\in\ks$, in 
	the right-hand side of \eqref{eq5.107},
	is an eigenfunction of $\Delta^{\ks}$ associated with the eigenvalue 
	$2\mathcal{A}=4\pi^{2}|\rho_{\kk}+\lambda|^{2}$.

	Then by \eqref{eq5.104}, we get
	\begin{equation}\label{eq:7.3.70kk}
		\begin{split}
			&\mathrm{Tr_s}^{[k^{-1}\widehat{\tau}]}[\exp(-t D^{X,2}/2)]\\
			&=\frac{1}{ (2\pi 
			t)^{p/2}}\left[\widehat{A}^{k^{-1}\widehat{\tau}}(0) 
			\Big(\widehat{A}^{k^{-1}\widehat{\tau}}\Big)^{-1}\big(i\mathrm{ad}(t\Omega^{\z_{\widehat{\tau}}(k^{-1})})|_{\kk}\big)\mathrm{Tr}^E\big[k^{-1}\tau e^{it\Omega^{\z_{\widehat{\tau}}(k^{-1})}}\big]
			\right]^{\mathrm{max}}.
		\end{split}
	\end{equation}
	Also the parameter $t$ is killed automatically in the right-hand side of \eqref{eq:7.3.70kk}.

	Note that the curvatures $R^{N|_{X(k^{-1}\widehat{\tau})}}$, $R^{TX|_{X(k^{-1}\widehat{\tau})}}$, 
	$R^F|_{X(k^{-1}\widehat{\tau})}$ are 
	given by the actions of the curvature form 
	$\Omega^{\z_{\widehat{\tau}}(k^{-1})}$ associated with 
	$Z^0_{\widehat{\tau}}(k^{-1})\rightarrow 
	X(k^{-1}\widehat{\tau})$. Then the right-hand side in \eqref{eq:7.3.70kk} is just
	\begin{equation}
		\label{eq:7.3.71kk}
		\begin{split}
			&\bigg[\widehat{A}^{k^{-1}\widehat{\tau}}(0) 
		\Big(\widehat{A}^{k^{-1}\widehat{\tau}}\Big)^{-1}\big(N|_{X(k^{-1}\widehat{\tau})}, 
		\nabla^{N|_{X(k^{-1}\widehat{\tau})}}\big)\\
		&\qquad\qquad \cdot\mathrm{Tr}^E\Big[\rho^E(k^{-1}\tau)
		\exp\Big(-\frac{R^F|_{X(k^{-1}\widehat{\tau})}}{2\pi i}\Big)\Big]
		\bigg]^{\mathrm{max}}
		\end{split}
	\end{equation}
	Then by \eqref{eq5.15}, \eqref{eq5.69}, \eqref{eq:7.3.71kk}, we 
	get the second identity in \eqref{localindex} for the semisimple 
	element $k^{-1}\widehat{\tau}$. This completes the proof of our theorem.
\end{proof}

\subsection{The local equivariant index theorem on $Z$}
\label{s4.8}
In this subsection, we make the same assumptions as in Subsections \ref{s1.9}, 
\ref{section4.9} and \ref{s4.5}. In particular, $\Gamma$ is a cocompact 
torsion-free discrete subgroup of $G$ such that 
$\sigma(\Gamma)=\Gamma$. Then $Z=\Gamma\backslash X$ is a compact 
manifold on which $\Sigma^{\sigma}$ acts isometrically. The bundle $S^{TX}$ 
descends to the bundle of $TZ$-spinors $S^{TZ}$. The assumptions 
in Subsection \ref{s4.5} make $S^{TZ}\otimes F$ an 
equivariant Clifford module over $Z$ equipped with the equivariant action of 
$\Sigma^{\sigma}$.

The operator $D^X$ descends to the Dirac operator 
$D^Z$, which acts on $C^\infty(Z, S^{TZ}\otimes F)$ and commutes with 
$\Sigma^\sigma$, and the operator $\mathcal{L}^X_{\mathcal{A}}$ descends to $\mathcal{L}^Z_{\mathcal{A}}$. By \eqref{eq:5.1.13bonn},
\begin{equation}
	\label{eq:Zd}
	\frac{1}{2}D^{Z,2}=\mathcal{L}^Z_{\mathcal{A}}.
\end{equation}

Let $\ker D^Z \subset C^\infty(Z, 
S^{TZ}\otimes F)$ be the kernel of $D^Z$, which is a finite-dimensional representation of 
$\Sigma^\sigma$. 
The equivariant index of $D^Z$ (or the Lefschetz number) associated with 
$\sigma$ is defined as
\begin{equation}
	\mathrm{Ind}_{\Sigma^\sigma}\big(\sigma, D^Z\big)=\mathrm{Tr_{s}}^{\ker 
	D^Z}[\sigma].
	\label{eq:7.4.2waw}
\end{equation}
By McKean-Singer formula 
(\cite{McKS67}, \cite[Proposition 6.3]{berline2003heat}), for $t>0$,
\begin{equation}
	\mathrm{Ind}_{\Sigma^\sigma}\big(\sigma, 
	D^Z\big)=\mathrm{Tr_s}\Big[\sigma^Z\exp\big(-tD^{Z,2}/2\big)\Big].
	\label{eq:7.4.2wawbonn}
\end{equation}

Recall that ${}^\sigma Z\subset Z$ is the fixed point set of 
$\sigma$. By \eqref{eq:1.9.21mm}, it is a finite disjoint union of 
$[X(\gamma\sigma)]_{Z}$, $\underline{[\gamma]}_{\sigma}\in \underline{E}_{\sigma}$.
Let $\widehat{A}^{\sigma}(TZ|_{^\sigma Z}, \nabla^{TZ|_{^\sigma 
Z}})$, $\mathrm{ch}^{\sigma}\big(F|_{^\sigma Z},\nabla^{F|_{^\sigma Z}}\big)$ be the closed differential 
forms on ${}^\sigma Z$ defined as in Subsection \ref{ss:5.2elliptic}.

By \cite{AtiyahBott67,AtiyahBott68} and \cite[Theorem 14.11 in Chapter 3]{Lawson1989spin}, $\mathrm{Ind}_{\Sigma^\sigma}(\sigma, D^Z)$ 
can be computed by the Lefschetz fixed point formula of Atiyah-Bott, so that
\begin{equation}\label{eq_ASSindex}
	\mathrm{Ind}_{\Sigma^\sigma}\big(\sigma,D^Z\big)=\int_{{}^\sigma 
	Z} \widehat{A}^{\sigma}\big(TZ|_{^\sigma Z}, 
	\nabla^{TZ|_{{}^\sigma 
	Z}}\big)\mathrm{ch}^{\sigma}\big(F|_{^\sigma Z},\nabla^{F|_{^\sigma Z}}\big).
\end{equation}

By Proposition \ref{prop:1.9.6}, if $\underline{[\gamma]}_{\sigma}\in 
\underline{E}_{\sigma}$, the action of $\sigma$ on 
$S^{TZ}\otimes F|_{[X(\gamma\sigma)]_{Z}}$ is equivalent to the action of 
$k^{-1}\sigma$ on the corresponding vector bundle $S^{TX}\otimes F$ 
over $\Gamma\cap Z(k^{-1}\sigma)\backslash X(k^{-1}\sigma)$.  
Then on each component $[X(\gamma\sigma)]_{Z}$ of ${}^\sigma Z$, the 
following function is constant,
\begin{equation}
	\Big[\widehat{A}^{\sigma}\big(TZ|_{^\sigma Z}, 
	\nabla^{TZ|_{{}^\sigma 
	Z}}\big)\mathrm{ch}^{\sigma}\big(F|_{^\sigma Z},\nabla^{F|_{^\sigma Z}}\big)\Big]^{\mathrm{max}}
	\label{eq:7.4.4waw}
\end{equation}
and it is equal to
\begin{equation}
	\Big[\widehat{A}^{k^{-1}\sigma}\big(TX|_{X(k^{-1}\sigma)}, 
	\nabla^{TX|_{X(k^{-1}\sigma)}}\big)\mathrm{ch}^{k^{-1}\sigma}\big(F|_{X(k^{-1}\sigma)},\nabla^{F|_{X(k^{-1}\sigma)}}\big)\Big]^{\mathrm{max}}.
	\label{eq:7.4.5waw}
\end{equation}

Then by \eqref{eq:tracebonn} and using Theorems 
\ref{thm_nonelliptic}, \ref{thm_localindex}, 
we get
\begin{equation}\label{eq:7.4.6mm}
	\mathrm{Tr_s}\big[\sigma^Z e^{-tD^{Z,2}/2}\big]
	=\sum_{\underline{[\gamma]}_{\sigma}\in 
	\underline{E}_{\sigma}}\int_{[X(\gamma\sigma)]_{Z}}\widehat{A}^{\sigma}\big(TZ|_{^\sigma Z}, \nabla^{TZ|_{{}^\sigma Z}}\big)\mathrm{ch}^{\sigma}\big(F|_{^\sigma Z},\nabla^{F|_{^\sigma Z}}\big).
\end{equation}
By \eqref{eq:7.4.10bb} and \eqref{eq:7.4.2wawbonn}, we see that \eqref{eq:7.4.6mm} is equivalent to \eqref{eq_ASSindex}.

\subsection{The de Rham operator associated with a flat bundle}\label{s7.7}
From now on, we assume that $G$ is a connected linear reductive Lie group 
with compact center. Then the center 
$\z_{\g}$ of $\g$ is included in $\kk$. We do not assume anymore that $K$ is semisimple or 
simply connected. We do not assume that $\sigma$ preserves the 
orientation of $\pp$ either.

Put
\begin{equation}
	\g_{\bbC}=\g\otimes_{\R}\bbC,\; \ku=\sqrt{-1}\pp\oplus \kk.
	\label{eq:compactformb}
\end{equation}
Then $\ku$ is a real Lie algebra, which is called the compact form of 
$\g$. It is clear that
\begin{equation}
	\ku_{\bbC}=\g_{\bbC}.
	\label{eq:5.5.2bs}
\end{equation}
The form $B$ extends to an invariant negative definite bilinear form on 
$\ku$ and to an invariant $\bbC$-bilinear form on $\g_{\bbC}$. Let 
$G_{\bbC}$ be the connected group of complex matrices associated with 
$\g_{\bbC}$, and let $U$ be the analytic subgroup of $G_{\bbC}$ 
associated with $\ku$. Since $G$ has compact center, by 
\cite[Proposition 5.3]{KnappRep1986}, $U$ is a compact Lie group. By \cite[Proposition 5.6]{KnappRep1986}, 
$G_\bbC$ is still reductive, 
and $G$, $U$ are closed subgroups of $G_\bbC$. 
In particular, $U$ is a maximal compact subgroup of $G_\bbC$.

Let $U\ku$, $U\g_\bbC$ be the enveloping algebras of $\ku$, $\g_\bbC$ respectively.
Then $U\g_{\bbC}$ can 
be identified with the left-invariant holomorphic differential 
operators on $G_{\bbC}$. 
Let $C^\ku$ be the Casimir operator of $U$ associated with $B$, by \eqref{eq:3.3.3gg}, we have
\begin{equation}
	C^\ku=C^\g\in U\g\cap U\ku.
	\label{eq:7.7.1hh}
\end{equation}

We extend the action $\sigma$ to $\g_{\bbC}$ as a complex linear 
isomorphism of $\g_{\bbC}$. We assume that $\sigma$ extends to an automorphism of $U$, then it 
also acts on $G_{\bbC}$ holomorphically. Set
\begin{equation}
	U^{\sigma}= U\rtimes \Sigma^{\sigma}.
	\label{eq:5.5.3bs}
\end{equation}

In the sequel, we fix a $(E,\rho^{E})\in\mathrm{Irr}(U^{\sigma})$ 
with an invariant Hermitian metric $h^{E}$. By Weyl's unitary trick 
\cite[Proposition 5.7]{KnappRep1986}, it extends uniquely 
to an irreducible representation of $G^{\sigma}$. We use the same notation $\rho^E$ for the restrictions of 
this representation to $G$, to $K$ and to $K^\sigma$. 
By \eqref{eq:7.7.1hh}, we have
\begin{equation}
	C^{\ku,E}=C^{\g,E}\in \mathrm{End}(E).
	\label{eq:7.7.2hh}
\end{equation}

Put $F=G\times_K E$. Let $\nabla^F$ be the Hermitian connection 
induced by the connection form $\omega^\kk$. Then the map $(g,v)\in 
G\times_K E\rightarrow \rho^E(g)v\in E$ gives a canonical 
identification of vector bundles on $X$,
\begin{equation}
	G\times_K E=X\times E.
	\label{eq:5.5.8bs}
\end{equation}
Then $F$ is equipped with a canonical flat connection $\nabla^{F,f}$ 
so that
\begin{equation}
	\nabla^{F,f}=\nabla^{F}+ \rho^E(\omega^\pp).
	\label{eq:5.5.9bs}
\end{equation}

Let $(\Omega_c^\bullet(X,F), d^{X,F})$ be the (compactly supported) de Rham 
complex associated with $(F, \nabla^{F,f})$. 
Let $d^{X,F,*}$ be the adjoint operator of $d^{X,F}$ 
with respect to the $L_2$ metric on $\Omega_c^\bullet(X,F)$. 
The Dirac operator $\mathbf{D}^{X,F}$ of this de Rham complex is given by
\begin{equation}\label{eq5.90}
	\mathbf{D}^{X,F}=d^{X,F}+d^{X,F,*}.
\end{equation}

As in \eqref{eq:actionscc}, $c(TX)$, 
$\widehat{c}(TX)$ act on $\Lambda^\bullet(T^*X)$. 
We still use $e_1$, $\cdots$, $e_m$ to denote an orthonormal basis of 
$\pp$ or $TX$, and let $e^1$, $\cdots$, $e^m$ be the corresponding 
dual basis of $\pp^{*}$ or $T^*X$. Let $\nabla^{\Lambda^\bullet(T^*X)\otimes F,u}$ be the 
connection on $\Lambda^\bullet(T^*X)\otimes F$ 
induced by $\nabla^{TX}$ and $\nabla^F$. 
Then the standard Dirac operator is given by
\begin{equation}
	D^{X,F}=\sum^m_{j=1} c(e_j)\nabla^{\Lambda^\bullet(T^*X)\otimes F,u}_{e_j}.
	\label{eq:5.5.14bs}
\end{equation}
By \cite[(8.42)]{BMZ2015toeplitz}, we have
\begin{equation}
	\mathbf{D}^{X,F}= D^{X,F}+ \sum^m_{j=1} \widehat{c}(e_j)\rho^E(e_j).
	\label{eq:5.5.15bs}
\end{equation}

Note that $C^{\g,E}$ defines an invariant parallel section of endomorphism of $F$. 
Recall that the operator $\mathcal{L}^{X}$ acting on 
$\Omega^{\bullet}(X,F)$ is defined as in \eqref{ellipticoperator}.
By \cite[Proposition 8.4]{BMZ2015toeplitz} and \eqref{eq:4.4.5nm}, 
\eqref{ellipticoperator}, we have
\begin{equation}\label{eq:7.7.7}
	\begin{split}
	\frac{\mathbf{D}^{X,F, 2}}{2}&= \mathcal{L}^{X}-
	\frac{1}{2}C^{\g,E}-\frac{1}{8} B^*(\kappa^\g,\kappa^\g)\\
	&=\frac{1}{2}C^{\g,X}-
	\frac{1}{2}C^{\g,E}.
	\end{split}
\end{equation}
Moreover, $\mathbf{D}^{X,F,2}$ commutes with the action of 
$G^\sigma$.

The real rank (resp. complex rank) $\mathrm{rk}_{\R}G$ (resp. 
$\mathrm{rk}_{\bbC}G$ ) of $G$ is defined as the real dimension of 
the maximal abelian subspace of $\pp$ (resp. the Cartan subalgebra of 
$\g$). The fundamental rank of $G$ is defined as
\begin{equation}
	\delta(G)=\mathrm{rk}_{\bbC} G -\mathrm{rk}_{\bbC} K\in \mathbb{N}.
	\label{eq:5.5.16bs}
\end{equation}

We still assume that $\gamma\sigma$ 
is a semisimple element given by \eqref{eq:7.1.8n}. As explain in 
Remark \ref{rk:new2022}, $Z^{0}_{\sigma}(\gamma)$ is real reductive 
equipped with a Cartan involution $\theta|_{Z^{0}_{\sigma}(\gamma)}$.
Let $S$ be a 
maximal torus of $K^{0}_{\sigma}(\gamma)$ with Lie algebra 
$\ks\subset\kk_{\sigma}(\gamma)$. Set
\begin{equation}
	\kb_{\sigma}(\gamma)=\{f\in\pp_{\sigma}(k^{-1})\;:\; [f,\ks]=0\}.
	\label{eq:5.5.17bonn}
\end{equation}
Then
\begin{equation}
	a\in \kb_{\sigma}(\gamma),\;\dim_{\R}\kb_{\sigma}(\gamma)\geq 
	\delta(Z^{0}_{\sigma}(\gamma)).
	\label{eq:5.5.18bonn}
\end{equation}
The quantity $\dim_{\R}\kb_{\sigma}(\gamma)$ only depends on the 
$\sigma$-conjugacy class of $\gamma$ in $G$. If $\gamma\sigma$ is 
elliptic, then $\dim_{\R}\kb_{\sigma}(\gamma)=
	\delta(Z^{0}_{\sigma}(\gamma))$.

Let $e\big(TX(\gamma\sigma),\nabla^{TX(\gamma\sigma)}\big)$ be the Euler form 
of $TX(\gamma\sigma)$ associated with the Levi-Civita 
connection $\nabla^{TX(\gamma\sigma)}$. If $\dim\pp_{\sigma}(\gamma)$ 
is even, then
\begin{equation}
	e\big(TX(\gamma\sigma),\nabla^{TX(\gamma\sigma)}\big)=\mathrm{Pf}\bigg[\frac{R^{TX(\gamma\sigma)}}{2\pi}\bigg].
	\label{eq:5.5.19bonn}
\end{equation}
If $\dim \pp_{\sigma}(\gamma)$ is odd, then 
$e\big(TX(\gamma\sigma),\nabla^{TX(\gamma\sigma)}\big)$ vanishes 
identically.

Recall that the notation $[\cdot]^{\mathrm{max}}$ refers 
to the forms on $X(\gamma\sigma)$. The following theorem extends 
\cite[Theorem 7.8.2]{bismut2011hypoelliptic}.

\begin{theorem}\label{thm:7.7.1hh}
	For $t>0$, the following identity holds:
	\begin{equation}\label{eq:7.7.8}
		\begin{split}
			&\mathrm{Tr_s}^{[\gamma\sigma]}\big[\exp(-t\mathbf{D}^{X,F,2}/2)\big]\\
			&=\frac{\exp(-|a|^2/2t)}{(2\pi t)^{p/2}} 
			\exp\Big(\frac{t}{8} B^*(\kappa^\g,\kappa^\g)\Big)\int_{\kk_{\sigma}(\gamma)} 
			J_{\gamma\sigma}(Y^\kk_0)\\
			&\qquad\mathrm{Tr_s}^{\Lambda^\bullet(\pp^*)\otimes E}
			\Big[\rho^{\Lambda^\bullet(\pp^*)\otimes E}(k^{-1}\sigma)
			\exp\big(-i\rho^{\Lambda^\bullet(\pp^*)\otimes 
			E}(Y^\kk_0)+\frac{t}{2}C^{\g,E}\big)\Big]\\
			&\hspace{40mm}\exp\big(-|Y^\kk_0|^2/2t\big) 
			\frac{dY^\kk_0}{(2\pi t)^{q/2}}.
		\end{split}
	\end{equation}
	
	If $\dim \kb_{\sigma}(\gamma)\geq 1$, then 
	\begin{equation}
		\mathrm{Tr_s}^{[\gamma\sigma]}\big[\exp(-t\mathbf{D}^{X,F,2}/2)\big]=0.
		\label{eq:7.7.9bis}
	\end{equation}
	If $\gamma\sigma$ is elliptic, then
	\begin{equation}
		\mathrm{Tr_s}^{[\gamma\sigma]}\big[\exp(-t\mathbf{D}^{X,F,2}/2)\big]=\left[e\big(TX(\gamma\sigma),\nabla^{TX(\gamma\sigma)}\big)\right]^{\mathrm{max}} 
		\mathrm{Tr}^E\big[\rho^E(\gamma\sigma)\big].
		\label{eq:7.7.10bis}
	\end{equation}
\end{theorem}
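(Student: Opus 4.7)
The strategy is to reduce everything to Theorem \ref{thm_orbitalintegral} applied to the $\Z_2$-graded de Rham setup. First, I would combine \eqref{eq:7.7.7} with Definition \ref{def:LXA} to write
\begin{equation*}
\tfrac{1}{2}\mathbf{D}^{X,F,2} = \mathcal{L}^X_{A} \quad \text{with} \quad A = -\tfrac{1}{2} C^{\g,E} - \tfrac{1}{8} B^*(\kappa^\g,\kappa^\g),
\end{equation*}
acting on $\Omega^\cdot(X,F)$. Here $\Omega^\cdot(X,F)$ corresponds to the $\Z_2$-graded $K^\sigma$-representation $\Lambda^\cdot(\pp^*) \otimes E$. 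The proof of Theorem \ref{thm_orbitalintegral} carries over verbatim to the graded setting (the grading operator $(-1)^{N^{\Lambda^\cdot(\pp^*)}}$ is central and $K^\sigma$-invariant, so it may be absorbed into the representation). Applying the graded version of \eqref{eq:4.2.1} and expanding $B^*(\kappa^\g,\kappa^\g)$ via \eqref{eq:4.4.5nm} to produce the prefactor $\exp(\tfrac{t}{48}\mathrm{Tr}^\kk[C^{\kk,\kk}] + \tfrac{t}{16}\mathrm{Tr}^\pp[C^{\kk,\pp}])$ gives \eqref{eq:7.7.8} directly.

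For the vanishing \eqref{eq:7.7.9bis}, I would use the elementary identity
\begin{equation*}
\mathrm{Tr_s}^{\Lambda^\cdot(\pp^*)}\big[\rho^{\Lambda^\cdot(\pp^*)}(k^{-1}\sigma)\exp(-i\rho^{\Lambda^\cdot(\pp^*)}(Y^\kk_0))\big] = \det\big(1 - \mathrm{Ad}(k^{-1}\sigma)\exp(-i\mathrm{ad}(Y^\kk_0))\big)\big|_{\pp}.
\end{equation*}
The full integrand in \eqref{eq:7.7.8} is invariant under the adjoint action of $K^0_\sigma(\gamma)$ on $Y^\kk_0$, so by a Weyl integration formula the integration reduces to $Y^\kk_0 \in \ks$. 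If $\dim \kb_\sigma(\gamma) \geq 1$, pick $f \in \kb_\sigma(\gamma) \setminus \{0\}$: by \eqref{eq:5.5.17bonn} we have $\mathrm{Ad}(k^{-1}\sigma)f = f$ (since $f \in \pp_\sigma(k^{-1})$) and $[Y^\kk_0, f]=0$ for $Y^\kk_0 \in \ks$. Thus $f$ is a nontrivial $+1$ eigenvector of the operator whose determinant appears above, forcing the determinant, hence the whole integrand, to vanish identically on $\ks$.

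For the elliptic statement \eqref{eq:7.7.10bis}, I would adapt the reduction of \cite[Section 7.8]{bismut2011hypoelliptic} to the twisted setting. With $a=0$, the Gaussian in \eqref{eq:7.7.8} collapses to $(2\pi t)^{-p/2}$, and one is left with an integral over $\kk_\sigma(\gamma)$ of the Clifford supertrace times $J_{\gamma\sigma}(Y^\kk_0)$ times a Gaussian in $Y^\kk_0$, times the scalar prefactor $\exp(\tfrac{t}{2}C^{\g,E}+\tfrac{t}{8}B^*(\kappa^\g,\kappa^\g))$. The curvature $R^{TX(\gamma\sigma)}$ is represented by $\Omega^{\z_\sigma(\gamma)}$ as in \eqref{eq:7.5.14bis}; expressing the integrand in these terms and carrying out a Weyl integration on $\ks$ combined with a Gaussian calculation converts the supertrace determinant above into the Pfaffian $\mathrm{Pf}[R^{TX(\gamma\sigma)}/2\pi]$ (which vanishes automatically when $p$ is odd, giving \eqref{eq:7.7.10bis} trivially). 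The $E$-dependent factor $\mathrm{Tr}^E[\rho^E(k^{-1}\sigma)\exp(-i\rho^E(Y^\kk_0))\exp(\tfrac{t}{2}C^{\g,E})]$ combined with the Gaussian over $Y^\kk_0$ is a heat-kernel-type expression on $\ks$, whose evaluation by the twisted Weyl character formula of \cite[Corollary 4.13.2]{Duistermaat_2000}, analogous to the final step of Theorem \ref{thm_localindex}, yields the $t$-independent value $\mathrm{Tr}^E[\rho^E(\gamma\sigma)]$.

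The main obstacle is the cancellation in the elliptic case: the three $K^\sigma$-invariant factors (the Clifford supertrace, the Gaussian, and the $C^{\g,E}$-heat prefactor) must conspire to produce a $t$-independent residue. This rigidity reflects the McKean-Singer identity for the de Rham index and ultimately the Atiyah-Bott Lefschetz formula, but in our formula it must be extracted by explicit computation. Fortunately, the computation parallels \cite[Section 7.8]{bismut2011hypoelliptic} almost verbatim once $k^{-1}$ is replaced by $k^{-1}\sigma$ and $\z(\gamma)$ is replaced by $\z_\sigma(\gamma)$, since all the algebraic manipulations depend only on the commuting adjoint actions of $k^{-1}\sigma$ and of elements of $\ks$ on $\pp$, $\kk$ and $E$.
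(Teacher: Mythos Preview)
Your approach matches the paper's for \eqref{eq:7.7.8} and \eqref{eq:7.7.9bis}: apply Theorem \ref{thm_orbitalintegral} in the $\Z_2$-graded setting with $A = -\tfrac{1}{2}C^{\g,E} - \tfrac{1}{8}B^*(\kappa^\g,\kappa^\g)$, then kill the integrand via the determinant identity \eqref{eq:7.5.8mm} and a nonzero $f\in\kb_\sigma(\gamma)$. (The paper skips the Weyl-integration step and observes directly that the determinant, being $K^0_\sigma(\gamma)$-invariant in $Y^\kk_0$, vanishes on all of $\kk_\sigma(\gamma)$ once it vanishes on $\ks$.)

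For \eqref{eq:7.7.10bis} there is one point the paper is more careful about than your sketch. You propose to rerun the final step of Theorem \ref{thm_localindex} verbatim, but there the representation was of $K^\sigma$ and the key eigenvalue identity was $\mathcal{A}=2\pi^2|\rho_\kk+\lambda|^2$. Here $E$ is a representation of $U^\sigma$ (the compact form of $G^\sigma$), the Casimir in the exponent is $C^{\g,E}$ rather than $C^{\kk,E}$, and the relevant identity is \eqref{eq:6.4.B}, namely $-C^{\g,E}-\tfrac{1}{4}B^*(\kappa^\g,\kappa^\g)=4\pi^2|\rho_\ku+\lambda|^2$. Consequently the twisted Weyl character formula must be invoked for $U^\sigma$, not $K^\sigma$: the paper first produces a $\sigma$-stable maximal torus of $U$ via \cite[Lemma (3.15.4)]{Duistermaat_2000} (using that $\ku(\sigma)$ meets $\ku^{\mathrm{reg}}$), and then runs the argument of Theorem \ref{thm_localindex} with $U$ in place of $K$, appealing to Remark \ref{rm:5.1.1bonn} to drop the simply-connected and semisimple hypotheses. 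Once this substitution $K\rightsquigarrow U$ is made, your outline goes through as written.
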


\begin{proof}
	The identity in \eqref{eq:7.7.8} follows from \eqref{eq:4.4.5nm}, 
	\eqref{eq:4.2.1}, \eqref{eq:7.7.7}. As in \eqref{eq:6.2.10nnnnn}, the integrand in \eqref{eq:7.7.8} 
	contains the following factor
	\begin{equation}\label{eq:7.5.8mm}
		\mathrm{Tr_{s}}^{\Lambda^\bullet(\pp^{*})}\left[\rho^{\Lambda^\bullet(\pp^{*})}(k^{-1}\sigma)e^{-i\rho^{\Lambda^\bullet(\pp^{*})}(Y^\kk_{0})}\right]=\det 
		\big(1-\exp(i\mathrm{ad}(Y^\kk_{0}))\mathrm{Ad}(\sigma^{-1} 
		k)\big)|_{\pp}.
	\end{equation}
	If $\dim \kb_{\sigma}(\gamma)\geq 1$, then the right-hand side in 
	\eqref{eq:7.5.8mm} vanishes identically for $Y^\kk_{0}\in 
	\kk_{\sigma}(\gamma)$. Then \eqref{eq:7.7.9bis} follows.

	Now take $\gamma=k^{-1}\in K$. Then 
	\begin{equation}
		\mathfrak{b}_{\sigma}(\gamma)\subset \pp_{\sigma}(\gamma).
		\label{eq:7.5.13mm}
	\end{equation}
	Moreover, by \cite[pp.129]{KnappRep1986}, 
	$\mathfrak{b}_{\sigma}(\gamma)\oplus\ks$ 
	is a Cartan subalgebra of $\z_{\sigma}(\gamma)$. In this case, 
	$\dim \pp_{\sigma}(\gamma)-\dim \kb_{\sigma}(\gamma)$ is even. Note 
	that $\Omega^{\z_{\sigma}(\gamma)}$ is the curvature form given in 
	\eqref{eq:7.5.14bis}.

	By \eqref{eq:7.5.14bis}, as an analogue of \eqref{eq5.69}, we have the following identities
	\begin{equation}
		\begin{split}
			&\widehat{A}^{-1}\big(i\mathrm{ad}(-t\Omega^{\z_{\sigma}(\gamma)})|_{\z_{\sigma}(\gamma)}\big)
			\left[\frac{
			\det(1-e^{-i\mathrm{ad}(-t\Omega^{\z_{\sigma}(\gamma)})}\mathrm{Ad}(k^{-1}\sigma))_{\z^\perp_{\sigma}(\gamma)}}
			{\det(1-\mathrm{Ad}(k^{-1}\sigma))_{\z^\perp_{\sigma}(\gamma)}}\right]^{1/2}=1,\\
			&	
			\mathrm{Tr}^E\left[\rho^{E}(k^{-1}\sigma)\exp(-i\rho^{E}(-t\Omega^{\z_{\sigma}(\gamma)}))\right]=\mathrm{Tr}^{E}\big[\rho^{E}(k^{-1}\sigma)\big].
		\end{split}
		\label{eq:7.5.27bis}
	\end{equation}
	
	Note 
	that if 
	$\dim \kb_{\sigma}(\gamma)\geq 1$, if 
	$Y^{\kk}_{0}\in\kk_{\sigma}(\gamma)$, then
	\begin{equation}
		\mathrm{Pf}\big[\mathrm{ad}(Y^{\kk}_{0})\big]=0.
		\label{eq:7.5.14nn}
	\end{equation}
	By \eqref{eq:7.5.14bis}, \eqref{eq:5.5.19bonn}, \eqref{eq:7.5.14nn}, 
	we get that $e\big(TX(\gamma\sigma),\nabla^{TX(\gamma\sigma)}\big)=0$. 
	Then \eqref{eq:7.7.9bis} is compatible with \eqref{eq:7.7.10bis}. 
	We only 
	need to consider the case where $\dim \kb_{\sigma}(\gamma)=0$, so 
	that $\ks$ is also a Cartan subalgebra of $\z_{\sigma}(\gamma)$.
	
	If we make the same assumptions on $K$, $\pp$ and $\sigma$ as in Subsection 
	\ref{s4.5}, then \eqref{eq:7.7.10bis} is a special 
	case of Theorem \ref{thm_localindex}. In general, we can proceed 
	as in the proof of Theorem \ref{thm_localindex} with the group 
	$U$ instead of $K$. Note that the Lie algebra of $\mathrm{Aut}(U)$ is isomorphic to 
	$[\ku,\ku]$. By \cite[Lemma (3.15.4)]{Duistermaat_2000}, if 
	$\sigma\in\mathrm{Aut}(U)$, then $[\ku,\ku](\sigma)$ contains regular 
	elements in $[\ku,\ku]$, so that there always exists $v\in 
	\ku(\sigma)\cap \ku^{\mathrm{reg}}$. Then we fix the corresponding 
	maximal torus and a positive root system $R^{+}$ for $U$ as in the proof 
	of Theorem \ref{thm_localindex}. Let 
	$\rho_{\ku}$ denote the element defined as in 
	\eqref{eq:5.2.14cologne}.
	
	We may suppose that $(E,\rho^{E})$ is irreducible for both $U$ 
	and $U^{\sigma}$, so that $C^{\g,E}$ is scalar. Let 
	$\lambda$ be the highest weight for this $U$-representation. By \cite[Proposition 
	7.5.2]{bismut2011hypoelliptic}, we have
	\begin{equation}
		-C^{\g,E}-\frac{1}{4}B^{*}(\kappa^\mathfrak{g},\kappa^\mathfrak{g})= 4\pi^{2}|\rho_{\ku}+\lambda|^{2}.
		\label{eq:6.4.B}
	\end{equation}

	Based on the above constructions, the arguments 
	in the proof of Theorem \ref{thm_localindex} still work without 
	assuming $U$ to be semisimple or simply connected. Using instead 
	\eqref{eq:7.5.27bis} and \eqref{eq:6.4.B}, we can prove 
	\eqref{eq:7.7.10bis} in full generality. This completes the proof of 
	our theorem.
\end{proof}

\subsection{Twisted $L_2$-torsion}
Following the idea in last subsection, our formula for twisted 
orbital integrals is quite promising in studying the equivariant real
analytic torsions for compact locally symmetric spaces. We refer to 
another publication of the author \cite{LIU2021109117} for a detailed 
investigation on 
this topic. Here, 
we give a brief discussion on the twisted 
$L_{2}$-torsion introduced by Bergeron and Lipnowski \cite{BeLip2017}.

We make the same assumptions as in Subsections \ref{s1.9}, 
\ref{section4.9} and \ref{s7.7}. In particular, $G$ is linear 
reductive and with compact center, $\Gamma$ is a cocompact 
torsion-free discrete subgroup of $G$ such that 
$\sigma(\Gamma)=\Gamma$. Let $(E,\rho^{E})$ be an 
irreducible unitary representation for both $U$ and $U^{\sigma}$. 
Furthermore, we make an assumption on the representation 
$(E,\rho^{E})$: as $G$-representations, 
$(E,\rho^{E})\ncong (E,\rho^{E}\circ\theta)$. By \cite[\S VI, Theorem 
5.3]{MR1721403} and \cite[Lemma 4.1]{BV2013torsion}, the flat vector 
bundle $F\rightarrow Z=\Gamma\backslash X$ is (strongly) acyclic.

Let 
$\bar{\sigma}\in\mathrm{Aut}(\Gamma)$ be the induced isomorphism of 
$\sigma$. Then $\bar{\sigma}$ is of finite order $N_{0}\in \bN^{*}$ 
(since $\Gamma$ is always finitely generated). 
\begin{lemma}
	The action of $\sigma^{N_{0}}$ on $X$ is the identity map. Then 
	for $\gamma\in\Gamma$, $\gamma\sigma$ is elliptic if and only if 
	$(\gamma\bar{\sigma})^{N_{0}}=1$.
\end{lemma}
\begin{proof}
	The first statement is equivalent to that $\sigma^{N_{0}}$ acts on $\pp$ 
	as identity. In fact, if a nonzero $a\in \pp$ is such that 
	$a'=\sigma^{N_{0}}(a)$, then the function $t\in\R\mapsto 
	d(pe^{ta},pe^{ta'})$ is either constant $0$ or tending to 
	infinity as $t\rightarrow +\infty$ (cf. \cite[Proposition 1.4.1, 
	pp.19]{eberlein1996geometry}). For any $\gamma\in\Gamma$, we 
	have $d(pe^{ta},\gamma)=d(pe^{ta'},\gamma)$ since 
	$\sigma^{N_{0}}$ fixes $\gamma$. Then $d(pe^{ta},pe^{ta'})\leq 
	2d(pe^{ta},\gamma)$, the cocompactness of $\Gamma$ 
	infers that $d(pe^{ta},pe^{ta'})$ is bounded hence must be constant $0$, 
	we conclude that $a=a'=\sigma^{N_{0}}(a)$.
	
	For $\gamma\in\Gamma$, $\gamma\sigma$ is semisimple, we can take 
	$g\in G$ such that $\sigma^{N_{0}}(g)=g$, and
	$\gamma=ge^{a}k^{-1}\sigma(g^{-1})$ where $a\in\pp$, $k\in K$ and 
	$\mathrm{Ad}(k)a=\sigma(a)$. Then the second part follows 
	directly from 
	$(\gamma\sigma)^{N_{0}}=ge^{N_{0}a}k^{-1}\sigma(k^{-1})\cdots\sigma^{N_{0}-1}(k^{-1})g^{-1}\sigma^{N_{0}}$.	
\end{proof}

Recall that $\underline{E}_{\sigma}$ denotes the finite set of elliptic 
classes in $[\Gamma]_{\sigma}=[\Gamma]_{\bar{\sigma}}$. Set 
$Z^{1}(\bar{\sigma},\Gamma)=\{\gamma \in \Gamma\;:\; 
(\gamma\bar{\sigma})^{N_{0}}=1\in\Gamma\}$, and let 
$H^{1}(\bar{\sigma},\Gamma)$ denote the quotient of 
$Z^{1}(\bar{\sigma},\Gamma)$ by the equivalent relation defined by 
the $\sigma$-conjugation by elements in $\Gamma$. The above lemma 
implies the identification
\begin{equation}
	H^{1}(\bar{\sigma},\Gamma)=\underline{E}_{\sigma}.
\end{equation}

Let $N^{\Lambda^\bullet (\pp^*)}$, $N^{\Lambda^\bullet (T^*X)}$ be the 
number operators of 
$\Lambda^\bullet (\pp^*)$, $\Lambda^\bullet (T^*X)$. For 
$\underline{[\gamma]}_{\sigma}\in \underline{E}_{\sigma}$, $t>0$, set
\begin{equation}
	\cE_{X,\gamma\sigma}(F,t)=\mathrm{Tr_s}^{[\gamma\sigma]}\Big[\big(N^{\Lambda^\bullet(T^*X)}-\frac{m}{2}\big)\exp\big(-t\mathbf{D}^{X,F,2}/2\big)\Big].
	\label{eq:6.3.1pl}
\end{equation}

Since $\gamma\sigma$ is elliptic, there exist 
$g\in G$ such that $k=g\gamma\sigma(g^{-1})\in K$.
Let $\lambda$ still denote the highest weight for the 
$U$-representation $\rho^{E}$ as in the proof of Theorem 
\ref{thm:7.7.1hh}. 
By \eqref{eq:4.2.1}, \eqref{eq:6.4.B}, we have
 	\begin{equation}
 		\begin{split}
 			\cE_{X,\gamma\sigma}(F,t)&=\frac{1}{(2\pi t)^{p/2}} \exp\big(-2\pi^{2} 
 			t|\lambda+\rho_{\ku}|^{2}\big)\\
 			&\cdot\int_{\kk_{\sigma}(k)} 
 			J_{k\sigma}(Y^\kk_0)\mathrm{Tr_s}^{\Lambda^\bullet(\pp^*)}\Big[\big(N^{\Lambda^\bullet(\pp^*)}-\frac{m}{2}\big)\rho^{\Lambda^\bullet(\pp^{*})}(k\sigma)e^{-i\rho^{\Lambda^\bullet(\pp^{*})}(Y^\kk_0)}\Big]\\
 			& 
 			\qquad\quad\mathrm{Tr}^{E}\Big[\rho^{E}(k\sigma)\exp(-i\rho^{E}(Y^\kk_0))\Big] e^{-|Y^\kk_0|^2/2t}\frac{dY^\kk_0}{(2\pi t)^{q/2}}.
 		\end{split}
 		\label{eq:7.2.5ss20jan}
 	\end{equation}
Set
\begin{equation}
	\begin{split}
		\mathrm{Tr_{s}}^{\Gamma,\prime}\big[\sigma^{Z}e^{-t\mathbf{D}^{Z,F,2}/2}\big]=\sum_{\underline{[\gamma]}_{\sigma}\in 
		\underline{E}_{\sigma}}\mathrm{Vol}(\Gamma\cap 
		Z_{\sigma}(\gamma)\backslash 
		X(\gamma\sigma))\cE_{X,\gamma\sigma}(F,t),
	\end{split}
	\label{eq:6.5.4disc}
\end{equation}
where the prime $(')$ refers to the number operator 
$N^{\Lambda^\bullet(\pp^*)}$ involved.

\begin{proposition}\label{prop:6.3.1est}
	If $\delta(Z^{0}_{\sigma}(\gamma))\neq 1$, then for $t>0$,
	\begin{equation}
		\cE_{X,\gamma\sigma}(F,t)=0.
		\label{eq:6.5.4last}
	\end{equation}
	For non-vanishing cases, there exists a 
	constant $C>0$ 
	such that for $t\in\; ]0,1]$
	\begin{equation}
		\begin{split}
			&\big|\sqrt{t}\cE_{X,\gamma\sigma}(F,t)\big|\leq C,\\
			&\big|(1+2t\frac{\partial}{\partial 
			t})\cE_{X,\gamma\sigma}(F,t)\big|\leq 
			C\sqrt{t}.
		\end{split}
		\label{eq:6.3.3pl}
	\end{equation}
	Then, as $t\rightarrow 0$, $\cE_{X,\gamma\sigma}(F,t)$ has the asymptotic expansion in the form of 
	\begin{equation}
		\frac{1}{\sqrt{t}}\sum_{j=0}^{+\infty} a^{\gamma\sigma}_j 
		t^j,\;\text{with }a^{\gamma\sigma}_j\in \bbC \text{ for } j\in\bN.
		\label{eq:6.3.4pl}
	\end{equation}

	There exist constants $C'>0$, $c'>0$ such that for $t\gg 0$, we have
	\begin{equation}
		\big|\cE_{X,\gamma\sigma}(F,t)\big|\leq C'e^{-c' t}.
		\label{eq:6.5.6last}
	\end{equation}
\end{proposition}
\begin{proof}
	Note that \eqref{eq:6.5.4last} was proved in \cite[Proposition 
	3.3.3 and Corollary 3.3.4]{LIU2021109117}, it follows from the 
	identities as in \eqref{eq:3.4.9paris}. The estimates \eqref{eq:6.3.3pl} were proved in \cite[(4.4.5) 
	in Theorem 4.4.1]{LIU2021109117}, and as a consequence, we get 
	the asymptotic expansion \eqref{eq:6.5.6last}.

	In the context of cyclic base change with $\gamma=1$, the 
	estimate \eqref{eq:6.5.6last} was proved in \cite[Lemma 
	4.10]{BeLip2017}. For general setting as here, it was proved in 
	\cite[(4.4.6) in Theorem 4.4.1]{LIU2021109117}. Note that for 
	this conclusion, the assumption on $\rho^{E}\circ\theta$ is 
	crucial (called a nondegeneracy condition for $\rho^{E}$).
\end{proof}
\begin{definition} We define the $\sigma$-twisted $L_{2}$-torsion for 
	$Z=\Gamma\backslash X$ associated with the flat vector bundle $F$ as follows,
	\begin{equation}
			\mathcal{T}_{\sigma,L_{2}}(Z,F)=-\frac{1}{2}\int_{0}^{+\infty}\big(1+2t\frac{\partial}{\partial t}\big)\mathrm{Tr_{s}}^{\Gamma,\prime}\Big[\sigma^{Z}e^{-t\mathbf{D}^{Z,F,2}/2}\Big]\frac{dt}{t}.
			\label{eq:6.5.9lastk}
	\end{equation}
\end{definition}
By Proposition \ref{prop:6.3.1est} and \eqref{eq:6.5.4disc}, 
$\mathcal{T}_{\sigma,L_{2}}(Z,F)$ is well-defined as a number. In 
particular, only the elliptic class $\underline{[\gamma]}_{\sigma}$ 
such that $\delta(Z^{0}_{\sigma}(\gamma))=1$ contributes to 
$\mathcal{T}_{\sigma,L_{2}}(Z,F)$. If there is no such elliptic class, we get 
$\mathcal{T}_{\sigma,L_{2}}(Z,F)=0$.
\begin{example}
	As in \cite{BeLip2017}, assume that $\sigma$ has finite 
	order, and that $H^{1}(\sigma,G)=1$ (which implies that for $\gamma\in 
	H^{1}(\sigma,\Gamma)$, it is $\sigma$-conjugate to $1$ by 
	elements in $G$). Recall that ${}^{\sigma}Z\subset Z$ is the 
	fixed point set of $\sigma$ in $Z$, then
	\begin{equation}
		\mathrm{Tr_{s}}^{\Gamma,\prime}\big[\sigma^{Z}e^{-t\mathbf{D}^{Z,F,2}/2}\big]=\mathrm{Vol}({}^{\sigma}Z)\mathrm{Tr_{s}}^{[\sigma]}\Big[\big(N^{\Lambda^\bullet(T^*X)}-\frac{m}{2}\big)\exp(-t\mathbf{D}^{X,F,2}/2)\Big].
	\end{equation}
	By \cite[Theorem 4.11]{BeLip2017}, 
	$\mathcal{T}_{\sigma,L_{2}}(Z,F)$ appeared as the limit of the 
	$\sigma$-equivariant analytic torsions under a tower of finite 
	coverings of $Z=\Gamma\backslash X$.
	 
	As explained in Subsection \ref{section:basechange}, if we write further the twisted orbital integral 
	$\mathrm{Tr_{s}}^{[\sigma]}[\cdots]$ in terms of the ordinary 
	identity orbital integral associated with the subgroup 
	$Z_{\sigma}(1)$, fixed point set of $\sigma$ in $G$. Then 
	$\mathcal{T}_{\sigma,L_{2}}(Z,F)$ is actually a linear 
	combination of the ordinary $L_{2}$-torsions 
	(\cite{MR1158345}, \cite{MATHAI1992369}) of ${}^{\sigma}Z$.
\end{example}

\begin{example}
	In \cite{MR2838248, BMZ2015toeplitz}, Bismut, Ma and Zhang showed 
	that for a universally constructed sequence of flat vector bundles $F_{d}$, $d\in 
	\bN$ over a closed manifold $Z$, under the nondegeneracy 
	condition, as $d\rightarrow +\infty$,
	\begin{equation}
		\mathcal{T}(Z,F_{d})=\mathcal{T}_{L_{2}}(Z,F_{d})+\mathcal{O}(e^{-cd}),
		\label{eq:6.5.11}
	\end{equation}
	where $\mathcal{T}(Z,F_{d})$, $\mathcal{T}_{L_{2}}(Z,F_{d})$ 
	denote the real analytic torsions, $L_{2}$-torsions respectively. 
	In the context of a locally symmetric space, a new proof of 
	\eqref{eq:6.5.11} using Selberg trace formula was given in 
	\cite{MR3128980}. 
	
	In \cite[Section 4]{LIU2021109117}, the author considered the 
	asymptotic expansion of $\sigma$-equivariant analytic torsions 
	$\mathcal{T}_{\sigma}(Z,F_{d})$ as $d\rightarrow +\infty$ for the compact locally symmetric 
	space $Z=\Gamma\backslash X$. We fix a nondegenerate unitary representation $(E,\rho^{E})\in \mathrm{Irr}(U^{\sigma})\cap 
	\mathrm{Irr}(U)$ which has the highest weight 
	$\lambda$. Associated with it, in \cite[Subsection 
	4.2]{LIU2021109117}, the author constructed a canonical sequence 
	$(E_{d},\rho^{E_{d}})\in \mathrm{Irr}(U^{\sigma})\cap 
	\mathrm{Irr}(U), d\in\bN^{*}$, such that $\rho^{E_{d}}$ has 
	highest weight $d\lambda$. By \cite[Proposition 
	4.6.1]{LIU2021109117}, as $d\rightarrow +\infty$,
	\begin{equation}
		\mathcal{T}_{\sigma}(Z,F_{d})=\mathcal{T}_{\sigma,L_{2}}(Z,F_{d})+\mathcal{O}(e^{-cd}),
		\label{eq:6.5.100}
	\end{equation}
	The main result of \cite[Section 
	4]{LIU2021109117} showed that the leading term (in $d$) of 
	$\mathcal{T}_{\sigma,L_{2}}(Z,F_{d})$ is given in terms of 
	$W$-invariants, for the fixed point set ${}^{\sigma}Z$, 
	introduced in \cite{MR2838248, BMZ2015toeplitz}.
\end{example}

\bibliographystyle{abbrv}

\bibliography{References}

\end{document}